\newtheorem{thm}{Theorem}[section]
\newtheorem{prop}[thm]{Proposition}
\newtheorem{lem}[thm]{Lemma}
\newtheorem{cor}[thm]{Corollary}
\theoremstyle{definition}
\newtheorem{defn}[thm]{Definition}
\theoremstyle{remark}
\newtheorem{remk}[thm]{Remark}
\newtheorem{remks}[thm]{Remarks}
\newtheorem{exm}[thm]{Example}
\newtheorem{exms}[thm]{Examples}
\newtheorem{notat}[thm]{Notation}
\numberwithin{equation}{section}
\newcommand{\sB}{{\mathcal B}}
\newcommand{\sZ}{{\mathcal Z}}
\newcommand{\A}{{\mathbb A}}
\newcommand{\G}{{\mathbb G}}
\renewcommand{\P}{{\mathbb P}}
\newcommand{\W}{{\mathbb W}}
\newcommand{\Z}{{\mathbb Z}}
\newcommand{\fm}{{\mathfrak m}}
\newcommand{\CH}{{\rm CH}}
\newcommand{\surj}{\twoheadrightarrow}
\newcommand{\inj}{\hookrightarrow}
\newcommand{\codim}{{\rm codim}}
\newcommand{\Spec}{{\rm Spec \,}}
\newcommand{\id}{{\operatorname{id}}}
\newcommand{\Sch}{{\operatorname{\mathbf{Sch}}}}
\renewcommand{\max}{{\operatorname{\rm max}}}
\newcommand{\Sm}{{\mathbf{Sm}}}
\newcommand{\ds}{{/\kern-3pt/}}
\newcommand{\ess}{\text{\rm{ess}}}
\newcommand{\Gr}{{\text{\rm Gr}}}
\newcommand{\Proj}{{\operatorname{Proj}}}
\newcommand{\tr}{{\operatorname{tr}}}
\newcommand{\TZ}{{\operatorname{Tz}}}
\renewcommand{\TH}{{\operatorname{TCH}}}
\newcommand{\un}{\underline}
\newcommand{\ov}{\overline}
\newcommand{\dgn}{{\operatorname{degn}}}
\renewcommand{\dim}{\text{\rm dim}}
\newcommand{\tuborg}{\left\{\begin{array}{ll}}
\newcommand{\sluttuborg}{\end{array}\right.}
\newcommand{\sfs}{{\rm sfs}}
\newcommand{\fs}{{\rm fs}}
\newcounter{elno}
\newcounter{elno-abc}   
\newenvironment{listabc}{
                         \begin{list}{\alph{elno-abc})
                                     }{\usecounter{elno-abc}}
                      }{
                         \end{list}}
\newcounter{elno-abc-prime}
\begin{document}
\title{A moving lemma for relative $0$-cycles}
\author{Amalendu Krishna, Jinhyun Park}
\address{School of Mathematics, Tata Institute of Fundamental Research, 1 Homi Bhabha Road, Colaba, Mumbai, India}
\email{amal@math.tifr.res.in}
\address{Department of Mathematical Sciences, KAIST, 291 Daehak-ro, Yuseong-gu, Daejeon, 34141, Republic of Korea (South)}
\email{jinhyun@mathsci.kaist.ac.kr; jinhyun@kaist.edu}
  
\keywords{algebraic cycles, moving lemma, higher Chow group, additive higher Chow group, linear projection, Grassmannian}        

\subjclass[2010]{Primary 14C25; Secondary 19E15, 14F42}

\begin{abstract}
We prove a moving lemma for the additive and ordinary higher Chow groups of relative $0$-cycles of regular semi-local $k$-schemes essentially of finite type over an infinite perfect field. From this, we show that the cycle classes can be represented by cycles that possess certain finiteness, surjectivity, and smoothness properties. It plays a key role in showing that the crystalline cohomology of smooth varieties can be expressed in terms of algebraic cycles.
\end{abstract}
\maketitle

\setcounter{tocdepth}{1}

\tableofcontents

\section{Introduction}\label{sec:Intro}
Just as the classical Chow moving lemma played a fundamental role in studies of Chow groups of smooth algebraic varieties over a field, the moving lemma of Bloch \cite{Bl1, Bl2} played a significant role in studies of higher Chow groups of smooth algebraic varieties, i.e., the motivic cohomology. One limitation of those moving lemmas however is that they focus only on the proper intersection properties of the given cycles. Occasionally, the given circumstances require us to know more about the cycles beyond such proper intersection properties. For instance, we often need to know whether the given cycles are finite over the base scheme, and smooth, or, if not, whether they can be moved to such cycles. Such questions require more subtle treatments and may hold under special circumstances only.

The goal of this article is to prove a moving lemma of this sort for higher relative 0-cycles of a regular semi-local scheme essentially of finite type over an infinite perfect field $k$. Here, `essentially of finite type' means it is obtained by localizing a quasi-projective $k$-scheme at a finite set $\Sigma$ of points. Achieving suitable finiteness and regularity of the cycles is the main characteristic of the moving lemma we seek.

In the Introduction, we state the main results, explain the motivation, and give an outline of the article.

\subsection{The \sfs-moving lemma}\label{sec:sfs**}
Let $k$ be an infinite perfect field. Let $R$ be a regular semi-local $k$-algebra essentially of finite type. Let $V = \Spec(R)$ and let $\Sigma$  denote the set of closed points of $V$. Let $\TZ^q(V, \bullet ; m)$ be the non-degenerate additive cycle complex of $V$ in codimension $q \ge 1$ and with modulus $m \ge 1$. Let $\TH^q(V, n; m)$ denote the associated homology groups, called the additive higher Chow groups of $V$ (see \S \ref{sec:additive complex}).

For $n \ge 1$, let $\TZ^n_{\sfs}(V,n;m)$ denote the subgroup of $\sfs$-cycles in $\TZ^n(V,n;m)$ (see \S \ref{sec:SFS*}). Roughly speaking, an \sfs-cycle is an element $\alpha \in \TZ^n(V,n;m)$ such that every irreducible component of $\alpha$ intersects $\Sigma \times F$ properly for every face $F \subset \square^{n-1}_k$, is finite and surjective over an irreducible component of $V$, and the image under every projection $V \times \square^{n-1}_k \to V \times  \square^j_k$ ($0 \le j \le n-1$) is a regular scheme. Those cycles have the trivial boundaries (see Lemma \ref{lem:No-intersection}). Let $\TH^n_{\sfs}(V, n; m)$ denote the image of the canonical map $\TZ^n_{\sfs} (V,n;m) \to \TH^n(V, n; m)$ (see \S \ref{section:ACH-sfs}). The goal of this article is to prove the following result.

\begin{thm}[The \sfs-moving lemma]\label{thm:Main-1}
Let $k$ be an infinite perfect field. Let $m, n\ge 1$ be integers. Let $V$ be a smooth semi-local $k$-scheme essentially of finite type. Then the canonical map $\TH^n_{\sfs}(V, n; m) \to \TH^n(V, n; m)$ is an isomorphism.
\end{thm}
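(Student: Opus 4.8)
Since $\TH^n_{\sfs}(V,n;m)$ is defined as the image of $\TZ^n_{\sfs}(V,n;m)$ inside $\TH^n(V,n;m)$, the map of the theorem is the inclusion of a subgroup and is injective; its content is therefore surjectivity, i.e.\ that every cycle class in $\TH^n(V,n;m)$ admits an $\sfs$-representative (which is automatically a cycle by Lemma~\ref{lem:No-intersection}). I would start by fixing a convenient model: write $V$ as the semi-localization at $\Sigma$ of a smooth affine $k$-variety $\ov V$, fix a closed immersion $\ov V \hookrightarrow \A^N_k$, and compactify, so that an arbitrary representative $Z \in \TZ^n(V,n;m)$, together with its modulus divisor $\{t = 0\}$ on the additive factor, is viewed inside $V \times \P^1_k \times (\P^1_k)^{n-1} \supseteq V \times \A^1 \times \square^{n-1}_k$.

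\textbf{The finiteness move by generic projection.} The main step is to move $Z$ by a generic linear projection, in the spirit of Bloch's moving lemma but constrained to respect the modulus and the faces. I would consider the family of projective-linear transformations of $\P^N \times \P^1_k \times (\P^1_k)^{n-1}$ that preserve the ambient $\A^N$, fix the divisors $\{t=0\}$ and $\{t=\infty\}$, and fix each divisor $\{y_i = 0\}$, $\{y_i = 1\}$, $\{y_i = \infty\}$ on the cube coordinates — concretely, products of $\G_m$-type transformations in the $t$- and $y_i$-directions composed with linear projections of $\A^N$ — parametrized by an open subscheme $U$ of a product of Grassmannians and tori, a smooth rational $k$-variety. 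As $k$ is infinite, $U(k)$ is dense, and I would show that a sufficiently general $\phi \in U(k)$ makes $Z' := \phi_*(Z)$ finite and surjective over an irreducible component of $V$ and makes each component of $Z'$ meet $\Sigma \times F$ properly for every face $F \subseteq \square^{n-1}_k$: the locus in $U$ of transformations failing one of these finitely many conditions at one of the finitely many points of $\Sigma$ is a proper closed subset, so a generic $k$-point avoids all of them. This genericity at the prescribed semi-local points, rather than on an unspecified open of $V$, is what replaces the incidence-variety/Bertini input of the classical argument.

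\textbf{Regularity of the coordinate projections.} An $\sfs$-cycle must further have regular image under each projection $V \times \square^{n-1}_k \to V \times \square^j_k$ ($0 \le j \le n-1$), a condition absent from the classical moving lemma. I would impose this by a final genericity requirement: after spreading $Z$ out to a finite-type model, a Bertini-type/generic-smoothness argument over the perfect field $k$ shows that for general $\phi$ each such image is smooth — equivalently regular — at all of its finitely many points lying over $\Sigma$. Intersecting this with the open conditions of the previous step gives a nonempty open $U' \subseteq U$ all of whose $k$-points produce an honest $\sfs$-cycle $Z'$.

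\textbf{The homotopy.} Finally I would connect $Z$ to $Z'$. Choosing an affine line $L$ of modulus-compatible transformations joining the identity to $\phi$, one obtains a one-parameter family $\{\phi_\tau\}_{\tau \in \square^1_k}$ with $\phi_0 = \id$ and $\phi_\infty = \phi$, and sets $H$ equal to the cycle-theoretic closure in $V \times \A^1 \times \square^{n-1}_k \times \square^1_k$ of $\{(\phi_\tau(z),\tau) : z \in Z\}$. One then checks that $H \in \TZ^n(V,n+1;m)$ — the modulus inequality persists because every $\phi_\tau$ fixes $\{t=0\}$, and properness along all faces, including the two new ones cut out by the $\square^1$-parameter, holds for general $L$ — and that $\partial H = \pm(Z' - Z)$ modulo degenerate cycles, whence $[Z] = [Z']$ in $\TH^n(V,n;m)$ with $Z'$ an $\sfs$-cycle. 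I expect the main obstacle to be twofold and specific to the additive setting: showing that the sub-family of modulus-compatible projections is still large enough to force finiteness, surjectivity, \emph{and} regularity of every coordinate projection simultaneously at the prescribed semi-local points, and verifying that the homotopy $H$ built from such a move is itself admissible for the modulus condition.
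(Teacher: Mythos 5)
Your reduction to surjectivity and your choice to seek a representative that is finite, surjective and has regular coordinate projections are the right starting points, but the engine of your argument does not run for a general $V$. You propose to move $Z$ by a generic member $\phi$ of a family of projective-linear transformations of the \emph{ambient} space $\P^N_k\times\P^1_k\times(\P^1_k)^{n-1}$ and to set $Z'=\phi_*(Z)$. A general such $\phi$ does not preserve $X\subset\A^N_k$ (nor $V$), so $\phi_*(Z)$ is not a cycle on $V\times B_n$ and the homotopy $\{(\phi_\tau(z),\tau)\}$ is not a cycle on $V\times B_{n+1}$; if you instead restrict to transformations preserving $V\times B_n$, then for a general smooth affine $X$ there is no transitive (or even positive-dimensional) group of modulus-compatible automorphisms left to move by. This is exactly why the paper cannot run a one-step Chow/Bloch-type translation argument and instead splits the proof in two: (a) the case $X=\A^r_k$, where one \emph{does} have translations, but they must be twisted, $\un{x}\mapsto\un{x}+yt^{s(m+1)}g$ with $s\ge s(Z)$, because a plain translation (or any transformation fixing $\{t=0\}$ and acting only on the fiber coordinates, as in your $\G_m$-scalings) either violates the modulus condition or fails to produce finiteness of $Z'$ over the base — finiteness requires genuinely mixing $\un{x}$ with $t$, and the monicity computation of Lemma~\ref{lem:monic} is what makes this work; and (b) the general case, which is reduced to (a) not by moving $Z$ but by choosing a finite linear projection $\phi:X\to\A^r_k$ and writing $\alpha=(\alpha-\phi^*_n\phi_{n*}\alpha)+\phi^*_n\phi_{n*}\alpha$: the second term is pushed down to $\A^r_k$, moved there by case (a), and pulled back, while the first term is (minus) the residual cycle $L^*(\alpha)$, which the presentation lemma (Theorem~\ref{thm:Main-2}) shows is \emph{already} an $\sfs$-cycle for a general $L$. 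Your proposal contains no analogue of this decomposition, and without it there is no mechanism producing a rational equivalence between $\alpha$ and anything on a general $V$.

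The second gap is the regularity clause. Asserting that ``a Bertini-type/generic-smoothness argument shows that for general $\phi$ each coordinate image is smooth'' at the points over $\Sigma$ is not a proof: generic smoothness gives regularity on an unspecified dense open, whereas you need it at the finitely many prescribed points of $\Sigma\times B_j$, and for the residual cycle the relevant scheme $\pi_j(L^+(Z))$ is not the image of anything smooth to begin with. In the paper this single clause consumes \S\ref{section:LP2}--\S\ref{sec:Pres-lem}: one must first make $Z$ itself regular over the residual points $L^+(\Sigma)$ (general position, non-collinearity, Castelnuovo's bound), then separate the residual fibers vertically along $\widehat{B}$ (the property $(I)$ and $(Z,x)$-admissibility), and only then deduce regularity of $L^+(Z)$ via the \'etale local isomorphisms $\widehat{\mathcal{O}}_{T,\beta}\cong\widehat{\mathcal{O}}_{\widetilde{Z},\alpha}$, finally descending from $\bar{k}$ to $k$ by a Galois argument. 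Likewise in the affine-space case the regularity of $(\phi^*_{g,s,1}(Z))^{(j)}$ is obtained by Bloch's spread-and-specialize lemma (Lemma~\ref{lem:blochlemma}) applied to $Z_{\rm sing}$, not by Bertini, and this is where perfectness of $k$ enters. As written, your proposal would need both the presentation-lemma reduction and this regularity machinery supplied from scratch.
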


For the same $V$ as above, let $z^q(V, \bullet)$ denote the cubical version of Bloch's cycle complex (see \cite[\S 1]{KL}) and let $\CH^q(V, n)$ denote the associated higher Chow groups. We can define the subgroup $z^n_{\sfs}(V, n)$ of \sfs-cycles and the higher Chow group $\CH^n_{\sfs}(V,n)$ of \sfs-cycles analogous to the additive higher Chow group of \sfs-cycles. There is a canonical map $\CH^n_\sfs(V,n) \to \CH^n(V,n)$. As a byproduct of the discussions toward the proof of Theorem \ref{thm:Main-1}, we can recover the following result, 
stated in \cite{EVMS}{\footnote{In \cite[Lemma 3.11]{EVMS}, Theorem \ref{thm:Main-3} is claimed for arbitrary fields, but we do not know if this can be
achieved using the techniques of linear projections.}}.

\begin{thm}\label{thm:Main-3}
Let $k$ be an infinite perfect field. Let $V = \Spec(R)$ be a smooth semi-local $k$-scheme essentially of finite type. Let $n \ge 1$ be an integer. Then the canonical map $\CH^n_\sfs(V,n) \to \CH^n(V,n)$ is an isomorphism.
\end{thm}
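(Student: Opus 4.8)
The plan is to deduce Theorem \ref{thm:Main-3} from the techniques developed for the additive case, namely by transporting the entire linear-projection machinery to Bloch's cubical complex $z^n(V,\bullet)$. The key observation is that the "additive" direction — the extra affine line $\A^1$ with modulus condition along the origin — plays essentially no role in the moving arguments that produce finiteness, surjectivity, and regularity of the moved cycles; the arguments are carried out fiberwise over that coordinate, or more precisely the modulus condition is preserved by the kind of projections we use. So the first step is to isolate, from the proof of Theorem \ref{thm:Main-1}, the statements that concern only the $\square^{n-1}$-directions and the base $V$: given a cycle $\alpha \in z^n(V,n)$ whose boundary lies in the appropriate subcomplex (or is zero, after passing to cycles with trivial boundary as in Lemma \ref{lem:No-intersection}), one constructs a cycle $\alpha' \in z^n_\sfs(V,n)$ together with an explicit homotopy (an element of $z^n(V,n+1)$) connecting $\alpha$ and $\alpha'$, using generic linear projections from $\square^n \supset \square^{n-1}\times\square^1$ realized inside a projective space.

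Concretely, I would proceed as follows. First, recall or re-derive the surjectivity of $\CH^n_\sfs(V,n) \to \CH^n(V,n)$: starting with an arbitrary cycle class, use a moving-by-translation / generic-projection argument (the same Grassmannian of linear projections parametrizing the "good position" that is used in the additive setting) to show that every class is represented by an \sfs-cycle. The projections must be chosen so that (i) each component of the moved cycle meets $\Sigma \times F$ properly for all faces $F$, (ii) each component is finite and surjective over a component of $V$, and (iii) all the coordinate projections $V \times \square^{n-1} \to V \times \square^j$ restrict to regular schemes on the cycle. The openness of these conditions on the parameter space of projections, together with the infinitude of $k$ (to find a $k$-rational good projection) and semi-locality of $V$ (so the relevant "bad loci" are avoided at all points of $\Sigma$ simultaneously), gives the existence of such a projection. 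Second, for injectivity, one takes an \sfs-cycle that bounds in $z^n(V,\bullet)$, say $\alpha = \partial\beta$ with $\beta \in z^n(V,n+1)$, and runs the same projection argument on $\beta$ rel boundary to replace $\beta$ by an \sfs-type chain whose boundary is still $\alpha$, showing $\alpha = 0$ in $\CH^n_\sfs(V,n)$ — here one uses that $z^n_\sfs$ is closed under the face maps in the relevant range and that \sfs-cycles have trivial boundary, so the subcomplex is well-behaved.

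The main obstacle, as in the additive case, is the simultaneous achievement of all three properties — proper intersection with $\Sigma \times F$, finiteness/surjectivity over $V$, and regularity of all intermediate projections — by a single choice of linear projection, while also controlling the behaviour over the (now absent) modulus divisor. Finiteness and surjectivity over $V$ are delicate because $V$ is only semi-local and of relative dimension possibly positive, so one cannot simply project to a point; one must project "along" the $\square$-fibers only, and argue that after a generic such projection the cycle becomes quasi-finite and then, by properness coming from the compactification $\square \hookrightarrow \P^1$ and the modulus/face constraints, actually finite. Keeping the regularity of every coordinate projection $V\times\square^{n-1}\to V\times\square^j$ under control simultaneously requires a careful induction on $j$ (or a single projection from a suitably large linear space chosen generically against all the finitely many bad loci coming from the singular loci of all these images). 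I expect that once the additive proof is in hand, the cubical version is formally parallel and the only real work is bookkeeping: checking that dropping the modulus condition removes constraints rather than adding them, so that the footnote's caveat — that we do not know how to do this for arbitrary (non-perfect, finite) fields by linear projections — is exactly the same obstruction as in the additive theorem, namely the need for enough $k$-rational points in the Grassmannian of projections.
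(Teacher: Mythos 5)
Your plan is essentially the paper's own proof: Theorem \ref{thm:Main-3} is obtained by specializing the presentation lemma (Theorem \ref{thm:Main-2*}) to $A_0=\widehat{A}_0=\Spec(k)$ and $A_i=\square_k\subset\P^1_k$, and then rerunning the proof of Theorem \ref{thm:Main-1*} verbatim, with the affine-space input supplied by the observation (Remark \ref{remk:sfs-affine-Chow}) that the twisted translations of Theorem \ref{thm:Ar-spread-case} degenerate to plain translations when one sets $s=0$ and drops the modulus; your observation that removing the modulus condition only removes constraints is exactly the point. One caveat: your second step (injectivity by moving a bounding chain $\beta$ with $\partial\beta=\alpha$ rel boundary to an \sfs-type chain) is neither needed nor established by these techniques --- with the definition used here (as in Definition \ref{defn:sfs-TCH}, the denominator is the full group of boundaries intersected with the \sfs-cycles, equivalently $\CH^n_\sfs(V,n)$ is the image of the \sfs-cycles in $\CH^n(V,n)$), injectivity is automatic, and only surjectivity requires an argument.
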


Theorem \ref{thm:Main-1} provides the main geometric ground for the proof of the following result and a few of its consequences in the paper \cite{KP-crys}, discussed separately due to the huge size and complexities of the proofs
of the current article. In particular, it allows one to describe the crystalline cohomology of a smooth scheme in positive characteristic in terms of algebraic cycles. 
  
\begin{thm}[{\cite{KP-crys}}]\label{thm:Main-4}
Let $k$ be any field and let $R$ be a smooth semi-local $k$-algebra essentially of finite type. Let $m, n \ge 1$ be integers. Then there is a natural isomorphism
\[
\tau_R: \W_m \Omega^{n-1}_R \xrightarrow{\cong} 
\TH^n(R, n;m),
\]
where $\W_m\Omega^{\bullet}_R$ is the big de Rham-Witt complex of Hesselholt and Madsen.
\end{thm}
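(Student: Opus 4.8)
The plan is to adapt, with the \sfs-moving lemma (Theorem~\ref{thm:Main-1}) as the new geometric input, the strategy by which R\"ulling identified the additive higher Chow groups of a \emph{field} with its big de Rham-Witt forms. Concretely, I would (1) equip the additive cycle groups of $V = \Spec(R)$ with the structure of a Witt complex over $R$ and invoke the universal property of $\W_m\Omega^{\bullet}_R$ to produce a natural homomorphism $\tau_R$; (2) use Theorem~\ref{thm:Main-1} to prove $\tau_R$ surjective; and (3) construct a residue homomorphism in the opposite direction, again using Theorem~\ref{thm:Main-1} to make it well defined on homology, and check that it inverts $\tau_R$. Reducing the statement for an arbitrary field $k$ to the case of an infinite perfect $k$, where Theorem~\ref{thm:Main-1} is available, is by the standard filtered-colimit and transfer arguments (compare the passage from Theorem~\ref{thm:Main-1} to Theorem~\ref{thm:Main-3}); so I will assume $k$ infinite and perfect throughout.

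\emph{Step 1: construction of $\tau_R$.} I would first show that the graded abelian group $q \mapsto \TH^{q+1}(V, q+1; m)$ carries a natural structure of Witt complex over $R$ in the sense of Hesselholt-Madsen: a graded-commutative product induced by external product of additive cycles followed by restriction to the diagonal; a differential given by an explicit cycle-level operator adjoining one cube coordinate; Frobenius and Verschiebung operators induced by the degree-$r$ maps $t \mapsto t^{r}$ on the affine line; and the truncation maps. Verifying the Witt-complex axioms --- the Leibniz rule and the identities relating the differential, the Frobenius, the Verschiebung and the $d\log$ symbols --- at the level of cycles is the first substantial task; the essential inputs are the Steinberg-type relations in the cubical additive cycle complex and the stability of the modulus-$m$ condition under all of these operations. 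Granting this, the universal property of $\W_m\Omega^{\bullet}_R$ yields a unique morphism of Witt complexes $\tau_R \colon \W_m\Omega^{q}_R \to \TH^{q+1}(V, q+1; m)$, natural in $R$; taking $q = n-1$ gives the asserted map.

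\emph{Step 2: surjectivity.} By Theorem~\ref{thm:Main-1}, every class in $\TH^n(R, n; m) = \TH^n(V, n; m)$ is represented by an \sfs-cycle $\alpha \in \TZ^n_{\sfs}(V, n; m)$. For an integral component $Z$ of $\alpha$, the coordinate ring $\cO(Z)$ is a \emph{finite} $R$-algebra, since $Z$ is finite and surjective over a component of $V$, and the regularity of all the projections of $Z$ to $V \times \sq^{j}$ forces $Z$, after a linear change of the cube coordinates, into the explicit parametrized form that computes $\tau_R$ of a product of de Rham-Witt generators $[a]\,d\log[b_1] \wedge \cdots \wedge d\log[b_{n-1}]$ and of their images under the Verschiebung operators. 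Matching $Z$ against such elements, while tracking the modulus condition throughout, shows $[\alpha] \in \im(\tau_R)$. It is precisely the finiteness, surjectivity, and smoothness built into the notion of an \sfs-cycle that make this normal-form argument work, which is why Theorem~\ref{thm:Main-1} is indispensable here.

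\emph{Step 3: injectivity and conclusion.} I would construct a ``de Rham-Witt residue'' homomorphism $\rho_R \colon \TH^n(V, n; m) \to \W_m\Omega^{n-1}_R$ as follows: given an integral \sfs-cycle $Z \subset V \times \A^1 \times \sq^{n-1}$, one writes down, using the coordinate functions on $\A^1$ and on the cube, a differential form on the finite $R$-scheme $\Spec \cO(Z)$ and pushes it to $\W_m\Omega^{n-1}_R$ by a de Rham-Witt trace along $Z \to V$; the modulus-$m$ condition is exactly what keeps the form pole-free and makes the trace land in $\W_m\Omega^{n-1}_R$ rather than in a localization. One then shows that $\rho_R$ is independent of the chosen \sfs-representative and annihilates the boundaries of \sfs-cycles --- using Theorem~\ref{thm:Main-1} together with Lemma~\ref{lem:No-intersection} to reduce to configurations where the trace-residue calculus is controlled --- so that $\rho_R$ descends to $\TH^n(V, n; m)$, and a direct computation on generators gives $\rho_R \circ \tau_R = \id$. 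With Step 2 this forces $\tau_R$ to be an isomorphism, and naturality in $R$ is immediate from the constructions. The hardest point, I expect, is the well-definedness of $\rho_R$ on homology: even with the moving lemma in hand, controlling the trace of a differential form along a one-parameter family of finite $0$-cycles, and its compatibility with the cubical faces and degeneracies, requires a delicate analysis of residues under degeneration --- this is the technical core of R\"ulling's field case, and carrying it out over a smooth semi-local base is where the bulk of the work lies.
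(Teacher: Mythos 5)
The first thing to say is that this paper does not prove Theorem~\ref{thm:Main-4} at all: it is quoted from the companion paper \cite{KP-crys}, and the authors state explicitly that it is ``discussed separately due to the huge size and complexities of the proofs.'' So there is no proof in this text to compare yours against. Judged on its own, your outline is a reasonable reconstruction of the known strategy (R\"ulling's identification for fields, adapted to smooth semi-local rings, with Theorem~\ref{thm:Main-1} supplying the \sfs-representatives needed for surjectivity and for the trace construction), but it is a plan rather than a proof: Steps~1 and~3 defer exactly the points you yourself identify as ``the first substantial task'' and ``the technical core,'' namely the verification of the Witt-complex axioms on cycle groups and the well-definedness of the residue map on homology. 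Those are not gaps one can wave at; they are the content of the theorem.

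There is also one concretely wrong step. You claim the reduction from an arbitrary field $k$ to an infinite perfect $k$ is ``by the standard filtered-colimit and transfer arguments.'' The authors explicitly warn in \S\ref{sec:Outline} that this is precisely what fails here: the pro-$\ell$-extension and push--pull technique that reduces Bloch's moving lemma to the infinite perfect case cannot be used, because the smoothness built into \sfs-cycles is not preserved by push-forward. This is why they say Theorem~\ref{thm:Main-4} is proved in \cite{KP-crys} over all base fields ``with different methods.'' Since your Steps~2 and~3 both lean on \sfs-representatives, and the \sfs-moving lemma is only available over infinite perfect fields, your argument as written does not cover finite or imperfect $k$, and the claimed reduction does not repair this. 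You would need either a transfer argument formulated purely on the level of $\tau_R$ and $\W_m\Omega^{\bullet}_R$ (checking that both sides carry compatible transfers for the pro-$\ell$ trick), or a genuinely different construction of the inverse that avoids \sfs-representatives over such fields.
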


\subsection{The presentation lemma}\label{sec:PL}
We deduce Theorem \ref{thm:Main-1} from the following general presentation lemma for residual cycles of linear projections. This has the flavor (hence the name) of Gabber's geometric presentation lemma (see \cite{CTHK}). Of course, our assertions are different and intricate.

Let $k$ be an infinite perfect field. Given a finite map $h: Y' \to Y$ of $k$-schemes and a reduced closed subscheme $Z \subset Y'$, let $h^+(Z)$ be the closure of $h^{-1}(h(Z)) \setminus Z$ in $Y'$ with the reduced induced closed subscheme structure. We call this the `residual scheme of $Z$' with respect to $h$.

Let $n \ge 1$ and let $\widehat{A}_0, \ldots, \widehat{A}_{n-1}$ be smooth projective and geometrically integral $k$-schemes of positive dimensions. For $0 \le j \le n-1$, let $A_j \subset \widehat{A}_j$ be a nonempty affine open subset. Set $C_0: = \Spec(k)$ and ${C}_j :=  \prod_{i=0} ^{j-1} A_i$ for $j \ge 1$. Let $\pi_j: C_{n} \to C_j$ be the obvious projection. For any map $f: Y' \to Y$, let $f_j: Y' \times C_j \to Y \times C_j$ be the map $f \times {\rm id}_{C_j}$.

Let $\overline{X} \subset \P^m_k$ be a reduced closed subscheme of pure dimension $r \ge 1$ and let $X \subset \overline{X}$ be the complement of a hyperplane in $\P^m_k$ such that $X$ is regular and integral. Let $\Sigma \subset X$ be a finite set of closed points. Let $Z \subset X \times C_{n}$ be an integral closed subscheme of dimension $r$ such that the projection $Z \to C_{n}$ is not constant, and the projection $Z \to X$ is finite and surjective. 

The presentation lemma for the residual schemes that we prove is the following.

\begin{thm}\label{thm:Main-2}
Let $k$ be an infinite perfect field. There exist a closed embedding $\overline{X} \inj \P^N_k$, a hyperplane $H \subset \P^N_k$ with $X = \overline{X} \setminus H$, and a dense open subset $\mathcal{U} \subset \Gr(N-r-1, H)$ of the Grassmannian variety such that for each $L \in \mathcal{U}(k)$, the linear projection $\phi_L: \P^N_k \setminus L \to \P^r_k$ away from $L$ defines a finite surjective morphism $\phi: \overline{X} \to \P^r_k$ such that the following hold.
\begin{enumerate}
\item
There exists a Cartesian square
\[
\xymatrix@C.8pc{
X \ar@{^{(}->}[r] \ar[d]_{\phi} & \overline{X} \ar[d]^{\phi} \\
\A^r_k \ar@{^{(}->}[r] & \P^r_k.}
\]
\item $\phi$ is {\'e}tale over an affine open neighborhood of $\phi(\Sigma)$.
\item $\phi(x) \neq \phi(x')$ for each pair $x \neq x'$ of points in $\Sigma$.
\item The map $k(\phi(x)) \to k(x)$ is an isomorphism for each $x \in \Sigma$.
\item The induced map $Z \to \phi_{n}(Z)$ is birational.
\item The map $\phi^+_n(Z) \to X$ is finite and surjective. 
\item $\pi_j(\phi^+_{n}(Z))$ is regular at all points lying over $\Sigma$ for each $0 \le j \le n$.
\end{enumerate}
\end{thm}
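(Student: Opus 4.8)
The plan is to realise all of \textup{(1)--(7)} as a finite list of \emph{open dense} conditions on the centre $L$, so that $\mathcal{U}$ may be taken to be the complement in $\Gr(N-r-1,H)$ of finitely many proper closed subsets; since $k$ is infinite and the Grassmannian is rational, such a $\mathcal{U}$ then has a $k$-point. First I would replace the given embedding $\overline{X}\subset\P^m_k$ by its composite with a $d$-uple Veronese embedding for $d\gg 0$, obtaining $\overline{X}\subset\P^N_k$, and take for $H$ the hyperplane cut out by the $d$-th power of the linear form defining the hyperplane at infinity of $\P^m_k$; then $X=\overline{X}\setminus H$ is unchanged, $\overline{X}$ is still the integral closure of $X$, $\overline{X}\cap H$ has pure dimension $r-1$, $\overline{X}$ is smooth along $X\supseteq\Sigma$, and $\deg_{\P^N}\overline{X}=d^{\,r}\deg_{\P^m}\overline{X}\ge 2$. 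Taking $d$ large ensures that a general linear projection of $\overline{X}$ is generically \'etale (and more), which is the point where positive characteristic requires care. For $L\in\Gr:=\Gr(N-r-1,H)$ with $L\cap\overline{X}=\emptyset$ the linear projection $\phi:=\phi_L|_{\overline{X}}\colon\overline{X}\to\P^r_k$ is finite (as $L\cap\overline{X}=\emptyset$) and surjective (as $\overline{X}$ is integral of dimension $r$); the condition $L\cap\overline{X}=\emptyset$ is open dense since $\dim(\overline{X}\cap H)+\dim L=(r-1)+(N-r-1)<N-1=\dim H$. Because $L\subset H$ one has $\phi^{-1}(\phi_L(H))=\overline{X}\cap H$ as sets, hence $X=\phi^{-1}(\A^r_k)$ as open subschemes of $\overline{X}$; this gives the Cartesian square \textup{(1)} for \emph{every} such $L$, and $\phi|_X\colon X\to\A^r_k$ is finite, and finite flat by miracle flatness ($X,\A^r_k$ regular of dimension $r$).

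Conditions \textup{(2)--(4)} concern only $\Sigma$. For \textup{(3)} it suffices that $L$ avoid the finitely many secant lines $\overline{x\,x'}$ with $x\neq x'\in\Sigma_{\bar k}$; for each such line $\ell\not\subset H$ one has $\{L: L\cap\ell\neq\emptyset\}=\{L: \ell\cap H\in L\}$ of codimension $r\ge 1$ in $\Gr$. For \textup{(2)}, $\phi$ fails to be \'etale over $\phi(\Sigma)$ precisely when for some $x\in\Sigma$ there is $x'\in\phi^{-1}(\phi(x))$ that is singular on $\overline{X}$ or at which $L$ meets the embedded projective tangent space $\mathbb{T}_{x'}\overline{X}$; over a fixed pair $(x,x')$ with $x'\neq x$ general, the conditions ``$L$ meets $\overline{x\,x'}$'' and ``$L$ meets $\mathbb{T}_{x'}\overline{X}$'' have codimension $r$ and $1$ in $\Gr$ and meet properly, so the incidence variety of such triples $(x,x',L)$ has dimension $\le 0+r+(\dim\Gr-r-1)<\dim\Gr$, and its image in $\Gr$ is a proper closed subset; once $\phi$ is unramified at $\Sigma$ it is \'etale there (flat plus unramified) and hence over an affine open neighbourhood of $\phi(\Sigma)$. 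With \textup{(2)} and \textup{(3)} in force, requiring in addition that $L$ separate all of $\Sigma_{\bar k}$ makes $\phi$ injective on each Galois orbit, and since $\phi$ is defined over $k$ this forces $k(\phi(x))\xrightarrow{\sim}k(x)$ for $x\in\Sigma$, which is \textup{(4)}; moreover \textup{(2)} then holds in the strong form that $\phi$ is \'etale at every point of $\phi^{-1}(\phi(\Sigma))$.

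For \textup{(5)} the key input is that the first projection $Z\times_{C_n}Z\to X$ is quasi-finite (since $Z\to X$ is finite), so $\dim(Z\times_{C_n}Z)\le r$; let $B$ be the union of the components of $Z\times_{C_n}Z$ other than the diagonal, so $\dim B\le r$. As $Z\subset X\times C_n$ is a closed subscheme, two distinct points of $Z$ with equal $C_n$-coordinate have distinct $X$-coordinates, so the non-injectivity locus of $\phi_n|_Z$ is the image in $Z$ of the set of $(z,z')\in B$ whose $X$-coordinates are identified by $\phi$; the incidence variety $\{((z,z'),L)\in B\times\Gr: L \text{ meets the corresponding secant line}\}$ has fibres of codimension $r$ over $B$, hence total dimension $\le r+(\dim\Gr-r)=\dim\Gr$, so for $L$ outside a proper closed subset the fibre over $L$ is finite and $\phi_n|_Z$ is generically injective; generic \'etaleness of a general projection (here $d\gg 0$ is used) upgrades this to birationality, giving \textup{(5)}. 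For \textup{(6)} one has $\phi_n^{-1}(\phi_n(Z))\cong \phi_n(Z)\times_{\A^r_k}X$; since $Z\twoheadrightarrow\phi_n(Z)$ is finite surjective and $Z\to\A^r_k$ is finite, $\phi_n(Z)\to\A^r_k$ is proper and quasi-finite, hence finite, so $\phi_n^{-1}(\phi_n(Z))$---and therefore $\phi_n^+(Z)$---is finite over $X$. By \textup{(5)} the degree of $\phi_n^{-1}(\phi_n(Z))\to X$ equals $[k(Z):k(\A^r_k)]=\deg(Z\to X)\cdot\deg\phi>\deg(Z\to X)$ since $\deg\phi=\deg_{\P^N}\overline{X}\ge 2$, so over the generic point of $X$ the scheme $\phi_n^{-1}(\phi_n(Z))$ strictly contains $Z$; hence $\phi_n^+(Z)$ dominates $X$, and being closed over $X$ it is surjective.

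Condition \textup{(7)} is the crux. First I would reduce it to the single assertion that $\phi_n^+(Z)\to X$ is \'etale, with trivial residue field extension, at each of its finitely many points over $\Sigma$: at such a point $w=(x,c)$ this presents $\phi_n^+(Z)$ \'etale-locally as the graph of a morphism from a neighbourhood of $x$ in $X$ to $C_n$, whence every $\pi_j(\phi_n^+(Z))$ is \'etale-locally the graph of the induced morphism to $C_j$ and so is regular at the image of $w$. To obtain this \'etaleness, use that by \textup{(2)--(4)} $\phi$ is \'etale with trivial residue field at $x$ and at every residual point $x'\in\phi^{-1}(\phi(x))$; the local structure of $\phi_n^+(Z)$ at $w$ is then identified with that of $Z$ at the points $(x',c)$ with $\phi(x')=\phi(x)$ and $(x',c)\in Z$, minus the branch through $(x,c)$ itself when present. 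Hence \textup{(7)} holds provided $L$ is chosen so that: \textup{(i)} for each relevant $c$ there is exactly one residual point $x'\neq x$ with $(x',c)\in Z$; \textup{(ii)} that residual point avoids $\overline{\operatorname{image}(Z_{\mathrm{sing}}\to X)}$, so $Z$ is regular at $(x',c)$; and \textup{(iii)} $Z\to X$ is unramified at $(x',c)$. Condition \textup{(ii)} is a codimension-$r$ secant condition against a closed subset of $X$ of dimension $\le r-1$, hence generic. The main obstacle is \textup{(i)}: the bad locus, where two distinct residual points $x'_i\neq x'_j$ over some $x\in\Sigma$ pair with the same $c$ in $Z$, is cut out by the three secant conditions ``$L$ meets $\overline{x\,x'_i}$'', ``$L$ meets $\overline{x\,x'_j}$'', ``$L$ meets $\overline{x'_i\,x'_j}$'' on a family of dimension $\le r$ (the non-diagonal locus of $Z\times_{C_n}Z$ lying over $\Sigma$), and the naive estimate only yields $\dim\Gr$. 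The point is that since $x,x'_i,x'_j$ span a plane not contained in $H$, the three points $\overline{x\,x'_i}\cap H$, $\overline{x\,x'_j}\cap H$, $\overline{x'_i\,x'_j}\cap H$ are collinear in $H$, so any two of the three secant conditions imply the third; two of them together have codimension $2r$, improving the estimate to $\le r+(\dim\Gr-2r)=\dim\Gr-r<\dim\Gr$. Condition \textup{(iii)} is automatic in characteristic zero (where $Z\to X$ is generically \'etale, so its ramification locus has image of dimension $\le r-1$ in $X$ and another codimension-$r$ secant condition suffices); in positive characteristic, when $Z\to X$ may be inseparable, controlling this is the most delicate point and is again where the large Veronese degree is used. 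Intersecting the finitely many open dense conditions produced above yields the desired $\mathcal{U}$.
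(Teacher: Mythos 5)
Your overall architecture (realise everything as finitely many dense open secant/tangency conditions on one Grassmannian, after a large Veronese re-embedding) is the same as the paper's, and your conditions (i)--(ii) for the residual scheme are essentially the paper's $(Z,x)$-admissibility. But there are genuine gaps in the two places you yourself identify as hard. The most serious is your reduction of (7) to ``$\phi_n^+(Z)\to X$ is \'etale with trivial residue fields at its points over $\Sigma$.'' Even when $\phi_n^+(Z)$ is, \'etale-locally over $x\in\Sigma$, a disjoint union of graphs $\Gamma_{f_1},\dots,\Gamma_{f_t}$ of maps to $C_n$, the reduced image $\pi_j(\phi_n^+(Z))$ near a point over $x$ is the union of the graphs $\Gamma_{\pi_j\circ f_i}$, and two of these can pass through the same point of $X\times C_j$ without coinciding as germs; the union of two distinct regular branches is singular at their common point. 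So regularity of each branch does not give regularity of the scheme $\pi_j(\phi_n^+(Z))$. One must impose the separation condition (i) on every truncation $Z^{(j)}$ (i.e., fibres of $Z^{(j)}$ over distinct residual points have disjoint images in $C_j$, for every $j$), and identify $\pi_j(\phi_n^+(Z))$ with the residual scheme of $Z^{(j)}$ via the push--pull compatibility $\pi_{j*}\circ\phi_n^*\phi_{n*}=\phi_j^*\phi_{j*}\circ\pi_{j*}$ — which in turn needs the birationality (5) for each $Z^{(j)}$, not just for $Z$. This is precisely what the paper does (Lemma \ref{lem:Res-sch-cycle} together with Proposition \ref{prop:Main-II} applied to each $Z^{(j)}$) and it is absent from your plan.

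Second, your condition (iii) — unramifiedness of $Z\to X$ at the residual points — is both the wrong condition and, in positive characteristic, possibly unachievable: if $Z\to X$ is generically inseparable it is unramified nowhere, so no choice of $L$ helps, and you leave this ``most delicate point'' unresolved. The correct mechanism does not pass through ramification of $Z\to X$ at all: granting (i) and the \'etaleness of $\phi$ at every point of $\phi^{-1}(\phi(x))$, the local ring of $T=\phi_n(Z)$ at $\phi_n(x',c)$ injects into $\mathcal{O}_{Z,(x',c)}$ by a finite \emph{unramified} map (unramifiedness coming from the \'etaleness of $\phi_n$ on $\phi_n^{-1}(T)$ followed by the surjection onto the component $Z$), which is then an isomorphism by Lemma \ref{lem:elem-com-alg}; the residual branch at $(x,c)$ is \'etale-locally isomorphic to $Z$ at $(x',c)$ and (ii) finishes. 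Note this step also needs $k(\phi_n(x',c))\xrightarrow{\sim}k(x',c)$ at the \emph{residual} points, which over a non-closed $k$ requires a Galois-descent argument that you only carry out for $\Sigma$. Finally, your bound $\dim(Z\times_{C_n}Z)\le r$ is incorrect: the fibres of the projection $Z\times_{C_n}Z\to Z$ are fibres of $Z\to C_n$, not of $Z\to X$, so the bound is $2r-\dim(g(Z))$. This still rescues your count for (i) (two secant conditions of total codimension $2r$ against a family of dimension $\le 2r-1$), but it breaks your count for (5), where a single codimension-$r$ condition against a family of dimension possibly $>r$ does not yield a proper closed bad locus; the birationality should instead be forced at a single marked point of each component, as in Lemma \ref{lem:fs_sansZ-*}.
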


\subsection{Outline of proofs and remarks}\label{sec:Outline}
We first remark that although $V$ may be in general obtained by localizing a quasi-projective $k$-scheme at a finite set $\Sigma$ of not necessarily closed points, for the proof of the \sfs-moving lemma, we can easily reduce to the case of closed points. See Proposition \ref{prop:no_closed}. Then the proof the \sfs-moving lemma can be broadly divided into two parts. 

In the first part, we prove it when the underlying semi-local ring is the localization of an affine space $\A^r_k$ at a finite set of closed points. To solve this case, we rely on two key ingredients: the lemma of Bloch \cite[Lemma 1.2]{Bl1} and the moving lemma for cycles with modulus on affine spaces by Kai \cite{Kai}. (N.B. Part of what we need in this article from \emph{ibid.} is also available in \cite{KP3}.) The moving lemma of Kai allows us to ensure that our cycles can be made to intersect the closed points of the semi-local scheme $V$ properly. After this, we apply an ``spread out and specialize" type of argument using \cite[Lemma 1.2]{Bl1} to achieve our goal. 

Roughly speaking, we argue that we can equip the \sfs-property to cycles after moving them via a certain kind of twisted translations by a general set of $k$-rational points of $\A^r_k$. This requires us to use that the ground field $k$ is infinite. The rest of the argument is to construct a homotopy between the new and the original cycle. The plain translations by the rational points do not work and the twisted translations make the argument more involved than the classical case. This is done in \S \ref{sec:sfs-AS}.

In the second part, we prove the general case of the \sfs-moving lemma by combining the affine space case and the presentation lemma (Theorem \ref{thm:Main-2}). The proof of the presentation lemma is an intricate application of the method of linear projections and moduli in algebraic geometry. 

The reason for this intricacy lies in the fact that it is not sufficient for us to find enough linear projections which give finite and flat morphisms from a projective variety $X$ to projective spaces. We need to invoke a more delicate linear projection in such a way that if we project a subvariety in some smooth family over $X$ to a similar family over the projective space, the resulting residual scheme has certain desired geometric properties, e.g., regularity along a given set of fibers in the family. Even more, we need to ensure that if we project this smooth family over $X$ to a smaller dimensional family via proper maps, then the images of the residual scheme continue to enjoy the good properties.

Showing that one can find enough such linear projections that do the above jobs lies at the heart of the argument. We see that the moduli spaces of linear subspaces that we encounter in the process are all rational, and we find enough rational lines in them. We then reduce the argument to studies of a family of linear subspaces parameterized by a rational line (pencil of linear subspaces). This simplifies the problem. 

Along the proofs, we need to separate the cases of algebraically closed and general infinite perfect fields. We first prove the results over algebraically closed fields. Over a general infinite perfect field $k$, we argue that we can find enough linear subspaces after going to an algebraic closure $\bar{k}$ so that all desired properties are achieved (over $\bar{k}$) in such a generality that they remain to be satisfied for the original cycle over $k$ after descent. One of these generalities we ensure over $\bar{k}$ is that the whole residual \emph{scheme} is regular, and not just its irreducible components (even if the latter case suffices for the sfs-moving lemma). 
We then show that there are enough such linear subspaces defined over $k$. This is achieved using a Galois descent.  

Carrying out this program rigorously takes up from \S \ref{sec:LP} to \S \ref{sec:Pres-lem}. We combine them to prove the main results in \S \ref{sec:MR}.

We now make some remarks on our assumption on the ground field. We need $k$ to be infinite to ensure that our moduli spaces have enough $k$-rational points. We need it to be perfect to achieve the regularity of various residual subvarieties. Although we only need the regularity of cycles, our argument at some stage uses the condition that some regular schemes that we encounter in the middle are actually smooth over $k$ (e.g., see the last part of the proof of Proposition~\ref{prop:Main-II}). Perfectness requirement is evident even in the proof of the \sfs-moving lemma in affine space, where we need to use a specialization argument. To make sure that we do not destroy the regularity during the specialization, 
we need our over-field to be separably generated over $k$ (e.g., see the proof of Lemma \ref{lem:sfs-Kai}). This requires $k$ to be perfect.

Recall that the moving lemma of Bloch and Chow hold over all fields. One proves this for infinite perfect fields first. The case of finite field reduces to the case of infinite perfect fields using the techniques of pro-$\ell$-extensions and the push-pull operators on the Chow groups. However, we cannot use this technique in our case because the smoothness property of the \sfs-cycles are not well-behaved under the push-forward operators. However, based on Theorem \ref{thm:Main-1}, we prove Theorem \ref{thm:Main-4} in \cite{KP-crys} over all base fields 
with different methods. 

Finally, the reader may notice that our sfs-moving lemma is stated and proven in this paper for $\TH^n(V,n;m)$ for $m \ge 1$. However, we remark that one does not miss out on anything by this assumption because it is shown in \cite[Theorem~1.5]{KP-6} that $\TH^n(V,n;0) = 0$. In particular, $\TH^n_{\sfs}(V, n; 0) = 0$.

The main result of this article plays essential roles in \cite{GK}, \cite{GK-1} and \cite{KP-crys}. Apart from these applications, we hope that 
our presentation lemma through linear projection techniques as well as various results and ideas of manipulating locally closed subsets of the Grassmannian will be useful in the future to anyone in the mathematics community (in particular, those working with algebraic cycles) who uses the linear projection machines in the tool box.

\subsection{Conventions}\label{sec:Conven}
Unless we specify otherwise, $k$ is a fixed field. We shall
assume later that $k$ is infinite and perfect for our main results.
A $k$-scheme is a separated scheme of finite type over $k$. An affine $k$-scheme is a $k$-scheme which is affine. A $k$-variety is an equidimensional reduced $k$-scheme. The product $X \times Y$ means $X \times _k Y$, unless we specify otherwise. We let $\Sch_k$ be the category of $k$-schemes and $\Sm_k$ of smooth $k$-schemes. A scheme essentially of finite type is a scheme obtained by localizing at a finite subset of (not necessarily closed) points of a quasi-projective subscheme of a finite type $k$-scheme. We include the case of not localizing at all. For $\mathcal{C} =  \Sch_k, \Sm_k$, we let $\mathcal{C}^{\ess}$ be the extension of the category $\mathcal{C}$, whose objects are either those in $\mathcal{C}$ or those obtained by localizing an object of $\mathcal{C}$ at a finite subset. 

Given $X \in \mathcal{C}^{\ess}$ and a finite set of points $\Sigma \subset X$, we write $X_{\Sigma}$ for the localization of $X$ along $\Sigma$. If $Y \subset X$ is an inclusion of a reduced locally closed subscheme, then the closure of $Y$ is considered a closed subscheme of $X$ with the reduced induced structure. The image of a reduced closed subset under a proper map is considered a closed subscheme of the target scheme with the reduced induced structure.

\section{The \fs \ and \sfs-cycles}\label{sec:SFS}
After recalling the definition of higher Chow groups and additive higher Chow groups, we define our main objects of study: the \fs \ and \sfs-cycles. We prove some preliminary results about these cycles. 

\subsection{Higher Chow groups and additive higher Chow groups}\label{sec:additive complex}
Let $k$ be a field. First recall (\emph{cf.} \cite{Bl1}) the definition of higher Chow groups. Let $X \in \Sch_k ^{\ess}$ be equidimensional. Let $\P_k^1=\Proj\, k[Y_0,Y_1]$, and $\square^n=(\P_k^1\setminus\{1\})^n$. Let $(y_1, \cdots, y_n) \in \square^n$ be the coordinates. A \emph{face} of $\square^n$ is a closed subscheme defined by a set of equations $\{ y_{i_1} = \epsilon_1, \cdots, y_{i_s} = \epsilon_s\}$, where $\epsilon_j \in \{ 0, \infty\}$. For $1 \leq i \leq n$ and $\epsilon =0, \infty$, let $\iota_i ^{\epsilon}: \square^{n-1} \to \square^n$ be the inclusion given by $(y_1, \cdots, y_{n-1}) \mapsto (y_1, \cdots, y_{i-1}, \epsilon, y_{i}, \cdots, y_{n-1})$. Its image gives a codimension $1$ face.

Let $q, n \geq 0$. When $X$ is obtained by localizing at a non-closed point, for closed subschemes in $X \times \square^n$, the notion of dimensions could be ambiguous but the codimensions are well-defined. So, we use dimensions only when there is no ambiguity. 

Let $\un{z}^q (X, n)$ be the free abelian group on the set of integral closed subschemes of $X \times \square^n$ of codimension $q$, that intersect properly with $X \times F$ for each face $F$ of $\square^n$. We define the boundary map $\partial_i ^{\epsilon} (Z):= [ ({\rm Id}_X \times \iota_i ^{\epsilon})^* (Z)]$. This collection of data gives a cubical abelian group $(\un{n} \mapsto \un{z} ^q (X, n))$ in the sense of \cite[\S 1.1]{KL}, and the groups $z^q (X, n) := \un{z} ^q (X, n) / \un{z} ^q (X, n)_{\rm degn}$ (in the notations of \emph{loc.cit.}) give a complex of abelian groups, whose boundary map at level $n$ is given by $\partial:= \sum_{i=1} ^n (-1)^i (\partial_i ^{\infty} - \partial_i ^0)$. The homology ${\rm CH} ^q (X, n):= {\rm H}_n (z^q (X, \bullet), \partial)$ is called the higher Chow group of $X$.

\bigskip

We recall the definition of additive higher Chow groups from \cite[\S 2]{KP2} (see also \cite{Park}). Let $X\in \Sch_k^{\ess}$ be equidimensional. Let $\A^1_k=\Spec k[t]$,  $\G_m=\Spec k[t,t^{-1}]$, and $\overline{\square} = \mathbb{P}^1_k$. For $n \ge 1$, let $B_n = \mathbb{A}^1_k \times \square^{n-1}$, $\overline{B}_n = \A^1_k \times \overline{\square}^{n-1}$ and $\widehat{B}_n = {\P}^{1}_k \times \overline{\square}^{n-1} \supset \overline{B}_n$. Let $(t, y_1, \ldots , y_{n-1})\in \overline{B}_n$ be the coordinates.

On $\overline{B}_n$, define the Cartier divisors $F_{n,i} ^1 := \{ y_i = 1 \}$ for $1 \leq i \leq n-1$, $F_{n,0} := \{ t = 0 \}$, and let $F_n ^1 := \sum_{i=1} ^{n-1} F_{n, i} ^1$. A {\em face} of $B_n$ is a closed subscheme defined by a set of equations of the form $y_{i_1}=\epsilon_1, \ldots,  y_{i_s}=\epsilon_s$, where $\epsilon_j\in\{0,\infty\}.$ For $1 \leq i \leq n-1$ and $\epsilon=0, \infty$, let $\iota_{n,i} ^{\epsilon}\colon B_{n-1}\to B_n$ be the inclusion $(t,y_1,\ldots, y_{n-2})\mapsto (t,y_1,\ldots, y_{i-1}, \epsilon,y_i,\ldots, y_{n-2})$. Its image is a codimension $1$ face.

The additive higher Chow complex is defined similarly using the spaces $B_n$ instead of $\square^n$, but together with proper intersections with all faces, we impose additional conditions called the \emph{modulus conditions}, that control how the cycles should behave at ``infinity'': (see \cite[Definition 2.1]{KP2}) let $X$ be a $k$-scheme, and let $V$ be an integral closed subscheme of $X \times B_n$. Let $\ov V$ denote the Zariski closure of $V$ in $X \times \overline{B}_n$ and let $\nu \colon {\ov V}^N \to {\ov V} \subset X \times \overline{B}_n$ be the normalization of $\ov V$. Let $m,n \geq 1$ be integers. We say that $V$ satisfies the \emph{modulus $m$ condition} on $X \times B_n$, if as Weil divisors on ${\ov V}^N$ we have $(m+1)[{\nu}^*(F_{n,0})] \le [\nu^*(F^1_n)].$  When $n=1$, we have $F_1 ^1 = \emptyset$, so it means $\nu^* (F_{1,0}) = 0$, or $\{ t = 0 \} \cap \overline{V} = \emptyset$. If $V$ is a cycle on $X \times B_n$, we say that $V$ satisfies the modulus $m$ condition if each of its irreducible components satisfies the modulus $m$ condition. When $m$ is understood, often we just say that $V$ satisfies the modulus condition. Note that since $F_{n,0} = \{ t = 0 \} \subset \overline{B}_n$, replacing $\overline{B}_n$ by $\widehat{B}_n$ in the definition does not change the nature of the modulus condition on $V$.

For an equidimensional $X  \in \Sch_k ^{\ess}$, and integers $m, n, q \geq 1$, we first define $\un{\TZ}^q (X, 1; m)$ to be the free abelian group on integral closed subschemes $Z$ of $X \times \mathbb{A}^1$ of codimension $q$, satisfying the modulus condition (see \cite[Definition 2.5]{KP2}).
For $n>1$,  $\un{\TZ}^q (X, n; m)$ is the free abelian group on integral closed subschemes $Z$ of $X \times B_n$ of codimension $q$ such that for each face $F$ of $B_n$, $Z$ intersects $X \times F$ properly on $X \times B_n$, and $Z$ satisfies the modulus $m$ condition on $X \times B_n$. For each $ 1 \leq i \leq n-1$ and $\epsilon= 0, \infty$, let $\partial_i ^\epsilon (Z) :=[ ({\rm Id_X}  \times\iota_{n, i}^{\epsilon})^*(Z)]$. The proper intersection with faces ensures that $\partial_i ^{\epsilon} (Z)$ are well-defined. The cycles in $\un{\TZ}^q (X, n;m)$ are called the \emph{admissible cycles} (or, often as \emph{additive higher Chow cycles, or additive cycles}). 

This gives the cubical abelian group $(\un{n}\mapsto \un{\TZ}^{q}(X, n+1; m))$ in the sense of \cite[\S 1.1]{KL}. Using the containment lemma \cite[Proposition 2.4]{KP}, that each face $\partial_i ^{\epsilon} (Z)$ lies in $\un{\TZ} ^q (X, n-1; m)$ is implied from the defining conditions.

For a cycle $ \sum_{i=1} ^s n_i Z_i$,we let $|\alpha|$ be the closed subscheme $ \bigcup_{i=1} ^s Z_i$ with its reduced structure. This is called the support of $\alpha$. If $f:Y \to X$ is flat and $\alpha \in \un{\TZ} ^q (X, n; m)$, we write $f^*(\alpha)$ often as $\alpha_Y$. This shorthand is more evident when $f$ is a localization morphism.

\begin{defn}[{\cite[Definition 2.6]{KP2}}]
Let $X \in \Sch_k ^{\ess}$ be equidimensional. The {\em additive higher Chow complex}, or just the \emph{additive cycle complex}, $\TZ^q(X, \bullet ; m)$ of $X$ in codimension $q$ with modulus $m$ is the non-degenerate complex associated to the cubical abelian group $(\un{n}\mapsto \un{\TZ}^{q}(X, n+1; m))$, {i.e.}, $\TZ^q(X, n; m)$ is the quotient $\un{\TZ}^q(X, n; m)/{\un{\TZ}^q(X, n; m)_{\dgn}}.$

The boundary map of this complex at level $n$ is given by $\partial := \sum_{i=1} ^{n-1} (-1)^i (\partial^{\infty}_i - \partial^0_i)$, and it satisfies $\partial ^2 = 0$. The homology $\TH^q(X, n; m): = {\rm H}_n (\TZ^q(X, \bullet ; m))$ for $n \ge 1$ is the {\it additive higher Chow group} of $X$ with modulus $m$. 
\end{defn}

\subsection{Subcomplexes associated to some algebraic subsets}\label{sec:subcx ass}

Let $X \in \Sch_k ^{\ess}$ be a variety. Here are some subgroups of $\TZ^q (X, n;m)$ with a finer intersection property with a given finite set $\mathcal{W}$ of locally closed algebraic subsets of $X$:

\begin{defn}[{\cite[Definition 4.2]{KP}}]\label{defn:subcx ass}
Define $\un\TZ^q _{\mathcal{W}} (X, n;m)$ to be the subgroup of $\un\TZ^q (X, n;m)$ generated by integral closed subschemes $Z \subset X \times B_n$ that additionally satisfy 
\begin{equation}\label{eqn:add Chow W}
{\rm codim}_{W \times F} (Z \cap (W \times F)) \geq q  \mbox{ for all } W \in \mathcal{W} \mbox{ and all faces } F \subset B_n.
\end{equation} 
The groups $ \un\TZ^q _{\mathcal{W}} (X, n+1;m)$ for $n \geq 0$ form a cubical subgroup of $(\un{n} \mapsto \un\TZ^q (X, n+1;m))$ and they give the subcomplex $\TZ^q _{\mathcal{W}} (X, \bullet;m) \subset \TZ^q (X, \bullet;m)$ by modding out by the degenerate cycles. The homology groups are denoted by $\TH^q_{\mathcal{W}} (X, n;m)$.
\end{defn}

\subsection{Schemes with finite closed points}

Recall that (\emph{cf.} \cite[\S 2.2]{GLL}) we say a scheme $X$ is an FA-scheme if for any finite subset $\Sigma \subset X$, there exists an \emph{affine} open neighborhood $U \subset X$ of $\Sigma $. We have the following (\emph{loc.cit.}):

\begin{lem}\label{lem:FA} Any quasi-projective $k$-scheme is FA. Any open subset of an FA-scheme is FA. Given any finite subset $\Sigma$ of a quasi-projective $k$-scheme, and an open subset $U \subset X$ containing $\Sigma$, there exists an affine open neighborhood $W \subset U$ of $\Sigma.$
\end{lem}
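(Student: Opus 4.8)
The plan is to prove the three assertions in turn, the third following formally from the first two.

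For the first, given a quasi-projective $k$-scheme $X$ and a finite set $\Sigma\subset X$, I would realize $X$ as an open subscheme of its scheme-theoretic closure $\ov{X}$ inside some $\P^N_k$ and put $Y:=\ov{X}\setminus X$, a closed subset of $\P^N_k$ disjoint from $\Sigma$. Let $S:=k[x_0,\dots,x_N]$, let $I_Y\subset S$ be the homogeneous ideal of $Y$ (taking $I_Y=(x_0,\dots,x_N)$ if $Y=\emptyset$), and for $x\in\Sigma$ let $\fp_x\subsetneq S$ be the homogeneous prime of $S$ corresponding to $x\in\P^N_k$; since $x\notin Y$ we have $I_Y\not\subset\fp_x$. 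The graded prime avoidance lemma then supplies a single homogeneous form $f\in I_Y$ of positive degree with $f\notin\fp_x$ for all $x\in\Sigma$, so the hypersurface $H:=V_+(f)$ contains $Y$ and avoids $\Sigma$. Consequently $\ov{X}\setminus H=\ov{X}\cap D_+(f)$ is a closed subscheme of the affine scheme $\P^N_k\setminus H$, hence affine; it lies in $\ov{X}\setminus Y=X$, is open in $X$, and contains $\Sigma$. This is the required affine open neighborhood, so $X$ is FA.

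For the second assertion, let $X$ be FA, $U\subset X$ open, and $\Sigma\subset U$ finite. Using that $X$ is FA, choose an affine open $V=\Spec A\subset X$ containing $\Sigma$; it then suffices to find an affine open neighborhood of $\Sigma$ inside the open set $W:=U\cap V$ of $V$. Writing $W=V\setminus V(I)$ for an ideal $I\subset A$, the inclusion $x\in W$ gives $I\not\subset\fp_x$ for every $x\in\Sigma$, so ordinary prime avoidance produces $f\in I$ vanishing at no point of $\Sigma$; then $D(f)=\Spec A_f$ is affine, open in $V$, contained in $W$ since $f\in I$, and contains $\Sigma$. Thus $D(f)\subset U$ works and $U$ is FA. The third assertion is now immediate: $X$ is FA by the first assertion, hence so is the open subset $U$ by the second, and the definition of FA applied to $\Sigma\subset U$ yields the affine open $W$.

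The only step I expect to require real care is the production of $H$ in the first assertion: when $k$ is finite, or the points of $\Sigma$ are not $k$-rational, one cannot simply take a general member of a linear system, and it is exactly the graded prime avoidance lemma --- equivalently, a Serre-type global generation statement for $\mathcal{I}_Y(d)$ with $d\gg 0$ followed by prime avoidance, as in \cite[\S 2]{GLL} --- that circumvents this. Everything else is a formal manipulation of opens, closed subschemes, and basic affine opens.
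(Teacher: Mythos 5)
Your proof is correct. The paper offers no argument of its own for this lemma --- it simply cites \cite[\S 2.2]{GLL} --- and your graded-prime-avoidance construction of a hypersurface containing $\ov{X}\setminus X$ while missing $\Sigma$, followed by ordinary prime avoidance for the open-subset case, is exactly the standard argument underlying that reference.
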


Recall (\S \ref{sec:Conven}) that a semi-local $k$-scheme $V$ is essentially of finite type if there is a quasi-projective $k$-scheme whose localization at a finite subset $\Sigma$ of points gives $V$. By Lemma \ref{lem:FA}, we may obtain it by localizing an \emph{affine} $k$-scheme of finite type.

\begin{defn}\label{defn:Geom-type}
For any semi-local $k$-scheme $V$ essentially of finite type, a pair $(X, \Sigma)$ consisting of an affine $k$-scheme $X$ of finite type and a finite set $\Sigma$ of points such that $V= \Spec (\mathcal{O}_{X, \Sigma})$, is called an \emph{atlas} for $V$. A smooth (resp. regular) atlas  $(X, \Sigma)$ is an atlas such that $X$ is smooth over $k$ (resp. regular).
\end{defn}

\begin{lem}\label{lem:Adm-spread}
Let $V= \Spec (R)$ be a semi-local $k$-scheme obtained by localizing at a finite set $\Sigma$ of points of a quasi-projective $k$-variety $X$. For a cycle $\alpha$ on $V \times B_n$, let $\overline{\alpha}$ be its Zariski closure in $X \times B_n$. 

Then $\alpha \in \TZ^q _{\Sigma} (V, n;m)$ if and only if there exists an affine open neighborhood $U \subset X$ of $\Sigma$ such that $\overline{\alpha}_U \in \TZ^q_{\Sigma}(U, n;m)$. 

Here, if $\partial (\alpha) = 0$, then we can assume that 
$\partial (\overline{\alpha}_U) = 0$. If $\alpha$ is a boundary, then we can assume $\overline{\alpha}_U$ is also a boundary. If $V$ is smooth over $k$, then we may take $(U, \Sigma)$ to be a smooth atlas. 
\end{lem}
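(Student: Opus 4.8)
The plan is to reduce a statement about the semi-local scheme $V$ to a statement about an affine model $X$ by a spreading-out argument, exploiting that all the conditions in question — proper intersection with $\Sigma \times F$ for the finitely many faces $F$, the modulus condition, being a boundary — are closed/open conditions that can be detected on an affine open neighborhood of $\Sigma$. First I would fix an atlas: by Lemma~\ref{lem:FA} and the definition of essentially of finite type, replace $X$ by an affine open neighborhood of $\Sigma$ (and if $V$ is smooth over $k$, use that smoothness is an open condition to arrange that this neighborhood is smooth over $k$, giving a smooth atlas). For any cycle $\alpha$ on $V\times B_n$, its Zariski closure $\ov{\alpha}$ in $X\times B_n$ makes sense because $V\times B_n \to X\times B_n$ is a localization (hence flat, with dense image), and for any affine open $U$ containing $\Sigma$ one has $\ov{\alpha}_U = \ov{\alpha}\cap (U\times B_n)$ and $(\ov{\alpha}_U)_V = \alpha$.

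The core of the argument is the equivalence $\alpha\in\TZ^q_\Sigma(V,n;m)$ $\iff$ $\ov{\alpha}_U\in\TZ^q_\Sigma(U,n;m)$ for a suitable $U$. The direction ($\Leftarrow$) is immediate by flat pullback along $V\times B_n\to U\times B_n$: proper intersection with faces, the modulus condition, and the finer intersection condition \eqref{eqn:add Chow W} with $\Sigma$ all pull back. For ($\Rightarrow$), I would argue one condition at a time. The modulus condition for $\alpha$ on $V$ is a statement about Weil divisors on the normalization of $\ov{\alpha}\cap(V\times\ov{B}_n)$; since the locus in $X$ over which $(m+1)[\nu^*F_{n,0}] \le [\nu^* F_n^1]$ fails is a closed subset not meeting $\Sigma$, shrinking $X$ removes it. For proper intersection with each face $F\subset B_n$: the condition $\codim_{\Sigma\times F}(Z\cap(\Sigma\times F))\ge q$ holds on $V$; the bad locus where the corresponding excess-intersection inequality fails for the closure over $X$ is again a closed subset of $X$ missing $\Sigma$, and there are only finitely many faces, so a single affine $U\ni\Sigma$ (obtained by intersecting finitely many such opens and invoking Lemma~\ref{lem:FA} again) works for all of them simultaneously. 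The proper intersection of $\ov{\alpha}_U$ with $U\times F$ itself (needed for membership in $\un\TZ^q(U,n;m)$, not just the $\Sigma$-refinement) follows the same way, shrinking further if necessary.

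For the statements about boundaries, I would handle them by the same spreading-out principle applied to the auxiliary cycles. If $\partial(\alpha)=0$ on $V\times B_n$, then $\partial(\ov{\alpha})$ is a cycle on $X\times B_n$ whose restriction to $V\times B_n$ vanishes, hence $\partial(\ov{\alpha})$ is supported on $(X\setminus\Sigma)\times B_n$, so its support is a closed subset missing $\Sigma\times B_n$; shrinking $X$ to an affine $U\ni\Sigma$ away from that support gives $\partial(\ov{\alpha}_U)=0$. If $\alpha=\partial(\beta)$ for some $\beta\in\TZ^q_\Sigma(V,n+1;m)$, take the closure $\ov{\beta}$ in $X\times B_{n+1}$, apply the already-established part of the lemma to $\beta$ to find an affine $U\ni\Sigma$ with $\ov{\beta}_U\in\TZ^q_\Sigma(U,n+1;m)$, and then shrink $U$ further so that $\partial(\ov{\beta}_U)=\ov{\alpha}_U$ — the difference $\partial(\ov{\beta}_U)-\ov{\alpha}_U$ restricts to $0$ on $V$, so it is again supported away from $\Sigma$. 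One must be slightly careful that all the finitely many shrinkings (for faces, for modulus, for the boundary conditions, for smoothness) can be performed within a single affine neighborhood; this is exactly what Lemma~\ref{lem:FA} guarantees, since a finite intersection of affine opens containing $\Sigma$ inside a quasi-projective scheme again contains an affine open neighborhood of $\Sigma$.

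The main obstacle, and the only place requiring genuine care rather than bookkeeping, is the modulus condition: unlike proper intersection, it is phrased on the normalization $\ov{V}^N$ of the closure, and one needs that forming the closure and normalizing commutes suitably with localization/shrinking so that "the bad locus in $X$ is closed and misses $\Sigma$" is literally true. Concretely I would note that for $U\ni\Sigma$ affine open, the normalization of $\ov{\ov{\alpha}_U}$ in $U\times\ov{B}_n$ restricts over $V$ to the normalization of $\ov{\alpha}$ in $V\times\ov{B}_n$ (normalization commutes with localization), the divisors $\nu^*F_{n,0}$ and $\nu^*F_n^1$ are pulled back from the corresponding divisors on $U\times\ov B_n$, and the inequality $(m+1)[\nu^*F_{n,0}]\le[\nu^*F_n^1]$ between effective Weil divisors is an open condition on the base in the sense that the locus where it holds is the complement of the image of a closed subset of the normalization; since it holds over $\Sigma$, a small enough affine $U$ works. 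This is the one point I would write out in more detail; everything else is a routine, if slightly lengthy, exercise in spreading out.
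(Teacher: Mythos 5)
Your argument is correct, but it is worth knowing that the paper does not actually write out a proof of the main equivalence or of the boundary statements: it simply cites \cite[Lemmas 4.13, 4.14]{KP3} for the first three assertions and only argues the smooth-atlas refinement directly (by excising $X_{\rm sing}$, exactly as you do). What you have written is, in effect, a self-contained reconstruction of those cited lemmas, and it is the expected spreading-out argument: the ($\Leftarrow$) direction by flat pullback along the localization, and the ($\Rightarrow$) direction by showing that each failure locus can be removed by shrinking to an affine neighborhood of $\Sigma$ supplied by Lemma~\ref{lem:FA}, with the boundary statements handled by noting that $\partial(\overline{\alpha})$ (resp.\ $\partial(\overline{\beta}_U)-\overline{\alpha}_U$) restricts to zero on $V$ and is therefore killed by a further shrinking.

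One small point to tighten: you repeatedly assert that the relevant bad locus is ``a closed subset of $X$ missing $\Sigma$.'' The bad loci upstairs (excess components of $\overline{Z}\cap(X\times F)$, or the offending prime divisors on the normalization) are indeed closed, but neither $X\times F$ nor $X\times \overline{B}_n$ is proper over $X$ (both contain an $\A^1$-factor), so their images in $X$ are a priori only constructible. The fix is routine: a constructible set contains the generic points of its closure, and any generization of a point of $\Sigma$ lies in $V$, so if the closure of such an image met $\Sigma$ the bad locus would already meet the preimage of $V$, contradicting the hypothesis over $V$; hence the closure of the image misses $\Sigma$ and one shrinks away from it. (For the modulus condition you can avoid even this by taking closures in $X\times\widehat{B}_n$, which \emph{is} proper over $X$; the paper notes that this does not change the modulus condition.) With that adjustment your proof is complete and, for the smooth-atlas part, coincides with the paper's.
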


\begin{proof}
The first three assertions were proven in \cite[Lemmas 4.13, 4.14]{KP3}. For the last one, choose any $X$ of finite type using the first assertion. Since $V$ is smooth, we have $X_{\rm sing} \cap V = \emptyset$ and $X_{\rm sm} = X \setminus X_{\rm sing} \supset \Sigma$. By Lemma \ref{lem:FA}, we can choose an affine open $U\subset X_{\rm sm} $ containing $\Sigma$.
\end{proof}

\subsection{The $\fs$-cycles}\label{sec:fs}

Recall that for higher Chow groups of a semi-local $k$-scheme $V$ in the Milnor range, \cite[Lemma 3.11]{EVMS} used the notions called \fs-cycles and \sfs-cycles. 
An \fs-cycle in \emph{loc.cit.} is a cycle $\alpha$ on $V \times \square_k ^n$ such that for each irreducible component $Z$, the morphism $Z \to V$ is finite and surjective. However, a moment's thought gives that it is not a good notion. For instance, if $V$ is reducible, then one can almost never achieve the surjection part.

Even if we modify the definition a bit by requiring instead that 
the support $|\alpha| \to V$ is finite and surjective, still there is a problem when $V$ is not irreducible: suppose $V = V_1 \cup V_2$ is a disjoint union of irreducible components. Suppose for $i=1,2$, we have an irreducible closed subscheme $Z_i$ on $V \times \square_k ^n$ such that $Z_i\to V_i$ is finite surjective. Then $W:= Z_1 + Z_2$ and $W' := Z_1 + 2 Z_2$ are both \fs-cycles in this updated sense. But, then $W' - W = Z_2$ is still finite over $V$, while it is no longer surjective over $V$. As a result the set of \fs-cycles in the above sense is not even closed under basic summation of cycles, thus they do not form a group. 

The natural notion to work with is the following.

\begin{defn}\label{defn:fs}
Let $X, Y \in \Sch_k ^{\ess}$. First suppose that $Y$ is irreducible. In this case, we say that a morphism $Y \to X$ of $k$-schemes is \emph{fs over $X$} (or an \emph{fs-morphism}, or simply \emph{fs} when $X$ is understood) if it is finite and it is surjective to an irreducible component of $X$. 

In case $Y$ is not necessarily irreducible, we say $Y \to X$ is \emph{fs over $X$} if for each irreducible component $Y_j \subset Y$, the induced map $Y_j \to X$ is fs over $X$. 

We generalize it further: let $f: Y \to X$ be a morphism in $\Sch_k ^{\ess}$ and let $U \to X$ be a flat morphism. We say that $Y \to X$ is \emph{fs over $U$}, if the fiber product $f' : Y \times_X U \to U$ is fs.
\end{defn}

This notion coincides with the na\"ive notion mentioned above when $X$ is irreducible. Unlike the na\"ive notion, this notion of \fs-morphisms behaves well under base changes:

\begin{lem}\label{lem:fs bc}
Let $f: Y \to X$ be an fs morphism in $\Sch_k ^{\ess}$. Let $U \to X$ be a flat morphism in $\Sch_k ^{\ess}$. Then the fiber product $f': Y \times_X U \to U$ is fs.
\end{lem}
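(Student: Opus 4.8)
The claim is that fs-morphisms are stable under flat base change. The key point is that the notion of being "fs over $X$" was \emph{defined} (Definition \ref{defn:fs}) precisely so as to be insensitive to the global reducibility pathology, and the work is to check that the two obviously-stable conditions (finiteness, and the component-wise surjectivity requirement) survive flat base change. First I would reduce to the irreducible case: since $Y \times_X U = \coprod_j (Y_j \times_X U)$ need not be literally a disjoint union, but its irreducible components are exactly the irreducible components of the various $Y_j \times_X U$, it suffices to treat each $Y_j$ separately; so I may assume $Y$ is irreducible and $Y \to X$ is finite and surjective onto an irreducible component $X_0$ of $X$. Finiteness of $f' : Y \times_X U \to U$ is immediate since finite morphisms are stable under arbitrary base change. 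It remains to analyze the components of $Y \times_X U$ and show each maps finitely and surjectively onto an irreducible component of $U$.

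For that, let $U_0$ be an irreducible component of $U$ and let $u$ be its generic point; I want to show that each irreducible component of $Y \times_X U$ dominating some component of $U$ does so finitely and surjectively, and conversely that every component of $U$ is dominated. The image of $U_0$ in $X$ lies in some irreducible component, but here the flatness of $U \to X$ enters: flatness forces the image of the generic point of each component of $U$ to be a generic point of (a component of) $X$, hence $U \to X$ sends $U_0$ onto a dense subset of some component $X_i$ of $X$. If $X_i \neq X_0$, then $Y \times_X U_0 = (Y \times_{X_0} \cdot) \times \cdots$ is empty over a dense open of $U_0$; but again by flatness the components of $Y\times_X U$ lying over $U_0$ would have to dominate $U_0$, a contradiction unless they are empty — so no component of $Y \times_X U$ lies over such a $U_0$, and consistently $Y_j \times_X U$ contributes nothing there. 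If $X_i = X_0$, then over the generic point of $U_0$ the fiber $Y \times_X U$ is $Y \times_{X_0} U_0$, which at the generic point is $(\text{the finite } X_0\text{-scheme } Y) \times_{X_0} \Spec k(u)$, a nonempty finite scheme over $k(u)$; every irreducible component of $Y \times_X U$ passing through this generic fiber therefore dominates $U_0$, and being finite over $U$ (hence over $U_0$ after restriction) and dominant, it is finite and surjective onto $U_0$, which is an irreducible component of $U$. So each component of $Y \times_X U$ is fs over $U$.

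The remaining subtlety is the global bookkeeping: I must make sure that \emph{every} irreducible component of $Y \times_X U$ arises this way, i.e. that there is no component lying entirely inside a non-dominant locus over $U$. This is handled by flatness of $U \to X$ together with finiteness (hence, in particular, properness and the fact that $Y \times_X U \to U$ is closed): the image is closed, and over each component $U_0$ of $U$ with $U \to X$ sending $U_0$ into $X_0$, the morphism $Y \times_X U|_{U_0} \to U_0$ is finite and its generic fiber is nonempty as shown, so by closedness of the image and the fact that a finite morphism with nonempty generic fiber onto an irreducible base is surjective, it is surjective onto $U_0$; in particular every component of $Y \times_X U$ either dominates a component of $U$ or is contained in a fiber over a point — but the latter cannot happen for a component of a scheme finite over $U$ whose image is a union of components of $U$, since a finite morphism has finite fibers. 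Thus each component maps to a component of $U$ finitely and surjectively, which is exactly the fs condition. I expect the main obstacle to be precisely this last step — ensuring the component-wise surjectivity over the correct components of $U$ while $X$, $Y$, and $U$ are all allowed to be reducible — and the role of the \emph{flatness} hypothesis on $U \to X$ (preventing components of $U$ from mapping into lower-dimensional strata of $X$) is what makes it go through; without flatness the statement would genuinely fail, as the naive-notion discussion preceding Definition \ref{defn:fs} already indicates.
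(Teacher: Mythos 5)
Your overall strategy is sound: the reduction to irreducible $Y$, the stability of finiteness under base change, and the observation that flatness of $U \to X$ sends generic points of components of $U$ to generic points of components of $X$ are all correct. The gap is in the last step, which you yourself flag as the crux: showing that \emph{every} irreducible component $W$ of $Y \times_X U$ dominates an irreducible component of $U$. Your dichotomy ``every component either dominates a component of $U$ or is contained in a fiber over a point'' is false as stated: a priori the image of $W$ under $p: Y\times_X U \to U$ could be a positive-dimensional proper closed irreducible subset of a component $U_0$ (neither all of $U_0$ nor a point), and the surjectivity of the \emph{total} map $Y \times_X U|_{U_0} \to U_0$ (which you correctly deduce from the nonempty generic fiber) only says that the images of the components cover $U_0$, not that each individual component surjects. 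Relatedly, in the case $X_i \neq X_0$ you invoke ``flatness'' of the wrong map: $Y \times_X U \to U$ is finite but not flat in general, so its components do not automatically dominate.

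The missing ingredient is the \emph{other} projection $q: Y \times_X U \to Y$, which \emph{is} flat, being the base change of the flat $g: U \to X$. If $w$ is the generic point of a component $W$, going-down for the flat $q$ forces $q(w)$ to be the generic point $\eta_Y$ of $Y$; hence $p(w)$ lies in the fiber $g^{-1}(\eta_{X_0})$, where $\eta_{X_0}$ is the generic point of the component $X_0$ dominated by $Y$, and $p(w)$ is in fact a generic point of that fiber (since $q^{-1}(\eta_Y) \to g^{-1}(\eta_{X_0})$ is a base change along the finite field extension $k(\eta_{X_0}) \subset k(\eta_Y)$, hence faithfully flat). The dimension formula for the flat local homomorphism $\mathcal{O}_{X, \eta_{X_0}} \to \mathcal{O}_{U, p(w)}$ then gives $\dim \mathcal{O}_{U, p(w)} = 0$, so $p(w)$ is a generic point of $U$, and finiteness of $p$ upgrades dominance to surjectivity onto that component. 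This is precisely what the paper's one-line proof packages by citing EGA IV, Proposition (2.3.7)(ii) on generizing (e.g.\ flat) morphisms and the preservation of dominance under such base change, together with the remark that dominance plus finiteness gives surjectivity.
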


\begin{proof}
That the base change of a finite morphism is again finite is apparent. The remaining part on surjectivity over an irreducible component follows by \cite[Proposition (2.3.7)-(ii), p.16]{EGA4-2}, where the dominance there is equivalent to surjectivity under finiteness.
\end{proof}

\begin{lem}\label{lem:fs fpf}
Let $Z$ be a cycle on $Y \times B$ such that $Z$ is fs over $Y$ in the sense that each irreducible component of $Z$ is fs over $Y$. 

Let $f: Y \to X$ be a finite surjective morphism in $\Sch_k ^{\ess}$ of irreducible schemes. Then the finite push-forward $f_* (Z)$ on $X \times B$ is fs over $X$. 
\end{lem}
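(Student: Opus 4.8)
The plan is to reduce immediately to the case where $Z$ itself is irreducible, since $f_*$ is additive on cycles and the fs-property is defined component-by-component, so it suffices to show: if $W \subset Y \times B$ is an integral closed subscheme that is fs over $Y$ (i.e.\ $W \to Y$ is finite and surjective, $Y$ being irreducible), then every irreducible component of the cycle $(f \times \id_B)_*(W)$ is fs over $X$. Write $g = f \times \id_B : Y \times B \to X \times B$; this is finite and surjective (base change of the finite surjective $f$ along the flat $B \to \Spec k$, or simply a product of finite surjective maps), and $X \times B$ is irreducible because $X$ is irreducible and $B$ is geometrically integral (it is an open subscheme of a product of $\P^1$'s, hence integral; base change to $X$ keeps it irreducible). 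So $g(W)$ is an integral closed subscheme of $X \times B$, and $g_*(W) = d \cdot [g(W)]$ where $d = [k(W):k(g(W))]$ is the generic degree. Thus there is a single irreducible component, namely $g(W)$, and we must check $g(W) \to X$ is finite and surjective.

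The key steps, in order: (1) The composite $W \to g(W) \hookrightarrow X \times B \to X$ equals the composite $W \to Y \xrightarrow{f} X$, both being restrictions of the obvious structure maps; the latter is finite and surjective since $W \to Y$ is fs and $f$ is finite surjective. (2) Surjectivity of $g(W) \to X$ then follows: the image of $g(W) \to X$ contains the image of $W \to X$, which is all of $X$. (3) Finiteness of $g(W) \to X$: the map $W \to g(W)$ is finite surjective (it is the restriction of the finite map $g$ to the integral subscheme $W$ mapping onto $g(W)$), hence by \cite[\dots]{EGA} — or simply because finiteness descends along a surjective finite morphism — the finiteness of $W \to X$ implies the finiteness of $g(W) \to X$. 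Concretely, for any affine open $\Spec A \subset X$, the preimage of $\Spec A$ in $W$ is $\Spec C$ with $C$ a finite $A$-module, and the preimage of $\Spec A$ in $g(W)$ is $\Spec B$ with $B \hookrightarrow C$; since $A$ is Noetherian and $C$ is a finite $A$-module, the $A$-submodule $B$ is also finite, so $g(W) \to X$ is finite. Since $X$ is irreducible, being finite and surjective onto $X$ is exactly being fs over $X$, completing the proof.

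I do not expect a serious obstacle here: everything reduces to the elementary facts that finiteness and surjectivity are preserved under composition and under "push-forward along a finite surjective map", together with the observation that the push-forward of an integral subscheme is integral (so the number of components does not grow). The only point requiring a moment of care is that $X \times B$ is irreducible, which is where geometric integrality of the $A_j$'s (hence of $B$) is used; after that, the finiteness of $g(W) \to X$ from finiteness of $W \to X$ is the standard "$A$ Noetherian $\Rightarrow$ submodule of a finite module is finite" argument applied affine-locally on $X$.
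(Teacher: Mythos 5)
Your proposal is correct and follows essentially the same route as the paper, whose entire proof is the reduction to irreducible $Z$ plus the observation that the composite $Z \to Y \to X$ of finite surjective maps is finite surjective. You merely fill in the details the paper leaves implicit (irreducibility of $X \times B$, and the transfer of finiteness and surjectivity from $Z \to X$ to its image $g(Z) \to X$ via the standard Noetherian submodule argument), all of which is accurate.
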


\begin{proof}
We may assume $Z$ is irreducible. Since $Z \to Y$ is finite surjective and $Y \to X$ is finite surjective, the composite $Z \to Y \to X$ is finite surjective.
\end{proof}

Here is one simple criterion on finiteness

\begin{lem}[Finiteness criterion]\label{lem:finiteness}
Let $X$ be an equidimensional affine $k$-scheme essentially of finite type. Let $\widehat{B}$ be a smooth projective geometrically integral $k$-scheme of finite type of dimension $n>0$ and let $B \subset \widehat{B}$ be a nonempty affine open subset.

Let $Z \in z^n ( X \times B)$ be an irreducible cycle. Then $Z \to X$ is fs over $X$ if and only if $Z$ is closed in $X \times \widehat{B}$.
\end{lem}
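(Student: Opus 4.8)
The plan is to prove both directions by exploiting the properness of $\widehat{B}$ over $k$ together with the fact that $Z$ has pure codimension $n$ in $X \times B$, i.e. is generically finite over $X$ in the right sense. For the direction ($\Leftarrow$), suppose $\overline{Z}$, the closure of $Z$ in $X \times \widehat{B}$, equals $Z$ (so $Z$ is already closed there). Since $\widehat{B}$ is proper over $k$, the projection $X \times \widehat{B} \to X$ is proper, hence its restriction to the closed subscheme $Z$ is proper. A proper morphism with finite fibers is finite, so it suffices to check that $Z \to X$ has finite fibers; this follows because $Z$ has dimension equal to $\dim X$ (it is a codimension-$n$ cycle in the $(\dim X + n)$-dimensional scheme $X \times B$, and $X$ is equidimensional), while any positive-dimensional fiber would force $\dim Z > \dim X$. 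Thus $Z \to X$ is finite. For surjectivity onto an irreducible component of $X$: the image $f(Z)$ is closed (as $f$ is proper) and irreducible (as $Z$ is), and one checks $\dim f(Z) = \dim Z = \dim X$, forcing $f(Z)$ to be a whole irreducible component of $X$. Hence $Z \to X$ is fs.

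For the converse ($\Rightarrow$), suppose $Z \to X$ is fs, so in particular finite. I would argue that $Z$ is then closed in $X \times \widehat{B}$ by a valuative-type or properness argument: consider the closure $\overline{Z} \subset X \times \widehat{B}$ and the "boundary" $\overline{Z} \setminus Z$, which lies over $X \times (\widehat{B} \setminus B)$. The key point is that $Z \to X$ finite implies $Z \to X$ is proper, hence $Z$ is proper over $X$; but $Z$ is also a locally closed subscheme of $X \times \widehat{B}$, and a subscheme proper over $X$ and locally closed in $X \times \widehat{B}$ (which is proper over $X$) must be closed in $X \times \widehat{B}$. Concretely: the inclusion $Z \hookrightarrow X \times \widehat{B}$ factors as $Z \to X \times \widehat{B}$; since $Z$ is proper over $X$ and $X \times \widehat{B}$ is separated over $X$, the image of $Z$ is closed in $X \times \widehat{B}$, and since $Z \to \overline{Z}$ is a proper dominant morphism onto the irreducible $\overline{Z}$ it is surjective, giving $Z = \overline{Z}$.

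The main obstacle I anticipate is the bookkeeping around dimensions and equidimensionality: $X$ is only assumed equidimensional and essentially of finite type (possibly semi-local, possibly localized at non-closed points), so "$\dim$" must be handled via codimension or via the relative dimension of $X \times B \to X$, and the argument that a finite morphism's image is a full irreducible component needs $X$ equidimensional to rule out $f(Z)$ sitting inside a component of larger dimension. I would phrase the finiteness half cleanly using "proper $+$ quasi-finite $\Rightarrow$ finite" and the dimension count $\codim_{X \times B}(Z) = n = \dim \widehat{B}$, and phrase the closedness half using "a locally closed subset that is proper over the base is closed." Neither half requires Chow-group machinery; the content is purely the properness of $\widehat{B}/k$ interacting with the codimension constraint built into $z^n(X \times B)$.
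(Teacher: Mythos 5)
Your ($\Rightarrow$) direction is essentially the paper's argument: finiteness of $Z \to X$ plus separatedness of $X \times \widehat{B} \to X$ makes the immersion $Z \hookrightarrow X \times \widehat{B}$ proper, hence closed (the paper phrases this via \cite[Corollary II-4.8-(e)]{Hartshorne}). The surjectivity bookkeeping at the end of your ($\Leftarrow$) direction also matches the paper's.

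The gap is in your justification of quasi-finiteness in the ($\Leftarrow$) direction. You claim that ``any positive-dimensional fiber would force $\dim Z > \dim X$,'' but this implication is false for a general proper morphism with $\dim Z = \dim X$: the blow-up of a surface at a point is proper, equidimensional onto its target, and has a one-dimensional fiber. Equality of dimensions only controls the \emph{generic} fiber; special fibers can jump. The statement is nevertheless true here, for a reason you did not use: each fiber of $Z \to X$ over a point $x$ is closed in $\{x\} \times \widehat{B}$ (hence proper over $k(x)$) and at the same time contained in $\{x\} \times B$ (hence affine, being closed in an affine scheme), and a scheme that is both proper and affine over a field is finite. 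Alternatively, and this is the paper's route, skip quasi-finiteness altogether: $Z$ is closed in the affine scheme $X \times B$, so $Z$ itself is affine, and a proper (here, projective) morphism between affine schemes of finite type is automatically finite \cite[Exercise II-4.6]{Hartshorne}. Either repair closes the gap; as written, the dimension argument does not.
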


\begin{proof}
Let $f: Z \hookrightarrow X \times \widehat{B} \to X$ be the composite map. Suppose $f$ is fs over $X$. Since the second map is projective, by \cite[Corollary II-4.8-(e), Theorem II-4.9, pp.102-103]{Hartshorne}, the first map is a closed immersion. This proves $(\Rightarrow)$. 

Conversely, suppose that $Z$ is closed in $X \times \widehat{B}$, i.e., the first map is a closed immersion (thus projective). Since the second map is projective, the composite $f$ is projective. Hence, $f$ is a projective morphism of affine schemes, so that it must be finite by \cite[Exercise II-4.6, p.106]{Hartshorne}. Moreover, $Z \to X_i$ being a finite map of irreducible affine schemes of the same dimension, where $X_i$ is the irreducible component that receives $Z$, this morphism must also be surjective. This proves $(\Leftarrow)$.
\end{proof}

\begin{lem}\label{lem:fs-smooth}
Let $V = \Spec (R)$ be a semi-local $k$-scheme essentially of finite type with the set of closed points $\Sigma$. Let $B \subset \widehat{B}$ be as in Lemma \ref{lem:finiteness}. Let $F:= \widehat{B} \setminus B$. Let $Z \in z^n (V \times B)$ be an irreducible cycle and let $\widehat{Z}$ be the Zariski closure of $Z$ in $V \times \widehat{B}$.

Suppose that $\widehat{Z} \cap (\Sigma \times F) = \emptyset$. Then given any affine atlas $(X,\Sigma)$ for $V$, there exists an affine open subatlas $(U, \Sigma)$ for $V$ such that for the Zariski closure $\bar{Z}$ of $Z$ in $X \times B$, the projection map $\bar{Z}_U \to U $ is fs over $U$.

If $V$ is smooth over $k$ from the first place, then we can choose $(U, \Sigma)$ such that $U$ is smooth over $k$ as well.
\end{lem}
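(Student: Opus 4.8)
The plan is to spread $Z$ out to its Zariski closure $\widetilde{Z}$ in $X \times \widehat{B}$, extract from it a closed ``bad locus'' $T \subset X$ away from which the closure does not meet $X \times F$, check that $T$ misses $\Sigma$, shrink $X$ to an affine (smooth) open subatlas inside $X \setminus T$, and then conclude by the finiteness criterion of Lemma~\ref{lem:finiteness}.

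First I would set $\widetilde{Z} \subset X \times \widehat{B}$ to be the Zariski closure of $Z$ and $\bar{Z} = \widetilde{Z} \cap (X \times B)$ its closure in $X \times B$. Since $V \to X$ is the localization of $X$ at $\Sigma$, it is a flat monomorphism identifying $V$ with a subspace of $X$ (the generizations of $\Sigma$), so $\widetilde{Z}$ is irreducible with the same generic point, hence the same function field, as $Z$, and $\widetilde{Z} \cap (V \times \widehat{B}) = \widehat{Z}$ as reduced closed subschemes --- here one uses the elementary topological fact that for $S \subseteq U \subseteq Y$ with $U$ carrying the subspace topology one has $\overline{S}^{\,Y} \cap U = \overline{S}^{\,U}$. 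Because $\widehat{B}$ is projective, $p_X \colon X \times \widehat{B} \to X$ is proper, so
\[
T := p_X\big(\widetilde{Z} \cap (X \times F)\big)
\]
is a closed subset of $X$.

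Next I would check that $\Sigma \cap T = \emptyset$. Fix $x \in \Sigma$; since $x \in V$ we have $\{x\} \times F \subset V \times \widehat{B}$, whence
\[
\widetilde{Z} \cap (\{x\} \times F) = \widehat{Z} \cap (\{x\} \times F) \subseteq \widehat{Z} \cap (\Sigma \times F) = \emptyset
\]
by hypothesis, so $x \notin T$. Thus $X \setminus T$ is an open neighbourhood of $\Sigma$; if moreover $V$ is smooth over $k$, then $X_{\rm sing} \cap V = \emptyset$, so $\Sigma \subset (X \setminus T) \cap X_{\rm sm}$. By Lemma~\ref{lem:FA} we may choose an affine open $U$ with $\Sigma \subset U \subset X \setminus T$ (and $U \subset X_{\rm sm}$ in the smooth case, so that $U$ is smooth over $k$), and then $(U, \Sigma)$ is an affine (resp. smooth) subatlas for $V$.

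Finally, by the choice of $U$ we have $p_X^{-1}(U) \cap \widetilde{Z} \cap (X \times F) = \emptyset$, i.e. $\widetilde{Z}_U \subseteq U \times B$; hence $\widetilde{Z}_U = \bar{Z}_U$, which is therefore closed in $U \times \widehat{B}$. As $\bar{Z}_U$ is an irreducible codimension-$n$ cycle on $U \times B$ (it has the same function field as $Z$), Lemma~\ref{lem:finiteness} applies and yields that $\bar{Z}_U \to U$ is fs over $U$, which is the assertion. The only step requiring genuine care is the identity $\widetilde{Z} \cap (V \times \widehat{B}) = \widehat{Z}$ used at the outset, namely the compatibility of Zariski closures with the localization map $V \to X$; after that the argument is purely formal, its two load-bearing inputs being the properness of $p_X$ (so that $T$ is closed) and Lemma~\ref{lem:finiteness}.
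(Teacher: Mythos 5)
Your proposal is correct and follows essentially the same route as the paper: form the closure in $X\times\widehat{B}$, push its intersection with $X\times F$ down to a closed subset of $X$ via properness, observe that the hypothesis forces this set to miss $\Sigma$, shrink to an affine (smooth) open neighborhood by Lemma~\ref{lem:FA}, and conclude with the finiteness criterion of Lemma~\ref{lem:finiteness}. Your explicit verification of $\widetilde{Z}\cap(V\times\widehat{B})=\widehat{Z}$ is a detail the paper uses implicitly; otherwise the arguments coincide.
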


\begin{proof}
Let $(X,\Sigma)$ be a given atlas.  Let $\widehat{\bar{Z}}$ be the Zariski closure of $\bar{Z}$ in $X \times \widehat{B}$ and let $\widehat{f}: \widehat{\bar{Z}} \hookrightarrow X \times \widehat{B} \to X$ be the composition with the projection. Let $Y:= \widehat{f} ( \widehat{\bar{Z}} \cap (X \times F))$. Since $\widehat{f}$ is projective and since $\widehat{\bar{Z}} \cap (\Sigma \times F) = \widehat{Z} \cap (\Sigma \times F) = \emptyset$, we see that $Y \subset X$ is a closed subset disjoint from $\Sigma$. Hence, $X \setminus Y$ is an open neighborhood of $\Sigma$ such that $\widehat{\bar{Z}} \cap ((X \setminus Y) \times F) = \emptyset$. By Lemma \ref{lem:FA}, we can find an affine open neighborhood $U$ of $\Sigma$ in $X \setminus Y$, so we have $\widehat{\bar{Z}} \cap (U \times F) = \emptyset$. In particular, $\widehat{\bar{Z}} \cap (U \times \widehat{B}) = \bar{Z} \cap (U \times \widehat{B})$. This means $\bar{Z}_U$ is closed in $U \times \widehat{B}$. Hence, by Lemma \ref{lem:finiteness}, the map $\bar{Z}_U \to U$ is fs over $U$.

In case $V$ is smooth, then by excising the singular locus of $X$, which is disjoint from $\Sigma$, we may assume that $X$ is smooth. Then the open subset $U \subset X$ is also smooth.
\end{proof}

Let $X$ be an equidimensional quasi-projective $k$-scheme and let $\Sigma \subset X$ be a finite set of points. By Lemma \ref{lem:FA}, we may replace $X$ be an affine $k$-scheme. We have the following two notions of \fs-cycles:

\begin{defn} Let $V= X_{\Sigma}$. Let $m, n \geq 1$ be integers.
\begin{enumerate} 
\item A cycle $\alpha \in \TZ^n _{\Sigma} (X, n;m)$ is said to be an \emph{\fs-cycle along $\Sigma$} if there is an affine open neighborhood $U \subset X$ of $\Sigma$ such that each irreducible component of $\alpha_U$ is fs over $U$. The group of \fs-cycles along $\Sigma$ is denoted by $\TZ^n_{\Sigma, \fs}(X, n;m)$. 
\item A cycle $\alpha \in \TZ^n _{\Sigma} (V, n;m)$ is said to be an \emph{\fs-cycle} if each irreducible component of $\alpha$ is fs over $V$. The group of \fs-cycles is denoted by $\TZ^n_{\fs} (V, n;m)$. 
\end{enumerate}
\end{defn}

These two notions are related as follows:

\begin{cor}\label{cor:fs-fine}
Let $X$ be an equidimensional affine $k$-scheme and let $\Sigma \subset X$ be a finite set of points. Let $V= X_{\Sigma}$. Let $m, n \geq 1$ be integers. Then a cycle $\alpha \in {\TZ}^n_{\Sigma} (X, n;m)$ is an fs-cycle along $\Sigma$ if and only if $\alpha_V \in {\TZ}^n_{\Sigma} (V, n;m)$ is an fs-cycle.
\end{cor}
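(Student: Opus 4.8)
The result is a componentwise consequence of the finiteness criterion (Lemma~\ref{lem:finiteness}), the base-change stability of \fs-morphisms (Lemma~\ref{lem:fs bc}), and Lemma~\ref{lem:fs-smooth}; I would organise the argument around these three inputs.

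\textbf{Step 1: reduction to one component.} Write $\alpha=\sum_i n_i W_i$ with distinct integral $W_i\subset X\times B_n$ of codimension $n$, so $\alpha_U=\sum_i n_i(W_i)_U$ for any open $U\subseteq X$ and $\alpha_V=\sum_i n_i(W_i)_V$. Let $I=\{\,i:\overline{\pr_X(W_i)}\cap\Sigma\neq\emptyset\,\}$. Since an open subset of $X$ containing a point $\sigma$ contains every generization of $\sigma$, one checks that for $i\in I$ (and only for those) the generic point of $W_i$ lies over $\Sigma$; hence for $i\in I$ both $(W_i)_V$ and $(W_i)_U$ (for any open $U\supseteq\Sigma$) are nonempty integral schemes with Zariski closure $W_i$ in $X\times B_n$, and for distinct $i,j\in I$ they are distinct. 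Putting $U_0:=X\setminus\bigcup_{i\notin I}\overline{\pr_X(W_i)}$ --- an open, and after Lemma~\ref{lem:FA} affine, neighbourhood of $\Sigma$ --- we get $(W_i)_U=\emptyset$ for $i\notin I$ whenever $\Sigma\subset U\subseteq U_0$. Thus the irreducible components of $\alpha_V$ are exactly the $(W_i)_V$ with $i\in I$, those of $\alpha_U$ (for $U\subseteq U_0$) are exactly the $(W_i)_U$ with $i\in I$, and it suffices to prove, for each $i\in I$: the map $(W_i)_V\to V$ is \fs over $V$ if and only if $(W_i)_{U'}\to U'$ is \fs over $U'$ for some affine open $U'\ni\Sigma$.

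\textbf{Step 2: the single-component equivalence.} If $(W_i)_{U'}\to U'$ is \fs over $U'$, then $V=U'_\Sigma$ and $V\to U'$ is flat, so $(W_i)_V=(W_i)_{U'}\times_{U'}V\to V$ is \fs over $V$ by Lemma~\ref{lem:fs bc}. Conversely, assume $Z:=(W_i)_V\to V$ is \fs over $V$. Applying Lemma~\ref{lem:finiteness} to the equidimensional affine scheme $V$ (essentially of finite type), with $\widehat{B}=\widehat{B}_n$ and $B=B_n$, the cycle $Z$ is closed in $V\times\widehat{B}_n$, hence equals its closure $\widehat{Z}$ there; writing $F=\widehat{B}_n\setminus B_n$ we get $\widehat{Z}\cap(\Sigma\times F)\subseteq\widehat{Z}\cap(V\times F)=Z\cap(V\times F)=\emptyset$. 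Now Lemma~\ref{lem:fs-smooth}, applied to the atlas $(X,\Sigma)$ of $V$, yields an affine open $U'\ni\Sigma$ with $\overline{Z}_{U'}\to U'$ \fs over $U'$; since the closure $\overline{Z}$ of $Z$ in $X\times B_n$ is $W_i$, this says precisely that $(W_i)_{U'}\to U'$ is \fs over $U'$.

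\textbf{Step 3: assembling the two directions.} For $(\Rightarrow)$: if $\alpha$ is an \fs-cycle along $\Sigma$ with witnessing affine open $U\ni\Sigma$, then for each $i\in I$ the scheme $(W_i)_U$ is a nonempty irreducible component of $\alpha_U$, hence \fs over $U$, so by Step~2 the map $(W_i)_V\to V$ is \fs over $V$; thus every component of $\alpha_V$ is \fs over $V$, i.e.\ $\alpha_V$ is an \fs-cycle (and $\alpha_V\in\TZ^n_\Sigma(V,n;m)$ by flat pull-back). For $(\Leftarrow)$: if $\alpha_V$ is an \fs-cycle, then $(W_i)_V\to V$ is \fs over $V$ for every $i\in I$, and Step~2 provides affine opens $U_i\ni\Sigma$ with $(W_i)_{U_i}\to U_i$ \fs over $U_i$; set $U:=U_0\cap\bigcap_{i\in I}U_i$, a finite intersection of affine opens in the affine scheme $X$, hence an affine open neighbourhood of $\Sigma$. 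As $U\hookrightarrow U_i$ is flat, Lemma~\ref{lem:fs bc} gives that $(W_i)_U\to U$ is \fs over $U$ for each $i\in I$, while $(W_i)_U=\emptyset$ for $i\notin I$; hence every irreducible component of $\alpha_U$ is \fs over $U$, i.e.\ $\alpha$ is an \fs-cycle along $\Sigma$.

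\textbf{Expected difficulty.} Granting Lemmas~\ref{lem:finiteness}, \ref{lem:fs bc} and \ref{lem:fs-smooth}, the corollary is essentially formal, and I do not anticipate a real obstacle. The only points requiring some care are the bookkeeping in Step~1 --- components $W_i$ of $\alpha$ whose image in $X$ avoids $\Sigma$ disappear upon localizing at $\Sigma$, and must be discarded by shrinking inside $U_0$ --- and the passage in Step~3 from the finitely many neighbourhoods $U_i$ to a single affine $U$, which is harmless since $X$ is affine.
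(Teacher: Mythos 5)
Your argument is correct and follows essentially the same route as the paper's: the forward direction by flat base change (Lemma~\ref{lem:fs bc}), and the converse by combining the finiteness criterion (Lemma~\ref{lem:finiteness}) — which shows each component of $\alpha_V$ is already closed in $V\times\widehat{B}_n$ and hence misses $\Sigma\times F_n$ — with Lemma~\ref{lem:fs-smooth} and a finite intersection of affine neighbourhoods. Your Step~1, which explicitly discards the components of $\alpha$ that vanish upon localization by shrinking into $U_0$, is a point of bookkeeping the paper leaves implicit, and you handle it correctly.
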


\begin{proof}
$(\Rightarrow)$ Since the localization map $V \to X$ is flat and it factors through any open neighborhood $U \subset X$ of $\Sigma$, one can pull-back by Lemma \ref{lem:fs bc} to prove this direction. 

$(\Leftarrow)$ By Lemma \ref{lem:Adm-spread}, there exists an affine open subatlas $(U_1, \Sigma)$ of $(X, \Sigma)$ for $V$ such that the closure $\overline{\alpha}$ of $Z$ in $U_1 \times B_n$ is in $\TZ_{\Sigma} ^n (U_1, n;m)$. 

For each irreducible component $Z$ of $\alpha$, let $\widehat{Z}$ be its Zariski closure in $V \times \widehat{B}$. Since $Z$ is fs over $V$, by Lemma \ref{lem:finiteness} $Z$ is already closed in $V \times \widehat{B}_n$, thus $Z= \widehat{Z}$. In particular, $\widehat{Z} \cap (\Sigma \times F_n) = \emptyset$. Hence by Lemma \ref{lem:fs-smooth} there exists an affine open subatlas $(U_Z, \Sigma)$ for $V$ of $(U_1, \Sigma)$ such that for the Zariski closure $\overline{Z}$ of $Z$ in $U_1 \times B_n$, the base change $\overline{Z}_{U_Z} \to U_Z$ is fs. By taking $U:= \bigcap_Z U_Z$ where the intersection is taken over all (finitely many) irreducible components of $\alpha$, we deduce that $\overline{Z} _{U} \to U$ is fs. This proves the corollary.
\end{proof}

We have the following a bit different characterization of the cycles centered around $\TZ^n_{\fs} (V,n;m)$:

\begin{prop}\label{prop:fs-suff-condn}
Let $V = \Spec(R)$ be a semi-local $k$-scheme of geometric type with the set $\Sigma$ of closed points. Let $m, n \ge 1$. Let $Z \in \TZ^n_{\Sigma}(V, n;m)$ be an irreducible cycle. Then $Z$ is an $\fs$-cycle if and only if there is an atlas $(X,\Sigma)$ for $V$ such that for the closures $\bar{Z}$ in $X \times B_n$ and $\widehat{Z}$ in $V \times \widehat{B}_n$, we have $\bar{Z} \in  \TZ^n_\Sigma(X, n;m)$ and $\widehat{Z}\cap (\Sigma \times F_n) = \emptyset$.

Here, $V$ is smooth over $k$ if and only if we can choose $(X, \Sigma)$ in the above such that $X$ is smooth over $k$ as well.
\end{prop}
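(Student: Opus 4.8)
The plan is to prove Proposition \ref{prop:fs-suff-condn} by reducing it to the combination of Lemma \ref{lem:finiteness} and Lemma \ref{lem:fs-smooth}, together with the spreading-out results of Lemma \ref{lem:Adm-spread} (and Corollary \ref{cor:fs-fine} for the cycle-theoretic bookkeeping). Since $Z$ is assumed irreducible, there is no issue with the group structure of $\fs$-cycles, and everything reduces to statements about a single integral closed subscheme.

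\medskip

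First I would prove the ``only if'' direction. Assume $Z$ is an $\fs$-cycle, i.e.\ $Z \to V$ is finite and surjective onto an irreducible component of $V$. By Lemma \ref{lem:Adm-spread}, applied to the admissible cycle $Z \in \TZ^n_\Sigma(V,n;m)$, there is an affine atlas $(X,\Sigma)$ for $V$ such that the Zariski closure $\bar Z$ of $Z$ in $X \times B_n$ lies in $\TZ^n_\Sigma(X,n;m)$; this already gives the first required property. For the second, observe that since $Z \to V$ is $\fs$, Lemma \ref{lem:finiteness} (with $B = B_n$, $\widehat B = \widehat B_n$) tells us that $Z$ is already closed in $V \times \widehat B_n$, so the Zariski closure $\widehat Z$ of $Z$ in $V \times \widehat B_n$ equals $Z$ itself. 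Consequently $\widehat Z \cap (\Sigma \times F_n) = Z \cap (\Sigma \times F_n) = \emptyset$, because $Z$ lives in $V \times B_n = V \times (\widehat B_n \setminus F_n)$ and $\Sigma \subset V$. This establishes both conditions.

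\medskip

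Conversely, suppose $(X,\Sigma)$ is an atlas with $\bar Z \in \TZ^n_\Sigma(X,n;m)$ and $\widehat Z \cap (\Sigma \times F_n) = \emptyset$, where $\widehat Z$ is the closure of $Z$ in $V \times \widehat B_n$. The hypothesis $\widehat Z \cap (\Sigma \times F_n) = \emptyset$ is precisely the input required by Lemma \ref{lem:fs-smooth} (taking $F = F_n = \widehat B_n \setminus B_n$). Applying that lemma to the atlas $(X,\Sigma)$, we obtain an affine open subatlas $(U,\Sigma)$ for $V$ such that, for the Zariski closure $\bar Z$ of $Z$ in $X \times B_n$, the projection $\bar Z_U \to U$ is $\fs$ over $U$. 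Thus $\bar Z$, viewed as a cycle in $\TZ^n_\Sigma(X,n;m)$, is an $\fs$-cycle along $\Sigma$ in the sense of the definition. By Corollary \ref{cor:fs-fine}, this is equivalent to $\bar Z_V = Z$ being an $\fs$-cycle in $\TZ^n_\Sigma(V,n;m)$, which is the desired conclusion. (Alternatively, one argues directly: $\bar Z_U \to U$ being $\fs$ localizes to $Z = (\bar Z_U)_V \to V$ being $\fs$ by Lemma \ref{lem:fs bc}.)

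\medskip

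Finally, the parenthetical smoothness refinement follows because the smooth version of Lemma \ref{lem:Adm-spread} lets us pick the initial atlas $X$ smooth over $k$ when $V$ is smooth, and the last sentence of Lemma \ref{lem:fs-smooth} then guarantees $U$ may be taken smooth as well; conversely, if such a smooth $X$ exists then $V = \Spec(\mathcal O_{X,\Sigma})$ is visibly smooth, being a localization of a smooth $k$-scheme. I do not anticipate a serious obstacle here: the proposition is essentially a repackaging of the two finiteness lemmas already proved, and the only point requiring care is keeping the three ambient spaces ($X \times B_n$, $V \times B_n$, $V \times \widehat B_n$) and their respective closures straight, and checking that the ``$\emptyset$ at infinity'' condition is exactly the hypothesis feeding Lemma \ref{lem:fs-smooth} in one direction and the conclusion of Lemma \ref{lem:finiteness} in the other.
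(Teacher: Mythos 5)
Your proposal is correct and follows essentially the same route as the paper's own proof: Lemma \ref{lem:Adm-spread} plus Lemma \ref{lem:finiteness} for the forward direction, Lemma \ref{lem:fs-smooth} plus Corollary \ref{cor:fs-fine} for the converse, and the smooth variants of those lemmas for the final assertion. Your explicit remark that $Z$ being closed in $V \times \widehat B_n$ forces $\widehat Z = Z$ and hence $\widehat Z \cap (\Sigma \times F_n) = \emptyset$ is a helpful unpacking of a step the paper leaves implicit.
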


\begin{proof}
For the first assertion, suppose that $Z$ is an $\fs$-cycle. By Lemma \ref{lem:Adm-spread}, there is a affine atlas $(X, \Sigma)$ for $V$ such that $\bar{Z} \in \TZ^n_{\Sigma} (X, n;m)$. Since $Z \to V$ is fs over $V$, by Lemma \ref{lem:finiteness}, $\widehat{Z} \cap (\Sigma \times F_n) = \emptyset$. 

Conversely, suppose that for an atlas $(X,\Sigma)$ and the closure $\bar{Z}$ in $X \times B_n$, we have $\bar{Z} \in \TZ^n _\Sigma (X, n;m)$ and $\widehat{Z} \cap (\Sigma \times F_n) = \emptyset$. Then, by Lemma \ref{lem:fs-smooth}, we may shrink $(X,\Sigma)$ to an affine open atlas $(U,\Sigma)$ such that $\bar{Z}_U \to U$ fs over $U$. Hence $\bar{Z}_U \in \TZ^n _{\Sigma, \fs} (U, n;m)$. Now by Corollary \ref{cor:fs-fine}, we have $Z \in \TZ^n _{\fs} (V, n;m)$. 

For the second assertion, in case $V$ was smooth, then we could have take $X$ to be smooth here by the last assertion of Lemma \ref{lem:Adm-spread}. Conversely, a localization of a smooth scheme is smooth again, so that $V$ is smooth over $k$.
\end{proof}

\subsection{The \sfs-cycles}\label{sec:SFS*}
For $1 \le j \le n$, let $\pi_j: B_n \to B_{j}$ and ${\widehat{\pi}}_j : {\widehat{B}}_n \to {\widehat{B}}_{j}$ be the projection maps. Let $X \in \Sch^{\ess}_k$ equidimensional. We shall often denote the maps ${\rm id}_X \times \pi_j: X \times B_n \to X \times B_j$ and ${\rm id}_X \times {\widehat{\pi}}_j: X \times \widehat{B}_n \to X \times \widehat{B}_j$ simply by $\pi_j$ and $\widehat{\pi}_j$, respectively, if the scheme $X$ is fixed in a given context.

For any reduced closed subscheme $Z \subset X \times B_n$ and $1 \le j \le n$, let $Z^{(j)} =({\rm id}_X \times  \pi_j)(Z)$ be the scheme-theoretic image of $Z$. Let $Z^{(0)}$ be the scheme-theoretic image of $Z$ in $X$.Note that if the projection $Z \to X$ is proper, then $({\rm id}_X \times  \pi_j)(Z)$ is closed in $X \times B_j$ and, with its reduced induced closed subscheme structure, coincides with $Z^{(j)}$. The same holds for $Z^{(0)}$. We shall use $Z^{(j)}$ when $Z \to X$ is in fact finite.

\begin{defn}\label{defn:sfs-0} 
Let $X \in \Sch^{\ess}_k$ be smooth over $k$ and let $\Sigma \subset X$ be a finite set of points. Let $ m, n \geq 1$ be integers. An integral cycle $[Z] \in \un{\TZ}^n (X, n;m)$  is called an \emph{\sfs-cycle along $\Sigma$}, if $[Z] \in \un{\TZ}^n_{\Sigma}(X, n;m)$, and there exists an affine neighborhood $U\subset X $ of $\Sigma$ such that the following hold.
\begin{enumerate}
\item $Z_U$ is finite and surjective over an irreducible component of $U$, 
i.e., $Z_U \to U$ is an \fs-morphism.
\item The scheme $(Z^{(j)})_U$ is smooth over $k$ for every $0 \le j \le n$. 
\end{enumerate}
A cycle $\alpha \in \un{\TZ}^n (X, n;m)$ is called an \emph{sfs-cycle along $\Sigma$} if every irreducible component of $\alpha$ is an \sfs-cycle along $\Sigma$.
\end{defn}

\begin{lem}\label{lem:sfs-spread}
Let $X$ be an equidimensional smooth affine $k$-scheme and let $\Sigma \subset X$ be a finite set of points. Let $V= X_{\Sigma}$. Let $m, n \geq 1$ be integers. Then $\alpha \in \un{\TZ}^n_{\Sigma}(X, n;m)$ is an \sfs-cycle along $\Sigma$ if and only if $\alpha_V \in \un{\TZ}^n_{\Sigma}(V, n;m)$ is an fs-cycle such that $Z^{(j)}$ is smooth over $k$ for each $0 \leq j \leq n$ and for each irreducible component $Z$ of $\alpha_V$.
\end{lem}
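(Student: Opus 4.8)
The plan is to reduce the equivalence to the previously established spreading-out statements for the $\fs$-property, namely Lemma \ref{lem:Adm-spread}, Corollary \ref{cor:fs-fine} and Proposition \ref{prop:fs-suff-condn}, and then to handle the smoothness condition on the images $Z^{(j)}$ separately by an elementary open-ness argument. Since $X$ is already smooth over $k$ and affine, the atlas $(X,\Sigma)$ is a smooth atlas for $V$, so there is no work to do in producing one. As both conditions in Definition \ref{defn:sfs-0} and in the statement of the lemma are imposed irreducible-component-by-irreducible-component, I may assume from the start that $\alpha = Z$ is a single integral cycle, with $Z_V$ its (integral) localization to $V \times B_n$; note $Z_V \in \un{\TZ}^n_\Sigma(V,n;m)$ by flatness of $V \to X$ and Lemma \ref{lem:fs bc} applied to the intersection conditions.

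First I would treat the $\fs$-part. The key point is that the condition ``$Z_U \to U$ is an $\fs$-morphism for some affine open $U \ni \Sigma$'' is, by definition, exactly the statement that $Z$ is an $\fs$-cycle along $\Sigma$ in the sense of \S\ref{sec:fs}, and Corollary \ref{cor:fs-fine} identifies this with the assertion that $Z_V \to V$ is an $\fs$-morphism, i.e. that $Z_V$ is an $\fs$-cycle on $V$. So the first clause of Definition \ref{defn:sfs-0} matches the $\fs$-clause of the lemma verbatim, modulo Corollary \ref{cor:fs-fine}.

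Next, the smoothness part. Here I would use that, once $Z_U \to U$ is finite (hence proper), the scheme-theoretic images $(Z^{(j)})_U \subset U \times B_j$ are closed, their formation commutes with the flat base change $V \to U$ (so $(Z^{(j)})_V = (Z_V)^{(j)}$, the corresponding scheme-theoretic image over $V$), and — crucially — smoothness over $k$ is an open condition on a $k$-scheme of finite type. Thus if $(Z^{(j)})_U$ is smooth over $k$ then so is its localization $(Z_V)^{(j)}$; conversely, if each $(Z_V)^{(j)}$ is smooth over $k$, then the smooth locus of $(Z^{(j)})_{U}$ is an open subset of $U \times B_j$ whose image in $U$ omits a closed set disjoint from $\Sigma$, and by Lemma \ref{lem:FA} (FA-property of the affine scheme $U$) we may shrink $U$ to a smaller affine open containing $\Sigma$ over which all $n+1$ images $(Z^{(j)})_U$, $0 \le j \le n$, become smooth simultaneously (taking a finite intersection of the resulting opens, and re-applying Lemma \ref{lem:fs-smooth} to preserve finiteness of $Z_U \to U$ on the smaller $U$). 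Combining the two parts gives the stated equivalence.

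The main obstacle I anticipate is bookkeeping rather than anything deep: one must make sure that when shrinking $U$ to enforce smoothness of the $(Z^{(j)})_U$ one does not lose the finiteness/surjectivity of $Z_U \to U$, and that the scheme-theoretic image over $U$ really does base-change correctly to the scheme-theoretic image over $V$ (this needs the projection $Z_U \to U$, equivalently $Z_U \to U \times B_j$, to be proper, which is why the $\fs$-clause must be arranged first). Both are handled by invoking Lemma \ref{lem:finiteness} and Lemma \ref{lem:fs-smooth} on the shrunk atlas, so the argument stays formal.
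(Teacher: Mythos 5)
Your proposal is correct and follows essentially the same route as the paper: the $\fs$-clause is identified via Corollary \ref{cor:fs-fine}, and for the converse the singular loci of the finite images $(Z^{(j)})_{U}$ are pushed forward to a closed subset of the base disjoint from $\Sigma$, after which one shrinks the atlas using Lemma \ref{lem:FA}. The only superfluous step is re-invoking Lemma \ref{lem:fs-smooth} after shrinking, since finiteness of $Z_U \to U$ is automatically preserved under restriction to an open subset of the base.
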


\begin{proof}
Under Corollary \ref{cor:fs-fine}, the $(\Rightarrow)$ direction is obvious. We prove $(\Leftarrow)$. By Corollary \ref{cor:fs-fine}, together with Lemma \ref{lem:FA}, we can find an affine open neighborhood $U' \subset X$ of $\Sigma$ such that the closure $\alpha_{U'} \in \un{\TZ}^n_{\Sigma} (U', n;m)$ is an \fs-cycle along $\Sigma$. Now let $Y \subset U'$ be the union of the images of the finite maps $(Z_{U'}^{(j)})_{\rm sing} \to U'$, where $Z$ runs over all irreducible components of $\alpha$ and $0 \leq j \leq n$. Since $Z_{U'} \to U'$ is finite for each $Z$, this $Y \subset U'$ is a closed subset that does not meet $\Sigma$. By Lemma \ref{lem:FA}, we can choose an affine open neighborhood $U \subset U'\setminus Y$ of $\Sigma$. Then for each component $Z$ of $\alpha$ and each $0 \leq j \leq n$, the scheme $Z_U ^{(j)}$ is smooth over $k$. Note $(Z_U) ^{(j)} = (Z^{(j)})_U$ naturally. This shows that $\alpha_U$ is an \sfs-cycle along $\Sigma$. 
\end{proof}

Another property that \sfs-cycles enjoy is the following.

\begin{lem}\label{lem:pull-back-sfs}
Let $\phi: X \to Y$ be an \'etale morphism of smooth affine $k$-schemes. Let $\Sigma \subset Y$ be a finite set of points and let $\Sigma' =\phi^{-1}(\Sigma)$. Let $Z \in \un{\TZ}^n (Y, n;m)$ be an integral \sfs-cycle along $\Sigma$. Then the flat pull-back $\phi^*(Z) \in \un{\TZ}^n(X, n;m)$ is an \sfs-cycle along $\Sigma'$.
\end{lem}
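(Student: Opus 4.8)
The plan is to verify, one by one, the conditions defining an \sfs-cycle along $\Sigma'$ for the flat pull-back $\phi^*(Z)$, using that étale maps preserve the relevant geometric properties. First I would record the elementary reductions: since $Z$ is integral and \'etale pull-backs of integral cycles are (not necessarily integral, but) effective cycles whose components all map to $Z$, and since each condition in Definition \ref{defn:sfs-0} is checked component by component, it suffices to verify the conditions for each irreducible component $Z'$ of $\phi^*(Z)$. Here $\phi^*(Z)$ denotes $(\phi \times \id_{B_n})^*(Z)$, and $\phi \times \id_{B_n}$ is again \'etale. I would also note at the outset that $\phi^*(Z) \in \un{\TZ}^n_{\Sigma'}(X,n;m)$: the proper intersection with $\Sigma' \times F$ follows because $\phi$ (hence $\phi \times \id$) is flat of relative dimension $0$, so pulling back preserves codimensions, and $\Sigma' \times F = (\phi\times\id)^{-1}(\Sigma\times F)$; the modulus condition is preserved under \'etale (indeed any smooth) pull-back by \cite[Proposition 2.4]{KP} or the standard argument that pull-back of a normalization dominates the normalization of the pull-back, so the divisorial inequality is inherited.

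The main steps are then: (1) Choose the affine open neighborhood. Let $U \subset Y$ be an affine open neighborhood of $\Sigma$ witnessing that $Z$ is an \sfs-cycle along $\Sigma$. Set $U' := \phi^{-1}(U) \subset X$; this is an open neighborhood of $\Sigma'$, and since $X$ is affine (hence FA by Lemma \ref{lem:FA}) and $\phi$ is \'etale, we may further shrink to an affine open $U'' \subset U'$ containing $\Sigma'$ — but in fact $\phi^{-1}(U)$ is affine when $\phi$ is finite; in the general \'etale case one just invokes Lemma \ref{lem:FA} to get an affine $U'' \ni \Sigma'$ inside $\phi^{-1}(U)$. Replacing $U'$ by $U''$, I have $\phi|_{U'}: U' \to U$ \'etale with $U', U$ affine smooth. (2) The fs-condition: $Z_U \to U$ is finite and surjective onto an irreducible component of $U$. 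Base-changing along the flat map $U' \to U$, Lemma \ref{lem:fs bc} gives that $(Z_U)_{U'} \to U'$ is fs; and $(Z_U)_{U'} = (Z_{U'})$ in the obvious sense equals $\phi^*(Z)$ restricted over $U'$, so each component $Z'$ of $\phi^*(Z)_{U'}$ is finite and surjective onto an irreducible component of $U'$. (3) The smoothness of the projections: for each $0 \le j \le n$ I must show $((Z')^{(j)})_{U'}$ is smooth over $k$. The key point is the compatibility of scheme-theoretic image with flat base change: since $Z_U \to U$ is finite (hence proper), $(Z^{(j)})_U$ is closed in $U \times B_j$ and is the scheme-theoretic image; pulling back along the flat map $U' \times B_j \to U \times B_j$ commutes with taking this image (flatness + properness), so $(\phi^*Z)^{(j)}$ over $U'$ equals the flat pull-back of $(Z^{(j)})_U$ along the \'etale map $U' \times B_j \to U \times B_j$. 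An \'etale morphism is smooth, and smoothness over $k$ is stable under pull-back along a $k$-smooth (indeed étale) morphism — more precisely, $(Z^{(j)})_U$ smooth over $k$ and $U'\times B_j \to U \times B_j$ smooth (being \'etale over a base smooth over $k$, or simply: the composite $(\phi^*Z)^{(j)}_{U'} \to (Z^{(j)})_U \to \Spec k$ is a composite of a smooth and a smooth morphism) shows $(\phi^*Z)^{(j)}_{U'}$ is smooth over $k$. Finally one passes from the scheme-theoretic image of $\phi^*(Z)$ to that of each component $Z'$: each $(Z')^{(j)}$ is a union of irreducible components of $(\phi^*Z)^{(j)}$, hence open in it (the components are disjoint after the reductions, or at worst one restricts to a further open), and an open subscheme of a $k$-smooth scheme is $k$-smooth.

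The step I expect to be the genuine technical heart is (3), specifically the assertion that forming the scheme-theoretic image of the projection $X \times B_n \to X \times B_j$ commutes with the \'etale base change $\phi$. This requires knowing that $Z \to X$ is proper (which it is, being finite over the witnessing $U$) so that the image is closed and reduced-structure issues are under control, and then invoking the flat-base-change compatibility of scheme-theoretic images for quasi-compact quasi-separated morphisms (EGA IV, or the fact that $\phi$ flat implies $\phi^*$ is exact and hence commutes with the cokernel computation defining the image ideal sheaf). Everything else — the fs-part, the modulus and face conditions, the passage to components — is a routine transcription of Lemmas \ref{lem:fs bc}, \ref{lem:fs-smooth}, \ref{lem:Adm-spread} and the definitions, and I would treat it telegraphically. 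I would also remark that the perfectness of $k$ is not needed here: we only pull back already-smooth schemes along \'etale maps, so no separability subtleties arise, in contrast with the specialization arguments elsewhere in the paper.
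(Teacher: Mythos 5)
Your proposal is correct and takes essentially the same route as the paper's proof: the paper also reduces to a witnessing affine neighborhood, handles the fs-part by Lemma \ref{lem:fs bc}, and establishes your key step (3) --- that $(\phi^*Z)^{(j)}$ coincides with the \'etale pull-back $\phi^*(Z^{(j)})$ --- by a concrete Cartesian-diagram argument (surjectivity plus reducedness forcing the scheme-theoretic image to fill the whole fiber product), which is just a hands-on instance of the flat base-change compatibility you invoke. The remaining points you flag (shrinking to an affine open via Lemma \ref{lem:FA}, passing from $\phi^*(Z)$ to its individual components using that a smooth scheme's components are open) are correct and no more than the paper itself glosses over.
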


\begin{proof}
It is easy to see that $\phi^*(Z) \in \un{\TZ}^n_{\Sigma'}(X, n;m)$. We now prove the other properties. We can shrink $Y$ and assume that $Z \to Y$ is finite and surjective, and $Z^{(j)}$ is smooth over $k$ for $0 \le j \le n$. Let $W := \phi^*(Z)$. It follows from Lemma \ref{lem:fs bc} that $W$ is an \fs-cycle along $\Sigma'$. To prove that each $W^{(j)}$ is smooth over $k$, let $W_j :=\phi^*(Z^{(j)})$ and consider the commutative diagram
\begin{equation}\label{eqn:pull-back-sfs-0}
\xymatrix@C.8pc{
W \ar@{->>}[r] \ar[d] & W^{(j)} \ar[r] & W_j \ar[r] \ar[d] & X 
\ar[d]^{\phi} \\
Z \ar@{->>}[rr] & & Z^{(j)} \ar[r] & Y.}
\end{equation} 
Here, the map $W^{(j)} \to W_j$ exists uniquely since the right square is Cartesian. The outer big square is also Cartesian, and this implies that so is the left square. In particular, the vertical arrows are all \'etale, the horizontal arrows are all finite and surjective and all schemes in \eqref{eqn:pull-back-sfs-0} are reduced. In particular, $W^{(j)} \surj W_j$. On the other hand, 
as $W \to X$ is finite, $W^{(j)} = \pi_j(W)$ is a reduced closed subscheme of $W_j$. Thus $W^{(j)} = W_j$. Since $Z$ and $Z^{(j)}$ are smooth over $k$ and $\phi$ is \'etale, it follows that $W$ and $W_j$ are smooth over $k$. In particular, $W^{(j)} = W_j$ is smooth over $k$. This finishes the proof. 
\end{proof}

\subsection{Additive higher Chow groups of \fs \ and \sfs-cycles}\label{section:ACH-sfs}
The goal of this paper is to prove the `\sfs-moving lemma' which will show that the cycle class groups of \sfs-cycles coincide with the additive higher Chow groups in the Milnor range for a smooth semi-local $k$-scheme essentially of finite type when $k$ is an infinite perfect field.

Let $m, n \geq 1$. Let $X$ be a smooth affine $k$-scheme and let $\Sigma \subset X$ be a finite set of points. It follows from Definition \ref{defn:sfs-0} that $\TZ^n_{\Sigma, \sfs}(X, n;m)$ is a subgroup of $\TZ^n_{\Sigma, \fs}(X, n;m)$.

\begin{defn}\label{defn:sfs-TCH}
We let
\[
\widetilde{\TH}^n _{\Sigma} (X, n; m) = \frac{{\ker}(\partial: \TZ^n _{\Sigma}(X, n;m) \to \TZ^n(X, n-1;m))}{{\rm im}  (\partial: \TZ^n (X, n+1;m) \to \TZ^n(X, n;m)) \cap \TZ^n _{\Sigma} (X, n;m)}.
\]
\[
\TH^n _{\Sigma, \fs} (X, n; m) = \frac{{\ker}(\partial: \TZ^n _{\Sigma, \fs}(X, n;m) \to \TZ^n(X, n-1;m))}{{\rm im}  (\partial: \TZ^n (X, n+1;m) \to \TZ^n(X, n;m)) \cap \TZ^n _{\Sigma, \fs} (X, n;m)}.
\]
\[
\TH^n _{\Sigma, \sfs} (X, n; m) = \frac{{\ker}(\partial: \TZ^n _{\Sigma, \sfs}(X, n;m) \to \TZ^n(X, n-1;m))}{{\rm im} (\partial: \TZ^n (X, n+1;m) \to  \TZ^n(X, n;m)) \cap \TZ^n _{\Sigma, \sfs} (X, n;m)}.
\]

We similarly define $\widetilde{\TH}_{\Sigma} ^n (V, n;m)$, $\TH^n _{\fs} (V, n;m)$, and $\TH^n _{\sfs} (V, n;m)$.
\end{defn}

If $X$ is not necessarily connected, note that the groups for $X$ are obtained simply by taking the direct sums of the corresponding groups over all connected components of $X$.

In the above, the definition of the group $\widetilde{\TH}^n_{\Sigma} (X, n;m)$ is slightly different from that of $\TH^n_{\Sigma} (X, n;m)$ in Definition \ref{defn:subcx ass}. However, we have:

\begin{lem}\label{lem:TH_M}
The natural surjection $\TH^n _{\Sigma} (X, n;m) \surj \widetilde{\TH}^n _{\Sigma} (X, n; m)$ is an isomorphism. Similarly, $\TH^n_{\Sigma} (V, n;m) \to \widetilde{\TH}_{\Sigma} ^n (V, n;m)$ is an isomorphism.
\end{lem}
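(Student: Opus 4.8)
The claim is that the natural surjection $\TH^n_{\Sigma}(X,n;m) \surj \widetilde{\TH}^n_{\Sigma}(X,n;m)$ is an isomorphism, and likewise for $V$ in place of $X$. The plan is to unwind both definitions and see that they differ only in the denominator: $\TH^n_{\Sigma}(X,n;m)$ is the homology of the subcomplex $\TZ^n_{\Sigma}(X,\bullet;m)$, so its denominator is $\partial\bigl(\TZ^n_{\Sigma}(X,n+1;m)\bigr)$, while $\widetilde{\TH}^n_{\Sigma}(X,n;m)$ has the a priori larger denominator $\partial\bigl(\TZ^n(X,n+1;m)\bigr) \cap \TZ^n_{\Sigma}(X,n;m)$. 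The numerators agree on the nose: both are $\ker\bigl(\partial: \TZ^n_{\Sigma}(X,n;m) \to \TZ^n(X,n-1;m)\bigr)$, since a cycle in $\TZ^n_{\Sigma}(X,n;m)$ automatically has faces in $\TZ^n_{\Sigma}(X,n-1;m)$ by the containment property built into Definition \ref{defn:subcx ass}. Therefore the surjection is an isomorphism precisely when
\[
\partial\bigl(\TZ^n(X,n+1;m)\bigr) \cap \TZ^n_{\Sigma}(X,n;m) \ = \ \partial\bigl(\TZ^n_{\Sigma}(X,n+1;m)\bigr).
\]
The inclusion $\supseteq$ is trivial, so everything reduces to proving $\subseteq$: if $\beta \in \TZ^n(X,n+1;m)$ is a cycle with $\partial\beta \in \TZ^n_{\Sigma}(X,n;m)$ (i.e. $\partial\beta$ meets all $W \in \{\Sigma\}$ times faces in the right codimension, i.e. properly), then $\partial\beta = \partial\beta'$ for some $\beta' \in \TZ^n_{\Sigma}(X,n+1;m)$ — equivalently, $\partial\beta$ is already the boundary of a cycle in codimension $n$ and level $n+1$ that is \emph{itself} in good position with respect to $\Sigma$.

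The mechanism for this is the spreading/moving argument that was already used to prove Lemma \ref{lem:Adm-spread}: since codimension $n$ is the top degree under consideration and $\Sigma$ is semi-local (a finite set of points), the constraint of meeting $\Sigma\times F$ properly is, for level-$(n+1)$ cycles of codimension $n$, essentially automatic or can be arranged by a small perturbation away from the generic point(s) of $X$. More precisely, I would invoke the standard fact (the analogue of \cite[Lemma 4.13, 4.14]{KP3}, cited in the proof of Lemma \ref{lem:Adm-spread}) that the inclusion $\TZ^n_{\Sigma}(X,\bullet;m) \hookrightarrow \TZ^n(X,\bullet;m)$ is a quasi-isomorphism in the relevant range — or at least that it induces an isomorphism on $\H_n$ — which is exactly a moving lemma for cycles with modulus relative to the finitely many points of $\Sigma$. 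This is the result of \cite{KP} (the subcomplex $\TZ^q_{\mathcal W}$ computes the same homology as $\TZ^q$) applied with $q = n$ and $\mathcal W = \{\Sigma\}$. Given that quasi-isomorphism, the displayed set equality follows formally: a boundary in $\TZ^n(X,\bullet;m)$ that happens to land in the subcomplex is, up to the subcomplex's own boundaries, the boundary of something in the subcomplex.

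The hard part — or rather the only genuinely substantive point — is justifying that one may upgrade a representative $\beta$ at level $n+1$ (which a priori only lies in the big group $\TZ^n(X,n+1;m)$) to one lying in $\TZ^n_{\Sigma}(X,n+1;m)$ \emph{without changing $\partial\beta$}. One cannot move $\partial\beta$ since it is fixed; one must move $\beta$ rel its boundary faces. This is where the moving lemma for the subcomplex $\TZ^n_{\mathcal W}$ of \cite{KP} enters in its full strength: it provides a homotopy (a level-$(n+2)$ element) realizing the required modification, and this homotopy must itself respect the modulus condition and the proper intersection with $\Sigma\times$(faces). Once that input is in hand, the proof is a two-line diagram chase. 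For the version over $V = X_{\Sigma}$, the same argument applies verbatim, or alternatively one deduces it from the $X$-version by the spreading equivalences of Lemma \ref{lem:Adm-spread} together with the fact that localization at $\Sigma$ is flat and compatible with all the complexes and subcomplexes involved; I would state it as: pull back along $V \to X$ and use that $\TZ^n_{\Sigma}(V,\bullet;m) = \colim_{U} \TZ^n_{\Sigma}(U,\bullet;m)$ over affine open neighborhoods $U$ of $\Sigma$, so the isomorphism for each $U$ passes to the colimit.
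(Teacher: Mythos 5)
Your proposal is correct and is essentially the paper's argument: both rest on the moving lemma asserting that $\TH^n_{\Sigma}(X,n;m) \to \TH^n(X,n;m)$ is an isomorphism, after which the claim is formal. The paper phrases the formal step more economically — the composite $\TH^n_{\Sigma}(X,n;m) \surj \widetilde{\TH}^n_{\Sigma}(X,n;m) \to \TH^n(X,n;m)$ is an isomorphism, so the surjective first arrow is injective — and this is exactly your observation that injectivity on $\H_n$ forces a boundary of the big complex lying in the subcomplex to already be a boundary of the subcomplex; in particular your paragraph about needing to ``move $\beta$ rel its boundary'' is moot, since only the statement of the moving lemma, not its internal homotopy, is required. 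One correction: for the smooth affine $X$ (and the semi-local $V$) at hand, the relevant moving lemma is Kai's \cite{Kai} (see \cite[Theorems 4.1, 4.10]{KP3}), not the projective-case result of \cite{KP}.
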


\begin{proof}
By the moving lemma for additive higher Chow groups of smooth affine schemes of W. Kai \cite{Kai} (see \cite[Theorem 4.1]{KP3} for a sketch of its proof), the composition $\TH^n _{\Sigma} (X, n;m) \surj \widetilde{\TH}^n _{\Sigma} (X, n; m) \to \TH^n (X, n;m)$ is an isomorphism. Hence, the first arrow is injective. The proof for the second one is similar, except that we use \cite[Theorem 4.10]{KP3}.
\end{proof}

We thus have canonical maps
\begin{equation}\label{eqn:moving arrows}
\TH^n _{ \sfs} (V, n;m) \to \TH^n _{\fs} (V, n; m) \to {\TH}^n _{\Sigma} (V, n; m) \to \TH^n (V, n;m),
 \end{equation}
where the last map is an isomorphism by Lemma~\ref{lem:TH_M} and \cite{Kai}.
Our goal is to show that all other maps are also isomorphisms. 

\subsection{Reduction to localization at closed points}\label{sec:reduct_closed}

The semi-local $k$-schemes essentially of finite type we consider are obtained by localizing an affine $k$-scheme (see Lemma \ref{lem:FA}) at a finite set $\Sigma$ of points which may not necessarily be closed. In \S \ref{sec:reduct_closed}, we show that for the \sfs-moving lemma, it is possible to reduce to the case when all points of $\Sigma$ are actually closed. The following is the goal:

\begin{prop}\label{prop:no_closed}
Suppose the natural map $\TH^n _{\sfs} (V, n;m) \to \TH^n (V, n;m)$ is an isomorphism for every smooth semi-local $k$-scheme $V$ essentially of finite type, obtained by localizing at a finite set of closed points. 
Then the natural map $\TH^n_{\sfs} (V, n;m) \to \TH^n (V, n;m)$ is an isomorphism for every smooth semi-local $k$-scheme $V$ essentially of finite type.
\end{prop}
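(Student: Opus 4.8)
The plan is to realize $V$ as a localization of a semi-local scheme $V''$ that \emph{is} obtained by localizing at closed points, by first spreading a given cycle out to a common affine model and then switching the finite set along which one localizes. As a preliminary reduction, by \eqref{eqn:moving arrows} and Lemma~\ref{lem:TH_M} the map $\TH^n_\Sigma(V,n;m)\to \TH^n(V,n;m)$ is an isomorphism, while the comparison $\TH^n_\sfs(V,n;m)\to\TH^n_\Sigma(V,n;m)$ is injective for a formal reason: an sfs-cycle has trivial boundary (Lemma~\ref{lem:No-intersection}), so the numerator of $\TH^n_\sfs(V,n;m)$ is all of $\TZ^n_\sfs(V,n;m)$ and its denominator is $\im(\partial)\cap\TZ^n_\sfs(V,n;m)$, which forces the kernel of this map to vanish. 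Hence it suffices to prove \emph{surjectivity}, so one fixes a class represented by some $\alpha\in\TZ^n_\Sigma(V,n;m)$ with $\partial\alpha=0$; writing $V=X_\Sigma$ with $\Sigma=\{\eta_1,\dots,\eta_s\}$ the set of closed points of $V$, Lemma~\ref{lem:Adm-spread} lets us arrange that $X$ is a smooth affine atlas and that the Zariski closure $\overline\alpha$ of $\alpha$ in $X\times B_n$ lies in $\TZ^n_\Sigma(X,n;m)$ with $\partial\overline\alpha=0$.

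Next I would choose, for each $i$, a closed point $x_i$ of the integral closed subscheme $\overline{\{\eta_i\}}\subset X$ in general position: for each component $Z$ of $\overline\alpha$ and each face $F\subset B_n$, projecting the $\Sigma$-admissible intersection $Z\cap(\overline{\{\eta_i\}}\times F)$ onto $\overline{\{\eta_i\}}$ shows that, outside a proper closed subset of $\overline{\{\eta_i\}}$, the fibre over $x_i$ --- which is $Z\cap(\{x_i\}\times F)$ --- has dimension at most $\dim F-n$. There are only finitely many such conditions, and a nonempty locally closed subset of a finite-type $k$-scheme always contains a closed point of the ambient scheme, so such an $x_i$ exists. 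Setting $\Sigma'':=\{x_1,\dots,x_s\}$ we then have $\overline\alpha\in\TZ^n_{\Sigma''}(X,n;m)$, and $V'':=X_{\Sigma''}$ is a smooth semi-local $k$-scheme essentially of finite type obtained by localizing at the finite set $\Sigma''$ of \emph{closed} points. Since each $\eta_i$ is a generization of $x_i$, we have $\Sigma\subset V''$ and $V=(V'')_\Sigma$; write $\jmath\colon V\to V''$ for this flat localization.

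Now I would invoke the hypothesis for $V''$. The restriction $\alpha'':=\overline\alpha|_{V''}\in\TZ^n_{\Sigma''}(V'',n;m)$ satisfies $\partial\alpha''=0$, so there is $\gamma''\in\TZ^n(V'',n+1;m)$ with $\beta'':=\alpha''-\partial\gamma''\in\TZ^n_\sfs(V'',n;m)$. Because $V''$ is semi-local with set of closed points exactly $\Sigma''$, its only affine open containing $\Sigma''$ is $V''$ itself, so ``$\beta''$ is an sfs-cycle along $\Sigma''$'' simply means that $\beta''$ is fs over $V''$ and that, for each component $Z''$ of $\beta''$ and each $0\le j\le n$, the image $(Z'')^{(j)}$ is smooth over $k$. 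Pulling back along $\jmath$ gives $\jmath^*\alpha''=\alpha$ and $\alpha-\partial(\jmath^*\gamma'')=\jmath^*\beta''$ in $\TZ^n(V,n;m)$. It remains to see $\jmath^*\beta''\in\TZ^n_\sfs(V,n;m)$: by Lemma~\ref{lem:fs bc} its components are fs over $V$; upper semicontinuity of fibre dimension (open sets being stable under generization) shows it is still admissible along the closed points $\Sigma$ of $V$; and, exactly as in the proof of Lemma~\ref{lem:pull-back-sfs}, each image $(\,\cdot\,)^{(j)}$ is pulled back from $(Z'')^{(j)}$, hence is a localization of a smooth $k$-scheme and so lies in $\Sm_k^{\ess}$. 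As $V$ is semi-local with closed points $\Sigma$, this is precisely the statement that $\jmath^*\beta''$ is an sfs-cycle on $V$. Therefore $[\alpha]=[\jmath^*\beta'']$ in $\TH^n_\Sigma(V,n;m)\cong\TH^n(V,n;m)$ lies in the image of $\TH^n_\sfs(V,n;m)$, proving surjectivity and hence the proposition; no assumption on $k$ beyond those already in force is used.

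The only genuinely delicate point is that the comparison must be run in both directions through $X$, which sees both $\Sigma$ and $\Sigma''$: \emph{downward}, one must move the representative so that it becomes admissible along the freshly chosen closed points $\Sigma''$ --- this is where the genericity of the $x_i$ enters --- and \emph{upward}, one pulls the resulting sfs-cycle back along the flat localization $\jmath$ --- this is where the semi-locality of $V''$ is used, to collapse ``sfs along $\Sigma''$'' to the $\Sigma''$-free condition ``fs over $V''$ with smooth images''. Everything else is routine spreading-out bookkeeping with Lemmas~\ref{lem:Adm-spread}, \ref{lem:fs bc} and \ref{lem:pull-back-sfs}.
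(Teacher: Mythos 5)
Your argument is correct and follows essentially the same route as the paper: spread the cycle out to an affine atlas, specialize the points of $\Sigma$ to closed points, apply the hypothesis to the resulting semi-local scheme localized at those closed points, and pull the resulting sfs-representative back along the flat localization map (this is exactly the content of Lemma~\ref{lem:non-clos-local-surj} and the proof of Proposition~\ref{prop:no_closed}). The only real divergence is your general-position choice of the $x_i$, which is unnecessary: the hypothesis is applied to the full group $\TH^n(V'',n;m)$, so the restricted cycle $\alpha''$ need not be $\Sigma''$-admissible, and any specializations (chosen distinct) suffice.
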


We prove the following first:

\begin{lem}\label{lem:non-clos-local-surj}
Let $V$ be a smooth semi-local $k$-scheme essentially of finite type, obtained by localizing an affine $k$-scheme $X$ at a finite set $\Sigma$ of, not necessarily closed, points. Let $\alpha \in \un{\TZ}^n (V, n;m)$.

Then there exist $(1)$ a smooth semi-local $k$-scheme $V'$ essentially of finite type, obtained by localizing an affine $k$-scheme at a finite set $\Sigma'$ of closed points with a flat localization map $V \to V'$ and $(2)$ a cycle $\alpha' \in \un{\TZ}^n (V', n;m)$ such that the flat pull-back map $\phi^{V'}_V : \underline{\TZ}^n (V', n;m) \to \underline{\TZ}^n (V, n;m)$ satisfies $\phi^{V'} _V (\alpha') = \alpha$.
If $\partial \alpha=0$, we can ensure $\partial \alpha'=0$. 
\end{lem}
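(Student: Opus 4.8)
The plan is to pass from the non-closed points of $\Sigma$ to closed points by a standard ``spreading out'' of the finitely many cycle components, carried out one point at a time, and to keep track of the boundary. Fix an affine atlas $(X,\Sigma)$ for $V$ as in Lemma~\ref{lem:FA}. Write $\Sigma = \{x_1,\dots,x_s\}$ and let $\alpha = \sum_i n_i Z_i$ with the $Z_i \subset V\times B_n$ integral. Let $\ov{Z}_i$ denote the Zariski closure of $Z_i$ in $X\times B_n$ and $\ov\alpha = \sum_i n_i \ov{Z}_i$; thus $\alpha = \ov\alpha_V$. Since $\alpha$ is admissible on $V$, after shrinking $X$ to a suitable affine open neighborhood $U$ of $\Sigma$ (using Lemma~\ref{lem:Adm-spread}, and its addendum on boundaries if $\partial\alpha=0$) we may assume $\ov\alpha = \ov\alpha_X \in \un{\TZ}^n(X,n;m)$ and, when $\partial\alpha=0$, that $\partial\ov\alpha=0$ already on $X$. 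Replacing $X$ by its connected component(s) meeting $\Sigma$, and noting that everything decomposes over connected components, we may assume $X$ is connected (hence, being smooth, integral).

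First I would reduce to the case $|\Sigma|=1$, i.e.\ $\Sigma = \{x\}$ a single (non-closed) point. Indeed, the $\overline{\{x_i\}}$ are finitely many irreducible closed subsets of $X$; each is itself a variety with a dense set of closed points, so one may choose a closed point $x_i'$ lying in $\overline{\{x_i\}}$ which avoids the finitely many ``bad'' closed subsets that arise (singular loci of the $\ov{Z}_i$ and of their faces, the images of non-finite loci, etc.), and pass to $\Spec\mathcal O_{X,\{x_1',\dots,x_s'\}}$; there is then a flat localization map $V \to V'$. The one-point case is where the work is. So suppose $\Sigma = \{x\}$ with closure $T := \overline{\{x\}} \subset X$, an integral closed subscheme, say of dimension $e$. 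The generic point of $T$ is $x$, so $V = \Spec\mathcal O_{X,x}$ is the localization of $X$ at the generic point of $T$. The idea is to spread $\alpha$ out over (an open subset of) $T$ and then restrict to a closed point of $T$. Concretely, let $Z_i^X$ be the closure of $Z_i$ in $X\times B_n$; the restriction of $Z_i^X$ to the generic fiber over $T$ is $Z_i$. Choose a nonempty affine open $T^\circ \subset T$ over which the structural data is as flat/nice as one wants: each $Z_i^X|_{T^\circ}$ is flat over $T^\circ$, the faces $(Z_i^X|_{T^\circ})\cap (T^\circ\times F)$ are flat over $T^\circ$ of the right codimension, and the modulus condition is fiberwise satisfied (these are all open conditions on the base $T$, and hold at the generic point $x$ because $\alpha$ is admissible on $V$; here one uses that $k$ is perfect, hence $T^\circ$ is generically smooth, to control the normalizations appearing in the modulus condition). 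Pick any closed point $x' \in T^\circ$; since $k$ is perfect and $T^\circ$ is a $k$-variety we may, after further shrinking, assume $x'$ is a smooth point of $T^\circ$. Set $V' := \Spec\mathcal O_{X,x'}$ and let $\alpha'$ be the restriction of $\ov\alpha_{T^\circ\times B_n}$... more precisely: let $\widehat{\alpha}$ be the flat pullback of $\ov\alpha$ along a suitable retraction, and set $\alpha' := $ (the pullback of $\ov\alpha$ to $X$, viewed on $V'$); I claim $\alpha' := \ov\alpha_{V'}$ works, where $\ov\alpha_{V'}$ is the flat pullback along the localization $V' \to X$.

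The verification that $\alpha' \in \un{\TZ}^n(V',n;m)$ is admissible is by generic flatness / semicontinuity over the base $T^\circ$: properness of intersections with faces, codimension, and the modulus inequality all hold at the generic point $x$ of $T^\circ$ (because $\alpha$ is admissible on $V$), hence on a nonempty open of $T^\circ$, hence at the chosen closed point $x'$ after shrinking; crucially, flat pullback of cycles commutes with taking faces and with taking closures in the needed sense, and the modulus condition is preserved because perfectness of $k$ lets us base-change normalizations along the (generically) smooth $T^\circ$. Finally, $\phi^{V'}_V(\alpha') = \alpha$: both sides are flat pullbacks of $\ov\alpha$ to $V$ — the localization $V\to X$ factors through $V'\to X$ up to the identification $V = \Spec\mathcal O_{T^\circ,x}\hookrightarrow\Spec\mathcal O_{X,x}$ — so this is just functoriality of flat pullback (and it is here that the shorthand $\alpha_W := f^*\alpha$ of \S\ref{sec:additive complex} is in force). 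If $\partial\alpha = 0$ we arranged $\partial\ov\alpha = 0$ on $X$, and since flat pullback is a map of complexes, $\partial\alpha' = \partial\ov\alpha_{V'} = 0$.

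\textbf{Main obstacle.} The technical heart is preserving the modulus condition under the specialization $x \rightsquigarrow x'$: the modulus condition is defined on the \emph{normalization} of the closure $\ov{Z}_i$, and normalization does not commute with arbitrary base change. The fix is exactly the perfectness hypothesis on $k$: after shrinking $T^\circ$ we may assume $T^\circ$ is smooth over $k$, so the relevant closures of the $Z_i^X$ over $T^\circ$ are, generically on $T^\circ$, normal (or at least their normalizations are flat over $T^\circ$ with geometrically normal fibers), whence the divisorial inequality $(m+1)[\nu^*F_{n,0}] \le [\nu^* F_n^1]$ specializes from the generic fiber to the closed fiber over $x'$. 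I expect this normalization-vs-base-change issue — and the bookkeeping needed to state the flatness/openness reductions precisely — to be the only non-routine part; everything else is generic flatness, upper semicontinuity of fiber dimension, and functoriality of flat pullback of cycles.
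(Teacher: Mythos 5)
Your construction coincides with the paper's: spread $\alpha$ out to an admissible cycle $\ov\alpha_U$ on an affine open $U\supset\Sigma$ via Lemma~\ref{lem:Adm-spread} (arranging $\partial\ov\alpha_U=0$ when $\partial\alpha=0$), choose for each $p\in\Sigma$ a closed point of $U$ in $\ov{\{p\}}$, set $V'$ to be the localization of $U$ at these closed points, and take $\alpha':=(\ov\alpha_U)_{V'}$. The factorization $V\to V'\to U$ and functoriality of flat pullback then give $\phi^{V'}_V(\alpha')=\alpha$ and $\partial\alpha'=0$. Up to this point you and the paper agree.

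However, your verification that $\alpha'\in\un{\TZ}^n(V',n;m)$ is misdirected, and the ``main obstacle'' you identify does not arise. You argue as if $\alpha'$ were obtained by \emph{specializing} the cycle to the closed fiber over $x'$ in the family $T^{\circ}$, which would indeed require generic flatness, semicontinuity, and control of normalizations under base change. But $V'=\Spec\mathcal{O}_{X,x'}$ is the $r$-dimensional localization of $U$ at $x'$, not the fiber $\Spec k(x')$, and $\alpha'$ is simply the flat pullback of the \emph{already admissible} cycle $\ov\alpha_U$ along the (flat) localization $V'\to U$. Proper intersection with faces and the modulus condition are preserved under such pullback for free (codimensions can only increase under localization, and the modulus condition restricts to open subsets and their limits); no fiberwise statement over $T^{\circ}$, no base change of normalizations, and no perfectness of $k$ is needed anywhere in this lemma. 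Your phrase ``the restriction of $Z_i^X$ to the generic fiber over $T$ is $Z_i$'' conflates the localization $\Spec\mathcal{O}_{X,x}$ with a fiber over $T$; once that conflation is removed, the reduction to $|\Sigma|=1$, the choice of $T^{\circ}$, and the entire flatness/semicontinuity apparatus can be deleted, and the proof collapses to the half-page argument the paper gives. (Specialization arguments of the kind you describe do appear elsewhere in the paper, e.g.\ in the proof of Lemma~\ref{lem:sfs-Kai}, but not here.)
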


\begin{proof}
By Lemma \ref{lem:FA}, we may assume that $V= X_{\Sigma}$, where $X$ is a smooth affine $k$-scheme of finite type. For the cycle $\alpha \in \underline{\TZ} ^n (V, n;m)$, by Lemma \ref{lem:Adm-spread}, there exists a smooth affine open neighborhood $U \subset X$ containing $\Sigma$ such that the Zariski closure $\alpha_U$ of $\alpha$ in $U \times B_n$ such that it is in $\un{\TZ}^n (U, n;m)$.
If $\partial \alpha=0$, we can shrink $U$ further (if necessary) so that
$\partial \alpha_U = 0$.

For each $p \in \Sigma$, there exists a closed point $\mathfrak{m}_p \in U$ that is a specialization of $p$. (It exists by the basic fact in commutative algebra that any proper ideal of a commutative ring with unit is contained in a maximal ideal.) We choose it so that a distinct pair of points of $\Sigma$ gives a distinct pair of points. Let $\Sigma' := \{ \mathfrak{m}_p \ | \ p \in \Sigma\}$, and take $V':= U_{\Sigma'}$. Here, $\alpha_U \in \un{\TZ}^n (U, n;m)$, and let $\alpha' \in \un{\TZ}^n (V', n;m)$ be its flat pull-back via the localization map $V' \to U$. This satisfies $\partial \alpha' = 0$ if $\partial \alpha = 0$. By the construction of $V'$, we also have the localization map $V \to V'$ and the flat pull-back map $\phi^{V'} _V : \un{\TZ}^n (V', n;m)\to \un{\TZ}^n (V, n,m)$. By the construction of $\alpha'$, we have $\phi^{V'} _V (\alpha') = \alpha$. This proves the lemma.
\end{proof}

We remark however that Lemma \ref{lem:non-clos-local-surj} does not say that the map $\phi^{V'}_V$ is surjective. It simply says that for each element $\alpha$, there is some $V'$ such that $\alpha$ can be an image of a cycle over $V'$.

\bigskip

{\bf Proof of Proposition \ref{prop:no_closed}.}  Since the map $\TH^n _{\sfs} (V, n;m) \to \TH^n (V, n;m)$ is automatically injective, it is enough to prove that this is surjective. Let $\alpha \in \TH^n (V, n;m)$ be an arbitrary cycle class, and choose its cycle representative in $\un{\TZ}^n (V, n;m)$, also denoted by $\alpha$. 
Being a cycle representing a class in $\TH^n (V, n; m)$, we have $\partial \alpha=0$.

By Lemma \ref{lem:non-clos-local-surj}, there exists now a smooth semi-local $k$-scheme $(V', \Sigma')$ essentially of finite type, obtained by localizing at a finite set of closed points, a cycle class $\alpha' \in \TH^n (V', n;m)$ and the localization map $\phi^{V'}_V : \TH^n (V', n;m) \to \TH^n (V, n;m) $ sends $\alpha'$ to $\alpha$.

On the other hand, the localization map $\phi^{V'}_V$ sends the $\sfs$-cycles over $V'$ to the $\sfs$-cycles over $V$. 
To see this, we first note that this map sends $\un{\TZ}_{\Sigma'} ^n (V', n;m)$ to $\un{\TZ}_{\Sigma} ^n (V, n;m)$ because the localization does not increase the dimensions of schemes, thus the proper intersection condition with $\Sigma'$ implies the proper intersection condition with $\Sigma$. Now, the $\sfs$-cycles are preserved under $\phi^{V'}_V$ because the localization (flat pull-back) of \fs-morphisms are \fs-morphisms by Lemma \ref{lem:fs bc}, while it is a basic fact in commutative algebra that a localization of a regular local ring is again a regular local ring. Hence, we have a commutative diagram:
\begin{equation}\label{eqn:red_cl}
\xymatrix{
\TH_{\sfs} ^n (V', n;m) \ar[d] _{\sfs_{V'}} \ar[r] ^{\phi_{\sfs}} & \TH_{\sfs} ^n (V, n;m) \ar[d] ^{\sfs_V} \\
\TH^n (V', n;m) \ar[r] ^{\phi} & \TH^n (V, n;m),}
\end{equation}
where $\phi= \phi^{V'} _V$ and $\phi_{\sfs}$ is the restriction of $\phi$. By construction, we have $\phi (\alpha') = \alpha$. By the given assumption, we have that $\sfs_{V'}$ is surjective, so that there exists $\alpha'' \in \TH^n_{\sfs} (V', n;m)$ such that $\sfs_{V'} (\alpha'') = \alpha'$. Hence $\alpha = \phi (\alpha') = \phi \circ  \sfs_{V'}  (\alpha'') = ^{\dagger} \sfs_V \circ \phi_{\sfs} (\alpha'')$, where $\dagger$ holds by the commutativity of the diagram \eqref{eqn:red_cl}. In particular, $\alpha \in {\rm im} (\sfs_V)$. Since $\alpha$ was arbitrary in $\TH^n (V, n;m)$, this shows that $\sfs_V$ is surjective, hence an isomorphism. 
\hfill$\square$

We have one further result:

\begin{lem}\label{lem:No-intersection}
Let $(V, \Sigma)$ be a smooth semi-local $k$-scheme essentially of finite type. Let $m, n \geq 1$ be integers. Let $\alpha \in \un{\TZ}^n_\Sigma(V, n;m)$ be such that $|\alpha|$ is finite over $V$. Then $\alpha$ does not intersect $V \times F$  for any proper face $F \subset B_n$ at all. In particular, $\partial(\alpha) =0$.
\end{lem}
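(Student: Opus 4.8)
The plan is to prove the displayed intersection statement directly, after which $\partial(\alpha)=0$ is immediate; perhaps surprisingly, the modulus condition plays no role and everything comes from the finer intersection condition with $\Sigma$ that is built into the definition of $\un{\TZ}^n_\Sigma$. First I would reduce to a single codimension-one face: every proper face $F \subset B_n$ is contained in a face of the form $\{y_i = \epsilon\}$ with $1 \le i \le n-1$ and $\epsilon \in \{0,\infty\}$, so it suffices to show $|\alpha| \cap (V \times \{y_i = \epsilon\}) = \emptyset$ for all such $i$ and $\epsilon$. Granting this, each $\partial_i^\epsilon(\alpha) = ({\rm id}_V \times \iota_{n,i}^\epsilon)^*(\alpha)$ is supported on the empty scheme, and therefore $\partial(\alpha) = \sum_{i=1}^{n-1}(-1)^i(\partial_i^\infty - \partial_i^0)(\alpha) = 0$.

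Next I would record two elementary inputs. \textbf{(i)} Since $B_n \cong \A^n_k$ is affine, $V \times B_n$ is affine and $|\alpha|$ is a closed affine subscheme; as $|\alpha| \to V = \Spec(R)$ is finite and $R$ is semi-local, we have $|\alpha| = \Spec(S)$ with $S$ a finite $R$-algebra, hence $S$ is again semi-local. Consequently every nonempty closed subscheme of $|\alpha|$ contains a closed point of $|\alpha|$, and, since finite morphisms are closed, every closed point of $|\alpha|$ maps to a closed point of $V$, i.e.\ into $\Sigma$. \textbf{(ii)} A proper face $F \subset B_n$ is cut out by $s \ge 1$ of the face equations, so $\dim F = n - s \le n-1$; as $\Sigma$ is zero-dimensional, $\dim(\Sigma \times F) \le n-1 < n$. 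Because $\alpha \in \un{\TZ}^n_\Sigma(V, n; m)$ forces $\codim_{\Sigma \times F}\bigl(|\alpha| \cap (\Sigma \times F)\bigr) \ge n$ by Definition~\ref{defn:subcx ass}, we must in fact have $|\alpha| \cap (\Sigma \times F) = \emptyset$ for every proper face $F$.

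Finally I would combine the two inputs. Suppose, for contradiction, that $|\alpha| \cap (V \times F) \ne \emptyset$ for some proper face $F$. By (i) this intersection, being a nonempty closed subscheme of $|\alpha|$, contains a closed point $z$ of $|\alpha|$, and the image $v$ of $z$ in $V$ lies in $\Sigma$. Since $z$ lies over $v$ inside $V \times B_n$ and also lies in $V \times F$, we get $z \in (\{v\} \times F) \subseteq \Sigma \times F$, so $|\alpha| \cap (\Sigma \times F) \ne \emptyset$, contradicting (ii). Hence $|\alpha| \cap (V \times F) = \emptyset$ for every proper face $F$, which is the assertion, and $\partial(\alpha)=0$ follows as in the first paragraph. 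I do not expect a genuine obstacle here; the only points requiring care are the semi-local bookkeeping in (i) --- that finiteness over a semi-local base produces a semi-local total space whose closed points sit over $\Sigma$ --- and matching the codimension condition defining $\un{\TZ}^n_\Sigma$ against the dimension count for faces in (ii).
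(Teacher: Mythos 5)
Your proposal is correct and follows essentially the same route as the paper: finiteness of $|\alpha|\to V$ over the semi-local base forces any nonempty intersection with $V\times F$ to contain a point lying over $\Sigma$, and the codimension condition defining $\un{\TZ}^n_\Sigma$ then rules out any intersection with $\Sigma\times F$ by the same dimension count $\dim(|\alpha|\cap(\Sigma\times F))\le\dim F-n<0$. The initial reduction to codimension-one faces is harmless but unnecessary, since your argument already handles an arbitrary proper face.
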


\begin{proof}
We may assume that $\alpha= [Z]$ is an irreducible cycle and $V$ is integral. We prove that $Z \cap (V \times F)$ is empty. 

The composite $Z \cap (V \times F)  \hookrightarrow Z \to V$ is finite by the given assumption. Hence, its image in $V$ is closed and therefore must
intersect $\Sigma$ non-trivially if non-empty. It suffices therefore to show that
the fiber product $\Sigma \times_V Z \times_{B_n} F = Z \cap (\Sigma \times F)$
is empty.

However, by the given assumption that $Z \in \un{\TZ}^n _{\Sigma} (V, n;m)$, the proper intersection condition with $\Sigma$ reads:
$\codim _{\Sigma \times F} Z \cap (\Sigma \times F)) \geq n$.
Equivalently, 
$$\dim \ Z \cap (\Sigma \times F) \leq \dim \ (\Sigma \times F) - n = \dim \ F - n < 0.
$$
But this means $Z \cap (\Sigma \times F) = \emptyset$. This proves the lemma.
\end{proof}

\bigskip

\textbf{Convention:} Using Proposition \ref{prop:no_closed}, from now on, when we say a semi-local $k$-scheme essentially of finite type, it will mean that it is obtained by localizing at a finite set of closed points, unless we say otherwise.

\section{The \sfs-moving lemma in affine spaces}\label{sec:sfs-AS}
In this section, we prove a special case of Theorem \ref{thm:Main-1} when the underlying semi-local scheme is a localization an affine space over $k$. This will be a ground for the general case of the theorem. 

\subsection{The set-up for affine spaces}

We fix some notations that we shall use throughout this section.

Let $k$ be an infinite perfect field. Let $m, n, r \geq 1$ be integers. We let $\Sigma \subset \A^r_k = \Spec(k[x_1, \ldots , x_r])$ be a finite set of closed points. Let $V$ be the localization of $\A^r_k$ at $\Sigma$. Let $j: V \to \A^r_k$ be the inclusion map. Let $p_n: \A^r_k \times \A^1_k \times \overline{\square}^{n-1} \to  \A^r_k \times \A^1_k$ and $q: \A^r_k \times \A^1_k  \to \A^r_k$ denote the projection maps and let $q_n = q \circ p_n$. Using the automorphism $y \mapsto 1/(1 - y)$ of $\P^1_k$, we replace $(\square, {\infty, 0})$ by $(\A^1_k, {0, 1})$, and write $\square = \A^1_k$. 

For any closed subset $Y \subset \A^r_k \times \A^1_k \times {\square}^{n-1}$, let $\overline{Y}$ be its closure in $\A^r_k \times \A^1_k \times \overline{\square}^{n-1}$. We let $Z \in \TZ^n_{\Sigma}(\A^r_k, n;m)$ be an irreducible cycle. For an integer $s \ge 0$ and a point $g \in \A^r_k$, we consider the map (\emph{cf.} \cite{Kai})
\begin{equation}\label{eqn:Kai-homotopy}
\phi_{g,s}: \A^r_k \times \A^1_k \times \square \times \overline{\square}^{n-1} \to  \A^r_k \times \A^1_k \times \overline{\square}^{n-1};
\end{equation}
\[
\begin{array}{lll}
\phi_{g,s}(\un{x}, t, y, y_1, \ldots, y_{n-1}) & = & (\un{x} + yt^{s(m+1)}g, t, y_1, \ldots, y_{n-1}).
\end{array}
\]

Note that $\phi_{g,s}$ is strictly speaking defined over the residue field of $g$, but to simply notations we often won't make it explicit. If needed, one can take the scalar extension to the residue field of $g$ to turn $g$ into a rational point. 
For $a \in \square(k)$, we let $\phi_{g,s,a}$ be the composite map
\[
\A^r_k \times \A^1_k \times \overline{\square}^{n-1} \inj  \A^r_k \times \A^1_k \times \square \times \overline{\square}^{n-1}  \xrightarrow{\phi_{g,s}} \A^r_k \times \A^1_k \times \overline{\square}^{n-1},
\]
where the first arrow takes $(\un{x}, t, \un{y})$ to $(\un{x}, t, a, \un{y})$.

The evaluation of $\phi_{g,s}$ at $y = 1$ defines an isomorphism $\A^r_{k(g)} \times \A^1_{k(g)} \to \A^r_{k(g)} \times \A^1_{k(g)}$, given by $\phi_{g,s,1}(\un{x}, t) = (\un{x} + t^{s(m+1)}g, t)$. Let $\phi_{g, s,1} ^{\sharp}: k(g)[\un{x}, t] \to k(g)[ \un{x}, t]$ be the corresponding $k(g)$-algebra isomorphism.

\subsection{Some properties of the twisted translations}

Note that $\phi_{g,s}$ is a flat morphism. In particular, $\phi^*_{g,s}(Z)$ is an algebraic cycle on $\A^r_k \times \A^1_k \times \square^{n}$. In 
the next few lemmas, we verify some algebraic and geometric properties of $\phi^*_{g,s}(Z)$.

\begin{lem}\label{lem:monic} 
Let $f(\un{x}, t) \in k[\un{x}, t]$ be a nonzero polynomial. Then there is a nonempty open subset $U \subset \mathbb{A}^r_k$ such that for each $g \in U$ and sufficiently large $s \gg 0$ (not depending on $g$), the polynomial $\phi_{g,s,1}^{\sharp} (f)$ is monic in $t$ over $k(g)[\un{x}]$, i.e., integral over  $k(g)[\un{x}]$.
\end{lem}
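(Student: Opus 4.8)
Write $f(\un{x},t) = \sum_{i=0}^{d} c_i(\un{x}) t^i$ with $d = \deg_t f$ and $c_d(\un{x}) \neq 0$ (if $d = 0$ there is nothing to do, since $f$ is already in $k[\un{x}]$; so assume $d \ge 1$). The key computation is to understand $\phi_{g,s,1}^{\sharp}(f)(\un{x},t) = f(\un{x} + t^{s(m+1)}g, t)$ and track the highest power of $t$ that appears. I would first treat $g$ as a point with generic coordinates, i.e. pass to the field $K = k(g_1,\dots,g_r)$ with $g_1,\dots,g_r$ algebraically independent indeterminates, and show that over $K[\un{x}]$ the polynomial $f(\un{x} + t^{s(m+1)}g, t)$ is monic in $t$ up to a unit for all large $s$; then I would descend to a nonempty open $U \subset \A^r_k$ of actual points $g$ at which the same holds.

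\textbf{Key steps.} Substituting, $f(\un{x}+ t^{N}g, t) = \sum_{i=0}^d c_i(\un{x}+t^N g)\, t^i$ where $N = s(m+1)$. Expand each $c_i(\un{x}+t^N g)$ as a polynomial in $t^N$ with coefficients in $k[g][\un{x}]$ (using multivariate Taylor expansion): the term of top $t$-degree coming from $c_i$ is $L_i(g)\, t^{N \cdot \deg c_i + i}$, where $L_i(g)$ is, up to a nonzero scalar, the leading form of $c_i$ evaluated at $g$ — in particular $L_i$ is a nonzero polynomial in $g$ for each $i$ with $c_i \neq 0$. Now choose $s$ (hence $N$) large enough that the quantities $N\deg c_i + i$ for $i = 0,\dots,d$ are pairwise distinct; since $\deg c_i$ and $i$ are fixed, it suffices to take $N > d$, e.g. $s$ with $s(m+1) > d$, and this bound does not depend on $g$. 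Then the monomials of top $t$-degree from distinct $c_i$ cannot cancel, so $\deg_t f(\un{x}+t^N g,t)$ is achieved uniquely by the index $i_0$ maximizing $N\deg c_i + i$ among nonzero $c_i$, with leading coefficient (in $t$) equal to $L_{i_0}(g)$, a constant in $k[g]$. Finally, let $U = \{ g \in \A^r_k : L_{i_0}(g) \neq 0\}$, which is nonempty open since $L_{i_0}$ is a nonzero polynomial and $k$ is infinite; after dividing by the unit $L_{i_0}(g) \in k(g)^\times$ we obtain that $\phi_{g,s,1}^\sharp(f)$ is monic in $t$ over $k(g)[\un{x}]$, hence integral over $k(g)[\un{x}]$, for all $g \in U$ and all $s$ with $s(m+1) > d$.

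\textbf{Main obstacle.} The only real subtlety is bookkeeping in the Taylor expansion: one must check that the coefficient of the top power of $t$ contributed by $c_i(\un{x}+t^N g)$ is genuinely a nonzero polynomial in $g$ alone (not in $\un{x}$), namely the leading form of $c_i$ evaluated at $g$, and that after choosing $N > d$ there is no interference between the contributions of different $i$. Once the indices $N\deg c_i + i$ are separated, no cancellation is possible and the argument is immediate. I would also note explicitly that the choice of $s$ is uniform in $g$ — it depends only on $d = \deg_t f$ and on $m$ — which is exactly the ``not depending on $g$'' clause in the statement; this falls out of the construction since the separation condition $s(m+1) > d$ involves neither $g$ nor $\un x$.
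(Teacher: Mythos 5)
Your proof is correct and follows essentially the same strategy as the paper's: substitute, identify the candidate top $t$-exponent $s(m+1)\deg c_i + i$ for each coefficient, take $s$ large enough (uniformly in $g$) to separate these exponents, and observe that the resulting leading coefficient is the leading form of $c_{i_0}$ evaluated at $g$, whose non-vanishing locus gives $U$. If anything, your argument is slightly cleaner than the paper's, which works out $r=1$ explicitly and only sketches the $r\ge 2$ cancellation issue that your leading-form computation handles uniformly.
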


\begin{proof}
Let $M: = \deg_t f$ and write $f(\un{x},t) = \sum_{i=0} ^M f_i (\un{x}) t^{M-i}$ for some $f_i \in k[ \un{x}]$ and $M\geq 0$. Since $f \not = 0$, we have $f_0 (\un{x}) \not = 0$. Let $d_i = \deg_{\un{x}} (f_i)$, which is the total degree in $\un{x}$. We first consider the case $r=1$ and take $U = \mathbb{A}^1_k \setminus \{ 0 \}$. Let $c_i \in k$ be the coefficient of the highest degree term of $f_i (\un{x})$. Since $f_0 (\un{x}) \not = 0$, we have $c_0 \in k^{\times}$. Then,
 $f(x + t ^{s (m+1)}g, t) = \sum_{i=0} ^M f_i (x + t^{s (m+1)} g) t^{M-i} = \sum_{i=0 } ^M c_i (g ^{d_i}t^{ d_i s (m+1) + M-i} + (\mbox{lower degree terms in } t))$. Let $i_0$ be the smallest integer such that $d_{i_0 } = \max \{ d_0, d_1, \ldots, d_{M} \}$. Here, $c_{i_0} \in k^{\times}$ by definition. 

If $d_{i_0 } = 0$, then each $f_i (x)$ is a constant, so $ f(x + t^{s (m+1)}g, t) $ gives an integral dependence in $t$ as desired. Suppose $d_{i_0 } > 0$. If $i_0 = 0$, then for each $i>0$ and each $s>0$, we have $d_0 s (m+1) + M \geq d_i s (m+1) + M > d_i s (m+1) + M-i$. Hence, the leading coefficient of the highest degree term in $t$ is $c_0 g^{d_0} \in k(g)^{\times}$, so, after dividing by this unit $c_0 g^{d_0}$, we get a monic polynomial in $t$. Hence it is integral. 

If $i_0 > 0$, then for each $i > i_0 $ and each $s>0$, we have $d_{i_0} s (m+1) +M- i_0 \geq d_i s (m+1) + M-i_0 > d_i s (m+1) + M- i$, while for $0 \leq i < i_0$, we have $d_i < d_{i_0}$ so that for every sufficiently large $s>0$, we have $d_i s (m+1) + M-i < d_{i_0} s (m+1) + M-{i_0}$. Note that this choice of $s$ depends only on $f$ and not on $g$. Hence, for every sufficiently large $s>0$ (not depending on $g$), again the leading coefficient of highest degree in $t$ is $c_{i_0} g^{d_{i_0}} \in k(g)^{\times}$. Hence after dividing by this unit, it gives the desired integral dependence relation. 

In case $r\geq 2$, the backbone of the proof is the same, but one problem is a possible cancellation of the highest degree terms in $t$, namely, if $d_i$ is the total degree of $f_i (x_1, \ldots, x_r)$, then possibly a multiple number of monomials in $\phi_{g,s,1} ^{\sharp} (f)$ could have the same total degree $d_i$. However, such $g$'s form a closed subscheme of $\mathbb{A}^r_k$ (depends on $f(\un{x}, t)$), so for a general $g \in U$ for some nonempty open subset $U \subset \mathbb{A}^r_k$, we can avoid it.
\end{proof}

In \cite[Proposition 2.3]{Kai} (or see \cite[Claim of proof of Theorem 4.1]{KP3}), W. Kai defines a positive integer $s(Z)$ associated to $Z$, which plays a crucial role in proving the modulus condition for $\phi^*_{g,s}(Z)$. 

\begin{lem}\label{lem:good-intersection}
Let $s \ge s(Z)$ be any integer. Then $\phi^*_{g,s}(Z) \in \TZ^n(\A^r_k, n+1;m)$ for any $g \in \A^r_k$.
\end{lem}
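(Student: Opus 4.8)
The plan is to verify the two defining conditions for membership in $\TZ^n(\A^r_k, n+1;m)$ for the cycle $\phi^*_{g,s}(Z)$: namely that each irreducible component intersects all faces of $\A^1_k \times \square^{n-1}$ (now with $n$ box-coordinates, the extra one being $y$) properly, and that it satisfies the modulus $m$ condition. For the proper intersection with faces, observe that the faces in the new complex $\A^r_k \times B_{n+1} = \A^r_k \times \A^1_k \times \square^n$ come in two flavors: those that impose conditions only on $(y_1,\dots,y_{n-1})$, and the one imposing $y=0$ or $y=\infty$. Restricting $\phi_{g,s}$ to a face of the first type reproduces (up to the affine automorphism $\phi_{g,s,1}$ on the $\A^r_k \times \A^1_k$ factor away from a codimension issue, and more precisely a flat map on each fiber) the original situation for $Z$ on the corresponding face $F \subset B_n$; since $Z$ already intersects $\A^r_k \times F$ properly and $\phi_{g,s}$ is flat, the pullback does too. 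For the face $\{y = a\}$ with $a \in \{0,1\}$ (recall we replaced $\{0,\infty\}$ by $\{0,1\}$), the map $\phi_{g,s,a}$ is flat onto $\A^r_k \times \A^1_k \times \overline{\square}^{n-1}$, so $\phi^*_{g,s,a}(Z)$ has the right codimension on the face; iterating over intersections of faces handles the general case by the standard induction. The key point that makes all of this work is the flatness of $\phi_{g,s}$, already noted in the text.

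For the modulus condition, I would invoke the integer $s(Z)$ from \cite[Proposition 2.3]{Kai} (equivalently \cite[Claim of proof of Theorem 4.1]{KP3}): this is precisely the number Kai introduces so that, for $s \ge s(Z)$, the twisted translation $\phi_{g,s}$ does not destroy the modulus condition. Concretely one passes to closures in $\A^r_k \times \A^1_k \times \square \times \widehat\square^{n-1}$, takes normalizations, and checks the divisor inequality $(m+1)[\nu^*(F_{n+1,0})] \le [\nu^*(F^1_{n+1})]$; the point of the weight $s(m+1)$ in the exponent $t^{s(m+1)}$ in \eqref{eqn:Kai-homotopy} is that it buys exactly the extra vanishing at $\{t=0\}$ needed to absorb the contribution of the new coordinate $y$, since $F^1_{n+1} = F^1_n$ (no new $\{y_i=1\}$ divisor appears from $y$, as $y$ ranges over $\square = \A^1_k$ with faces $\{0,1\}$, not $\{1,\infty\}$). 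This is the only place where the specific form of the twist and the choice $s \ge s(Z)$ enters, and it is the step I expect to carry the real content — everything else is formal flatness bookkeeping. I would cite Kai's proposition rather than reprove it.

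So the proof structure is: (1) recall that $\phi_{g,s}$ is flat, hence $\phi^*_{g,s}(Z)$ is a well-defined cycle of the correct codimension $n$ on $\A^r_k \times B_{n+1}$; (2) check proper intersection with each face by restricting $\phi_{g,s}$ to that face, using flatness of the restricted map and the corresponding proper-intersection property already enjoyed by $Z$ in $\TZ^n_\Sigma(\A^r_k,n;m)$ (a fortiori in $\TZ^n(\A^r_k,n;m)$), and inducting on faces of higher codimension; (3) check the modulus $m$ condition on closures and normalizations, quoting \cite{Kai} for the fact that $s \ge s(Z)$ is exactly what is needed. The main obstacle is genuinely understanding and correctly applying the modulus bound from Kai's work — the rest is routine once flatness is in hand — but since that result is available to us by the ground rules, the write-up can be kept short. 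I would also note in passing that no condition relative to $\Sigma$ is being asserted here (the target group is $\TZ^n(\A^r_k, n+1;m)$, not $\TZ^n_\Sigma$), so the proof need not track the finite set of closed points at all at this stage.
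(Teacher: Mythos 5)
Your proposal matches the paper's proof: the modulus condition is delegated to Kai's \cite[Proposition 2.3]{Kai} exactly as in the text, and the proper-intersection check is the same three-way case analysis on faces ($\phi_{g,s,0}$ the identity and $\phi_{g,s,1}$ an isomorphism for the faces $\{y=0\}\times F'$ and $\{y=1\}\times F'$, and a flat map of relative dimension one plus a dimension count for $\square\times F'$). The only quibble is your parenthetical heuristic that ``$F^1_{n+1}=F^1_n$'' — the compactification of the new coordinate does matter for the modulus divisor — but since you correctly defer the actual verification to Kai, this does not affect the argument.
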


\begin{proof}
The modulus condition for $\phi^*_{g,s}(Z)$ follows from \cite[Proposition 2.3]{Kai} (see also \cite[Proof of Theorem 4.1]{KP3}). We show that $\phi^*_{g,s}(Z)$ intersects all faces of $\square^n$ properly. Let $F$ be a face of $\square^n$. If $F = \{0\} \times F'$ for some face $F'$ of $\square^{n-1}$, then the proper intersection follows directly from that of $Z$ with $F'$ since the map $\phi_{g,s,0}$ is identity. If $F = \{1\} \times F'$ for some face $F'$ of $\square^{n-1}$, then the proper intersection also follows from that of $Z$ with $F'$ since the map $\phi_{g,s,1}: \A^r_k \times \A^1_k \times F' \to \A^r_k \times \A^1_k \times F'$ is an isomorphism. If $F = \square \times F'$ for some face $F'$ of $\square^{n-1}$, then the map $\A^r_k \times \A^1_k \times \square \times F' \to \A^r_k \times \A^1_k \times F'$ is flat of relative dimension one and hence we get 
\[
\begin{array}{lll} 
 & & \dim(\phi^*_{g,s}(Z) \cap (\A^r_k \times \A^1_k \times \square \times F'))  =  \dim(\phi^*_{g,s}(Z \cap F')) \\
& = & \dim(Z \cap F') +1  \le  \dim(Z) + 1 - {\rm codim}_{\square^{n-1}}(F') \\
& = & \dim(\phi^*_{g,s}(Z)) - {\rm codim}_{\square^n}(\square \times F')  =  \dim(\phi^*_{g,s}(Z)) - {\rm codim}_{\square^n}(F).
\end{array}
\]
This proves the desired proper intersection of $\phi^*_{g,s}(Z)$. 
\end{proof}

\begin{lem}\label{lem:sfs-Ar-1}
Assume that $n =1$. For $g \in \A^r_k \setminus \{0\}$ and $s \gg 0$ as in Lemma \ref{lem:monic}, $\phi^*_{g,s,1}(Z)$ is finite and surjective over $\A^r_{k(g)}$.
\end{lem}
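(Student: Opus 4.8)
The plan is to exploit the fact that when $n=1$ the ambient cube $\square^{n-1}$ is trivial, so that $Z \subset \A^r_k \times \A^1_k$ is simply an integral curve-like subscheme (codimension $n=1$) satisfying the modulus condition, i.e.\ $\ov{Z} \cap (\A^r_k \times \{t=0\}) = \emptyset$ by the $n=1$ case of the modulus condition recalled in \S\ref{sec:additive complex}. The cycle $\phi^*_{g,s,1}(Z)$ is the image of $Z$ under the automorphism $\phi_{g,s,1}^{-1}$ of $\A^r_k \times \A^1_k$, namely $(\un{x},t) \mapsto (\un{x} - t^{s(m+1)}g, t)$, so it suffices to check that this sheared copy of $Z$ maps finitely and surjectively onto $\A^r_{k(g)}$ under the first projection $q$.

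First I would translate the problem into commutative algebra. Write $Z = \Spec(k[\un{x},t]/\fp)$ for a height-one prime $\fp$, pick a nonzero $f \in \fp$, and apply Lemma~\ref{lem:monic}: there is a nonempty open $U \subset \A^r_k$ so that for $g \in U$ and $s \gg 0$ (the bound depending only on $f$, not on $g$), the polynomial $\phi^{\sharp}_{g,s,1}(f)$ is monic in $t$ over $k(g)[\un{x}]$. This element lies in $\phi^{\sharp}_{g,s,1}(\fp)$, which is the defining ideal of $\phi^*_{g,s,1}(Z)$ over $k(g)$. A monic polynomial in $t$ exhibits $k(g)[\un{x},t]/\phi^{\sharp}_{g,s,1}(\fp)$ as a quotient of $k(g)[\un{x},t]/(\text{monic})$, which is a finite $k(g)[\un{x}]$-module; hence the coordinate ring of $\phi^*_{g,s,1}(Z)$ is a finite module over $k(g)[\un{x}]$, i.e.\ $\phi^*_{g,s,1}(Z) \to \A^r_{k(g)}$ is finite. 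For surjectivity, note $\phi_{g,s,1}^{-1}$ is an automorphism of $\A^r_k \times \A^1_k$, so $\phi^*_{g,s,1}(Z)$ is integral of the same dimension $r$ as $Z$, and a finite morphism of integral affine schemes of the same dimension onto an integral target is automatically surjective (its image is closed, of dimension $r = \dim \A^r_{k(g)}$, hence everything) --- this is the argument already used in Lemma~\ref{lem:finiteness}. Alternatively one can invoke Lemma~\ref{lem:finiteness} directly with $X = \A^r_{k(g)}$ and $\widehat{B} = \P^1_{k(g)}$: since the modulus condition for $n=1$ forces $\ov{Z}$ (hence its shear) to be disjoint from $t=\infty$ after the coordinate change $y \mapsto 1/(1-y)$ identifying the relevant fiber --- one must be a touch careful here about which point $\{t=0\}$ versus $\{y=1\}$ the modulus kills --- the closure of $\phi^*_{g,s,1}(Z)$ in $\A^r_{k(g)} \times \P^1_{k(g)}$ is already closed, and the criterion gives fs over $\A^r_{k(g)}$.

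I expect the main obstacle to be the bookkeeping around \emph{closures and the point at infinity}: Lemma~\ref{lem:monic} only controls behaviour over the generic point of $\A^r$ via a single chosen $f \in \fp$, and I must make sure that monicity of $\phi^{\sharp}_{g,s,1}(f)$ in $t$ genuinely forces integrality of the \emph{whole} coordinate ring $k(g)[\un{x},t]/\phi^{\sharp}_{g,s,1}(\fp)$ over $k(g)[\un{x}]$ and not just of the subring generated by the image of $t$ --- this is fine because $k(g)[\un{x},t]$ is generated over $k(g)[\un{x}]$ by $t$ alone, so a single monic relation does the job, but this is the point to state carefully. A secondary subtlety is the passage to the residue field $k(g)$ when $g$ is not a $k$-rational point; since the statement only asserts finiteness and surjectivity over $\A^r_{k(g)}$, one simply base-changes to $k(g)$ at the outset and treats $g$ as rational, as the paper's convention after \eqref{eqn:Kai-homotopy} permits. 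No homotopy or modulus-preservation issues arise in this lemma --- those are handled by Lemmas~\ref{lem:good-intersection} and the forthcoming construction --- so the proof is short once the monic-polynomial input is in hand.
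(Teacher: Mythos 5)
Your proof is correct and takes essentially the same route as the paper: the paper also reduces to a single equation for $Z$ (using factoriality of $k[\un{x},t]$ and the $n=1$ modulus condition to normalize it to $th-1$), applies Lemma~\ref{lem:monic} to get a monic-in-$t$ relation, deduces finiteness from integrality over $k(g)[\un{x}]$, and gets surjectivity by the dimension count; your variant of taking \emph{any} nonzero $f\in\fp$ is a harmless streamlining since one monic relation on the generator $t$ already forces the whole quotient to be a finite module. Do drop the ``alternative'' via Lemma~\ref{lem:finiteness}, though: as you half-suspected, the $n=1$ modulus condition only forces $\ov{Z}$ to avoid $\{t=0\}$ and says nothing about $t=\infty$, so closedness in $\A^r_{k(g)}\times\P^1_{k(g)}$ does not come for free and the finiteness really must be extracted from the monicity.
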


\begin{proof}
Since $\mathbb{A}^r_k \times \mathbb{A}^1_k$ is factorial, there exists an irreducible polynomial $f(\un{x}, t) \in k[\un{x}, t]$ such that $Z= \Spec ( k[ \un{x}, t]/ (f (\un{x}, t)))$. The modulus condition mandates that this cycle does not intersect the divisor $\{ t = 0 \}$ in $\mathbb{A}^r_k \times \mathbb{A}^1_k$, so that after scaling $f$ by a constant in $k^{\times}$, we must have $f = t h -1$ for some $h(\un{x}, t) \in k[\un{x}, t]$. By Lemma \ref{lem:monic}, $\phi_{g,s}^{\sharp} (th -1)$ is monic in $t$ for $g \in \mathbb{A}^r_k \setminus \{0\}$ and $s \gg 0$ up to scaling by a unit in $k(g)^{\times}$. This is equivalent to saying that $\phi^*_{g,s,1} (Z_{k(g)}) \to \mathbb{A}^r_{k(g)}$ is finite. As both have the same dimension and $\mathbb{A}^r_{k(g)}$ is integral, this morphism is automatically surjective.
\end{proof}

\subsection{The three types of cycles}

In order to generalize Lemma \ref{lem:sfs-Ar-1} to $n \ge 2$ case, we need to consider three types of cycles.

\begin{lem}\label{lem:cat1}
Suppose that the projection to the first factor $ Z \to \mathbb{A}^r_k$ is dominant. Then there is a dense open subset $U \subset \mathbb{A}_k ^r$ such that each $g \in U$ and integer $s>0$, the projection to the first factor $\phi^*_{g,s,1} (Z_{k(g)}) \to \mathbb{A}^r_{k(g)}$ is still dominant. 
\end{lem}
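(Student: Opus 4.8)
The claim is stable under a generic choice of $g$, so the plan is to reduce it to a statement about a single explicit polynomial and then to invoke Lemma \ref{lem:monic} (or rather its proof technique) together with the openness of dominance in a family. First I would fix a coordinate description of $Z$: since the projection $Z \to \A^r_k$ is dominant and $Z$ is integral of dimension $r$ inside $\A^r_k \times \A^1_k \times \square^{n-1}$... wait, actually $Z$ need not be a hypersurface here since $n \ge 1$ is arbitrary. Better: since $Z \to \A^r_k$ is dominant and $\dim Z = r$ (as $Z$ has codimension $n$ in the $(r+n)$-dimensional ambient space $\A^r_k \times B_n$), the generic fiber of $Z \to \A^r_k$ is $0$-dimensional, i.e. $Z \to \A^r_k$ is generically finite. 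Thus the function field $k(Z)$ is a finite extension of $k(\A^r_k) = k(\un x)$, and the coordinates $t, y_1, \ldots, y_{n-1}$ restrict to algebraic functions on $Z$ over $k(\un x)$.

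Next I would analyze the effect of $\phi_{g,s,1}$. By construction $\phi_{g,s,1}(\un x, t, \un y) = (\un x + t^{s(m+1)} g, t, \un y)$, which is an automorphism of $\A^r \times \A^1 \times \square^{n-1}$ (for fixed $g$), being invertible with inverse obtained by substituting back. Hence $\phi^*_{g,s,1}(Z_{k(g)})$ is again an integral subscheme of the same dimension $r$, and the projection $\phi^*_{g,s,1}(Z_{k(g)}) \to \A^r_{k(g)}$ is dominant if and only if the composite $Z_{k(g)} \xrightarrow{\phi_{g,s,1}} \A^r \times \A^1 \times \square^{n-1} \to \A^r$ is dominant, where the first map is the (iso onto its image) restriction of $\phi_{g,s,1}$. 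Concretely this composite sends a point of $Z$ with coordinates $(\un x, t, \un y)$ to $\un x + t^{s(m+1)} g \in \A^r$. So I must show: for $g$ in a dense open $U$ and all $s > 0$, the morphism $Z \times \Spec k(g) \to \A^r_{k(g)}$, $(\un x, t) \mapsto \un x + t^{s(m+1)} g$, is dominant.

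The key point is that this morphism, viewed as a morphism of varieties over the parameter $g$, degenerates at $g = 0$ to the original dominant projection $Z \to \A^r_k$. More precisely, consider the morphism $\Psi \colon Z \times \A^r_k \to \A^r_k \times \A^r_k$ over the second factor, $(z, g) \mapsto (\pi_{\A^r}(z) + t(z)^{s(m+1)} g,\, g)$. The fiber over $g = 0$ is exactly the projection $Z \to \A^r_k$, which is dominant onto $\A^r_k$ by hypothesis; equivalently, $\Psi$ restricted to $Z \times \{0\}$ has dense image. Since the locus of $g \in \A^r_k$ over which the fiberwise image of $\Psi$ is dense is constructible (by Chevalley: the image of $\Psi$ is constructible, so its complement in $\A^r_k \times \A^r_k$ is constructible, and projecting the closure appropriately, the set of $g$ for which the fiber image is \emph{not} dense is constructible), and it is nonempty (contains $g = 0$) but — being the set where a constructible condition defining an open-or-not set holds — I instead argue: the set $U$ of $g$ for which $\Psi_g \colon Z_{k(g)} \to \A^r_{k(g)}$ is dominant is open (dominance of a morphism to an irreducible target is an open condition in flat families, or one can use generic flatness and semicontinuity of fiber dimension), and it is nonempty since $0 \in U$. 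Hence $U$ is dense open. Since this argument is uniform in $s$ (or since there are only countably many $s$ and each gives a dense open, but actually we want a single $U$ — so run the argument once with the product over finitely many... no, all $s$): in fact for each fixed $s$ the set $U_s$ is dense open, and one checks the argument produces the \emph{same} $U$ independent of $s$ because the leading-coefficient obstruction from the proof of Lemma \ref{lem:monic} depends only on which monomials can cancel, not on $s$; alternatively take $U = \bigcap_s U_s$ and note this is still dense as a countable intersection of dense opens in a variety over an infinite field, or simply restate to get one $U$ working for all $s$ by the uniformity just noted.

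The main obstacle will be packaging the ``dominance is open in the family parametrized by $g$'' step cleanly, and in particular getting a \emph{single} dense open $U$ that works simultaneously for all $s > 0$ rather than one $U_s$ per $s$. The cleanest route is to observe, as in the proof of Lemma \ref{lem:monic}, that the only way $\Psi_g$ fails to be dominant is if a cancellation of top-degree terms forces the image into a proper subvariety, and the locus of such bad $g$ is cut out by vanishing of certain fixed polynomials in $g$ (the relevant leading coefficients) that do not depend on $s$; thus $U$ is the common nonempty open complement. I would spell this out by choosing, via the generic finiteness of $Z \to \A^r_k$, a dense open $Z_0 \subset Z$ mapping finite flat onto a dense open $W \subset \A^r_k$, reducing to showing $\Psi_g(Z_0) $ is dense, and then the fiberwise-dominance criterion applies directly.
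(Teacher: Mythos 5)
The paper's own justification for this lemma is the single sentence ``This is immediate from the definition of $\phi_{g,s}$,'' so there is no detailed argument to compare yours against; the question is whether your argument stands on its own, and its central step does not. You reduce correctly to showing that the composite $Z_{k(g)} \to \A^r_{k(g)}$, $(\un{x},t,\un{y}) \mapsto \un{x} \pm t^{s(m+1)}g$, is dominant for general $g$, but you then assert that the locus of $g$ where the fibre $\Psi_g$ of the family is dominant is \emph{open}, and conclude density from the fact that it contains $g=0$. Dominance of the fibrewise image is not an open condition on the base: the set $\{g : \overline{I_g} = \A^r\}$, with $I$ the (constructible) image of $\Psi$, is only constructible, and the relevant semicontinuity runs the wrong way --- for $\overline{I} \to \A^r_g$ the special fibres can be strictly \emph{larger} than the generic fibre, so $\dim \overline{I_0} = r$ places no lower bound on the dimension of the generic fibre of the image. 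A constructible set containing the closed point $g=0$ need not contain a dense open. (In characteristic $0$ one could rescue the specialization-at-$0$ idea via the Jacobian criterion for dominance, whose failure is a closed condition in $g$; but the paper allows characteristic $p$, where that criterion fails.) The repair is to verify dominance directly at the generic point $\eta=(\eta_1,\dots,\eta_r)$ of $\A^r_g$: writing $(\un{\xi},\tau,\dots)$ for the generic point of $Z$, one must show the $\xi_i - \tau^{s(m+1)}\eta_i$ are algebraically independent over $k(\un{\eta})$. A relation $P(\un{\eta},\un{\xi}-\tau^{s(m+1)}\un{\eta})=0$ with $0\neq P\in k[\un{G},\un{U}]$ forces, since the $\eta_i$ are algebraically independent over $k(Z)$, every coefficient of $P(\un{G},\un{\xi}-\tau^{s(m+1)}\un{G})\in k(Z)[\un{G}]$ to vanish; extracting these coefficients by induction on the degree in $\un{G}$ gives $P=0$, because $\xi_1,\dots,\xi_r$ are algebraically independent over $k$ --- which is exactly the hypothesis that $Z\to\A^r_k$ is dominant. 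Constructibility then produces a dense open $U_s$ of good $g$ for each fixed $s$.

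There is a secondary gap in obtaining one $U$ valid for all $s>0$, which is what the statement asserts. Neither of your proposals works: $\bigcap_s U_s$ is a countable intersection of dense opens, hence neither open nor, over a countable field, guaranteed to contain any closed point; and the claim that the bad locus is cut out by $s$-independent ``leading coefficients'' is asserted but never established --- Lemma~\ref{lem:monic} cannot supply it, since it concerns monicness in $t$ of a single defining polynomial (a finiteness statement about a hypersurface), not dominance of the projection of a codimension-$n$ subvariety. The uniformity in $s$ needs its own argument (for example, non-dominance of $\Psi_{g,s}$ forces $p_n(Z)$ to be the image of a cylinder $Y\times\A^1_k$ under the shear $(\un{x},t)\mapsto(\un{x}+t^{s(m+1)}g,t)$, and one can bound the set of pairs $(g,s)$ for which a fixed $r$-dimensional variety admits such a description); alternatively, observe that in every application in \S\ref{sec:sfs-AS} the integer $s$ is fixed before $U$ is chosen, so the statement could be weakened to ``for each $s$ there is a dense open $U_s$'' without harm.
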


\begin{proof}This is immediate from the definition of $\phi_{g,s}$.
\end{proof}

\begin{lem}\label{lem:cat2}
Assume that $(a)$ the projection $q_n: Z \to \mathbb{A}^r_k$ is not dominant, while $(b)$ the projection $pr_2: Z \to \mathbb{A}^1_k$ is dominant. Then there is a dense open subset $U \subset \mathbb{A}^r_k$ such that for each $g \in U$ and $s>0$,  we have
\begin{enumerate}
\item $\dim (q_n (\phi^*_{g,s,1}(Z_{k(g)}))) = \dim (q_n(Z_{k(g)})) + 1$ and 
\item the projection $pr_2: \phi^*_{g,s,1}(Z_{k(g)}) \to \mathbb{A}^1_{k(g)}$ is dominant.
\end{enumerate}
\end{lem}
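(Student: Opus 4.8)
First I would unwind what the two conclusions mean in terms of the defining polynomials of $Z$. Since $Z \subset \A^r_k \times \A^1_k \times \square^{n-1}$ is irreducible and $q_n : Z \to \A^r_k$ is not dominant, the closure $W := q_n(Z)$ is a proper closed subset of $\A^r_k$, say of dimension $d < r$. The hypothesis $(b)$ that $pr_2 : Z \to \A^1_k$ is dominant is what will guarantee that the twist variable $t$ appears nontrivially: it is precisely this that forces the translation $\un{x} \mapsto \un{x} + y\,t^{s(m+1)} g$ to actually move $Z$ off the slab $W \times \A^1_k \times \square^{n-1}$. The geometric picture is that $\phi^*_{g,s,1}(Z_{k(g)})$ is the image of $Z_{k(g)}$ under the automorphism $\psi_{g,s} : (\un{x},t,\un{y}) \mapsto (\un{x} + t^{s(m+1)}g, t, \un{y})$ of $\A^r \times \A^1 \times \square^{n-1}$ (evaluation at $y=1$), so $\phi^*_{g,s,1}(Z_{k(g)}) = \psi_{g,s}(Z_{k(g)})$ and I only need to control the images $q_n(\psi_{g,s}(Z_{k(g)}))$ and $pr_2(\psi_{g,s}(Z_{k(g)}))$.

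For conclusion $(2)$ this is immediate: $\psi_{g,s}$ commutes with $pr_2$ (it does not alter the $t$-coordinate), so $pr_2(\psi_{g,s}(Z_{k(g)})) = pr_2(Z_{k(g)})$, which is dense in $\A^1_{k(g)}$ by hypothesis $(b)$; no genericity of $g$ is needed for this part, so I can take $U = \A^r_k$ as far as $(2)$ is concerned and shrink it later only for $(1)$.

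For conclusion $(1)$, the inequality $\dim q_n(\psi_{g,s}(Z)) \le \dim Z \le \dim q_n(Z) + 1$ is automatic (a projection drops dimension by at most the fibre dimension, which here is at most $1$ since $pr_2$ is dominant forces $\dim Z \ge 1$, and in fact $\dim Z = d+1$ because the generic fibre of $Z \to W$ over $W$ has dimension $\dim Z - d$ and the $t$-direction already contributes $1$; one should record $\dim Z = d+1$ cleanly). So the content is the \emph{lower} bound $\dim q_n(\psi_{g,s}(Z)) \ge d+1$, i.e. that after the twist the image in $\A^r$ jumps up by one. I would argue this by a tangent/differential computation at a general point, or more elementarily as follows. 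Fix a general point $z_0 = (\un{x}_0, t_0, \un{y}_0) \in Z$ with $t_0 \neq 0$ (possible by $(b)$); then $\psi_{g,s}(z_0) = (\un{x}_0 + t_0^{s(m+1)} g, t_0, \un{y}_0)$, and as $z_0$ varies over $Z$ the first coordinate sweeps out (at least) $W + t^{s(m+1)} g \cdot \{\text{values of } t \text{ on } Z\}$ near that point. Concretely, the composite $Z \dashrightarrow \A^r$, $z \mapsto q_n(\psi_{g,s}(z)) = (q_n(z)) + (pr_2(z))^{s(m+1)} g$, restricted to a general curve in $Z$ on which $pr_2$ is nonconstant, has image a curve transverse to $W$ for general $g$ — because the "velocity" of the added term is a nonzero multiple of the fixed vector $g$, which for general $g$ is not tangent to $W$ at $q_n(z_0)$. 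Packaging this: the map $(g,z) \mapsto q_n(\psi_{g,s}(z))$ from $\A^r \times Z \to \A^r$ is dominant (already for a single well-chosen curve in $Z$), hence by generic flatness / upper semicontinuity of fibre dimension there is a dense open $U \subset \A^r_k$ over which the fibre $q_n(\psi_{g,s}(Z_{k(g)}))$ has the generic dimension, which must be $\ge d+1$; combined with the automatic upper bound this gives equality.

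The main obstacle I expect is making the lower bound argument uniform in $g$ and valid over the (possibly non-closed) residue field $k(g)$: one needs to exhibit a \emph{single} explicit proper closed "bad" locus in $\A^r_k$, cut out in terms of the equations of $Z$ (analogous to the bad locus in the proof of Lemma~\ref{lem:monic}), outside of which the dimension jump happens — rather than arguing pointwise at a general closed point and hoping it spreads out. The cleanest route is probably the relative-dimension argument above applied to the incidence variety $\{(g,z)\} \subset \A^r \times Z$ with its two projections: show the projection to $\A^r$ (via $q_n \circ \psi$) is dominant onto $\A^r$ — equivalently its image is not contained in any hypersurface — using only that $W \subsetneq \A^r$ and $pr_2(Z)$ is infinite, then invoke Chevalley's theorem on fibre dimensions to get the desired $U$. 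I would also double-check the edge case $d = r-1$ (where $d+1 = r$, so conclusion $(1)$ asserts $\psi_{g,s}(Z)$ becomes dominant over $\A^r$), which is exactly the situation that feeds into Lemma~\ref{lem:cat1} on the next iteration, so it deserves an explicit sentence.
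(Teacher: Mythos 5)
Your handling of conclusion (2) coincides with the paper's (the twist does not alter the $t$-coordinate, so $pr_2$ of the twisted cycle equals $pr_2(Z)$, and no genericity of $g$ is needed). The trouble is in part (1), on both sides of the equality. For the upper bound, the chain $\dim q_n(\phi^*_{g,s,1}(Z)) \le \dim Z \le \dim q_n(Z)+1$ is not valid: $Z$ has codimension $n$ in $\A^r_k \times B_n$, so $\dim Z = r$ regardless of $d := \dim q_n(Z)$, and the fibres of $q_n|_Z$ sit inside the $n$-dimensional $B_n$, not inside a one-dimensional space; the parenthetical claim ``$\dim Z = d+1$'' is unjustified and fails whenever $d < r-1$. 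The paper obtains the bound $\le d+1$ by a different and correct observation: every point of $q_n(\phi^*_{g,s,1}(Z))$ has the form $x + t^{s(m+1)}g$ with $(x,t) \in q_n(Z) \times pr_2(Z)$, so the set in question is the image of a $(d+1)$-dimensional product under $\Phi(x,t) = x + t^{s(m+1)}g$.

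For the lower bound, which is the real content, your argument does not close. The ``velocity vector'' heuristic only shows that the twisted image is not contained in $W = \overline{q_n(Z)}$ for general $g$; it does not show the dimension increases. The final packaging is also not correct as stated: dominance of the total map $\A^r_k \times Z \to \A^r_k$, $(g,z) \mapsto q_n(\phi_{g,s,1}(z))$, says nothing about the dimension of an individual slice $q_n(\phi^*_{g,s,1}(Z_{k(g)}))$; to invoke Chevalley you must work with the incidence set $T = \{(g,w) : w \in q_n(\phi^*_{g,s,1}(Z_{k(g)}))\} \subset \A^r_k \times \A^r_k$ and compute $\dim T$, which requires bounding the fibres of $(g,z) \mapsto (g, q_n(\phi_{g,s,1}(z)))$ --- precisely the step you omit and where the difficulty lies. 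Indeed, $q_n(\phi^*_{g,s,1}(Z))$ is exactly the image under $\Phi$ of $V' := (q_n, pr_2)(Z) \subset W \times \A^1_k$, so the claimed equality forces in particular $\dim \overline{V'} = d+1$, i.e.\ that the fibres of $V' \to \A^1_k$ are generically $d$-dimensional; neither your sketch nor a transversality computation along a single curve in $Z$ supplies this input. The paper's mechanism is different: it places inside $q_n(\phi^*_{g,s,1}(Z))$ the family of translates $\Phi(q_n(Z) \times \{t_0\})$ for varying $t_0 \in pr_2(Z)$ and argues that for general $g$ these are proper subsets, forcing the dimension to exceed $d$. A complete write-up has to engage with that containment of translates (and with the structure of $V'$), not with the differential of the twist at one point.
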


\begin{proof}
By $(b)$, the map $pr_2$ is a dominant morphism to a regular curve, thus it is flat by \cite[Proposition III-9.7, p.256]{Hartshorne}. In particular, $pr_2 (Z) \subset \mathbb{A}^1_k$ is a dense open subset. For each $g \in \mathbb{A}^r_k$ and $s>0$, we have a surjection $\Phi: q_n (Z_{k(g)}) \times pr_2 (Z_{k(g)}) \to q_n (\phi^*_{g,s,1} (Z_{k(g)}))$, given by sending $(x,t)$ to $x + t^{s (m+1)} g$. Thus, $\dim \ q_n (\phi^*_{g,s,1} (Z_{k(g)})) \leq \dim \  q_n (Z_{k(g)}) + 1$.

On the other hand, for each fixed closed point $t_0 \in pr_2 (Z)$, the set $\Phi (q_n (Z_{k(g)}), t_0)$ has the same dimension as that of $q_n(Z_{k(g)})$, while it is an equidimensional proper closed subset of $q_n(\phi^*_{g,s,1} (Z_{k(g)}))$ when $g$ is a general member, i.e., in an open subset of $\A^r_k$. Since $pr_2(Z)$ is dense open in $\A^1_k$ and hence of positive dimension, we must have $\dim (q_n (\phi^*_{g,s,1} (Z_{k(g)})) > \dim (q_n (Z_{k(g)}))$. This proves (1). The property (2) is obvious because $\phi_{g,s}$ does not modify the $\mathbb{A}^1_k$-coordinate.
\end{proof}

\begin{lem}\label{lem:cat3}
Assume that neither of the projections $q_n: Z \to \mathbb{A}^r_k$ and $pr_2: Z \to \mathbb{A}^1_k$ is dominant. Let $s \ge 1$ be any integer. Then there is a dense open subset $U\subset \mathbb{A}^r_k$ such that for each $g \in U$, there is an open neighborhood $\mathcal{W}_g \subset \mathbb{A}^r_{k(g)}$ of $\Sigma$ such that $\phi^*_{g,s,1}(Z_{k(g)})$ restricted over $\mathcal{W}_g$ is empty.
\end{lem}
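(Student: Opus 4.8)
The plan is to read off the precise shape of $Z$ from the two non-dominance hypotheses, then to view $\phi^*_{g,s,1}(Z_{k(g)})$ as a twisted translate of $Z$ by a nonzero scalar multiple of $g$ and push it off the fibres over $\Sigma$ by a dimension count valid for general $g$. First, since $Z$ is irreducible and $pr_2\colon Z\to\A^1_k$ is not dominant, $pr_2(Z)$ is a single closed point $t_0$, so $Z\subset\A^r_k\times\{t_0\}\times\square^{n-1}$. The one place the modulus condition enters is the observation that $t_0\neq 0$: otherwise $\ov{Z}$ would lie in the divisor $F_{n,0}=\{t=0\}$, which is incompatible with admissibility of $Z$ (the modulus condition is only defined for cycles with $\ov{Z}\not\subset F_{n,0}$, the inequality $(m+1)[\nu^*F_{n,0}]\le[\nu^*F^1_n]$ requiring $\nu^*F_{n,0}$ to be a genuine Weil divisor). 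Put $L=k(t_0)$ and let $\lambda\in L^\times$ be the image of $t$; then $\phi_{g,s,1}$ carries a point of $\A^r_{k(g)}\times\{t_0\}\times\square^{n-1}$ lying over $x$ to a point lying over $x+\lambda^{s(m+1)}g$, where $\lambda^{s(m+1)}\neq 0$ since $s\ge 1$. Finally, the non-dominance of $q_n\colon Z\to\A^r_k$ makes $W:=\ov{q_n(Z)}$ a proper closed subset of $\A^r_k$, so $\dim W\le r-1$ and $q_n(Z_{k(g)})\subset W_{k(g)}$ for every $g$.

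Next I would produce $U$. If $\phi^*_{g,s,1}(Z_{k(g)})$ had a point over some $x\in\Sigma$, then by the previous step its image under $\phi_{g,s,1}$ lies over $x+\lambda^{s(m+1)}g$, which must belong to $W_{k(g)}$. Passing to $\ov{k}$, this forces $\bar x+\lambda_\iota^{s(m+1)}g\in W_{\ov{k}}$ for some $\bar x\in\Sigma(\ov{k})$ and some conjugate $\lambda_\iota\in\ov{k}^\times$ of $\lambda$; for each such pair this is the defining condition of $\sigma^{-1}(W_{\ov{k}})$, where $\sigma(g)=\bar x+\lambda_\iota^{s(m+1)}g$ is an affine-linear automorphism of $\A^r_{\ov{k}}$, hence a closed subset of dimension $\le r-1$. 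Since $\Sigma(\ov{k})$ and the set of conjugates of $\lambda$ are finite, the union $B_{\ov{k}}$ of these subsets is a proper closed subset of $\A^r_{\ov{k}}$; it is $\Gal(\ov{k}/k)$-stable, as it depends only on the $k$-rational data $Z,\Sigma,t_0$, so it descends to a proper closed $B\subsetneq\A^r_k$. I set $U:=\A^r_k\setminus B$, a dense open subset.

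It remains to check that $U$ works. For $g\in U$ the computation above shows $\phi^*_{g,s,1}(Z_{k(g)})$ does not meet $\Sigma_{k(g)}\times B_n$: a point over $\Sigma$ would, after extending scalars to $\ov{k(g)}$, place the induced geometric point over $g$ into $B\times_k\ov{k(g)}$, forcing $g\in B$. To upgrade this to an open neighbourhood, let $\ov{Z}$ be the closure of $Z$ in $\A^r_k\times\ov{B}_n$; it lies in $\A^r_k\times\{t_0\}\times\ov{\square}^{n-1}$ with $\ov{\square}^{n-1}=(\P^1)^{n-1}$ proper and $\Spec L$ finite, so $\phi^*_{g,s,1}(\ov{Z}_{k(g)})\to\A^r_{k(g)}$ is proper and its image $\widetilde W_g$ is closed; the same argument, now with $W=q_n(\ov{Z})$, gives $\widetilde W_g\cap\Sigma_{k(g)}=\emptyset$. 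Then $\mathcal W_g:=\A^r_{k(g)}\setminus\widetilde W_g$ is an open neighbourhood of $\Sigma_{k(g)}$, and since every point of $\phi^*_{g,s,1}(Z_{k(g)})$ maps into $\widetilde W_g$, this cycle is empty over $\mathcal W_g$.

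The main obstacle is the first step: recognizing that admissibility of $Z$ must be invoked to rule out $t_0=0$ — the statement is simply false without it, since for $t_0=0$ the map $\phi_{g,s,1}$ restricts to the identity on $Z$ — and then being careful enough with residue fields so that the exceptional locus of $g$ is genuinely defined over $k$. Once $Z$ has been recognized as supported over a single nonzero value of $t$, the dimension count and the Galois descent are routine.
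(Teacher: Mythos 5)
Your proof is correct and follows essentially the same strategy as the paper's: identify $Z$ as supported over a single closed point $t_0\neq 0$ of $\A^1_k$ (with $t_0=0$ excluded by the modulus condition), observe that $\phi_{g,s,1}$ then acts on that fiber as translation by a nonzero constant multiple of $g$, and conclude by a dimension count that for general $g$ the translate of the proper closed set $\overline{q_n(Z)}$ misses $\Sigma$. The only difference is bookkeeping: the paper first base-changes to $k(t_0)$ to make $t_0$ rational, whereas you track the conjugates of $t_0$ over $\bar{k}$ and descend the bad locus by Galois invariance; both are fine.
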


\begin{proof}
Since $pr_2: Z \to \mathbb{A}^1_k$ is not dominant and $Z$ is irreducible, $pr_2 (Z)$ must be a singleton closed subset $\{ t_0 \}$. By the modulus condition that $Z$ satisfies, we must have $t_0 \not = 0$ and $Z \subset \A^r_k \times \{t_0\} \times \square^{n-1}_k$. It is therefore sufficient to prove the lemma by replacing $k$ by $k(t_0)$ and $\Sigma$ by $\pi^{-1}_{t_0}(\Sigma)$, where $\pi_{t_0}: \Spec(k(t_0)) \to \Spec(k)$ is the base change. We can thus assume that $t_0 \in k^{\times}$. Consider the proper closed subset $\overline{q_n (Z)} \subset \mathbb{A}^r_k$ of dimension $<r$ and the dense open complement $\mathcal{U}_0= \mathbb{A}^r_k \setminus \overline{q_n (Z)}$.

Because $Z$ restricted over $\mathcal{U}_0$ is empty, we see that the translation $\phi_{g,s,1}^* (Z_{k(g)})$ restricted to the translation $ \phi_{g,s,1}^* (\mathcal{U}_0)$ is empty for every $g \in \mathbb{A}_k ^r$. Hence, it is enough to show that for an open subset $U \subset \mathbb{A}_k ^r$, the set $\mathcal{W}_g:= \phi_{g,s,1} ^*(\mathcal{U}_0)$ contains $\Sigma$ for each $g \in U$. However, this is evident because $\Sigma$ is a finite set of closed point of $\mathbb{A}_k ^r$ while $\mathcal{U}_0$ is a dense open subset of $\mathbb{A}_k ^r$, and $\phi_{g,s,1}^*$ is translation by a nonzero constant factor ($t_0 ^{s (m+1)}$) of $g$. This proves the lemma.
\end{proof}

\subsection{Key lemmas}

The key to our \sfs-moving lemma for the localizations of $\A^r_k$ are the following two lemmas.

Let $W \subset \A^r_k \times B_n$ be a reduced closed subscheme and let $\overline{W}$ be its closure in $\A^r_k \times \overline{B}_n$ with reduced closed subscheme structure. We let $\overline{W}^o = \overline{W} \cap (\A^r_k \times \G_{m,k} \times \overline{\square}^{n-1})$. We fix a closed point $x \in \Sigma$ and integers $m,s \geq 1$.  

Define 
$$
\tuborg  & P_1: \mathbb{A}_k ^r \times \mathbb{A}_k ^r \times \mathbb{A}_k ^1 \times \overline{\square} ^{n-1} \to \mathbb{A}_k ^r, \\
 &  P_2: \mathbb{A}_k ^r \times \mathbb{A}_k ^r \times \mathbb{A}_k ^1 \times \overline{\square} ^{n-1} \to  \mathbb{A}_k ^r \times \mathbb{A}_k ^1 \times \overline{\square} ^{n-1}\sluttuborg$$
 to be the projection to the first factor, and the projection to the remaining factors. For a fixed $x \in \mathbb{A}_k ^r$, define $\iota_x: \mathbb{A}_k ^r \times \mathbb{A}_k ^1 \times \overline{\square}^{n-1} \to \mathbb{A}_k ^r \times \mathbb{A}_k ^r \times \mathbb{A}_k ^1 \times \overline{\square}^{n-1}$ to be the map $(g, t, \un{y}) \mapsto (g, x + t^{s(m+1)}g, t, \un{y})$. Let $\theta_x:= P_2 \circ \iota_x$ and $\omega_{\overline{W}, x} := (P_1 \circ \iota_x) |_{\theta_x ^{-1} (\overline{W})}$, where $\theta^{-1}_x(\overline{W})$ is given its reduced induced closed subscheme structure. We then have the commutative diagram
\begin{equation}\label{eqn:fs-generic}
\xymatrix@C.6pc{
&  \theta^{-1}_x(\overline{W}^o) \ar[d] \ar@{^{(}->}[r] & \A^r_k \times \G_{m,k} \times \square^{n-1} \ar@{^{(}->}[r]^-{\iota_x} \ar[d] & \A^r_k \times \A^r_k \times \G_{m,k} \times \square^{n-1} \ar[r]^-{P_2} \ar[d] & \A^r_k \times \G_{m,k} \times \square^{n-1} \ar[d] & \overline{W}^0 \ar@{^{(}->}[l] \ar[d] \\
\theta^{-1}_x(W) \ar@{^{(}->}[ur] \ar@{^{(}->}[r] \ar@/_2pc/[drrr]_{\omega_{W,x}} & \theta^{-1}_x(\overline{W}) \ar@{^{(}->}[r] \ar[drr]_{\omega_{\overline{W},x}} & \A^r_k \times \A^1_k \times \overline{\square}^{n-1} \ar@{^{(}->}[r]^-{\iota_x} & \A^r_k \times \A^r_k \times \A^1_k \times \overline{\square}^{n-1} \ar[r]^-{P_2} \ar[d]^-{P_1} &  \A^r_k \times \A^1_k \times \overline{\square}^{n-1} & \overline{W} \ar@{^{(}->}[l] \\
& & & \A^r_k, & &}
\end{equation}
where the top row's $\iota_x$, $ P_2$ are the restrictions of the second row, and $\omega_{W, x}$ is the natural composition. The vertical arrows are canonical open immersions. It is easy to check that $\iota_x$ is a closed immersion and  $\theta_x$ is an isomorphism on the top row. Using \eqref{eqn:Kai-homotopy} and \eqref{eqn:fs-generic}, one immediately verifies the following observation which we shall use often.

\begin{lem}\label{lem:fs-generic-*}
Let $x \in \mathbb{A}_k ^r$ be fixed. Then for each $g \in \A^r_k$, the map $\omega^{-1}_{\overline{W},x}(g) \to \phi^*_{g,s,1}(\overline{W})$, $(g,t, \un{y}) \mapsto (x, t, \un{y})$, is an isomorphism. The same holds for $W$ and $\overline{W}^o$ as well.
\end{lem}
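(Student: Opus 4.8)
The plan is to unwind the definitions in diagram \eqref{eqn:fs-generic} and check that the claimed map is literally the identity in appropriate coordinates. First I would fix $x \in \mathbb{A}^r_k$ and recall the definitions: $\iota_x$ sends $(g,t,\underline{y})$ to $(g, x + t^{s(m+1)}g, t, \underline{y})$, the map $\theta_x = P_2 \circ \iota_x$ sends $(g,t,\underline{y})$ to $(x + t^{s(m+1)}g, t, \underline{y})$, and $\omega_{\overline{W},x} = (P_1 \circ \iota_x)|_{\theta_x^{-1}(\overline{W})}$ is the map $(g,t,\underline{y}) \mapsto g$ restricted to $\theta_x^{-1}(\overline{W})$. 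So a point of $\omega^{-1}_{\overline{W},x}(g)$ is precisely a point $(g, t, \underline{y})$ with $\theta_x(g,t,\underline{y}) = (x + t^{s(m+1)}g, t, \underline{y}) \in \overline{W}$. On the other hand, by \eqref{eqn:Kai-homotopy} the map $\phi_{g,s,1}: \mathbb{A}^r_k \times \mathbb{A}^1_k \times \overline{\square}^{n-1} \to \mathbb{A}^r_k \times \mathbb{A}^1_k \times \overline{\square}^{n-1}$ sends $(\underline{x}, t, \underline{y})$ to $(\underline{x} + t^{s(m+1)}g, t, \underline{y})$, so a point of $\phi^*_{g,s,1}(\overline{W}) = \phi_{g,s,1}^{-1}(\overline{W})$ is a point $(\underline{x}, t, \underline{y})$ with $(\underline{x} + t^{s(m+1)}g, t, \underline{y}) \in \overline{W}$.

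Next I would compare the two. The assignment $(g, t, \underline{y}) \mapsto (x, t, \underline{y})$ takes a point of $\omega^{-1}_{\overline{W},x}(g)$ (for which $(x + t^{s(m+1)}g, t, \underline{y}) \in \overline{W}$) to the point $(x, t, \underline{y})$, and since $x + t^{s(m+1)}g$ is exactly the first coordinate of $\phi_{g,s,1}(x,t,\underline{y})$, the target point satisfies the defining condition of $\phi^*_{g,s,1}(\overline{W})$; hence the assignment is a well-defined morphism $\omega^{-1}_{\overline{W},x}(g) \to \phi^*_{g,s,1}(\overline{W})$. Conversely, $(\underline{x}, t, \underline{y}) \mapsto (g, t, \underline{y})$ gives a morphism in the other direction, landing in $\omega^{-1}_{\overline{W},x}(g)$ because $\phi_{g,s,1}$ only depends on the point through its second and later coordinates after the translation, so when restricted to the slice $\{\text{first coordinate} = x\}$ — and any point of $\omega^{-1}_{\overline{W},x}(g)$ is realized this way — one recovers the original condition. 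These two morphisms are mutually inverse on points, and since both sides carry reduced induced structure (as stated: $\theta_x^{-1}(\overline{W})$ is given its reduced structure, and $\phi^*_{g,s,1}(\overline{W})$ is a reduced closed subscheme being a flat pullback of a reduced scheme), the bijection upgrades to an isomorphism of schemes.

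Finally I would note that the same computation works verbatim with $\overline{W}$ replaced by $W$ or by $\overline{W}^o$: in each case the defining locus in the source is the preimage under the same map $\theta_x$ (resp. $\phi_{g,s,1}$) of the corresponding subscheme, and the coordinate swap $(g,t,\underline{y}) \leftrightarrow (x,t,\underline{y})$ is the same isomorphism. Since $W \subset \overline{W}$ and $\overline{W}^o \subset \overline{W}$ are the intersections with $\mathbb{A}^r_k \times B_n$ and $\mathbb{A}^r_k \times \mathbb{G}_{m,k} \times \overline{\square}^{n-1}$ respectively, and the coordinate swap preserves the $t$-coordinate and all the $\underline{y}$-coordinates, these loci are carried to each other compatibly. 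There is no real obstacle here: the only thing to be careful about is bookkeeping the precise form of $\iota_x$, $\theta_x$, $\omega_{\overline{W},x}$ and of $\phi_{g,s}$, and tracking that the scheme structures on both sides are reduced so that a bijection on points suffices. In particular, one should verify that the fiber $\omega^{-1}_{\overline{W},x}(g)$ really equals the set-theoretic fiber with reduced structure — which follows because $\omega_{\overline{W},x}$ is the restriction of the projection $P_1$, so its scheme-theoretic fiber over a $k$-rational (or $k(g)$-rational) point is cut out inside a reduced scheme and we take its reduction, matching the reduced pullback $\phi^*_{g,s,1}(\overline{W})$.
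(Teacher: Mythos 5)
Your unwinding of the forward map is correct, and it is exactly the ``immediate verification'' the paper has in mind (the paper offers no written proof): a point of $\omega^{-1}_{\overline{W},x}(g)$ is a point $(g,t,\un{y})$ with $(x+t^{s(m+1)}g,\,t,\,\un{y}) \in \overline{W}$, and $(x,t,\un{y})$ then satisfies the defining condition of $\phi^*_{g,s,1}(\overline{W})$. The problem is with the codomain and with your inverse. The scheme $\phi^*_{g,s,1}(\overline{W}) = \phi_{g,s,1}^{-1}(\overline{W})$ is the \emph{full} preimage $\{(\un{x},t,\un{y}) : (\un{x}+t^{s(m+1)}g,t,\un{y}) \in \overline{W}\}$; for $\overline{W}=\overline{Z}$ it is $r$-dimensional and dominant over $\A^r_k$ whenever $Z$ is, while $\omega^{-1}_{\overline{W},x}(g)$ sits inside $\{g\}\times\A^1_k\times\overline{\square}^{n-1}$ and is generically finite. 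So the two cannot be isomorphic as you assert, and your proposed inverse $(\un{x},t,\un{y})\mapsto(g,t,\un{y})$ is not an inverse: the composite on $\phi^*_{g,s,1}(\overline{W})$ sends $(\un{x},t,\un{y})$ to $(x,t,\un{y})$, which is the identity only on the slice $\un{x}=x$. The justification you give --- that $\phi_{g,s,1}$ ``only depends on the point through its second and later coordinates'' --- is false: the first coordinate of $\phi_{g,s,1}(\un{x},t,\un{y})$ is $\un{x}+t^{s(m+1)}g$.

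The statement that is actually true, and the one the paper uses downstream, is that $\omega^{-1}_{\overline{W},x}(g)$ is isomorphic to the fiber $\phi^*_{g,s,1}(\overline{W}) \cap (\{x\}\times\A^1_k\times\overline{\square}^{n-1})$ of $\phi^*_{g,s,1}(\overline{W})$ over $x$; this is how the map is written out explicitly in the proof of Lemma \ref{lem:support-Kai}, and it is the form in which Lemma \ref{lem:fs-Kai-*} is deduced (one intersects $\phi^*_{g,s,1}(\overline{Z})$ with $\Sigma\times\overline{B}_n$, i.e.\ takes the union over $x\in\Sigma$ of the fibers over $x$). The lemma's own phrasing suppresses this intersection, so part of the confusion is inherited from the statement; but your proof, taken at face value, would imply that $\phi^*_{g,s,1}(\overline{W})$ lies entirely over the single point $x\in\A^r_k$, which is absurd in the cases of interest. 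Once the codomain is corrected, your argument does go through: both sides are cut out by the same condition inside copies of $\A^1_k\times\overline{\square}^{n-1}$ labelled by $g$ and by $x$ respectively (equivalently, both are pullbacks of $\overline{W}$ along $(t,\un{y})\mapsto(x+t^{s(m+1)}g,t,\un{y})$, up to reduction), and the reducedness bookkeeping you describe at the end is then the right thing to check; the same remark applies verbatim to $W$ and $\overline{W}^o$.
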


Another lemma we shall use is the following.

\begin{lem}[{\cite[Lemma 1.2]{Bl1}}]\label{lem:blochlemma} Let $X$ be an algebraic $k$-scheme and $G$ a connected algebraic $k$-group acting on $X$. Let $A, B \subset X$ be closed subsets, and assume the fibers of the map $G \times A \to X$, $(g,a) \mapsto g\cdot a$ all have the same dimension, and that this map is dominant.

Moreover, suppose that for an over-field $K \supset k$ and a $K$-morphism $\psi: X_K \to G_K$, there is a nonempty open subset $U \subset X$ such that for every $x \in U_K$, a scheme point, we have $\tr.\deg_k k (\varphi \circ \psi (x), \pi (x)) \geq \dim(G)$, where $\pi: X_K \to X_k$ and $\varphi: G_K \to G_k$ are
the projection maps. Define $\phi: X_K \to X_K$ by $\phi (x) = \psi (x) \cdot x$ and suppose $\phi$ is an isomorphism. Then the intersection $\phi (A_K \cap U_K) \cap B_K$ is proper.
\end{lem}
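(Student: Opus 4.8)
\textbf{Overview of the approach.} This is Bloch's classical moving lemma; the plan is to imitate the proof in \cite[Lemma 1.2]{Bl1}, but to keep careful track of the over-field $K$ and the hypothesis on transcendence degrees, which is what allows the "twisted" action $\phi(x)=\psi(x)\cdot x$ (rather than translation by a fixed general $g$) to still move $A$ into general position against $B$. The key point is that the incidence variety
\[
\Gamma = \{(g,a)\in G\times A : g\cdot a \in B\}
\]
has controlled dimension, and that a \emph{general} element of $G$ gives a proper intersection $g\cdot A \cap B$; the transcendence-degree hypothesis is precisely what guarantees that the specific "moving parameter" $\psi(x)$ attached to a general point $x$ behaves like a general element of $G$.

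\textbf{Step 1: dimension count for the generic translate.} Consider the action map $a\colon G\times A\to X$, $(g,x)\mapsto g\cdot x$, which by hypothesis is dominant with equidimensional fibers, say of fiber dimension $e$, so $\dim(G)+\dim(A)=\dim(X)+e$ on each component. Form $\Gamma=a^{-1}(B)\subset G\times A$ and project to $G$ via $p\colon\Gamma\to G$. Since $a$ is dominant with equidimensional fibers, $a^{-1}(B)$ has dimension $\dim(B)+e = \dim(G)+\dim(A)+\dim(B)-\dim(X)$. Hence for $g$ in a dense open $G^\circ\subset G$ the fiber $p^{-1}(g)\cong (g\cdot A)\cap B \subset \{g\}\times A$ has dimension $\le \dim(A)+\dim(B)-\dim(X)$, i.e. $g\cdot A$ meets $B$ properly. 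One must be slightly careful that this holds componentwise and after restricting to the open set $U$; replacing $A$ by $\overline{A\cap U}$ (or intersecting with $U$ at the end) handles this, and this is where the hypothesis that $U\subset X$ is nonempty open is used.

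\textbf{Step 2: transferring genericity to $\psi(x)$ via transcendence degree.} Now work over $K$. For a scheme point $x\in U_K$, the hypothesis gives $\operatorname{tr.deg}_k k(\varphi\circ\psi(x),\pi(x))\ge\dim(G)$. The idea is: the point $\phi(x)=\psi(x)\cdot x$ lies in $\phi(A_K\cap U_K)\cap B_K$ exactly when $x\in A_K\cap U_K$ and $\psi(x)\cdot x\in B_K$, i.e. when $(\psi(x),x)\in\Gamma_K$. The transcendence-degree bound forces $\varphi\circ\psi(x)$ (the image of the moving parameter over the base field $k$) to be a "sufficiently general" point of $G$ — concretely, it cannot lie in the proper closed complement $G\setminus G^\circ$ of Step 1 \emph{unless} $x$ itself is constrained, because a point of $G_K$ lying over a non-general $k$-point of $G$ has too small a transcendence degree over $k$ together with $\pi(x)$. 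Making this precise: if $x$ ranged over a subvariety of $\phi(A_K\cap U_K)\cap B_K$ of too-large dimension, then $(\psi(x),x)$ would sweep out a subvariety of $\Gamma_K$ dominating (over $k$) a piece of $\Gamma$ of dimension exceeding the bound from Step 1, contradicting the tr.deg.\ hypothesis. This yields $\dim\big(\phi(A_K\cap U_K)\cap B_K\big)\le\dim(A)+\dim(B)-\dim(X)$, i.e. the intersection is proper. Here the assumption that $\phi$ is an isomorphism is used so that $\dim\phi(A_K\cap U_K)=\dim(A_K\cap U_K)$ and the preimage of the intersection is computed cleanly.

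\textbf{Main obstacle.} The delicate point is Step 2: translating "$\varphi\circ\psi(x)$ together with $\pi(x)$ has transcendence degree $\ge\dim G$ over $k$" into the statement "$(\psi(x),x)$ avoids the bad locus of the incidence variety $\Gamma$." One must argue at the level of generic points of components of $\phi(A_K\cap U_K)\cap B_K$: pick such a generic point $x$, note $x\in A_K\cap U_K$ and $(\psi(x),x)\in\Gamma_K$, and bound $\operatorname{tr.deg}_k k(x)$ from above by combining (i) $\operatorname{tr.deg}_{k(\pi(x))}k(x)\le \dim A$ minus the codimension of the component, and (ii) the fact that once $\varphi\circ\psi(x)=:g_0$ is a general ($k$-scheme) point of $G$ — which the hypothesis forces, since otherwise $\operatorname{tr.deg}_k k(g_0,\pi(x))<\dim G$ would follow from $g_0$ lying in a proper subvariety of $G$ — the point $x$ lies in the fiber $(g_0\cdot A\cap B)$, of dimension $\le\dim A+\dim B-\dim X$ by Step 1. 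Assembling these inequalities gives the codimension bound; the bookkeeping over $k$ versus $K$, and ensuring the "general $g_0$" locus from Step 1 is the relevant one, is the part that requires care rather than ingenuity, and it is exactly Bloch's original argument.
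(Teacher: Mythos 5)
The paper does not prove this lemma --- it is quoted verbatim from \cite[Lemma 1.2]{Bl1} --- so there is no internal proof to compare against; judging your reconstruction on its own terms, Step 1 (the bound $\dim W \le \dim G + \dim A + \dim B - \dim X$ for each component of the incidence variety $W=\{(g,a)\in G\times A : g\cdot a\in B\}$, using the equidimensionality of the fibers of the action map) is correct, but Step 2 rests on a false claim. You assert that the hypothesis forces $g_0:=\varphi\psi(x)$ to be a general point of $G$, ``since otherwise $\tr.\deg_k k(g_0,\pi(x))<\dim G$ would follow from $g_0$ lying in a proper subvariety of $G$.'' That implication fails: the hypothesis constrains the transcendence degree of the \emph{pair} $(g_0,\pi(x))$, and $\pi(x)$ alone may carry all of it. For example, with $G=\G_a$ translating $X=\A^1_k$ and $\pi(x)$ the generic point of $X$, the pair $(0,\pi(x))$ has transcendence degree $1=\dim G$ even though $g_0=0$ is maximally special. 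Hence $g_0$ need not lie in the dense open $G^{\circ}$ of your Step 1, and the reduction ``$\pi(x)$ lies in the fiber $g_0\cdot A\cap B$, which has the expected dimension'' breaks down, since over special $g_0$ that fiber can be large. (Even in the paper's own application in \S 3 the hypothesis is verified only for the pair, not for $\varphi\psi(x)$ alone.)

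The inequality that actually closes the argument is $\tr.\deg_k k(\varphi\psi(x),\pi(x)) \ge \dim G + \dim Z$, where $z$ is the generic point of a component $Z$ of $\phi(A_K\cap U_K)\cap B_K$ and $x=\phi^{-1}(z)$; since $(\varphi\psi(x),\pi(x))$ is a point of $W$, this combined with Step 1 gives $\dim Z\le \dim A+\dim B-\dim X$ directly. The point is that this strengthened bound cannot be extracted from the hypothesis applied at the generic point $x$ alone (there it yields only $\ge\dim G$, which is compatible with $\dim Z$ being too large). One must apply the hypothesis at suitable \emph{specializations} $y'$ of $x$ inside $\phi^{-1}(Z)\cap U_K$ --- e.g.\ closed points obtained by specializing into $k$ a transcendence basis of $K(x)$ over $K$ --- for which $\tr.\deg_k k(\varphi\psi(y'),\pi(y'))\le \tr.\deg_k k(\varphi\psi(x),\pi(x))-\dim Z$; the hypothesis at $y'$ then supplies the missing $\dim G$. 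This is precisely why the statement demands the transcendence-degree bound at \emph{every} scheme point of $U_K$, including closed ones, and this specialization step is absent from your proposal; without it the argument does not go through.
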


\subsection{Applications of the key lemmas}

We apply the above two lemmas to our cycle $Z$ and various other closed subsets associated to it. 
Let $\eta \in \mathbb{A}^r _k$ denote the generic point and let $K:= k(\eta)$. We can regard $\eta \in \mathbb{A}_k ^r (K)$. 
Apply Lemma \ref{lem:blochlemma} with  
\[
X  = \A^r_k \times \A^1_k \times \overline{\square}^{n-1}_k, \ G = \A^r_k, \ \psi(\un{x}, t, \un{y}) = (\eta)t^{s(m+1)}, \ A = \Sigma \times \A^1_k \times \overline{\square}^{n-1}, \ \mbox{and} \ B = \overline{Z},
\]
where $G$ acts on $\A^r_k \times \A^1_k \times \overline{\square}^{n-1}$ by $g \cdot (\un{x}, t, \un{y}) = (g + \un{x}, t, \un{y})$. We let $\phi: X_K \to X_K$ be given by $\phi(\un{x}, t, \un{y}) = ((\eta)t^{s(m+1)} + \un{x}, t, \un{y})$. One checks immediately that the conditions of Lemma \ref{lem:blochlemma} are satisfied and we conclude that $\phi(A_K) \cap \overline{Z}_K$ has dimension at most zero. Comparing this with \eqref{eqn:fs-generic} and using Lemma \ref{lem:fs-generic-*}, this is equivalent to saying that the generic fiber of $\omega_{\overline{Z},x}$ is finite for every $x \in \Sigma$. 

It follows that if $Z'$ is an irreducible component of $\theta^{-1}_x(\overline{Z})$, then either the map $\omega_{\overline{Z},x}: Z' \to \A^r_k$ is not dominant or it is dominant and generically quasi-finite. In the dominant case, Chevalley's theorem on fiber dimensions (e.g., see \cite[Exercise II-3.22, p.95]{Hartshorne}) tells us that we must have $\dim(Z') = r$ and $Z' \to \A^r_k$ is generically finite. In any case, it follows that there is a dense open subset of $\A^r_k$ over which $Z' \to \A^r_k$ is quasi-finite (with possibly empty fibers). 

By taking the finite intersection of such dense open subsets, running over all irreducible components of $\theta^{-1}_x(\overline{Z})$ and all $x \in \Sigma$, we conclude that there is a dense open subset $U \subset \A^r_k$ such that for each $x \in \Sigma$, the map $\omega^{-1}_{\overline{Z},x}(U) \to U$ is quasi-finite. Using Lemma \ref{lem:fs-generic-*}, equivalently we get:

\begin{lem}\label{lem:fs-Kai-*}
For any integer $s \ge 1$, there is a dense open subset $U \subset \A^r_k$ such that for every $g \in U$, the set $(\Sigma \times \overline{B}_n)_{k(g)} \cap \phi^{*}_{g,s,1}(\overline{Z})_{k(g)} =  (\Sigma \times \overline{B}_n)_{k(g)} \cap \overline{\phi^{*}_{g,s,1}({Z}_{k(g)})}$ is finite.
\end{lem}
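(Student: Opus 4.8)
The statement is the endpoint of the discussion that precedes it, so the plan is to assemble that discussion into a proof. The core input is Bloch's lemma (Lemma~\ref{lem:blochlemma}), applied with $X = \A^r_k\times\A^1_k\times\overline{\square}^{n-1}_k$, $G = \A^r_k$ acting by translation on the first coordinate, $A = \Sigma\times\A^1_k\times\overline{\square}^{n-1}$, $B = \overline{Z}$, $K = k(\eta)$ the function field of $\A^r_k$, and $\psi(\un{x},t,\un{y}) = t^{s(m+1)}\eta$, so that $\phi(\un{x},t,\un{y}) = \psi(\un{x},t,\un{y})\cdot(\un{x},t,\un{y}) = (\un{x}+t^{s(m+1)}\eta,\,t,\,\un{y})$. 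First I would verify the hypotheses of Lemma~\ref{lem:blochlemma}: the action map $G\times A\to X$ is dominant with finite (hence equidimensional) fibres because $\Sigma$ is nonempty and finite; and the transcendence-degree condition holds over a suitable dense open $U\subset X$ because $\eta$ is the generic point of $\A^r_k$. Bloch's lemma then gives that $\phi(A_K)\cap\overline{Z}_K$ is a proper intersection in $X_K$; since $\dim A = n$, $\dim\overline{Z} = r$ (as $Z$ has codimension $n$ in $\A^r_k\times B_n$), and $\dim X = r+n$, this intersection has dimension $\le 0$, hence is finite.

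Next I would reinterpret this through the commutative diagram~\eqref{eqn:fs-generic} and Lemma~\ref{lem:fs-generic-*}. For each $x\in\Sigma$, the map $\theta_x\colon(g,t,\un{y})\mapsto(x+t^{s(m+1)}g,\,t,\,\un{y})$ identifies the generic fibre $\omega^{-1}_{\overline{Z},x}(\eta)$ with the part of $\phi(A_K)\cap\overline{Z}_K$ coming from the component $\{x\}\times\A^1_k\times\overline{\square}^{n-1}$ of $A$, so that finiteness of $\phi(A_K)\cap\overline{Z}_K$ is equivalent to finiteness of $\omega^{-1}_{\overline{Z},x}(\eta)$ for every $x\in\Sigma$. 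Decomposing $\theta^{-1}_x(\overline{Z})$ into irreducible components, a component $Z'$ on which $\omega_{\overline{Z},x}$ is not dominant has empty fibres over the complement of $\overline{\omega_{\overline{Z},x}(Z')}$, while a dominant $Z'$ has finite generic fibre, forcing $\dim Z' = r$ and $Z'\to\A^r_k$ generically finite, hence --- by Chevalley's upper semicontinuity of fibre dimension --- quasi-finite over a dense open of $\A^r_k$. Intersecting these finitely many dense opens over all components $Z'$ and all $x\in\Sigma$ yields a dense open $U\subset\A^r_k$ such that $\omega^{-1}_{\overline{Z},x}(g)$ is finite for every $g\in U$ and every $x\in\Sigma$. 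Finally, for $g\in U$, Lemma~\ref{lem:fs-generic-*} identifies $\omega^{-1}_{\overline{Z},x}(g)$ with the slice $(\{x\}\times\overline{B}_n)\cap\phi^{*}_{g,s,1}(\overline{Z})$, so $(\Sigma\times\overline{B}_n)_{k(g)}\cap\phi^{*}_{g,s,1}(\overline{Z})_{k(g)} = \bigcup_{x\in\Sigma}\omega^{-1}_{\overline{Z},x}(g)_{k(g)}$ is finite; and the equality with $\overline{\phi^{*}_{g,s,1}(Z_{k(g)})}$ holds because $\phi_{g,s,1}$ is an automorphism of $\A^r_k\times\A^1_k\times\overline{\square}^{n-1}$ preserving $\A^r_k\times\A^1_k\times\square^{n-1}$, hence commutes with Zariski closure, and closure is compatible with the field extension $k\subset k(g)$.

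The step I expect to require the most care is the application of Bloch's lemma --- in particular verifying its transcendence-degree hypothesis, which is precisely why one must translate by the generic point $\eta$ rather than by a fixed rational point --- together with the bookkeeping needed to match $\phi(A_K)\cap\overline{Z}_K$ with the generic fibres of the maps $\omega_{\overline{Z},x}$ through the diagram~\eqref{eqn:fs-generic}. Once those identifications are in place, the spreading-out via Chevalley's theorem and the passage to $k(g)$ are routine.
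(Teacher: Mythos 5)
Your proposal is correct and follows essentially the same route as the paper: Bloch's lemma with exactly this data to get that $\phi(A_K)\cap\overline{Z}_K$ has dimension at most zero, the identification of this with the generic fibres of $\omega_{\overline{Z},x}$ via \eqref{eqn:fs-generic} and Lemma \ref{lem:fs-generic-*}, and then Chevalley's theorem on each irreducible component of $\theta_x^{-1}(\overline{Z})$ to spread quasi-finiteness over a dense open of $\A^r_k$. The only points you make explicit that the paper leaves implicit are the verification of the transcendence-degree hypothesis on $\{t\neq 0\}$ and the compatibility of $\phi_{g,s,1}^*$ with Zariski closure, both of which are handled correctly.
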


We can now show the following.

\begin{lem}\label{lem:fs-Kai}
Let $s \gg 0$ be as in Lemma \ref{lem:monic}. Assume that $Z$ is either dominant over $\A^r_k$ or restricts to zero on $V$. Then we can find a dense open $U \subset \A^r_k$ such that for $g \in U$, the scheme $\phi^*_{g,s,1}(Z)|_V$ is either empty or finite and surjective over $V$. 
\end{lem}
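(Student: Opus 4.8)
The plan is to split the analysis according to the trichotomy of cycles established in Lemmas~\ref{lem:cat1}, \ref{lem:cat2}, \ref{lem:cat3}, treating the hypothesis ``$Z$ dominant over $\A^r_k$ or $Z|_V = 0$'' as the case distinction. First I would dispose of the easy case: if $Z|_V = 0$, then by Lemma~\ref{lem:cat3} (the case where neither $q_n$ nor $pr_2$ is dominant is exactly the situation where $Z|_V$ can vanish, the other degenerate possibility being covered by spreading out) there is a dense open $U$ such that $\phi^*_{g,s,1}(Z)$ restricted over an open neighborhood $\mathcal{W}_g$ of $\Sigma$ is empty, hence $\phi^*_{g,s,1}(Z)|_V = \emptyset$ and there is nothing more to prove. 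So the substance is the case where $Z \to \A^r_k$ is dominant.

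In the dominant case, the strategy is: (i) use Lemma~\ref{lem:monic} and Lemma~\ref{lem:sfs-Ar-1}'s idea to get finiteness, and (ii) use Lemma~\ref{lem:fs-Kai-*} to control the intersection with $\Sigma$ so that the restriction to $V$ behaves well. More precisely, since $Z \to \A^r_k$ is dominant and $Z$ is irreducible of dimension $r$ (it is a codimension-$n$ cycle in $\A^r_k \times B_n$ with the projection to $\A^r_k$ dominant, and $Z$ satisfies the modulus and proper-intersection conditions), the generic fiber of $Z \to \A^r_k$ is finite; I would pick a nonzero polynomial $f \in k[\un{x},t]$ (or a suitable collection of them cutting out $\overline{Z}$, using that $\overline{Z}$ projects finitely onto $\A^r_k$ generically) vanishing on $Z$ and apply Lemma~\ref{lem:monic} to arrange that $\phi^{\sharp}_{g,s,1}(f)$ becomes monic in $t$ for $g$ in a dense open $U_1 \subset \A^r_k$ and $s \gg 0$. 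Combined with the analogous monic-in-$y_i$ statements for the other coordinates obtained from Lemma~\ref{lem:monic} applied to the defining equations in the remaining variables, this forces $\phi^*_{g,s,1}(Z) \to \A^r_{k(g)}$ to be finite. By dimension count and integrality of $\A^r_{k(g)}$, each component dominating $\A^r_{k(g)}$ is in fact finite surjective. Then by Lemma~\ref{lem:fs-Kai-*} I choose a further dense open $U_2$ so that $(\Sigma \times \overline{B}_n)_{k(g)} \cap \overline{\phi^*_{g,s,1}(Z_{k(g)})}$ is finite; intersecting with $U_1$ and taking $U = U_1 \cap U_2$, I then argue that after restricting to $V$ the cycle $\phi^*_{g,s,1}(Z)|_V$, being finite over $V$ with the fibers over closed points of $\Sigma$ finite (hence, by the modulus condition and Lemma~\ref{lem:No-intersection} type reasoning, the bad components have been removed), is finite and surjective over each irreducible component of $V$ it meets — i.e., it is an $\fs$-cycle or empty over $V$.

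The main obstacle I anticipate is bookkeeping the passage from ``monic in one variable'' to genuine finiteness of $\phi^*_{g,s,1}(Z) \to \A^r_{k(g)}$ when $n \ge 2$: Lemma~\ref{lem:monic} as stated handles monicity in $t$ over $k(g)[\un{x}]$, but for the projection to $\A^r$ one needs integral dependence of all the $B_n$-coordinates $t, y_1, \dots, y_{n-1}$ over $k(g)[\un{x}]$ simultaneously, and the twisted translation $\phi_{g,s}$ only shifts the $\un{x}$-coordinate while leaving $t$ and the $y_i$ untouched. So the finiteness over $\A^r$ must be extracted from the geometry of $Z$ itself (namely that the composite $Z \to \A^r_k \times \overline{B}_n \to \A^r_k$ is already generically finite because $Z \to \A^r$ is dominant between schemes of the same dimension $r$, so $\overline{Z} \to \A^r$ is projective, hence the generic quasi-finiteness of Lemma~\ref{lem:fs-Kai-*} upgrades to finiteness over a dense open of $\A^r$, and the whole point of the twisted translation plus Lemma~\ref{lem:fs-Kai-*} is to move the locus where finiteness/emptiness over the fiber fails \emph{away from $\Sigma$}). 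Thus the real content is: spread out $Z$, use that $\overline{Z} \to \A^r$ is finite over a dense open $U_0 \subset \A^r_k$ and disjoint from $\Sigma \times F_n$ after the generic twisted translation (Lemma~\ref{lem:fs-Kai-*}), choose $U \subset \A^r_k$ so that $\phi_{g,s,1}(U_0) \supset \Sigma$ for $g \in U$ (exactly as in the proof of Lemma~\ref{lem:cat3}), and conclude via Lemma~\ref{lem:finiteness} that $\overline{\phi^*_{g,s,1}(Z)}$ is closed over $V$, hence $\phi^*_{g,s,1}(Z)|_V \to V$ is $\fs$ or empty.
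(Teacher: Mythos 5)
Your overall shape (reduce to the case $Z$ dominant over $\A^r_k$, use the monic trick for finiteness, use Lemma \ref{lem:fs-Kai-*} near $\Sigma$, conclude via the finiteness criterion) matches the paper's, but two steps that carry the real weight of the argument are not established in your sketch.

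First, the finiteness of $\overline{\phi^*_{g,s,1}(Z_{k(g)})} \to \A^r_{k(g)}$ over a neighborhood of $\Sigma$. Your first idea, ``analogous monic-in-$y_i$ statements,'' cannot work: $\phi_{g,s}$ translates only the $\un{x}$-coordinates by $t^{s(m+1)}g$ and leaves $t$ and the $y_i$ fixed, so no translation makes anything monic in $y_i$. Your fallback, ``$\overline{Z}\to\A^r_k$ is finite over a dense open $U_0$, then choose $g$ so that $\phi_{g,s,1}(U_0)\supset\Sigma$ as in Lemma \ref{lem:cat3},'' also fails when $Z$ is dominant over $\A^1_t$: the twist $\un{x}\mapsto \un{x}+t^{s(m+1)}g$ depends on $t$, so it does not induce a translation of $\A^r_k$ and the expression $\phi_{g,s,1}(U_0)$ for $U_0\subset\A^r_k$ is meaningless except when $t$ is constant on the cycle (which is exactly the Lemma \ref{lem:cat3} situation). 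The paper's mechanism is different: set $W=p_n(\overline{Z})\subset \A^r_k\times\A^1_k$ and factor $\overline{\phi^*_{g,s,1}(Z)}\to\phi^*_{g,s,1}(W)\to\A^r_k$. The second map is finite by Lemma \ref{lem:sfs-Ar-1} (this is where Lemma \ref{lem:monic} enters, applied to the hypersurface $W$, and where properness in the $t$-direction is secured); the first map is projective because $\overline{\square}^{n-1}$ is proper; and Lemma \ref{lem:fs-Kai-*} makes the composite quasi-finite over $\Sigma$, so ``projective $+$ quasi-finite $\Rightarrow$ finite'' upgrades the first map, and Chevalley's theorem handles surjectivity. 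None of this is recoverable from your two proposed routes.

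Second, the passage from $\overline{Z}$ to $Z$. To apply Lemma \ref{lem:finiteness}/\ref{lem:fs-smooth} you need the closure of $\phi^*_{g,s,1}(Z)$ in $V\times\overline{B}_n$ to acquire \emph{no} points over $\Sigma$, i.e.\ $(\overline{\phi^*_{g,s,1}(Z)}\setminus\phi^*_{g,s,1}(Z))\cap(\Sigma\times\overline{B}_n)=\emptyset$. You attribute this to Lemma \ref{lem:fs-Kai-*}, but that lemma only says the intersection of $\overline{\phi^*_{g,s,1}(Z)}$ with $\Sigma\times\overline{B}_n$ is \emph{finite}, which is strictly weaker than the required emptiness on the boundary piece. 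The paper closes this gap with a second, separate application of Bloch's Lemma \ref{lem:blochlemma} to $B=Y:=\overline{Z}\setminus Z$ (which has dimension $\le r-1$), where the properness of the intersection forces it, by a dimension count, to be empty rather than merely finite. Without this step your cycle $\phi^*_{g,s,1}(Z)|_V$ could still fail to be closed in $V\times\widehat{B}_n$, and hence fail to be finite over $V$.
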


\begin{proof}
We can assume $n \ge 2$ by Lemma \ref{lem:sfs-Ar-1}. We let $U_1 \subset \A^r_k$ be the intersection of open subsets obtained in Lemmas \ref{lem:cat3} and \ref{lem:fs-Kai-*}. We can therefore assume that $\overline{\phi^*_{g,s,1}(Z_{k(g)})} \to \A^r_{k(g)}$ is dominant for all $g \in U_1$. 

For $g \in U_1$, there is a commutative diagram
\begin{equation}\label{eqn:sfs-affine*-0}
\xymatrix@C1pc{
\A^r_{k(g)} \times \mathbb{A}^1_{k(g)} \times \overline{\square}^{n-1}_{k(g)} \ar[r]^-{p_n} \ar[d]_{\phi_{g,s,1}} & \A^r_{k(g)} \times \mathbb{A}^1_{k(g)} \ar[d]^{\phi_{g,s,1}} \\
\A^r_{k(g)} \times \mathbb{A}^1_{k(g)} \times \overline{\square}^{n-1}_{k(g)} \ar[r]^-{p_n} & \A^r_{k(g)} \times \mathbb{A}^1_{k(g)},}
\end{equation} 
where the horizontal arrows are the projections.

If we let $W= p_n(\overline{Z}_{k(g)})$, it follows from Lemma \ref{lem:fs-Kai-*} that the composite map $\overline{\phi^*_{g,s,1}(Z_{k(g)})} \to \phi^*_{g,s,1}(W) \to \A^r_{k(g)}$ is quasi-finite over $\Sigma_{k(g)}$. Since $\overline{\phi^*_{g,s,1}(Z_{k(g)})} \to \A^r_{k(g)}$ is dominant by Lemma \ref{lem:cat1}, it follows from Chevalley's theorem on fiber dimensions (see \cite[Exercise II-3.22, p.95]{Hartshorne}) that there is an open neighborhood $U_g \subset \A^r_{k(g)}$ of $\Sigma_{k(g)}$ over which the map $\overline{\phi^*_{g,s,1}(Z_{k(g)})} \to \A^r_{k(g)}$ is quasi-finite with non-empty fibers. We then get maps $\overline{\phi^*_{g,s,1}(Z_{k(g)})} \cap q^{-1}_n(U_g) \xrightarrow{p_n}  \phi^*_{g,s,1}(W) \cap q^{-1}(U_g) \xrightarrow{q} U_g$, where the first map is projective and the composite map is quasi-finite with non-empty fibers. This implies that the first map is also quasi-finite, and hence, it is finite. Since $\overline{Z} \to W$ is dominant, so is the map $\overline{\phi^*_{g,s,1}(Z_{k(g)})} \to \phi^*_{g,s,1}(W)$ by \eqref{eqn:sfs-affine*-0}. It follows that $\overline{\phi^*_{g,s,1}(Z_{k(g)})} \to \phi^*_{g,s,1}(W)$ is finite and surjective over $U_g$.

On the other hand, we have shown in Lemma \ref{lem:sfs-Ar-1} that ${\phi}^*_{g,s,1}(W) \to \A^r_{k(g)}$ is finite and surjective over $\A^r_k$ for our choice of $s \gg 0$ and $g \in \A^r_k \setminus \{0\}$. We conclude that there is an open neighborhood $U_g \subset \A^r_{k(g)}$ of $\Sigma_{k(g)}$ over which $\phi^*_{g,s,1}(\overline{Z}_{k(g)}) \to \A^r_{k(g)}$ is finite and surjective.

To show this property for $\phi^*_{g,s,1}({Z}_{k(g)})$, we fix $x \in \Sigma$ and use the diagram \eqref{eqn:fs-generic} where we take $\overline{W} = Y := \overline{Z} \setminus Z$. To understand the generic fiber of $\omega_{Y,x}$, we apply Lemma \ref{lem:blochlemma} with
\begin{equation}\label{eqn:sfs-affine-0}
X = \A^r_k \times \mathbb{A}^1_k \times \overline{\square}^{n-1}_k, G = \A^r_k \ ,   \psi(\un{x},t, \un{y}) = (\eta) t^{s(m+1)}, A = \Sigma \times \mathbb{A}^1_k \times \overline{\square}^{n-1}_k,  B = Y,
\end{equation}
where $G$ acts on $\A^r_k \times \mathbb{A}^1_k \times \overline{\square}^{n-1}_k$ by $g \cdot (\un{x}, t, \un{y}) = (g + \un{x} , t, \un{y})$ as before. One checks immediately that the conditions of Lemma \ref{lem:blochlemma} are satisfied. It follows that the intersection $\phi_{\eta,s,1}(A_{k(\eta)}) \cap B_{k(\eta)}$ is proper. By a dimension counting, this means that $\phi_{\eta,s,1}(A_{k(\eta)}) \cap B_{k(\eta)} = \emptyset$. Equivalently, we have $A_{k(\eta)} \cap \phi^*_{\eta,s,1}(Y_{k(\eta)}) = \emptyset$. We conclude by Lemma \ref{lem:fs-generic-*} that for every $x \in \Sigma$, the map $\omega_{Y,x}:\theta^{-1}_x(Y) \to \A^r_k$ is not dominant. We can therefore find a dense open subset $U \subset U_1 \subset \A^r_k$ such that the fiber of $\omega_{Y,x}:\theta^{-1}_x(Y) \to \A^r_k$ is empty over $U$ for every $x \in \Sigma$. In other words, for every $g \in U$, the intersection $\phi^*_{g,s,1}(Y_{k(g)}) \cap A_{k(g)} =(\overline{\phi^*_{g,s,1}({Z}_{k(g)})} \setminus \phi^*_{g,s,1}({Z}_{k(g)})) \cap A_{k(g)}$ is empty. But this means that the map $\phi^*_{g,s,1}({Z}_{k(g)}) \to \A^r_{k(g)}$ is finite and surjective over an affine neighborhood of $\Sigma_{k(g)}$ (see Lemma \ref{lem:fs-smooth}).
\end{proof}

\begin{lem}\label{lem:sfs-Kai}
Assume that $Z \in \TZ^n _{\Sigma} (\A^r_k,n;m)$ is an irreducible cycle such that $Z \to \A^r_k$ is finite and surjective over an affine neighborhood of $\Sigma$. We can then find $s \gg 0$ and a dense open subset $U \subset \A^r_k$ such that for each $1 \le j \le n$ and for each $g \in U$, the scheme $(\phi^*_{g,s,1}(Z_{k(g)}))^{(j)}$ is regular over an affine neighborhood of $\Sigma_{k(g)}$.
\end{lem}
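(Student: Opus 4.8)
The plan is to treat each index $1\le j\le n$ separately and then intersect the finitely many dense open subsets of $\A^r_k$ that result, taking $s\gg 0$ as in Lemma~\ref{lem:fs-Kai}. The structural starting point is that $\phi_{g,s,1}$ alters only the $\A^r_k$-coordinate, and does so through a function of $t$ alone, so it commutes with the projection $\pi_j\colon \A^r_k\times B_n\to \A^r_k\times B_j$; combining this with the commutation of scheme-theoretic image with the flat base change $k\to k(g)$ and with an isomorphism, one obtains
\[
\bigl(\phi^*_{g,s,1}(Z_{k(g)})\bigr)^{(j)} \;=\; \phi_{g,s,1}^{-1}\bigl((Z^{(j)})_{k(g)}\bigr).
\]
Hence $(\phi^*_{g,s,1}(Z_{k(g)}))^{(j)}$ is abstractly isomorphic to $(Z^{(j)})_{k(g)}$; the only change is that the projection to $\A^r_{k(g)}$ has been twisted, so that its fibre over a point of $\Sigma$ corresponds under $\phi_{g,s,1}$ to a sheared slice of $Z^{(j)}$. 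Since $k$ is perfect and $Z^{(j)}$ is integral, its non-regular locus $S_j:=(Z^{(j)})_{\sing}$ is a proper closed subscheme, so $\dim S_j\le r-1$; moreover $S_j$ is the non-smooth locus of $Z^{(j)}$ over $k$, which commutes with base change, so the non-regular locus of $(Z^{(j)})_{k(g)}$ is contained in $(S_j)_{k(g)}$. It therefore suffices to find $s\gg 0$ and a dense open $U\subset\A^r_k$ so that for $g\in U$ the set $\phi_{g,s,1}^{-1}\bigl((S_j)_{k(g)}\bigr)$ is disjoint from $\Sigma_{k(g)}\times\A^1_k\times\overline{\square}^{j-1}$. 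Granting this, since $Z\to\A^r_k$ is in particular dominant, Lemma~\ref{lem:fs-Kai} makes $\phi^*_{g,s,1}(Z_{k(g)})$ finite over a neighbourhood of $\Sigma_{k(g)}$, hence so is its image $(\phi^*_{g,s,1}(Z_{k(g)}))^{(j)}$; its non-regular locus then has closed image in $\A^r_{k(g)}$ disjoint from $\Sigma_{k(g)}$, so the scheme is regular over a smaller affine neighbourhood of $\Sigma_{k(g)}$, as required.

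For the disjointness I would argue exactly as in the proof of Lemma~\ref{lem:fs-Kai}: apply Bloch's Lemma~\ref{lem:blochlemma} with $X=\A^r_k\times\A^1_k\times\overline{\square}^{j-1}$, with $G=\A^r_k$ acting on the first factor by translation, with $\psi(\un x,t,\un y)=(\eta)t^{s(m+1)}$ for $\eta$ the generic point of $\A^r_k$ and $K=k(\eta)$, with $A=\Sigma\times\A^1_k\times\overline{\square}^{j-1}$, and with $B$ the closure of $S_j$ in $X$. The verification of the hypotheses is the same as there; in particular the open subset on which the transcendence-degree condition holds may be taken to be $\{t\ne 0\}$, since $\psi$ degenerates only on $\{t=0\}$. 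Bloch's lemma gives that $\phi_{\eta,s,1}(A_K\cap\{t\ne 0\}_K)\cap B_K$ is a proper intersection, and since $\dim A+\dim B\le j+(r-1)<r+j=\dim X$ it is empty; specialising the generic point produces a dense open $U_j\subset\A^r_k$ over which $\phi_{g,s,1}(A_{k(g)}\cap\{t\ne 0\}_{k(g)})\cap B_{k(g)}=\emptyset$, which translates into the desired disjointness on the locus $t\ne 0$.

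The remaining --- and I expect principal --- difficulty concerns the locus $t=0$, where $\phi_{g,s,1}$ degenerates to the identity: a point of $S_j$ lying over $\Sigma$ with $t$-coordinate $0$ would be a fixed point of every $\phi_{g,s,1}$, hence would lie in $\phi_{g,s,1}^{-1}\bigl((S_j)_{k(g)}\bigr)$ over $\Sigma_{k(g)}$ for all $g$, so that the projected scheme could never be made regular there. This is where the hypothesis $Z\in\TZ^n_\Sigma(\A^r_k,n;m)$ enters essentially. Proper intersection of $Z$ with the codimension-one faces $\{y_i=1\}\subset B_n$ forces $Z\cap(\Sigma\times\{y_i=1\})=\emptyset$ for $1\le i\le n-1$ on dimension grounds, as in Lemma~\ref{lem:No-intersection}; and the modulus $m$ condition, read on the normalisation $\nu\colon\overline{Z}^{N}\to\overline{Z}$ as $\supp\nu^*(F_{n,0})\subseteq\supp\nu^*(F^1_n)$, gives $\overline{Z}\cap\{t=0\}\subseteq\bigcup_i\bigl(\overline{Z}\cap\{y_i=1\}\bigr)$. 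Together these show that $Z$, hence $Z^{(j)}=\pi_j(Z)$, hence $S_j$, has no point lying over $\Sigma$ with $t=0$ (for $n=1$ the modulus condition gives $\overline{Z}\cap\{t=0\}=\emptyset$ outright). Consequently $B$, the closure of $S_j$ in $X$, is disjoint from the fibres over $\Sigma$ on the locus $t=0$, so the $t=0$ part of $\phi_{g,s,1}(A_{k(g)})$ --- which is $\Sigma_{k(g)}\times\{0\}\times\overline{\square}^{j-1}$ --- misses $B_{k(g)}$ as well; combined with the previous paragraph this yields $\phi_{g,s,1}(A_{k(g)})\cap B_{k(g)}=\emptyset$ for $g$ in a dense open, and intersecting the $U_j$ finishes the proof.
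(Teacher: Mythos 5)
Your proof is correct and follows essentially the same route as the paper's: both reduce the statement, via the commutation $(\phi^*_{g,s,1}(Z_{k(g)}))^{(j)} = \phi^*_{g,s,1}((Z^{(j)})_{k(g)})$ coming from the diagram \eqref{eqn:sfs-affine*-1}, to showing that for general $g$ the twisted translate of $\Sigma \times B_j$ misses $S_j = (Z^{(j)})_{\sing}$, and both then conclude by combining this avoidance with the finiteness supplied by Lemma \ref{lem:fs-Kai}. The one genuine difference is how the genericity of $g$ is obtained: you invoke Bloch's Lemma \ref{lem:blochlemma}, whereas the paper uses the bare dimension count that $\theta_x^{-1}(S_j) \cong S_j$ has dimension $\le r-1$ and hence $\omega_{S_j,x}$ cannot dominate $\A^r_k$; in this empty-intersection situation the two are interchangeable, and your choice is consistent with how the neighbouring Lemmas \ref{lem:fs-Kai} and \ref{lem:support-Kai} are proved. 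Your explicit treatment of the locus $t=0$ is welcome — the paper handles it implicitly, since $\theta_x$ is an isomorphism only over $\G_{m,k}$ and $Z\cap\{t=0\}=\emptyset$ — but two small slips there should be corrected. First, the divisors $\{y_i=1\}$ are not faces of $B_n$ (faces are cut out by $y_i\in\{0,\infty\}$), and no appeal to proper intersection is needed: $Z\subset \A^r_k\times B_n$ avoids $\{y_i=1\}$ outright, so the modulus condition alone already forces $Z\cap\{t=0\}=\emptyset$, hence $S_j\cap\{t=0\}=\emptyset$. Second, the closure $\overline{S_j}$ in $\A^r_k\times\A^1_k\times\overline{\square}^{j-1}$ may still acquire points over $\Sigma$ with $t=0$ at the boundary where some $y_i=1$, so your final assertion should be the disjointness of $\phi_{g,s,1}(A_{k(g)})$ from $(S_j)_{k(g)}$ itself rather than from its closure; this weaker statement is exactly what the regularity argument requires, and it does follow from your two cases.
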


\begin{proof}
We take $W = Z_{\rm sing}$, the singular locus of $Z$, in \eqref{eqn:fs-generic} and consider the map $\omega_{Z_{\rm sing},x}: \theta^{-1}_x(Z_{\rm sing}) \to \A^r_k$ for $x \in \Sigma$. We had seen previously that the map $\theta_x$ on the top row of \eqref{eqn:fs-generic} is an isomorphism. In particular, the map $\theta_x:\theta^{-1}_x(Z_{\rm sing}) \to Z_{\rm sing}$ is an isomorphism. But this implies that $\dim(\theta^{-1}_x(Z_{\rm sing})) = \dim(Z_{\rm sing}) \le r-1$. It follows that the map $\omega_{Z_{\rm sing},x}: \theta^{-1}_x(Z_{\rm sing}) \to \A^r_k$ is not dominant. We can therefore find a dense open subset $U \subset \A^r_k$ such that the fibers of $\omega_x$ over $U$ are empty. By shrinking $U$ further, we can assume that this holds for all $x \in \Sigma$.

It follows from Lemma \ref{lem:fs-generic-*} that for every $g \in U$, the closed subscheme $(\phi^*_{g,s,1}(Z_{k(g)}))_{\rm sing} = \phi^*_{g,s,1}((Z_{k(g)})_{\rm sing}) = \phi^*_{g,s,1}((Z_{\rm sing})_{k(g)})$ does not meet $(\Sigma \times B_n)_{k(g)}$. Here, the last equality uses the perfectness of $k$. But this means that $\phi^*_{g,s,1}(Z_{k(g)})$ is regular at all points lying over $\Sigma_{k(g)}$. By choosing $s \gg 0$ as in Lemma \ref{lem:monic}, shrinking $U$ further, and using Lemma \ref{lem:fs-Kai}, we can assume that $\phi^*_{g,s,1}(Z_{k(g)})$ is finite and surjective over an affine neighborhood of $\Sigma_{k(g)}$. But then $\phi^*_{g,s,1}(Z_{k(g)})$ must be regular over an affine neighborhood of $\Sigma_{k(g)}$.

Let $Z^{(j)} \subset \A^r_k \times B_j$ be the projection of $Z$ to $B_j$ as in \S \ref{sec:SFS*} for $1 \le j \le n$. Since $Z \to \A^r_k$ is finite and surjective over an affine neighborhood of $\Sigma$, each $Z^{(j)}$ is also finite and surjective over an affine neighborhood of $\Sigma$. We can therefore repeat the above process successively for each $Z^{(j)}$ by shrinking $U$ further each time. In the end, we get a dense open subset $U \subset \A^r_k$ such that each $1 \le j \le n$ and for each $g \in U$, the scheme $\phi^*_{g,s,1}(Z^{(j)}_{k(g)})$ is regular over a common affine neighborhood of $\Sigma_{k(g)}$. Since the diagram 
\begin{equation}\label{eqn:sfs-affine*-1}
\xymatrix@C1pc{
\A^r_{k(g)} \times \mathbb{A}^1_{k(g)} \times {\square}^{n-1}_{k(g)} \ar[r]^-{\pi_j} \ar[d]_{\phi_{g,s,1}} & \A^r_{k(g)} \times \mathbb{A}^1_{k(g)} \times {\square}^{j-1}_{k(g)} \ar[d]^{\phi_{g,s,1}} \\
\A^r_{k(g)} \times \mathbb{A}^1_{k(g)} \times \overline{\square}^{n-1}_{k(g)} \ar[r]^-{\pi_j} & \A^r_{k(g)} \times \mathbb{A}^1_{k(g)} \times {\square}^{j-1}_{k(g)}}
\end{equation}
commutes and the vertical maps are isomorphisms, it follows that $\phi^*_{g,s,1}(Z^{(j)}_{k(g)}) = (\phi^*_{g,s,1}(Z_{k(g)}))^{(j)}$. We have therefore shown that there is a dense open subset $U \subset \A^r_k$ such that for every $g \in U$ and $1 \le j \le n$, the scheme $(\phi^*_{g,s,1}(Z_{k(g)}))^{(j)}$ is regular over a common affine neighborhood of $\Sigma_{k(g)}$. This finishes the proof.
\end{proof}

\begin{lem}\label{lem:support-Kai}
For every integer $s \ge 1$, there is a dense open subset $U \subset \A^r_k$ such that for every $g \in U$, one has $\phi^*_{g,s,1}(Z_{k(g)}) \cap (\Sigma \times \A^1_k \times F)_{k(g)} = \emptyset$ for every proper face $F$ of $\square^{n-1}$.
\end{lem}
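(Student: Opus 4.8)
The plan is to carry out the same sort of incidence-variety dimension count used for Lemmas~\ref{lem:fs-Kai-*}--\ref{lem:sfs-Kai}, via the diagram~\eqref{eqn:fs-generic}, but in this case the relevant incidence scheme will be visibly of dimension $<r$ for a cheap reason: the hypothesis $Z\in\TZ^n_{\Sigma}(\A^r_k,n;m)$ already forces $Z$ to miss $\Sigma\times\A^1_k\times F$ altogether for every proper face $F$. Indeed, if $F\subset\square^{n-1}$ is a proper face, say of codimension $c\ge 1$, then $\A^1_k\times F$ is a proper face of $B_n$ of codimension $c$, and the two proper-intersection conditions defining $\TZ^n_{\Sigma}$ give
\[
\dim\bigl(Z\cap(\A^r_k\times\A^1_k\times F)\bigr)\le (r+n-c)-n=r-c\le r-1
\]
and, for each $x\in\Sigma$,
\[
\dim\bigl(Z\cap(\{x\}\times\A^1_k\times F)\bigr)\le (n-c)-n=-c<0,
\]
so $Z\cap(\{x\}\times\A^1_k\times F)=\emptyset$; in particular $(x,0,\un{y})\notin Z$ for every $\un{y}\in F$. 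No modulus condition is needed for either bound.

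Next I would fix $x\in\Sigma$ and a proper face $F$, and feed $W:=Z\cap(\A^r_k\times\A^1_k\times F)$ (with its reduced structure) into the set-up of~\eqref{eqn:fs-generic}, producing $\theta^{-1}_x(W)\subset\A^r_k\times\A^1_k\times F$ together with the projection $\omega_{W,x}\colon\theta^{-1}_x(W)\to\A^r_k$ to the $g$-coordinate. The heart of the argument is the bound $\dim\theta^{-1}_x(W)<r$. On the open locus $\{t\ne 0\}$ the map $\theta_x$ is an isomorphism, with inverse $(w,t,\un{y})\mapsto\bigl((w-x)t^{-s(m+1)},t,\un{y}\bigr)$; hence $\theta^{-1}_x(W)\cap\{t\ne 0\}\cong W\cap\{t\ne 0\}$, which has dimension $\le\dim W\le r-1$ by the first display. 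On $\{t=0\}$ the map $\theta_x$ collapses $\A^r_k\times\{0\}\times F$ onto $\{x\}\times\{0\}\times F$, so $\theta^{-1}_x(W)\cap\{t=0\}$ equals $\A^r_k\times\{0\}\times\{\un{y}\in F:(x,0,\un{y})\in Z\}$, which is empty by the previous paragraph. Therefore $\theta^{-1}_x(W)\subset\{t\ne 0\}$ and $\dim\theta^{-1}_x(W)\le r-1<r$, so $\omega_{W,x}$ is not dominant and its image is a proper closed subset $C_{x,F}\subsetneq\A^r_k$. (For a point $x\in\Sigma$ that is not $k$-rational one runs this over $k(x)$ and takes the image of the resulting proper closed set under the finite map $\A^r_{k(x)}\to\A^r_k$, as is already implicit in~\eqref{eqn:fs-generic}.)

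Finally I would put $U:=\A^r_k\setminus\bigcup_{x\in\Sigma}\bigcup_{F}C_{x,F}$, the inner union over the finitely many proper faces $F$ of $\square^{n-1}$; this is a dense open subset of $\A^r_k$. For $g\in U$ the fibre of $\omega_{W,x}$ over $g$ is empty for every $x\in\Sigma$ and every proper $F$, and by Lemma~\ref{lem:fs-generic-*} (identifying the $g$-fibre of $\omega_{W,x}$ with the slice of $\phi^*_{g,s,1}(W)$ lying over $\{x\}$, and recalling that $\phi_{g,s,1}$ acts trivially on the $\square^{n-1}$-coordinates, so $\phi^*_{g,s,1}(Z)\cap(\A^r_k\times\A^1_k\times F)_{k(g)}=\phi^*_{g,s,1}(W_{k(g)})$) this is exactly the statement that $\phi^*_{g,s,1}(Z_{k(g)})\cap(\{x\}\times\A^1_k\times F)_{k(g)}=\emptyset$; ranging over $x\in\Sigma$ gives the claim. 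I expect the only genuinely delicate point to be the treatment of the fibre of $\theta_x$ over $\{t=0\}$, which is $r$-dimensional and would otherwise let $\theta^{-1}_x(W)$ dominate $\A^r_k$; it is precisely the proper intersection of $Z$ with $\Sigma\times F$ that kills this contribution.
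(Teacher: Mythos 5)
Your argument is correct and follows essentially the same route as the paper's proof: set $W = Z \cap (\A^r_k \times \A^1_k \times F)$, run it through the diagram \eqref{eqn:fs-generic}, show $\omega_{W,x}$ is not dominant by a dimension count, and conclude with Lemma \ref{lem:fs-generic-*}; you merely make explicit the dimension bound and the vanishing over $\{t=0\}$ (which the paper handles implicitly by setting up \eqref{eqn:fs-generic-0} over $\G_{m,k}$, using that the modulus condition already forces $Z \cap \{t=0\} = \emptyset$). The only nitpick is that $C_{x,F}$ should be taken to be the \emph{closure} of the image of $\omega_{W,x}$, since that map need not be proper.
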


\begin{proof}
We let $F$ be a proper face of $\square^{n-1}$ and let $W = Z \cap (\A^r_k \times \A^1_k \times F)$. We fix a point $x \in \Sigma$ and consider the diagram (see \eqref{eqn:fs-generic}):
\begin{equation}\label{eqn:fs-generic-0}
\xymatrix@C.8pc{
\theta^{-1}_x(W) \ar[drr]_{\omega_{W,x}} \ar@{^{(}->}[r] & \A^r_k \times \G_{m,k} \times F \ar@{^{(}->}[r]^-{\iota_x} & \A^r_k \times \A^r_k \times \G_{m,k} \times F \ar[r]^-{P_2}  \ar[d]^-{P_1} &  \A^r_k \times \G_{m,k} \times F  & W  \ar@{^{(}->}[l] \\
& & \A^r_k. & &}
\end{equation}

As in \eqref{eqn:fs-generic}, the map $\theta_x = P_2 \circ \iota_x$ is an isomorphism. Note also that (see Lemma \ref{lem:fs-generic-*}) for any $g \in \A^r_k$, the map $\omega^{-1}_{W,x}(g) \to \phi^*_{g,s,1}(Z) \cap (\{x\} \times \A^1_k \times F)$, which sends $(g,t, \un{y})$ to $(x, t, \un{y})$, is an isomorphism. It follows therefore 
 that the map $\omega_{W,x}$ is not dominant. Equivalently, there exists a dense open $U \subset \A^r_k$ such that the fibers of $\omega_{W,x}$ over $U$ are empty. Shrinking $U$ further if necessary, we can assume that this happens for all $x \in \Sigma$. It is clear that for every $g \in U$, the set $\phi^*_{g,s,1}(Z_{k(g)}) \cap (\Sigma \times \A^1_k \times F)_{k(g)}$ is empty. This proves the lemma.
\end{proof}

\subsection{The proof of the moving lemma for affine spaces}

We can now prove the main result of this section, the \sfs-moving lemma for the localizations of $\A^r_k$. We begin with the following intermediate modification step.

\begin{lem}\label{lem:partial-dominance}
Let $k$ be an infinite field and let $\alpha \in \TZ^n(\A^r_k, n;m)$. Let $V= \Spec (\mathcal{O}_{\mathbb{A}^r_k, \Sigma})$ for a finite subset $\Sigma \subset \mathbb{A}^r_k$ of closed points, with the localization map $j: V \to \A^r_k$. Assume that $\partial (j^*(\alpha)) = 0$. Then there are cycles $\beta \in \TZ^n(\A^r_k,n;m)$ and $\gamma \in \TZ^n(\A^r_k, n+1;m)$ with $\partial(j^*(\gamma)) = j^*(\alpha) - j^*(\beta)$ such that each component of $\beta$ is either dominant over $\A^r_k$ or restricts to zero on $V$.
\end{lem}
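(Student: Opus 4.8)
The plan is to move $\alpha$ --- up to boundaries, which is all the conclusion constrains --- first into $\TZ^n_{\Sigma}(\A^r_k,n;m)$ (possible by Lemma~\ref{lem:TH_M} and \cite{Kai}), and then in finitely many further stages by the twisted translations $\phi_{g,s}$ of \eqref{eqn:Kai-homotopy} at suitably general $k$-rational points $g\in\A^r_k$ (available because $k$ is infinite), using that a cycle lying in $\TZ^n_{\Sigma}$ has $\Sigma$-disjoint boundary, so that the homotopy correction terms cannot breed new bad components. The case $n=1$ is trivial: an irreducible $Z\in\TZ^n(\A^r_k,1;m)$ has $\dim Z=r$, the modulus condition gives $\ol Z\cap\{t=0\}=\emptyset$, and were $Z\to\A^r_k$ not dominant then $Z$ would be the cylinder $\ol{q(Z)}\times\A^1_k$ over its $(r-1)$-dimensional image and would meet $\{t=0\}$; so take $\beta=\alpha$, $\gamma=0$. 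Assume $n\ge2$.

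For the basic step, let $\delta\in\TZ^n(\A^r_k,n;m)$ and write $\delta=\delta^{\mathrm{g}}+\delta^{\mathrm{b}}$, with $\delta^{\mathrm{g}}$ the sum of the components already dominant over $\A^r_k$ or with $\Sigma$-disjoint image (these satisfy the conclusion and will not be moved) and $\delta^{\mathrm{b}}$ the bad ones, and suppose $\delta^{\mathrm{b}}\in\TZ^n_{\Sigma}(\A^r_k,n;m)$. Since $\TZ^n_{\Sigma}(\A^r_k,\bullet;m)$ is a subcomplex, $\partial\delta^{\mathrm{b}}\in\TZ^n_{\Sigma}(\A^r_k,n-1;m)$, and comparing codimension $n$ with $\dim(\Sigma\times B_{n-1})=n-1$ forces every component of $\partial\delta^{\mathrm{b}}$ to be disjoint from $\Sigma\times B_{n-1}$, hence to have $\Sigma$-disjoint image in $\A^r_k$. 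For general $g$ and $s\gg0$ (at least the integer $s(\delta^{\mathrm{b}})$ of \cite[Proposition~2.3]{Kai} controlling the modulus), Lemma~\ref{lem:good-intersection} gives $H_g(\delta^{\mathrm{b}}):=\phi^*_{g,s}(\delta^{\mathrm{b}})\in\TZ^n(\A^r_k,n+1;m)$, and the cubical boundary formula --- using $\phi_{g,s,0}=\id$, that $\phi_{g,s,1}$ is an \emph{automorphism} of $\A^r_k\times\A^1_k\times\ol\square^{n-1}$ (rationality of $g$), and that $\phi_{g,s}$ commutes with the face maps in $y_1,\dots,y_{n-1}$ --- yields, up to signs,
\[
\partial\,H_g(\delta^{\mathrm{b}})\ =\ \delta^{\mathrm{b}}\ -\ \phi^*_{g,s,1}(\delta^{\mathrm{b}})\ -\ \phi^*_{g,s}(\partial\delta^{\mathrm{b}}).
\]
Put $\beta:=\delta^{\mathrm{g}}+\phi^*_{g,s,1}(\delta^{\mathrm{b}})+\phi^*_{g,s}(\partial\delta^{\mathrm{b}})\in\TZ^n(\A^r_k,n;m)$ (the summands other than $\delta^{\mathrm{g}}$ are signed sums of faces of $H_g(\delta^{\mathrm{b}})$, hence admissible) and $\gamma:=\pm H_g(\delta^{\mathrm{b}})\in\TZ^n(\A^r_k,n+1;m)$; the $\delta^{\mathrm{g}}$-terms cancel in $j^*\delta-j^*\beta$, and applying the flat pullback $j^*$ to the identity gives $\partial(j^*\gamma)=j^*\delta-j^*\beta$.

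For general $g$, every component of $\beta$ is dominant over $\A^r_k$ or $\Sigma$-disjoint \emph{except} for translates of bad components of $\delta$ whose image in $\A^r_k$ has strictly grown. Indeed, a bad component $Z$ of $\delta$ has $\dim Z=r$ and $q_n(Z)\subsetneq\A^r_k$: if $\pr_2\colon Z\to\A^1_k$ is dominant then by Lemma~\ref{lem:cat2} $\phi^*_{g,s,1}(Z)\cong Z$ is irreducible with $\pr_2$ still dominant on it and $\dim q_n(\phi^*_{g,s,1}(Z))=\dim q_n(Z)+1$, and for general $g$ it again lies in $\TZ^n_{\Sigma}(\A^r_k,n;m)$ by Lemmas~\ref{lem:support-Kai} and \ref{lem:fs-Kai-*}; if $\pr_2$ is not dominant then by Lemma~\ref{lem:cat3} $\phi^*_{g,s,1}(Z)$ is $\Sigma$-disjoint, hence good. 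The components of $\delta^{\mathrm{g}}$ are untouched. Finally, each component of $\phi^*_{g,s}(\partial\delta^{\mathrm{b}})$ is $\phi^*_{g,s}(W)$ for an irreducible component $W$ of $\partial\delta^{\mathrm{b}}$, which is $\Sigma$-disjoint with image of some dimension $d_W\le r-1$; a short computation identifies the image of $\phi^*_{g,s}(W)$ in $\A^r_k$ with the closure of $\bigcup_{\lambda\in k}\bigl((\text{image of }W)+\lambda g\bigr)$, of dimension $d_W+1$, so $\phi^*_{g,s}(W)$ is dominant over $\A^r_k$ when $d_W=r-1$, while if $d_W\le r-2$ a dimension count shows it stays $\Sigma$-disjoint for general $g$; either way it is good. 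Hence $\phi^*_{g,s}(\partial\delta^{\mathrm{b}})$ adds no bad component, and the bad part $\beta^{\mathrm{b}}$ of $\beta$ --- a sum of translates $\phi^*_{g,s,1}(Z)$ of bad components of $\delta$ --- again lies in $\TZ^n_{\Sigma}(\A^r_k,n;m)$.

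To finish, iterate: put $\beta_0=\alpha$ (so $\beta_0^{\mathrm{b}}=\alpha^{\mathrm{b}}\in\TZ^n_{\Sigma}$) and apply the step with $\delta=\beta_{i-1}$, at each stage choosing $g_i\in\A^r_k(k)$ outside the finitely many proper closed ``bad'' loci from Lemmas~\ref{lem:cat1}--\ref{lem:cat3}, \ref{lem:support-Kai}, \ref{lem:fs-Kai-*} and the dimension counts, and $s_i\gg0$; this produces $\beta_i$ with $\beta_i^{\mathrm{b}}\in\TZ^n_{\Sigma}(\A^r_k,n;m)$ and $\gamma_i$ with $\partial(j^*\gamma_i)=j^*\beta_{i-1}-j^*\beta_i$. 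By the previous paragraph the minimum of $\dim q_n(Z)$ over the bad components of $\beta_i$ strictly increases while staying $\le r-1$, so after at most $r$ stages every component of $\beta:=\beta_r$ is dominant over $\A^r_k$ or restricts to zero on $V$; with $\gamma:=\gamma_1+\dots+\gamma_r$, telescoping gives $\partial(j^*\gamma)=j^*\alpha-j^*\beta$. The one genuinely delicate point is that the boundary correction $\phi^*_{g,s}(\partial\delta^{\mathrm{b}})$ must never feed a bad component back in --- which is exactly why we pass to $\TZ^n_{\Sigma}$ first, making $\partial\delta^{\mathrm{b}}$ $\Sigma$-disjoint --- and this, together with the repeated ``general $g$'' choices, is where the infinitude of $k$ enters; verifying the boundary identity with correct signs and the persistence of $\beta^{\mathrm{b}}\in\TZ^n_{\Sigma}$ is routine but lengthy.
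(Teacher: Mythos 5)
Your overall toolkit---the twisted translations $\phi_{g,s}$ at general rational points, the cubical homotopy identity, and termination via Lemmas~\ref{lem:cat1}--\ref{lem:cat3}---is the same as the paper's, but your decision to translate only the bad part $\delta^{\mathrm b}$ while freezing $\delta^{\mathrm g}$ forces you to control the correction term $\phi^*_{g,s}(\partial\delta^{\mathrm b})$ as a cycle on all of $\A^r_k$, and that is where the argument breaks. The inference ``every component of $\partial\delta^{\mathrm b}$ is disjoint from $\Sigma\times B_{n-1}$, hence has $\Sigma$-disjoint image in $\A^r_k$'' is not valid: for a component $W$ of $\partial\delta^{\mathrm b}$ the image $q_{n-1}(W)$ is only constructible, and its \emph{closure} can contain a point $x\in\Sigma$ even though $W\cap(\Sigma\times B_{n-1})=\emptyset$ (nothing in the $\TZ^n_{\Sigma}$-condition forbids, say, $q_{n-1}(W)=C\setminus\{x\}$ for a curve $C$ through $x$). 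For such a $W$ with $d_W\le r-2$, the image of $\phi^*_{g,s}(W)$ contains $q_{n-1}(W)$ itself (take $y=0$ in your sweep), so its closure contains $x$ for \emph{every} choice of $g$; hence $\phi^*_{g,s}(W)$ is neither dominant over $\A^r_k$ nor restricts to zero on $V$, and ``either way it is good'' fails. Since you declare these components good and never revisit them, the $\beta$ you produce can violate the conclusion; and if one instead tried to reprocess them, they are not obviously in $\TZ^n_{\Sigma}$ and their image dimension $d_W+1$ can be smaller than that of the components just improved, so the strictly-increasing-dimension termination argument collapses too. A secondary issue: the preliminary reduction to $\alpha^{\mathrm b}\in\TZ^n_{\Sigma}$ cannot be obtained from Lemma~\ref{lem:TH_M}, which concerns classes of cycles with $\partial\alpha=0$ on $\A^r_k$, whereas you only have $\partial(j^*\alpha)=0$.

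The difficulty is self-inflicted, and the paper's proof avoids it entirely by translating the \emph{whole} cycle. From $\partial(\phi^*_{g,s}(\alpha))+\phi^*_{g,s}(\partial\alpha)=\alpha-\phi^*_{g,s,1}(\alpha)$ one applies $j^*$; the correction term becomes $\phi^*_{g,s}(\partial(j^*\alpha))=0$ by hypothesis, so one may simply take $\beta=\phi^*_{g,s,1}(\alpha)$ and $\gamma=\phi^*_{g,s}(\alpha)$. Moreover $\partial(j^*\beta)=0$, so the step iterates, and Lemmas~\ref{lem:cat1}--\ref{lem:cat3} classify every component of $\beta$ (dominant components stay dominant; the remaining ones either have strictly growing image or become zero on $V$). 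No good/bad splitting, no $\TZ^n_{\Sigma}$ bookkeeping, and no analysis of $\phi^*_{g,s}(\partial\delta^{\mathrm b})$ is needed.
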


\begin{proof}
We choose an integer $s \gg 0$ which is at least as large as the the integer $s(Z)$ and the one chosen in Lemmas \ref{lem:cat2} and \ref{lem:cat3} for every irreducible component $Z$ of $\alpha$. It follows from Lemma \ref{lem:good-intersection} that $\phi^*_{g,s}(\alpha)$ intersects all faces of $\square^{n}$ properly. Taking the face $F = \{1\} \times \square^{n-1}$ (and using the containment lemma \cite[Proposition 2.2]{KP3}), we see that $\phi^*_{g,s,1}(\alpha) \in \TZ^{n}(\A^r_{k(g)}, n; m)$ for all $g \in \A^r_k$. We can also assume that $s \gg 0$ is large enough so that Lemma \ref{lem:good-intersection} holds also for each boundary of each component of $\alpha$.

We let $U \subset \A^r_k$ be any dense open which is contained in the intersection of the ones given by Lemmas \ref{lem:cat2} and \ref{lem:cat3} for all irreducible components of $|\alpha|$. We let $g \in U(k)$ be any element. It follows by our choice of $g$ that if $Z$ is a component of $\alpha$, then $\phi^*_{g,s,1}(Z)$ is either dominant over $\A^r_k$, or it restricts to zero on $V$, or satisfies conditions (1) and (2) of Lemma \ref{lem:cat2}.

We now compute
\begin{eqnarray}
\notag \phi^*_{g,s} \circ \partial (\alpha) & = &\phi^*_{g,s} (\sum_{i=1} ^{n-1}(-1)^i (\partial^1_i - \partial^0_i) (\alpha))  {=}^\dagger \sum_{i=1} ^{n-1} (-1)^i(\partial^1_{i+1} - \partial^0_{i+1}) (\phi^*_{g,s}(\alpha)) \\
\notag & = & -  \sum_{i=2} ^n (-1)^i(\partial^1_{i} - \partial^0_{i}) (\phi^*_{g,s}(\alpha)),
\end{eqnarray}
where ${=}^\dagger$ follows from \eqref{eqn:Kai-homotopy}.
On the other hand, we have
\begin{eqnarray}
\notag \partial \circ \phi^*_{g,s}(\alpha) & = &  \sum_{i=1} ^n (-1)^i(\partial^1_{i} - \partial^0_{i}) (\phi^*_{g,s}(\alpha)) \\
\notag & = & (-1)(\partial^1_{1} - \partial^0_{1})(\phi^*_{g,s}(\alpha)) + \sum_{i=2} ^n (-1)^i(\partial^1_{i} - \partial^0_{i}) (\phi^*_{g,s}(\alpha)).
\end{eqnarray}

It follows that $\partial (\phi^*_{g,s}(\alpha)) + \phi^*_{g,s}(\partial (\alpha))= (\partial^0_{1} - \partial^1_{1})(\phi^*_{g,s}(\alpha)) =\alpha - \phi^*_{g,s,1}(\alpha)$. Lemma \ref{lem:good-intersection} says that $\phi^*_{g,s}(\alpha) \in \TZ^n(\A^r_k, n+1;m)$. If we let $\gamma = \phi^*_{g,s}(\alpha)$ and $\beta = \phi^*_{g,s,1}(\alpha)$, we see that $\partial(j^*(\gamma)) = j^*(\alpha) - j^*(\beta)$. It also follows that $\partial(j^*(\beta)) = 0$.

We now replace $\alpha$ by $\beta$ in $\TZ^n(\A^r_k,n;m)$ and repeat the above process. It follows from Lemmas \ref{lem:cat1}, \ref{lem:cat2} and \ref{lem:cat3} that after finite steps, we arrive at new cycles $\beta \in \TZ^n(\A^r_k,n;m)$ and $\gamma \in \TZ^n(\A^r_k,n+1;m)$ such that $\partial(j^*(\gamma)) = j^*(\alpha) - j^*(\beta)$. Moreover, each component of $\beta$ is either dominant over $\A^r_k$ or restricts to zero on $V$.
\end{proof}

\begin{thm}\label{thm:Ar-spread-case}
Let $k$ be an infinite perfect field and let $\alpha \in \TZ^n(\A^r_k, n;m)$. Let $V= \Spec (\mathcal{O}_{\mathbb{A}^r_k, \Sigma})$ for a finite subset $\Sigma \subset \mathbb{A}^r_k$ of closed points, with the localization map $j: V \to \A^r_k$. Assume that $\partial (j^*(\alpha)) = 0$. Then there are cycles $\beta \in \TZ^n_{\sfs} (V,n;m)$ and $\gamma \in \TZ^n(V, n+1;m)$ such that $\partial(\gamma) = j^*(\alpha) - \beta$.
\end{thm}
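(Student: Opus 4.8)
The plan is to use Lemma~\ref{lem:partial-dominance} to first reduce to the case where every component $Z$ of $\alpha$ is either dominant over $\A^r_k$ or restricts to zero on $V$, and then to apply the twisted-translation homotopy $\phi_{g,s}$ one last time, using the key lemmas of the preceding subsections to equip the resulting cycle with the full sfs-property. Concretely, by Lemma~\ref{lem:partial-dominance} we may replace $\alpha$ (at the cost of a boundary term $\gamma_0 \in \TZ^n(\A^r_k,n+1;m)$ with $\partial(j^*\gamma_0) = j^*\alpha - j^*\alpha'$) by a cycle $\alpha'$ each of whose components is dominant over $\A^r_k$ or restricts to $0$ on $V$; note $\partial(j^*\alpha')=0$ still. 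So from now on assume $\alpha$ itself has this property, and it suffices to produce $\beta \in \TZ^n_\sfs(V,n;m)$ and $\gamma \in \TZ^n(V,n+1;m)$ with $\partial\gamma = j^*\alpha - \beta$.

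Next I would fix an integer $s \gg 0$ large enough to simultaneously satisfy: $s \ge s(Z)$ for every component $Z$ of $\alpha$ (so Lemma~\ref{lem:good-intersection} applies, giving $\phi^*_{g,s}(\alpha) \in \TZ^n(\A^r_k,n+1;m)$ and hence, via the face $F=\{1\}\times\square^{n-1}$ and the containment lemma, $\phi^*_{g,s,1}(\alpha) \in \TZ^n(\A^r_k,n;m)$); the monic-in-$t$ condition of Lemma~\ref{lem:monic} for each component; the hypothesis of Lemma~\ref{lem:sfs-Kai}; and the same for every boundary of every component. Then I would choose $g \in U(k)$ for a single dense open $U \subset \A^r_k$ obtained by intersecting the dense opens produced by Lemmas~\ref{lem:fs-Kai}, \ref{lem:sfs-Kai}, and \ref{lem:support-Kai} (applied to each component $Z$ of $\alpha$, of which there are finitely many); here the infinitude of $k$ guarantees $U(k) \ne \emptyset$. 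Since $g$ is a $k$-rational point, $\phi_{g,s}$ is defined over $k$ and no scalar extension is needed. Set $\beta := \phi^*_{g,s,1}(\alpha)$ and $\gamma := \phi^*_{g,s}(\alpha)$, restricted to $V$. The homotopy formula computed in Lemma~\ref{lem:partial-dominance}, namely $\partial(\phi^*_{g,s}(\alpha)) + \phi^*_{g,s}(\partial\alpha) = \alpha - \phi^*_{g,s,1}(\alpha)$, together with $\partial(j^*\alpha)=0$, pulls back along $j$ to give $\partial\gamma = j^*\alpha - j^*\beta$ over $V$; composing with the boundary $\gamma_0$ from the first paragraph handles the original $\alpha$.

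It remains to check $j^*\beta = \phi^*_{g,s,1}(\alpha)|_V \in \TZ^n_\sfs(V,n;m)$, and this is exactly where the key lemmas do their work, component by component. For a component $Z$ of $\alpha$: by Lemma~\ref{lem:fs-Kai} (which applies because $Z$ is dominant over $\A^r_k$ or restricts to $0$ on $V$), $\phi^*_{g,s,1}(Z)|_V$ is either empty — in which case it contributes nothing — or finite and surjective over $V$, i.e.\ an fs-cycle. In the latter case, $Z \to \A^r_k$ is finite and surjective over an affine neighbourhood of $\Sigma$ (this is what Lemma~\ref{lem:fs-Kai} gives, via Lemma~\ref{lem:fs-smooth}), so Lemma~\ref{lem:sfs-Kai} applies and yields that $(\phi^*_{g,s,1}(Z_{k(g)}))^{(j)}$ is regular over a common affine neighbourhood of $\Sigma$ for all $0 \le j \le n$; since $g \in \A^r_k(k)$ and $k$ is perfect, regular over $k$ is smooth over $k$, so condition (2) of Definition~\ref{defn:sfs-0} holds. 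Lemma~\ref{lem:support-Kai} ensures $\phi^*_{g,s,1}(Z)$ meets no $\Sigma \times \A^1_k \times F$ for $F$ a proper face, which with the modulus and face-intersection properties already recorded (Lemma~\ref{lem:good-intersection}) places $\phi^*_{g,s,1}(Z)$ in $\un\TZ^n_\Sigma$; by Lemma~\ref{lem:sfs-spread} the collection of these conditions over a suitable atlas says precisely that $\beta$ is an sfs-cycle along $\Sigma$, hence $j^*\beta \in \TZ^n_\sfs(V,n;m)$.

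The main obstacle, as usual in this circle of ideas, is the bookkeeping around the choice of $s$ and $U$: one must verify that a \emph{single} $s$ works for all the monicity, modulus, Kai-style and boundary constraints simultaneously (each lemma only says "$s \gg 0$", and one must confirm these bounds depend only on the finitely many components and their boundaries, not on $g$), and that the finitely many dense opens from Lemmas~\ref{lem:fs-Kai}, \ref{lem:sfs-Kai}, \ref{lem:support-Kai} — applied across all components — genuinely intersect in a dense open with a $k$-point. The perfectness of $k$ enters crucially (and only) in upgrading regularity of the projected schemes $(\cdot)^{(j)}$ to smoothness over $k$, which is what Definition~\ref{defn:sfs-0}(2) demands; this is the one place the argument would fail over an imperfect field.
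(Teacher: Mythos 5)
Your proposal is correct and follows essentially the same route as the paper's proof: reduce via Lemma~\ref{lem:partial-dominance}, fix a single $s \gg 0$ satisfying all the constraints at once, pick $g$ in the intersection of the dense opens from Lemmas~\ref{lem:fs-Kai}, \ref{lem:sfs-Kai} and \ref{lem:support-Kai}, and use the homotopy identity $\partial(\phi^*_{g,s}(\alpha)) + \phi^*_{g,s}(\partial\alpha) = \alpha - \phi^*_{g,s,1}(\alpha)$ with $\beta = j^*\phi^*_{g,s,1}(\alpha)$ and $\gamma = j^*\phi^*_{g,s}(\alpha)$. The only slip is notational: what Lemma~\ref{lem:fs-Kai} makes finite and surjective near $\Sigma$ is $\phi^*_{g,s,1}(Z)$, not $Z$ itself, but this does not affect the argument.
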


\begin{proof}
By applying Lemma \ref{lem:partial-dominance} and removing those components of the resulting new cycle $\alpha$ which restrict to zero on $V$, we can assume that every component of $\alpha$ is dominant over $\A^r_k$. Note that this does not change $\partial(j^*(\alpha))$.

We now choose an integer $s \gg 0$ which is at least as large as the the integer $s(Z)$ and the one chosen in Lemmas \ref{lem:fs-Kai} and \ref{lem:sfs-Kai} for every irreducible component $Z$ of $\alpha$. It follows from Lemma \ref{lem:good-intersection} that $\phi^*_{g,s}(\alpha)$ intersects all faces of $\square^{n}$ properly and $\phi^*_{g,s,1}(\alpha) \in \TZ^{n}(\A^r_{k(g)}, n; m)$ for all $g \in \A^r_k$ (see the proof of Lemma \ref{lem:partial-dominance}). We can also assume that $s \gg 0$ is large enough so that Lemma \ref{lem:good-intersection} holds also for each boundary of each component of $\alpha$.

We let $U \subset \A^r_k$ be any dense open which is contained in the intersection of the ones given by Lemmas \ref{lem:fs-Kai}, \ref{lem:sfs-Kai} and \ref{lem:support-Kai} for all irreducible components of $\alpha$. Since $U$ is rational and $k$ is infinite, $U(k)$ is a dense subset of $U$. We let $g \in U(k)$ be any element. We claim that $j^*(\phi^*_{g,s,1}(\alpha)) \in \TZ^n_{\sfs} (V,n;m)$, where $\phi^*_{g,s}(-)$ is defined on $\TZ^n(\A^r_k, n;m)$ by the usual linear extension. By Lemmas \ref{lem:fs-Kai} and \ref{lem:sfs-Kai}, we only need to show that $\phi^*_{g,s,1}(\alpha) \in \TZ^n_{\Sigma} (\A^r_k,n;m)$. But this is equivalent to showing that $(\Sigma \times \A^1_k \times F) \cap |\phi^*_{g,s,1}(\alpha)| = \emptyset$ for every proper face $F$ of $\square^{n-1}$, which in turn follows from Lemma \ref{lem:support-Kai}. The claim is thus proven.

A computation identical to the one in the proof of Lemma \ref{lem:partial-dominance} shows that $\partial (\phi^*_{g,s}(\alpha)) + \phi^*_{g,s}(\partial (\alpha)) = (\partial^0_{1} - \partial^1_{1})(\phi^*_{g,s}(\alpha)) = \alpha - \phi^*_{g,s,1}(\alpha)$. Lemma \ref{lem:good-intersection} says that $\phi^*_{g,s}(\alpha) \in \TZ^n(\A^r_k, n+1;m)$. If $\partial (j^*(\alpha)) = 0$, we can set $\gamma =  j^* \circ \phi^*_{g,s}(\alpha)$ and $\beta = j^*(\phi^*_{g,s,1}(\alpha))$. We get $\partial(\gamma) = j^*(\alpha) - \beta$ and we have shown above that $\beta \in \TZ^n_{\sfs}(V, n;m)$. The theorem is now proven.
\end{proof}

\begin{remk}\label{remk:sfs-affine-Chow}
The proof of Theorem \ref{thm:Ar-spread-case} (where we take $n \ge 2$, replace $B_n$ by $\square^{n-1}$ and take $s =0$ everywhere in the proof) also shows that if $n \ge 1$ and $\alpha \in z^n(\A^r_k,n)$ is a higher Chow cycle with $\partial(j^*(\alpha)) = 0$, then we can find $\gamma \in z^n(V,n+1)$ and $\beta \in z^n_{\sfs}(V,n)$ such that $\partial(\gamma) = j^*(\alpha) - \beta$. Note that 
$n = 0$ case of this result is trivial.
\end{remk} 

\section{The \fs-property of residual cycles}\label{sec:LP}

Let $k$ be an infinite perfect field. In this section, we discuss some results on linear projections in projective spaces, and show how these projections can be used to equip the residual cycle of a given cycle with certain finiteness properties over the base scheme. The main result of \S \ref{sec:LP} is Theorem \ref{thm:fs-present-intro}. It will be used later in proving the \fs-moving lemma (see Lemma \ref{lem:fs-move-*}), a precursor to the final \sfs-moving lemma.

For $0 \le n < N$ and a linear subspace $H \subset \P^N_k$ defined over $k$, let $\Gr(n, H)$ be the Grassmannian scheme of $n$-dimensional linear subspaces of $\P^N_k$ contained in $H$. This is a homogeneous space of dimension $(\dim(H)-n)(n+1)$. Unless we specify the field of definition, a linear subspace of $\P^N_k$ will mean a $k$-linear subspace.

Given two closed subschemes $Y, Y' \subset \P^N_k$, let ${\rm Sec}(Y,  Y') \subset \P^N_k$ be the union of all lines $\ell_{yy'}$ joining distinct points $y \in Y, y' \in Y'$. 
In general, we have $\dim({\rm Sec}(Y,Y')) \le \dim(Y) + \dim(Y') + 1$. If $Y = Y'$, the scheme ${\rm Sec}(Y, Y') = {\rm Sec}(Y)$ is the secant variety of $Y$. If $Y' = L$ is a linear subspace, then ${\rm Sec}(Y,L) = C_L(Y)$ is the cone over $Y$ with vertices in $L$.  

\subsection{Containment and avoidance}\label{sec:CAL}
Let $0 \le m \le n < N$ be integers and let $S, T \subset \P^N_k$ be two disjoint subsets. 

\begin{defn}\label{defn:LP-0}
We denote the set of $n$-dimensional linear subspaces of $\P^N_k$ containing $S$ by $\Gr_S(n, \P^N_k)$. We write $\Gr_S(n, \P^N_k)$ as $\Gr_x(n, \P^N_k)$ if $S = \{x\}$ is a closed point. We denote the set of $n$-dimensional linear subspaces of $\P^N_k$ which do not intersect $S$ by $\Gr(S, n, \P^N_k)$. If $S = \{x\}$, we write $\Gr(S, n, \P^N_k)$ as $\Gr(x, n, \P^N_k)$. We let $\Gr_S(T, n, \P^N_k) := \Gr_S(n, \P^N_k) \cap \Gr(T, n, \P^N_k)$. For any linear subspace $L\subset \P_k ^N$, we define $\Gr_S(n, L)$ and $\Gr(T, n, L)$ similarly.
\end{defn}

One checks that, when $M \subset \P_k ^N$ is a linear subspace of dimension $m$, then $\Gr_M(n, \P^N_k)$ is a homogeneous space which is an irreducible closed subscheme of $\Gr(n, \P^N_k)$ of dimension $(N-n)(n-m)$. The following result is elementary. We leave the proof as an exercise.

\begin{lem}\label{lem:elem-0}
Let $N > n$. $(1)$ If $S' \subset S$, then $\Gr(S, n, \P^N_k) \subset \Gr(S', n, \P^N_k)$. $(2)$ For any finite closed set $S \subset \P^N_k$ , $\Gr(S, n, \P^N_k) \subset \Gr(n, \P^N_k)$ is a dense open subset. 
\end{lem}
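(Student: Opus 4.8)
The statement has two parts, both elementary set-theoretic/geometric facts about Grassmannians. I will treat them separately.

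\medskip

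\emph{Part (1).} This is pure logic once we unravel the definitions. By Definition \ref{defn:LP-0}, $\Gr(S, n, \P^N_k)$ is the set of $n$-dimensional linear subspaces $L \subset \P^N_k$ with $L \cap S = \emptyset$. Suppose $S' \subset S$ and take any $L \in \Gr(S, n, \P^N_k)$, so $L \cap S = \emptyset$. Then $L \cap S' \subset L \cap S = \emptyset$, hence $L \cap S' = \emptyset$, i.e.\ $L \in \Gr(S', n, \P^N_k)$. That is all that is needed; there is no obstacle here.

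\medskip

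\emph{Part (2).} Here I would argue that the condition ``$L$ meets $S$'' is closed and proper. Write $S = \{x_1, \dots, x_t\}$ as a finite set of (not necessarily $k$-rational) closed points. For a single closed point $x$, consider the incidence subset
\[
\mathcal{I}_x = \{\, L \in \Gr(n, \P^N_k) : x \in L \,\} = \Gr_x(n, \P^N_k),
\]
which by the remark just before the lemma (the case $M = \{x\}$, $m = 0$) is an irreducible closed subscheme of $\Gr(n, \P^N_k)$ of dimension $(N-n)\cdot n$. Since $n < N$, this dimension is strictly less than $\dim \Gr(n, \P^N_k) = (N-n)(n+1)$, so $\mathcal{I}_x$ is a proper closed subset, and $\Gr(x, n, \P^N_k) = \Gr(n, \P^N_k) \setminus \mathcal{I}_x$ is a nonempty (dense, as $\Gr(n,\P^N_k)$ is irreducible) open subset. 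Finally, by part (1) applied repeatedly, or directly, $\Gr(S, n, \P^N_k) = \bigcap_{i=1}^{t} \Gr(x_i, n, \P^N_k)$ is a finite intersection of dense open subsets of the irreducible variety $\Gr(n, \P^N_k)$, hence is itself dense open. (One small point to mention: if the $x_i$ are not $k$-rational, one can either pass to $\ol{k}$ where $\mathcal{I}_x$ visibly has the stated dimension and then descend, or simply note that the incidence variety and its dimension count are insensitive to the base field; either way $\Gr_{x_i}(n,\P^N_k)$ is a proper closed subvariety defined over $k$.)

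\medskip

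There is essentially no hard step in this lemma — it is a warm-up dimension count — so the ``main obstacle'' is merely to be careful that the reduction $\Gr(S,n,\P^N_k) = \bigcap_i \Gr(x_i,n,\P^N_k)$ is legitimate over a non-algebraically-closed $k$ and that the incidence locus $\Gr_x(n,\P^N_k)$ genuinely has codimension $N - n \ge 1$; both follow from the homogeneous-space description of $\Gr_M(n,\P^N_k)$ recorded immediately before the statement.
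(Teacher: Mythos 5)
Your proof is correct, and since the paper explicitly leaves this lemma as an exercise, your argument (trivial set containment for (1), plus the codimension-$\geq N-n\geq 1$ count for the incidence locus $\Gr_x(n,\P^N_k)$ using the homogeneous-space description recorded just before the statement for (2)) is exactly the intended one. The care you take with non-rational closed points is a nice touch but not an obstacle, since the condition $x\in L$ only becomes more restrictive there.
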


\begin{lem}\label{lem:fs-open-1}
Let $X \subset \P^N_k$ be a closed subscheme of dimension $r \ge 1$ with $N \gg r$ and let $H \subset \P^N_k$ be a hyperplane, not containing any irreducible component of $X$. Then $\Gr(X, N-r-1, H)$ is a dense open subset of $\Gr(N-r-1, H)$.
\end{lem}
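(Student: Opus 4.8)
The plan is to run the standard incidence‑variety dimension count. Write $d := N-r-1$, so that $d \ge 0$ (using $N > r$, which holds since $N \gg r$) and $d \le \dim(H) = N-1$. By the description of Grassmannians recalled at the start of \S\ref{sec:LP}, $G := \Gr(d,H)$ is a geometrically integral homogeneous $k$-scheme of dimension $(\dim(H)-d)(d+1) = r(N-r)$. Since $\Gr(X,d,H) = G \setminus Y$, where $Y := \{L \in G : L \cap X \ne \emptyset\}$, and since a nonempty open subset of the irreducible scheme $G$ is automatically dense, it suffices to show that $Y$ is a closed subset of $G$ with $\dim(Y) < \dim(G)$.

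First I would form the incidence variety $I := \{(L,x) \in G \times_k X : x \in L\}$, a closed subscheme of $G \times_k X$. As $X \subset \P^N_k$ is projective over $k$, the first projection $G \times_k X \to G$ is proper, so its image $Y$ is closed in $G$. To bound $\dim(I)$ I would use the second projection $q : I \to X$: a point $x \in X$ lies in $q(I)$ exactly when some $d$-plane contained in $H$ passes through $x$, which (as $d \le \dim H$) happens precisely when $x \in H$; hence $q(I) = X \cap H$, and the fibre of $q$ over a point $x \in X \cap H$ is $\Gr_x(d,H)$, which by the dimension formula recalled in \S\ref{sec:CAL} (applied with $H \cong \P^{N-1}_k$ in place of $\P^N_k$ and $m = 0$) has dimension $r(N-r-1)$. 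Because $H$ contains no irreducible component of $X$ and $\dim(X) = r$, every irreducible component $Z$ of $X$ satisfies $\dim(Z \cap H) \le \dim(Z) - 1 \le r-1$, so $\dim(X \cap H) \le r-1$. Combining, $\dim(I) \le (r-1) + r(N-r-1) = r(N-r) - 1$, whence $\dim(Y) \le \dim(I) < r(N-r) = \dim(G)$, as required; then $\Gr(X,d,H) = G \setminus Y$ is a nonempty open, hence dense, subset of $G$.

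I do not expect a genuine obstacle here; the only points requiring care are routine. One is the fibre‑dimension bookkeeping — in particular the verification that the fibres of $q$ over $X\cap H$ are exactly the Schubert‑type loci $\Gr_x(d,H)$ of the stated dimension, and the application of the upper‑semicontinuity/fibre‑dimension bound $\dim(I) \le \dim q(I) + \max_{x} \dim q^{-1}(x)$. Another is the observation that over a not‑necessarily‑closed field "dense" follows from "nonempty open" precisely because $G = \Gr(d,H)$ is irreducible, which is built into its description as a homogeneous space. Finally one should keep the convention $\dim\emptyset = -\infty$, so that the estimate $\dim(X\cap H) \le r-1$ and the conclusion remain valid even when $X \cap H$ (and hence possibly $Y$) is empty.
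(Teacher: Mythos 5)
Your proof is correct and follows essentially the same route as the paper's: the same incidence variety, the same fibre-dimension count $\dim(X\cap H)+ \dim \Gr_x(N-r-1,H) \le (r-1)+r(N-r-1) = r(N-r)-1 < \dim \Gr(N-r-1,H)$, and properness of the projection to conclude the bad locus is a proper closed subset. The extra remarks on irreducibility of the Grassmannian and the empty-intersection convention are fine but not needed beyond what the paper records.
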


\begin{proof}
Consider the incidence scheme $S = \{(x, L) \in X \times \Gr(N-r-1, H)| x \in L\}$. We have the obvious projection maps $X \overset{\pi_1}{\leftarrow} S \overset{\pi_2}{\rightarrow} \Gr (N-r-1, H).$

Each fiber of $\pi_1$ over $X \setminus ( X \cap H) $ is empty. It is a smooth morphism over $ X \cap H$ with its fiber over $x \in X \cap H$ to be $\Gr_x (N-r-1, H)$ , whose dimension is $( (N-1) - (N-r-1)) (N-r-1 - 0) = r (N-r-1)$. 
It follows that $\dim(S) \leq \dim( X \cap H) + \dim \ \Gr_x ( N-r-1, H) = r-1 + r (N-r-1) = r (N-r) -1$. Thus, $\pi_2(S)$ is a closed subscheme of $\Gr(N-r-1, H)$ of dimension $\leq r(N-r) -1$ which is less than $\dim \ \Gr(N-r-1, H) = r(N-r)$. Hence, $\Gr(X, N-r-1, H) =  \Gr(N-r-1, H) \setminus \pi_2(S)$ is a dense open subset.
\end{proof}

\subsection{Transverse intersection}\label{sec:Transverse}
For a reduced scheme $X$, let $X_{\rm sing} \subset X$ be the singular locus of $X$ and let $X_{\rm sm}$ be its complement. For a closed subscheme $X \subset \P^N_k$, let $\Gr^{\rm tr}(X, n, \P^N_k)$ denote the set of $n$-dimensional linear subspaces which \emph{do not} intersect $X_{\rm sing}$, and whose intersection with $X_{\rm sm}$ is transverse (if not empty). We let $\Gr ^{\rm tr} (X,S, n, \mathbb{P} ^N_k) = \Gr (S, n, \mathbb{P}_k ^N) \cap \Gr ^{\rm tr} (X, n, \mathbb{P}^N_k)$ and $\Gr ^{\rm tr}_S(X, n, \mathbb{P} ^N_k) = \Gr_S(n, \mathbb{P}_k ^N) \cap \Gr ^{\rm tr} (X, n, \mathbb{P}^N_k)$. For a linear subspace $H \subset \P^N_k$, we define $\Gr ^{\rm tr} (X,S, n; H)$ and $\Gr ^{\rm tr} _S(X, n; H)$ similarly. 

\begin{lem}\label{lem:fs-open-2}
Let $r \ge 2$ be an integer and suppose $N \gg r$. Let $H \subset \P^N_k$ be a hyperplane. Let $L \subset \P^N_k$ be a linear subspace of dimension $N-r+1$ intersecting $H$ transversely and let $X \subset L$ be a curve (not necessarily connected) none of whose components is contained in $H$. Then the set of linear subspaces in $\Gr^{\rm tr}(L, X, N-2, H)$ is a dense open subset of $\Gr(N-2, H)$.
\end{lem}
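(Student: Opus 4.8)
The plan is to reuse the incidence-correspondence / dimension-counting strategy of Lemma \ref{lem:fs-open-1}, but now applied to the two separate ``bad loci'' that define the complement of $\Gr^{\rm tr}(L, X, N-2, H)$ inside $\Gr(N-2, H)$. Write $G := \Gr(N-2, H)$, a Grassmannian of dimension $2(\dim(H)-(N-2)) = 2(N-1-N+2) = 2$. A point $M \in G$ fails to lie in $\Gr^{\rm tr}(L, X, N-2, H)$ precisely when either (i) $M$ meets $X_{\rm sing}$, or (ii) $M$ meets $X_{\rm sm}$ non-transversely (for the transversality one intersects inside $L$, since $X \subset L$; one should be a little careful that the relevant ambient space for the transversality statement is $H \cap L$, but $L$ meets $H$ transversely so $H\cap L$ has the expected dimension $N-r$ and the bookkeeping goes through). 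So it suffices to show each of these two loci is a proper closed subset of $G$, hence contained in a closed subset of dimension $\le 1$, and that what remains is a dense open.

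For locus (i): $X_{\rm sing}$ is a finite set of closed points since $X$ is a curve. For a single closed point $x$, the set of $M \in G$ with $x \in M$ is $\Gr_x(N-2, H)$ if $x \in H$ and empty otherwise; by the dimension formula recalled in \S\ref{sec:CAL} (applied with the linear subspace $\{x\}$, $m=0$), $\dim \Gr_x(N-2,H) = (\dim(H) - (N-2))(N-2-0) = 1\cdot(N-2) = N-2$. Hmm — this is $N-2$, not $\le 1$, so I need to be more careful about the ambient Grassmannian: the claim is that $\Gr(N-2,H)$ has dimension $r(N-r)$-style formula, and for the statement to be non-vacuous one needs $N \gg r$, in which case $\dim G = (\dim H - n)(n+1)$ with $n = N-2$ gives $(N-3)(N-1)$, a large number, while $\Gr_x(N-2,H)$ has dimension $(N-3)(N-2) < (N-3)(N-1) = \dim G$ for $N$ large. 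Good — so (i) is a proper closed subset. Running over the finitely many points of $X_{\rm sing}$ keeps it a proper closed subset.

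For locus (ii): introduce the incidence scheme
\[
S \;=\; \bigl\{(x,M) \in X_{\rm sm} \times G \;\big|\; x \in M,\ T_xX + T_xM \neq T_x(H\cap L)\bigr\},
\]
with projections $\pi_1: S \to X_{\rm sm}$ and $\pi_2: S \to G$. For fixed $x$, the fiber $\pi_1^{-1}(x)$ is the set of $(N-2)$-planes in $H$ through $x$ whose tangent direction at $x$ fails to be complementary to the line $T_xX$ inside $T_x(H\cap L)$; this is a proper closed subvariety of $\Gr_x(N-2, H)$, so $\dim S \le \dim X_{\rm sm} + \dim\Gr_x(N-2,H) - 1 \le 1 + (N-3)(N-2) - 1 = (N-3)(N-2)$, again strictly less than $\dim G = (N-3)(N-1)$ for $N\gg r$. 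Hence $\pi_2(S)$, which is the locus (ii) (or rather its closure — here I use the convention from \S\ref{sec:Conven} that images of reduced closed sets under proper maps are taken with reduced structure, and $\pi_2$ is proper since $G$ is proper and $S$ is closed in $X\times G$ with $\ol X$ proper... one must first replace $X_{\rm sm}$ by $X$ or take closures appropriately), has dimension $< \dim G$, so it is a proper closed subset.

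Intersecting the complements of loci (i) and (ii) yields that $\Gr^{\rm tr}(L, X, N-2, H)$ contains a dense open subset of $G$; since it is itself visibly an open condition (non-incidence with the finite set $X_{\rm sing}$ is open, and transversality along $X_{\rm sm}$ is open by the standard semicontinuity of the tangent-space dimension), it \emph{is} a dense open subset.

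\textbf{Expected main obstacle.} The routine dimension counts are not the difficulty; the delicate point is the precise formulation of ``transverse'' when $X$ lives in the proper linear subspace $L$ rather than in all of $\P^N_k$, and correspondingly which ambient space ($H$, or $H\cap L$, or $\P^N_k$) the transversality is measured against — one needs the hypothesis that $L$ meets $H$ transversely precisely to guarantee $H\cap L$ has the expected dimension so that the fiber-dimension estimate for $\pi_1$ is the clean one written above. A secondary technical nuisance is making the incidence scheme $S$ genuinely proper over $G$ (so that $\pi_2(S)$ is closed): this is handled by working with $X$ (closed in $L$, hence in $\P^N_k$) rather than $X_{\rm sm}$ and noting the non-transversality/singularity conditions cut out a closed subscheme.
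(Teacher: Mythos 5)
There is a genuine gap, and it starts with a misreading of the notation. By the definitions in \S\ref{sec:Transverse}, $\Gr^{\rm tr}(L, X, N-2, H) = \Gr(X, N-2, H) \cap \Gr^{\rm tr}(L, N-2, H)$: the first argument is the scheme with respect to which transversality is required, and the second is the set to be \emph{avoided entirely}. So the lemma asserts that a general $(N-2)$-plane $M \subset H$ (a) is disjoint from the curve $X$ and (b) meets the linear space $L$ transversely. Your two ``bad loci'' --- $M$ meeting $X_{\rm sing}$, and $M$ meeting $X_{\rm sm}$ non-transversely --- describe a different set, namely $\Gr^{\rm tr}(X, N-2, H)$. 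As a result you never address condition (b) at all (transversality to $L$ is nowhere established), and for condition (a) you only exclude $X_{\rm sing}$ rather than all of $X$. Your closing worry about ``which ambient space the transversality is measured against'' dissolves once the notation is parsed correctly: the transversality is to $L$, a linear subspace, for which genericity is immediate. There are also arithmetic slips in the dimension counts: $\dim \Gr(N-2,H) = (\dim H - (N-2))((N-2)+1) = N-1$, not $2$ and not $(N-3)(N-1)$, while $\dim \Gr_x(N-2,H) = N-2$; the codimension-one comparison is what makes the avoidance of finitely many points generic.

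For the record, the paper's proof is short and structurally different from an incidence-variety count in $\Gr(N-2,H)$: it first observes that $\Gr^{\rm tr}(L, N-2, H)$ is dense open, then uses the map $\nu_L(M) = M \cap L$ onto $\Gr(N-r-1, L\cap H)$ (smooth and surjective because $L$ meets $H$ transversely), reduces the $X$-avoidance to Lemma \ref{lem:fs-open-1} applied to the curve $X$ inside $L$, and pulls back the resulting dense open set along $\nu_L$; the key point is that $M \cap X = (M\cap L) \cap X$ since $X \subset L$. A correct direct argument in your style is also available: since $M \subset H$ and no component of $X$ lies in $H$, one has $M \cap X = M \cap (X\cap H)$ with $X \cap H$ finite, so avoidance of $X$ follows from Lemma \ref{lem:elem-0}(2), and it remains only to intersect with the dense open locus of $M$ transverse to $L$. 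Either repair requires replacing your loci (i) and (ii) by the correct conditions.
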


\begin{proof}Observe that $\Gr^{\rm tr}(L, N-2, H)$ is a dense open subset of $\Gr(N-2, H)$. Consider the map $\nu_L: \Gr^{\rm tr}(L, N-2, H) \to \Gr(N-r-1, L \cap H)$ given by $\nu_L(M) = L \cap M$. This $\nu_L$ is a smooth surjective morphism of relative dimension $2(r-1)$. It follows from Lemma \ref{lem:fs-open-1} that $\Gr(X, N-r-1, L\cap H)$ is a dense open subset of $\Gr(N-r-1, L \cap H)$, so $\nu^{-1}_L(\Gr(X, N-r-1, L\cap H))$ is a dense open subset of $\Gr^{\rm tr}(L, N-2, H)$, and hence a dense open subset of $\Gr(N-2, H)$.
\end{proof}

\subsection{Affine Veronese embedding and Linear projection}\label{subsubsection:AVE}
Recall that for positive integers $m, d \ge 1$, the Veronese embedding $v_{m,d}: \P^m_k \inj \P^N_k$ is a closed embedding given by $v_{m,d}([\underline{x}]) = [M_0(\underline{x}), \ldots , M_N(\underline{x})] = [M(\underline{x})]$, where $N =  {m+d \choose m}-1$ and $\{M_0, \ldots , M_N\}$ are all monomials in $\{x_0, \ldots , x_m\}$ of degree $d$, arranged in the lexicographic order.

If $[y_0, \ldots , y_N] \in \P^N_k$ denotes the projective coordinates, it is clear that $v_{m,d}^{-1}\left(\{y_0 = 0\}\right) = \{x^d_0 = 0\}$. In particular, the Veronese embedding yields Cartesian squares

\begin{equation}\label{eqn:Veronese}
\xymatrix@C1pc{
\A^m_k \ar[d]_{v_{m,d}} \ar[r] & \P^m_k \ar[d]^{v_{m,d}} &  dH_{m,0} \ar[l] \ar[d] ^{v_{m,d}}\\
\A^N_k \ar[r] & \P^N_k &  H_{N,0}, \ar[l]} 
\end{equation}
where $H_{m,0} \subset \P^m_k$ is the hyperplane $\{x_0 = 0\}$ and the vertical arrows are all closed embeddings. The closed embedding $v_{m,d}: \A^m_k \inj \A^N_k$ is  given by $v_{m,d}(y_1, \ldots , y_m) = (M_1', \ldots , M_N' )$, where $\{M_1', \ldots , M_N' \}$ is the induced ordered set of all monomials in $\{y_1, \ldots , y_m\}$ of degree bounded by $d$. 

Let $1 \le r < N$ be two integers. Recall (e.g., see \cite[Lemma 6.1]{KP}) that when $L \subset \P^N_k$ is a linear subspace of dimension $N-r-1$, there is an associated projection map $\phi_L:\P^N_k \setminus L\to \P^r_k$, where $\P^r_k$ is a linear subspace of $\P^N_k$ such that $L \cap \mathbb{P}_k ^r = \emptyset$. This map $\phi_{L}$ defines a vector bundle over $\P^r_k$ of rank $N-r$, whose fiber over a point $x \in \P^r_k$ is the affine space $C_x(L) \setminus L$, where $C_x(L) = {\rm Sec}(\{x\}, L)$. 

\begin{remk}\label{remk:vector-bunde}
The referee asked whether the above vector bundle $\phi_L:\P^N_k \setminus L\to \P^r_k$ is isomorphic to $\mathcal{O}_{\P^r_k}(1)^{\oplus (N-r)}$. Indeed, $\phi_L$ is (up to an isomorphism) the projection map of quotient stacks $\phi_L: [{((\A^{r+1} \setminus \{0\} ) \times_k V)}/{\G_m}] \to [ ( \A^{r+1} \setminus \{0\}) /{\G_m}]$, where $V = k^{N-r}$ and the $\G_m$-action everywhere is by scalar multiplication. Since $[{((\A^{r+1} \setminus \{0\}) \times_k V)}/{\G_m}] \cong [( \A^{r+1} \setminus \{0\}) /{\G_m}] \times_{B\G_m} [{V}/{\G_m}]$, one identifies $\phi_L$ with the map $\P^r_k \times_{B\G_m}  \pi^*({V(1)^{\oplus (N-r)}}) \to \P^r_k$, where $V(1)$ is the line bundle on $B\G_m := [{\Spec(k)}/{\G_m}]$ associated to the $1$-dimensional $\G_m$-representation given by the scalar multiplication on $k$, and $\pi: [(\A^{r+1} \setminus \{0\})/{\G_m}] \to B\G_m$ is the canonical projection. 

Note that in general, if we let $\G_m$ act on $k$ by weight $n \in \Z$ (i.e., $\lambda \cdot x = \lambda^n x$) and let $V(n)$ denote the corresponding line bundle on $B\G_m$, then $\pi^*(V(n))$ is isomorphic to $\mathcal{O}_{\P^r_k}(n)$. Hence the above $\pi^* (V (1) ^{\oplus (N-r)})$ is isomorphic to $\mathcal{O}_{\P ^r _k } (1)^{\oplus (N-r)}$, as wished.
\end{remk}

\begin{defn}\label{defn:LP-X}
Recall that if $X \subset \mathbb{P}^N_k$ is a closed subscheme with $X \cap L = \emptyset$, then $\phi_L$ restricted to $X$ defines a projection $\phi_L: {\phi_L}|_X: X \to \P^r_k$. We call it the linear projection of $X$ away from $L$. Since this is a morphism of projective schemes with affine fibers, it must be a finite morphism. In particular, $\dim(X) \le r$.
\end{defn}

We shall use the following situation often: let $H \subset \P^N_k$ be a hyperplane containing $L$ and $X \subset \mathbb{P}^N_k$ a closed subscheme with $X \cap L = \emptyset$ and $X \not\subset H$. Then $\phi_{L}$ defines the Cartesian squares of morphisms
\begin{equation}\label{eqn:proj-0}
\xymatrix@C1pc{
X\setminus H \ar[r] \ar[d] & X \ar[d] & X \cap H \ar[l] \ar[d] \\
\P^r_k \setminus H \ar[r] & \P^r_k & \P^r_k \cap H. \ar[l]}
\end{equation}
Together with \eqref{eqn:Veronese}, we deduce the following fact, which we use often:

\begin{lem}\label{lem:Ver-affine-map}
Let $X \inj \A^m_k$ be an affine scheme of dimension $r \ge 1$ and let $\overline{X} \inj \P^m_k$ be its projective closure. Then, for $d \ge 1$, the Veronese embedding $v_{m,d} : \P^m_k \inj \P^N_k$ and the linear projection away from $L \in \Gr(N-r-1, \P^N_k)(k)$ yield a Cartesian diagram with finite vertical maps
\begin{equation}\label{eqn:Ver-affine-map-0}
\xymatrix@C1pc{
X \ar[r] \ar[d]_{\phi_L} & \overline{X} \ar[d]^{\phi_L} \\
\A^r_k \ar[r] & \P^r_k}
\end{equation}
if $L \in \Gr(\overline{X}, N-r-1, H_{N,0}) (k)$, where $H_{N,0} = \{ y_0 = 0 \} \subset \mathbb{P}^N_k$ as in \eqref{eqn:Veronese}.
\end{lem}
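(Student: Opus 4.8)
The statement to prove, Lemma \ref{lem:Ver-affine-map}, just assembles two Cartesian squares already available in the excerpt. The plan is to paste \eqref{eqn:Veronese} restricted to the affine chart next to \eqref{eqn:proj-0} for the projective closure $\overline{X} \subset \P^N_k$ and check that the composite is Cartesian with finite vertical maps. First I would fix $d \ge 1$ and write $\overline{X}' := v_{m,d}(\overline{X}) \subset \P^N_k$, which is a closed subscheme of dimension $r$ since $v_{m,d}$ is a closed embedding. By hypothesis $L \in \Gr(\overline{X}', N-r-1, H_{N,0})(k)$, so $L \cap \overline{X}' = \emptyset$ and $\dim L = N-r-1$; hence the linear projection $\phi_L$ of Definition \ref{defn:LP-X} restricts to a finite morphism $\phi_L: \overline{X}' \to \P^r_k$ (finite because it is a morphism of projective $k$-schemes with affine fibers, as recalled there), and in particular $\dim \overline{X}' \le r$, consistent with equality.

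Next I would invoke the Cartesian square \eqref{eqn:proj-0} with the hyperplane $H = H_{N,0}$: since $L \subset H_{N,0}$ (this is exactly the condition $L \in \Gr(\cdot, N-r-1, H_{N,0})$) and $\overline{X}' \not\subset H_{N,0}$ (no component of $X$, equivalently of $\overline{X}$, lies in the hyperplane at infinity because $X$ is a genuine affine scheme of dimension $r$, so $\overline{X}$ meets the chart $\A^m_k$ densely), \eqref{eqn:proj-0} gives the Cartesian square with $\overline{X}' \setminus H_{N,0}$ over $\P^r_k \setminus H_{N,0}'$, where $\P^r_k \cap H_{N,0}$ is the hyperplane cutting out the standard affine chart $\A^r_k \subset \P^r_k$ (one may choose the linear $\P^r_k$ in $\phi_L$ so that $\P^r_k \cap H_{N,0}$ is the coordinate hyperplane $\{y_0 = 0\} \cap \P^r_k$; this is harmless up to the ambient linear automorphism and is the convention implicit in \eqref{eqn:Veronese}). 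On the other side, the left square of \eqref{eqn:Veronese} identifies $v_{m,d}$ restricted over the affine chart: $v_{m,d}^{-1}(H_{N,0}) = d H_{m,0}$ has complement $\A^m_k$, so $v_{m,d}: \A^m_k \inj \A^N_k$ is a closed embedding and it identifies $X$ with $\overline{X}' \cap \A^N_k = \overline{X}' \setminus H_{N,0}$. Composing, $\phi_L|_X : X \to \A^r_k$ is finite (restriction of the finite map $\phi_L: \overline{X}' \to \P^r_k$ to the preimage of the open $\A^r_k$), and the outer rectangle
\[
\xymatrix@C1pc{
X \ar[r] \ar[d]_{\phi_L} & \overline{X} \ar[d]^{\phi_L} \\
\A^r_k \ar[r] & \P^r_k}
\]
is Cartesian because it is the horizontal composite of the two Cartesian squares just described (and the pasting lemma for Cartesian squares), after identifying $\overline{X}$ with $\overline{X}'$ via the closed embedding $v_{m,d}$. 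Finally the vertical maps are finite: the right one is $\phi_L: \overline{X}' \to \P^r_k$ by Definition \ref{defn:LP-X}, and the left one is its base change along the open immersion $\A^r_k \inj \P^r_k$, hence again finite.

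**The main point of care.** There is essentially no hard step here; the only thing to get right is the bookkeeping of which hyperplane is which. One must check that the hyperplane at infinity of $\A^N_k \subset \P^N_k$, namely $H_{N,0} = \{y_0 = 0\}$, simultaneously (i) contains the center $L$ of projection, which is exactly the hypothesis $L \in \Gr(\overline{X}, N-r-1, H_{N,0})$, and (ii) pulls back under $\phi_L$ to the hyperplane at infinity of $\A^r_k \subset \P^r_k$, which holds once the target $\P^r_k \subset \P^N_k$ is chosen inside a coordinate subspace complementary to $L$ and meeting $H_{N,0}$ in its own coordinate hyperplane — this is the standard normalization of the linear projection recalled before Definition \ref{defn:LP-X}, and it is compatible with the Veronese square \eqref{eqn:Veronese} since $v_{m,d}^{-1}(\{y_0=0\}) = \{x_0^d = 0\}$ is precisely the hyperplane at infinity of $\A^m_k$ (with multiplicity $d$, which is irrelevant as a set). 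Granting this normalization, the two Cartesian squares \eqref{eqn:Veronese} and \eqref{eqn:proj-0} glue on the nose and the lemma follows; the dimension claim $\dim X = r$ is built into the hypothesis that $X$ has dimension $r$ and is preserved by the finite surjection onto (a dense open of) $\P^r_k$, equivalently onto $\A^r_k$.
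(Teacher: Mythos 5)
Your proof is correct and follows exactly the route the paper intends: the lemma is stated as a direct consequence of pasting the Veronese square \eqref{eqn:Veronese} with the linear-projection square \eqref{eqn:proj-0} (the paper gives no separate written proof beyond the phrase ``together with \eqref{eqn:Veronese}, we deduce''), and your careful bookkeeping of the hyperplane $H_{N,0}$, the normalization of the target $\P^r_k$, and the finiteness of $\phi_L$ fills in precisely the details the paper leaves implicit.
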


\subsection{The Set-up}\label{sec:Set-up}
Let $k$ be an infinite perfect field. Here, we introduce the basic set-up that will be used for most of the paper. This set of assumptions will be referred to as the Set-up of \S \ref{sec:Set-up}. 

(1) \emph{The spaces:} Let $X$ be an equidimensional reduced projective $k$-scheme of dimension $r \geq 1$ with a given closed embedding $\eta: X \hookrightarrow \mathbb{P}_k ^N$ with $N \gg r$ and of degree $d+1 \gg 0$. We let $\widehat{B}$ be a smooth projective geometrically integral $k$-scheme of positive dimension and let $B \subset \widehat{B}$ be a nonempty affine open subset with $F: = \widehat{B} \setminus B$. Let $\Sigma \subset X_{\rm sm}$ be a finite set of closed points. 

(2) \emph{The linear projections:} 
Suppose that $H \subset \P^N_k$ is a hyperplane not meeting $\Sigma$, and that $X \setminus (X \cap H) \subset X_{\rm sm}$. For $L \in \Gr(X, N-r-1, H)(k)$, let $\phi_L: X \to \P^r_k$ be the linear projection away from $L$. If $L$ is fixed in a given context, we often drop it from the notation of $\phi_L$ and write as $\phi: X \to \P^r_k$. We write $\widehat{\phi} = \widehat{\phi}_L =\phi_L \times {\rm id}_{\widehat{B}}: X \times \widehat{B} \to \P^r_k \times \widehat{B}$.

(3) \emph{The cycles:} 
Let $Z \subset X \times \widehat{B}$ be a reduced closed subscheme with irreducible components $\{Z_1, \ldots , Z_s\}$, each of dimension $r$. We suppose that both $X \times F$ and $H \times \widehat{B}$ intersect properly with each irreducible component of $Z$. We let $\widehat{f}: Z \to X$ and $\widehat{g}: Z \to \widehat{B}$ denote the restrictions of the projection maps. Let $E \subset \widehat{B}$ be a closed subset containing $F$ such that no component of $Z$ is contained in $\widehat{g}^{-1}(E)$. We suppose that each projection $Z_i \to \widehat{B}$ is non-constant.

(4) \emph{The residual schemes and residual sets:}
Let $L^+(Z)$ be the closure of $\widehat{\phi}^{-1}(\widehat{\phi}(Z)) \setminus Z$ in $X \times \widehat{B}$ with the reduced closed subscheme structure. For any closed point $x \in \overline{X}$, we write $L^+(\{x\})$ as $L^+(x)$.
We let $L^+(\Sigma) =  \bigcup_{x \in \Sigma} L^+(x)$.

\subsection{A Nisnevich property of linear projections}\label{sec:Nis-LP}
The first result on `moving' our cycle $Z$ is the following:

\begin{lem}\label{lem:fs-proj-1et}
We are under the Set-up of \S \ref{sec:Set-up}. After replacing the embedding $\eta: X \inj \P^N_k$ by a bigger one via a Veronese embedding if necessary, there exists a dense open subset $\mathcal{U} \subset \Gr (X, N-r-1, H)$ such that each $L \in \mathcal{U}(k)$ satisfies the following:
\begin{enumerate}
\item $\phi_L$ is {\'e}tale at $\Sigma$.
\item $\phi_L(x) \neq \phi_L(x')$ for each pair of distinct points $x \neq x' \in \Sigma$.
\item $k(\phi_L(x)) \xrightarrow{\simeq} k(x)$ for all $x \in \Sigma$.
\item $L^{+}(x) \neq \emptyset$ for all $x \in \Sigma$. 
\item $L^{+}(x) \cap \widehat{f} (\widehat{g}^{-1}(E)) = \emptyset$ for
all $x \in \Sigma$.
\item
$L^{+}(x) \cap \widehat{f}(Z_i) = \emptyset$ for all $x \in 
\Sigma$ if  
$\widehat{f}: Z_i \to X$ is not dominant over any irreducible component of $X$. 
\end{enumerate}
\end{lem}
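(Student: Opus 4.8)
The plan is to produce, for each of the seven assertions, a dense open subset of $\Gr(N-r-1,H)$ over which it holds, and to take $\mathcal{U}$ to be the intersection of these with $\Gr(X,N-r-1,H)$; the latter is dense open in $\Gr(N-r-1,H)$ by Lemma~\ref{lem:fs-open-1} (using $N\gg r$), and for $L\in\Gr(X,N-r-1,H)(k)$ the linear projection $\phi_L\colon X\to\P^r_k$ is finite by Definition~\ref{defn:LP-X}. Each ``bad'' locus we discard is a closed subset defined by incidence conditions which, although the points of $\Sigma$ may fail to be $k$-rational, are invariant under $\Gal(\bar{k}/k)$; so we run all the dimension counts after base change to $\bar{k}$ and note that the resulting open sets descend to $k$. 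Any $L\in\mathcal{U}(k)$ then satisfies (1)--(6), each being a geometric condition checkable over $\bar{k}$. Throughout we exploit that $\dim\Gr(N-r-1,H)=r(N-r)$ is large because $N\gg r$.

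\emph{Conditions (1)--(3).} Let $p_1,\dots,p_M\in X_{\bar{k}}$ be the closed points lying over $\Sigma$; they lie in the smooth locus since $k$ is perfect. For (1): $\mathcal{O}_{X,p_i}$ is regular and $\phi_L$ is finite with $0$-dimensional fibres, so by miracle flatness $\phi_L$ is flat at $p_i$; hence $\phi_L$ is \'etale at $p_i$ iff it is unramified there, iff the projective tangent space $T_{p_i}X$ is disjoint from $L$. As $p_i\notin H$, the $r$-plane $T_{p_i}X$ is not contained in $H$, so $T_{p_i}X\cap H$ is an $(r-1)$-plane, and $\{L:L\cap(T_{p_i}X\cap H)\ne\emptyset\}$ is a proper closed subset of $\Gr(N-r-1,H)$ since $(N-r-1)+(r-1)=N-2<N-1=\dim H$; discarding the finite $\Gal(\bar{k}/k)$-stable union over $i$ gives (1). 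For (2) and (3) it suffices that $\phi_{L,\bar{k}}$ be injective on $\{p_1,\dots,p_M\}$: then distinct points of $\Sigma$ have distinct images, while the $\Gal(\bar{k}/k)$-orbit of $\phi_L(x)$ has the same cardinality as that of $x$, forcing $k(\phi_L(x))\xrightarrow{\simeq}k(x)$ (the inclusion $k(\phi_L(x))\hookrightarrow k(x)$ being automatic). Injectivity on $\{p_i\}$ means $L$ misses each line $\overline{p_ip_j}$; each such line is not contained in $H$, hence meets $H$ in a single point, and avoiding these finitely many points is again a proper closed $\Gal(\bar{k}/k)$-stable condition since $N-r-1<N-1$.

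\emph{Conditions (4)--(6).} Composing $\eta$ with a Veronese embedding $v_{N,d'}$, $d'\gg 0$, as in \eqref{eqn:Veronese} (which preserves $X_{\rm sm}$, $\Sigma$ and, taking $H=\{y_0=0\}$, the pair $(X,X\setminus H)$, and does not affect the conditions already arranged), we may assume $\deg X\ge 2$. Then $\phi_L$ has degree $\deg X\ge 2$, so by upper semicontinuity of fibre length for the finite morphism $\phi_L$ onto the regular scheme $\P^r_k$ the fibre $\phi_L^{-1}(\phi_L(x))$ has $k(\phi_L(x))$-length $\ge\deg X\ge 2$; by (1) and (3) the point $x$ occurs in this fibre with multiplicity exactly one, so $L^{+}(x)=\overline{\phi_L^{-1}(\phi_L(x))\setminus\{x\}}\ne\emptyset$, which is (4). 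For (5) and (6), put $D:=\widehat{f}(\widehat{g}^{-1}(E))\cup\bigcup_i\widehat{f}(Z_i)$, the union over those $i$ for which $\widehat{f}|_{Z_i}$ is not dominant onto a component of $X$; since no component of $Z$ lies in $\widehat{g}^{-1}(E)$ and the relevant $Z_i$ are not dominant, $D\subset X$ is closed of dimension $\le r-1$ and is independent of $L$. As $L^{+}(x)\subseteq\phi_L^{-1}(\phi_L(x))$, one has $L^{+}(x)\cap D=\emptyset$ as soon as no $y\in D$ with $y\ne x$ satisfies $\phi_L(y)=\phi_L(x)$, i.e. as soon as $L$ meets no line $\overline{xy}$ with $y\in D$. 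Over $\bar{k}$ the points $\overline{p_iy}\cap H$ (with $y\in D_{\bar{k}}$) lie in $C_{p_i}(D)\cap H$, the intersection with $H$ of the cone over $D$ with vertex $p_i$; since $p_i\notin H$ this cone is not contained in $H$, so $C_{p_i}(D)\cap H$ has dimension $\le r-1$, and $\{L:L\cap C_{p_i}(D)\cap H\ne\emptyset\}$ is a proper closed $\Gal(\bar{k}/k)$-stable subset of $\Gr(N-r-1,H)$. Discarding it for all $i$ yields (5) and (6) simultaneously.

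Intersecting $\Gr(X,N-r-1,H)$ with all the dense open subsets produced above gives the required $\mathcal{U}$. The only genuinely delicate point is (4): it is the one place where the largeness of $\deg X$ — hence the Veronese re-embedding — is unavoidable, and it rests on (1) and (3), together with miracle flatness and the perfectness of $k$, to ensure that a smooth point of $\Sigma$ meets its own fibre with multiplicity exactly one. The remaining conditions are dimension counts on $\Gr(N-r-1,H)$ made possible by $N\gg r$; the sole bookkeeping subtlety is that, $\Sigma$ possibly having non-rational points, every count must be performed over $\bar{k}$ with care that each bad locus descends to $k$.
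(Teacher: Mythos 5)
Your proposal is correct and follows essentially the same route as the paper: incidence/dimension counts in $\Gr(N-r-1,H)$ over $\bar{k}$ to avoid the tangent spaces at $\Sigma$, the secant lines between points over $\Sigma$, and the cones over the $(\le r-1)$-dimensional bad locus; miracle flatness plus unramifiedness for (1); the Veronese re-embedding to force $\deg\phi_L\ge 2$ for (4); and Galois-stability plus perfectness of $k$ to descend the open set and transfer the properties (your cardinality argument for (3) is exactly the paper's Lemma~\ref{lem:base-change-kbar}). The only cosmetic difference is that the paper packages the bad loci as secant varieties ${\rm Sec}(\{x\},W\cup(\Sigma\setminus\{x\}))$ of dimension $\le r$ and quotes Lemma~\ref{lem:fs-open-1} directly, whereas you intersect with $H$ first; the two counts are equivalent.
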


\begin{proof}
Replacing the given embedding $X \inj \P^N_k$ by its composition with a Veronese embedding, we may begin with a closed embedding $X \inj \P^N_k$ such that $N \gg r$ and the degree of $X$ in $\P^N_k$ is bigger than one.

\textbf{Step 1.} \emph{First suppose that $k$ is algebraically closed.} 
It follows from our assumption that $\dim(\widehat{g}^{-1}(E)) \le r-1$. Since $\widehat{f}$ is projective, it follows that $\widehat{f}(\widehat{g}^{-1}(E))$ is a closed subset of $X$ of dimension at most $r-1$. We let $W \subset X$ be the union of  $X_{\rm sing}$, $\widehat{f}(\widehat{g}^{-1}(E))$ and the images of all components of $Z$ which are not dominant over $X$. This is a closed subset of $X$ such that $\dim(W) \le r-1$. In particular, $\dim({\rm Sec} (D_1, W \cup D_2)) \le r$ for any finite closed subsets $D_1, D_2 \subset X$. It follows from Lemma \ref{lem:fs-open-1} that $\mathcal{U}_1 :=  \bigcap_{x \in \Sigma} \Gr(X \cup {\rm Sec}(\{x\}, W \cup (\Sigma \setminus \{x\})), N-r-1, H)$ is dense open in $\Gr(N-r-1, H)$. Furthermore, any $L \in \mathcal{U}_1(k)$ satisfies (5) and (6) by construction. 

We continue the proof of the rest of the properties. 
Let $T_{\Sigma, X} \subset \P^N_k$ be the union of the tangent spaces to $X$ at all points of $\Sigma$. Since $\Sigma \subset X_{\rm sm}$, we have $T_{\Sigma, X} = T_{\Sigma, X_{\rm sm}}$, which is a finite union of linear subspaces of dimension $r$. For each $x \in \Sigma$, the set $\sZ_x = X \cup T_{\Sigma, X} \cup {\rm Sec}(\{x\}, X_{\rm sing} \cup (\Sigma \setminus \{x\}))$ is closed in $\P^N_k$ of dimension $r$. Therefore, the set $\mathcal{U} =   \bigcap_{x \in \Sigma} \Gr(\sZ_x, N-r-1, H) \cap \mathcal{U}_1$ is dense open in $\Gr(N-r-1, H)$ by  Lemma \ref{lem:fs-open-1}. By construction, each $L \in \mathcal{U}(k)$ defines the finite surjective map $\phi_L: X \to \P^r_k$, which is unramified at $\Sigma$ and separates the points of $\Sigma$. In particular, (2) holds.

Since $X_{\rm sm}$ is regular and dense in $X$, it follows that $\phi_L|_{X_{\rm sm}} :X_{\rm sm} \to \P^r_k$ is a dominant and quasi-finite morphism between regular $k$-schemes. In particular, the map $\mathcal{O}_{\P^r_k, \phi_L(x)} \to \mathcal{O}_{X, x}$ is a local homomorphism of regular local rings with the finite closed fiber for each $x \in \Sigma$. It follows from \cite[Proposition (6.1.5), p.136]{EGA4-2} (or \cite[Theorem 23.1, p.179]{Matsumura}) that $\phi_L$ is flat at each point of $\Sigma$. Hence $\phi_L$ is {\'e}tale at $\Sigma$, being flat and unramified, proving (1).

Since $k = \bar{k}$, the isomorphisms of the residue fields, (3) is evident. The property (4) follows because $\deg (\phi_L)>1$ by the assumptions on the chosen Veronese embedding of $X$. This proves the lemma in Step 1 when $k$ is algebraically closed.

\textbf{Step 2.} \emph{Now suppose that $k$ is any infinite perfect field.} 
Let $\bar{k}$ be an algebraic closure. For any $k$-scheme $A$, let $\pi_A: A_{\bar{k}} \to A$ be the base change to $\bar{k}$. We have that $\Sigma_{\bar{k}} = \pi^{-1}_X(\Sigma)$ is still a finite closed set of the regular scheme $X_{{\rm sm}, \bar{k}}$. By Step 1 applied to $X_{\bar{k}}$, $H_{\bar{k}}$ and $\Sigma_{\bar{k}}$, there exists a dense open $\mathcal{U}' \subset \Gr(N-r-1, H_{\bar{k}})$ where the mentioned properties (1) $\sim$ (6) hold. 

Since $k$ is perfect, there exists a finite Galois extension $k \subset k'$ in $\bar{k}$ such that $\mathcal{U}' $ is defined over $k'$. Let ${\mathcal{U}}:= \bigcap_{\sigma \in {\rm Gal} (k'/k)} \sigma \cdot \mathcal{U}'$. This is a nonempty open subset defined over the radicial closure of $k$ in $k'$, but since $k$ is perfect, this radicial closure is equal to $k$. Hence $\mathcal{U} \subset \Gr (N-r-1, H)$ and it is defined over $k$.  (\emph{cf.} \cite[Lemma 3.4.3]{CTHK}). Here we have $\mathcal{U}_{\bar{k}} \subset \mathcal{U}'$. Now, for each $L \in \mathcal{U} (k)$, we have $X \cap L = \emptyset$ by our choice of the open set. So, we get a finite surjective map $\phi_L: X \to \mathbb{P}_k ^r$ over $k$.

We prove that $\phi_L$ is \'etale at each point $x \in \Sigma$. Let $y:= \phi_L (x)$. By the faithfully flat descent (\cite[Corollaire (17.7.3)-(ii), p.72]{EGA4-4}), the map $\phi_L : \Spec (\mathcal{O}_{X,x}) \to \Spec (\mathcal{O}_{\mathbb{P}_k ^r, y})$ is \'etale if and only if its faithfully flat base change $\phi_{L, \bar{k}} : \Spec (\mathcal{O}_{X_{\bar{k}, x_{\bar{k}}}}) \to \Spec (\mathcal{O}_{\mathbb{P}^r_{\bar{k}, y_{\bar{k}}}})$ of the semi-local schemes via $\Spec (\bar{k}) \to \Spec (k)$ is \'etale. Here, $x_{\bar{k}}:= \pi_X ^{-1} (x)$ and $y_{\bar{k}}:= \pi_{\mathbb{P}_k ^r} ^{-1} (y)$. But Step 1 shows that the latter map $\phi_{L, \bar{k}}$ is \'etale at each point of the set $x_{\bar{k}} \subset \Sigma_{\bar{k}}$, thus so is the former $\phi_L$ at $x$. This proves (1).

Since $\phi_{L, \bar{k}}$ separates the points of $\Sigma_{\bar{k}}$ by construction, (2) is obvious. Furthermore, this shows that for each $x \in \Sigma$, the map $\phi_{L, \bar{k}} : \pi_X ^{-1} (x) \to \pi_{\mathbb{P}^r_k} ^{-1} (y)$ is injective, where $y=\phi_L (x)$. Hence by Lemma \ref{lem:base-change-kbar} below, we have $k(x) = k(y)$, which proves (3). The property (4) is evident because $\deg (\phi_L) > 1$ and $k(\phi (x)) \simeq k(x)$ for each $x \in \Sigma$ by (3). 

The conditions (5) and (6) are apparent for any $L \in \mathcal{U}(k)$ because 
for every $x \in \Sigma$, we have that $(L_{\bar{k}})^+(x') \cap
\widehat{f}_{\bar{k}}(\widehat{g}^{-1}_{\bar{k}}(E_{\bar{k}})) = 
\emptyset = (L_{\bar{k}})^+(x') \cap \widehat{f}_{\bar{k}}(Z_{i, \bar{k}})$
for all $x'$ lying in the finite set $\pi^{-1}_X(x) \subset \Sigma_{\bar{k}}$.
Note here that if $Z_i$ is not dominant over a component of $X$, then
no component of $Z_{i, \bar{k}}$ can be dominant over any component of
$X_{\bar{k}}$. This finishes the proof of the lemma.
\end{proof}

We used the following in the middle of the proof of the above Lemma \ref{lem:fs-proj-1et}.

\begin{lem}\label{lem:base-change-kbar}
Let $k$ be an infinite perfect field and let $\phi: X \to Y$ be a finite morphism of $k$-schemes. Consider the base change Cartesian square
\begin{equation}\label{eqn:base-change}
\xymatrix@C1pc{
X_{\bar{k}} \ar[d]_{\pi_X} \ar[r]^{\phi_{\bar{k}}} & Y_{\bar{k}} \ar[d]^{\pi_{Y}} \\
X \ar[r]^{\phi} & Y.}
\end{equation}
Let $x \in X$ be a closed point and let $y:= \phi (x)$. Then one has $| \pi_Y ^{-1} (y)| \leq | \pi_X ^{-1} (x)|$. The equality holds if and only if $[k(x): k(y)] = 1$. Furthermore, this equality holds when the map $\phi_{\bar{k}}: \pi_X ^{-1} (x) \to \pi_Y ^{-1} (y)$ is injective.
\end{lem}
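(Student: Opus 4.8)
The plan is to analyze the fiber of $\pi_Y$ over $y$ and the fiber of $\pi_X$ over $x$ directly via field theory. Since $\phi$ is finite, the closed point $y = \phi(x)$ has a residue field $k(y)$, and $k(x)$ is a finite extension of $k(y)$. The key observation is that $\pi_Y^{-1}(y) = \Spec(k(y) \otimes_k \bar k)$ as a set, and similarly $\pi_X^{-1}(x) = \Spec(k(x) \otimes_k \bar k)$, because base change of schemes commutes with taking fibers over a point; so $|\pi_Y^{-1}(y)|$ equals the number of maximal ideals (equivalently, since $\bar k$ is algebraically closed and $k$ perfect, the number of $k$-algebra homomorphisms $k(y) \to \bar k$, i.e. $[k(y):k]_{\mathrm{sep}} = [k(y):k]$), and likewise $|\pi_X^{-1}(x)| = [k(x):k]$.

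First I would record that $k(x)/k$ and $k(y)/k$ are finite separable extensions (here is where perfectness of $k$ enters), so $|\pi_X^{-1}(x)| = [k(x):k] = \#\Hom_k(k(x),\bar k)$ and $|\pi_Y^{-1}(y)| = [k(y):k] = \#\Hom_k(k(y),\bar k)$. The inclusion $k(y) \hookrightarrow k(x)$ induces a restriction map $\Hom_k(k(x),\bar k) \to \Hom_k(k(y),\bar k)$, and this map is surjective because $\bar k$ is algebraically closed (every embedding $k(y) \to \bar k$ extends to $k(x)$). Surjectivity gives $|\pi_Y^{-1}(y)| \le |\pi_X^{-1}(x)|$, and the fibers of this restriction map all have cardinality $\#\Hom_{k(y)}(k(x),\bar k) = [k(x):k(y)]$; hence equality $|\pi_Y^{-1}(y)| = |\pi_X^{-1}(x)|$ holds if and only if $[k(x):k(y)] = 1$.

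For the last statement, I would observe that under $\phi_{\bar k}$ the set $\pi_X^{-1}(x)$ maps to $\pi_Y^{-1}(y)$ compatibly with the identifications above: a $k$-embedding $\sigma: k(x) \to \bar k$ is sent to its restriction $\sigma|_{k(y)}$. Thus the map $\phi_{\bar k}: \pi_X^{-1}(x) \to \pi_Y^{-1}(y)$ is precisely the restriction map $\Hom_k(k(x),\bar k) \to \Hom_k(k(y),\bar k)$ discussed above. If this map is injective, then since it is also surjective it is a bijection, so its fibers are singletons, i.e. $[k(x):k(y)] = 1$, which by the previous paragraph is exactly the equality case. This completes the proof.

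The main obstacle — really the only subtlety — is making sure the scheme-theoretic fiber $\pi_Y^{-1}(y)$ is correctly identified with $\Spec(k(y)\otimes_k\bar k)$ and that its cardinality as a set of points equals $[k(y):k]$ using perfectness (so $k(y)\otimes_k \bar k$ is reduced, a product of copies of $\bar k$); once the bookkeeping of fibers-versus-embeddings is set up, everything reduces to the elementary fact that a field embedding into an algebraically closed field extends along any finite extension, with the number of extensions counted by the separable degree.
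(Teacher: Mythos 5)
Your proof is correct and follows essentially the same route as the paper: perfectness of $k$ gives $|\pi_X^{-1}(x)| = [k(x):k]$ and $|\pi_Y^{-1}(y)| = [k(y):k]$, and the tower $k \subset k(y) \subset k(x)$ then yields both the inequality and the equality criterion. The only cosmetic difference is in the final step, where the paper deduces the reverse inequality $|\pi_X^{-1}(x)| \le |\pi_Y^{-1}(y)|$ directly from injectivity of $\phi_{\bar k}$ on the fibers, whereas you identify that fiber map with the restriction map on $\Hom_k(-,\bar k)$ and use its surjectivity; both are valid.
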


\begin{proof}
Since $k$ is perfect, we have $|\pi_X^{-1}(x)| = [k(x):k]$ and $|\pi_{Y}^{-1}(y)| = [k(y) : k]$. So, the field extensions $k \inj k(y) \inj k(x)$ imply the first and the second assertions. If the map $\phi_{L_{\bar{k}}}: \pi_X^{-1}(x) \to \pi_{Y}^{-1}(y)$ is injective, then $|\pi_{Y}^{-1}(y)| \ge |\pi_X^{-1}(x)|$. The last part of the lemma thus follows.
\end{proof}

\subsection{Some algebraic results}\label{subsection:Com-Alg}
We discuss some algebraic results that will be needed.

\begin{lem}\label{lem:elem-com-alg}
Let $f:A \to B$ be an injective finite unramified local homomorphism of noetherian local rings, that induces an isomorphism of the residue fields. Then $f$ is an isomorphism. 
\end{lem}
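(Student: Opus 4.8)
The plan is to deduce this from Nakayama's lemma applied to $B$ viewed as a module over $A$. Write $\fm_A \subset A$ and $\fm_B \subset B$ for the maximal ideals. First I would unwind the hypotheses: since $f$ is finite, $B$ is a finitely generated $A$-module; since $f$ is a local homomorphism and unramified, one has $\fm_A B = \fm_B$ (the required separability of the residue field extension is vacuous here, as that extension is trivial); and by assumption the induced map $A/\fm_A \to B/\fm_B$ is an isomorphism.

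Next I would observe that $B/\fm_A B = B/\fm_B \cong A/\fm_A$, so that the $A$-module $B/\fm_A B$ is generated by the class of $1_B$; equivalently, $B = f(A)\cdot 1_B + \fm_A B$. Now I would apply Nakayama's lemma: take the finitely generated $A$-module $B$, its $A$-submodule $N := f(A)\cdot 1_B$, and the ideal $\fm_A$, which lies in the Jacobson radical of the local ring $A$. Then $B/N$ is a finitely generated $A$-module satisfying $\fm_A \cdot (B/N) = B/N$, hence $B/N = 0$, i.e. $B = f(A)$. Since $f$ is injective, it is therefore an isomorphism.

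There is essentially no real obstacle in this argument; the only point one must be slightly careful about is the precise content of "unramified" for a local homomorphism, namely that it delivers exactly $\fm_A B = \fm_B$ together with separability of the residue extension, the latter condition being automatic because the residue field map is an isomorphism. Once this is pinned down, $B/\fm_A B$ is a cyclic $A$-module and Nakayama applies verbatim.
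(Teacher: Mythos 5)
Your proof is correct and follows essentially the same route as the paper: both arguments use finiteness to reduce surjectivity to Nakayama's lemma, the unramified hypothesis to identify $B/\fm_A B$ with $B/\fm_B$, and the residue field isomorphism to conclude that $B/\fm_A B$ is generated by $1_B$ over $A$. Nothing further is needed.
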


\begin{proof}
Let $\fm_A$ and $\fm_B$ be the maximal ideals of $A$ and $B$, respectively. Since $f$ is finite, to show that $f$ is surjective 
it suffices to show that $A/{\fm_A} \to B/({\fm_A}B)$ is surjective by Nakayama's lemma. But this follows because the map $A/{\fm_A} \to B/{\fm_B}$ is an isomorphism and so is the map $B/({\fm_A}B) \to B/{\fm_B}$ as $f$ is unramified.
\end{proof}

\begin{lem}\label{lem:GP-0}
Let $f: Y' \to Y$ be a finite surjective morphism of regular $k$-schemes. Let $W \subset Y$ be an irreducible closed subset and let $y \in W$ be a closed point. Let $S = f^{-1}(y)$ and $W' = f^{-1} (W) $. Let $x \in S$ and let $Z \subset W'$ be an irreducible component passing through $x$. Suppose that $f$ is {\'e}tale at $x$ and $k(y) \xrightarrow{\simeq} k(x)$. Then $Z \cap S = \{x\}$ if and only if $Z$ is the only component of $W'$ passing through $x$.
\end{lem}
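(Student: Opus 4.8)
\textbf{Proof plan for Lemma \ref{lem:GP-0}.}

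The plan is to analyze both directions by passing to the semi-local ring of $W'$ along the fibre $S$, using the \'etale hypothesis at $x$ to reduce the geometry to a statement about connected components. First I would set up notation: let $B = \mathcal{O}_{W', x}$ and $A = \mathcal{O}_{W, y}$, noting that both are regular (as localizations of regular schemes) and $f$ induces a finite local homomorphism $A \to B$ which is \'etale, hence flat and unramified, with $k(y) \xrightarrow{\simeq} k(x)$. Since $W' = f^{-1}(W)$ and $Z \subset W'$ is an irreducible component through $x$, the components of $W'$ through $x$ correspond to the minimal primes of $B$, and $Z \cap S = \{x\}$ is equivalent (on $\Spec B$) to saying that the closed point is the only point of $\Spec B$ lying over the closed point of $\Spec(\mathcal{O}_{W,y})$ that belongs to the component corresponding to $Z$. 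The key reduction: because $f$ is \'etale at $x$ and induces an isomorphism of residue fields, Lemma \ref{lem:elem-com-alg} shows that $\mathcal{O}_{W, y} \to \mathcal{O}_{f^{-1}(W), x}$ — i.e. the map on the local rings of the preimage scheme $W' = f^{-1}(W)$ — is an isomorphism after completion, or more directly, that the fibre $f^{-1}(W) \times_W \Spec \mathcal{O}_{W,y}$ has at $x$ a single local branch isomorphic to the base.

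Concretely, I would argue as follows. ($\Leftarrow$) Suppose $Z$ is the only component of $W'$ passing through $x$. I want $Z \cap S = \{x\}$. Since $Z$ is the unique component of $W'$ at $x$, in a Zariski neighborhood $U$ of $x$ in $W'$ we have $U \subset Z$ (the other components are closed and avoid $x$). Then $Z \cap S \cap U = S \cap U = f^{-1}(y) \cap U$. Now $f$ restricted to a neighborhood is \'etale onto its image, and $W' = f^{-1}(W)$, so $f|_{W'}: W' \to W$ near $x$ is still unramified; combined with $k(y) \xrightarrow{\simeq} k(x)$ and Lemma \ref{lem:elem-com-alg} applied to $\mathcal{O}_{W, y} \to \mathcal{O}_{W', x}$ (which is finite, injective after passing to $Z$, unramified, residually trivial), we get that this map is an isomorphism, so $f|_{W'}$ is a local isomorphism at $x$; hence $f^{-1}(y)$ meets a neighborhood of $x$ only in $x$. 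Since $Z \cap S$ is a closed subset of $Z$ finite over $y$ (as $S$ is finite), and the only point of it near $x$ is $x$ itself, and — here I use irreducibility of $Z$ together with $Z \cap S$ being the fibre of the finite map $Z \to W$ over $y \in W$ — the whole of $Z \cap S$ collapses: more carefully, $Z \to W$ is finite surjective of regular schemes, $y \in W$, and over a neighborhood the map is an isomorphism, so $Z \cap S = \{x\}$.

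($\Rightarrow$) Conversely, suppose $Z' \neq Z$ is another component of $W'$ through $x$. I must show $Z \cap S \supsetneq \{x\}$, i.e. produce a second point of $S$ on $Z$. The idea: $f$ being \'etale at $x$ means $\mathcal{O}_{W',x}$ is \emph{unramified} over $\mathcal{O}_{W,y}$, but if $\mathcal{O}_{W',x}$ had two minimal primes $\mathfrak{p}_Z, \mathfrak{p}_{Z'}$ both contracting to the minimal prime of the (regular, hence domain) local ring $\mathcal{O}_{W,y}$, then the fibre ring $\mathcal{O}_{W',x} \otimes_{\mathcal{O}_{W,y}} k(y)$, which by unramifiedness and residual triviality is just $k(x) = k(y)$ (a field, so $\Spec$ is a single point), forces... — and this is the subtle point — it forces the \emph{generic} behaviour: by \'etaleness $\mathcal{O}_{W,y} \to \mathcal{O}_{W',x}$ is flat, so every minimal prime of $\mathcal{O}_{W',x}$ lies over a minimal prime of $\mathcal{O}_{W,y}$, i.e. over the zero ideal; then $Z$ and $Z'$ both dominate $W$. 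Consider the normalization or just the total ring: $f^{-1}(W) = W'$, and the fibre $f^{-1}(y)$ has, counted with multiplicity/length, "degree" equal to $\deg(f)$ along each component of $W$ that $Z$ dominates. Because $f$ is \'etale at $x$ with trivial residue extension, exactly one point of $S$ sits on the "analytic branch" of $W'$ at $x$ coming from each component through $x$; but $Z$ itself globally meets $f^{-1}(y)$ in a fibre whose total length is $\deg(f|_Z) \geq 1$, and if this equals $1$ then — tracing back — there can be only one branch of $W'$ at $x$, contradiction. The cleanest route: if $Z$ were the only component of $W'$ through $x$ the $(\Leftarrow)$ argument gives $Z\cap S = \{x\}$; contrapositive gives the claim, provided I also rule out the degenerate scenario where $Z'$ passes through $x$ but $Z \cap S$ is still $\{x\}$. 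This is prevented because $Z'$ through $x$ means $\mathcal{O}_{W',x}$ is not a domain, which contradicts $\mathcal{O}_{W,y}\to\mathcal{O}_{W',x}$ being an isomorphism — and that isomorphism is exactly what $Z\cap S=\{x\}$ plus \'etaleness plus residual triviality forces via Lemma \ref{lem:elem-com-alg}.

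\textbf{Main obstacle.} The delicate point is the $(\Rightarrow)$ direction: I need to convert the scheme-theoretic statement "$\mathcal{O}_{W',x}$ has a single minimal prime" (unibranchness at $x$) into the fibre statement "$Z\cap S=\{x\}$", and vice versa, without accidentally conflating $W' = f^{-1}(W)$ (which may be non-reduced or reducible) with its components. The hinge is that \'etaleness of $f$ at $x$ together with $k(y)\xrightarrow{\simeq}k(x)$ makes the local ring map $\mathcal{O}_{W,y}\to\mathcal{O}_{W',x}$ an isomorphism \emph{precisely when} $W'$ is unibranch at $x$ (using Lemma \ref{lem:elem-com-alg} for one direction and flatness for the other), and an isomorphism of local rings translates the fibre over $y$ to the fibre over $x$, which is the single point $x$. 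I expect most of the work to be in carefully justifying that the fibre $Z \cap S$, as opposed to the possibly-larger fibre $W' \cap S = S$, is controlled by this local isomorphism — i.e. showing that other points of $S$ lying on $Z$ would have to be "absorbed" into the unique branch at $x$, which is impossible.
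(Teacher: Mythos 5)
Your proposal does not close either implication, and the two gaps are of different natures. For the direction that the paper actually uses downstream (in Corollary \ref{cor:fs_sansZ} and Theorem \ref{thm:fs-present-intro}), namely $Z \cap S = \{x\} \Rightarrow Z$ is the unique component through $x$, you apply Lemma \ref{lem:elem-com-alg} to the map $\mathcal{O}_{W,y} \to \mathcal{O}_{W',x}$. That lemma requires the map to be \emph{finite}, and this is exactly what you cannot assume: $\mathcal{O}_{W',x}$ is a further localization of the semi-local ring $\mathcal{O}_{W',S}$, and if some component of $W'$ through $x$ also meets $S\setminus\{x\}$, this localization is no longer module-finite over $\mathcal{O}_{W,y}$ --- which is precisely the situation you are trying to rule out, so the argument is circular. (Your alternative of deducing this implication from the other one ``by contraposition'' is a logical slip: the contrapositive of ($\Leftarrow$) is ($\Leftarrow$) again.) The paper instead applies Lemma \ref{lem:elem-com-alg} to the composite $\gamma'\colon \mathcal{O}_{W,y} \to \mathcal{O}_{Z,x}$: the hypothesis $Z\cap S=\{x\}$ identifies $\mathcal{O}_{Z,x}$ with the semi-local ring $\mathcal{O}_{Z, Z\cap S}$, which \emph{is} finite over $\mathcal{O}_{W,y}$; injectivity holds because $Z$ dominates $W$ (flatness of $f$ and going-down), and unramifiedness because $\gamma'$ factors through the {\'e}tale map $\mathcal{O}_{W,y}\to\mathcal{O}_{W',x}$ followed by the surjection $\beta_5\colon\mathcal{O}_{W',x}\twoheadrightarrow\mathcal{O}_{Z,x}$. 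Once $\gamma'$ is an isomorphism, cancellation for {\'e}tale morphisms shows $\beta_5$ is {\'e}tale, and a surjective {\'e}tale map of local rings is an isomorphism; hence $\mathcal{O}_{W',x}$ is a domain and $Z$ is the only branch. This two-step factorization is the missing idea in your plan.

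The converse direction is where your proposal breaks irreparably, at the step ``over a neighborhood the map is an isomorphism, so $Z\cap S=\{x\}$.'' An isomorphism $\mathcal{O}_{W,y}\xrightarrow{\sim}\mathcal{O}_{W',x}$ identifies only the germ of $W'$ at the single point $x$ with the germ of $W$ at $y$; it carries no information about the other points of $S$, which may well lie on $Z$. No refinement of this argument can succeed, because the implication fails as stated: take $Y'=Y=\mathbb{A}^1_k$ with $f(s)=s^2$ (characteristic of $k$ not $2$), $W=Y$, $y=x=1$; then $W'=Z=\mathbb{A}^1_k$ is irreducible, $f$ is {\'e}tale at $x$ with trivial residue extension, yet $Z\cap S=\{1,-1\}$. (Note that in this example $\mathcal{O}_{Y,y}\to\mathcal{O}_{Y',x}$ is {\'e}tale and residually trivial but \emph{not} finite and \emph{not} an isomorphism, which again illustrates why finiteness in Lemma \ref{lem:elem-com-alg} cannot be waived.) The paper's own treatment of this direction reduces the claim to comaximality of the minimal primes of $\mathcal{O}_{W',S}$, which controls only whether $Z$ meets the \emph{other} components along $S$, not whether $Z$ itself contains a second point of $S$; so this direction should be treated with caution in any case. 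Only the implication discussed in the first paragraph is invoked elsewhere in the paper.
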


\begin{proof}
We first observe that $f$ must be a flat morphism (see \cite[Exercise III-10.9, p.276]{Hartshorne}). We next note that any irreducible component of $W'$ that passes through $x$ will be in the connected component of $Y'$ containing $x$. So, we may assume $Y'$ is connected. On the other hand, $W\subset Y$ being irreducible, it must belong to a unique connected component of $Y$. Hence, we may also assume that $Y$ is connected.

Now, first suppose $S= \{ x \}$. We claim that $f$ is an isomorphism locally around $y$, so that the lemma holds trivially. Indeed, it follows from Lemma \ref{lem:elem-com-alg} that the map $\mathcal{O}_{Y, y} \to \mathcal{O}_{Y', x}$ is an isomorphism. This implies that $f$ is a finite and flat map with $[k(Y'): k(Y)] = 1$ (see \cite[Exercise 5.1.25, p.176]{Liu}) and hence must be an isomorphism.

We now suppose $|S| > 1$. Consider the commutative diagram of semi-local rings
\begin{equation}\label{eqn:GP-0-0}
\xymatrix@C2pc{
\mathcal{O}_{Y, y} \ar[r]^{\alpha_1} \ar@{->>}[d]^{\beta_1} & 
\mathcal{O}_{Y',S} \ar@{->>}[d]^{\beta_2} \ar[r]^{\alpha_2} & 
\mathcal{O}_{Y',x} \ar@{->>}[d]^{\beta_3} \\
\mathcal{O}_{W,y}  \ar@/_3.5pc/[drr] ^{\gamma'} \ar[r]^{\alpha_3} \ar[dr]_{\gamma} & 
\mathcal{O}_{W',S} \ar[r]^{\alpha_4} \ar@{->>}[d]^{\beta_4} & \mathcal{O}_{W',x} 
\ar@{->>}[d]^{\beta_5} \\
& \mathcal{O}_{Z,S} \ar[r]^{\alpha_5} & \mathcal{O}_{Z,x},}
\end{equation}
where $\gamma:= \beta_4 \circ \alpha_3$ and $\gamma':= \alpha_5 \circ \gamma$. Here, $\alpha_1$ and $\alpha_3$ are finite and flat, and $\alpha_2\circ \alpha_1$ is \'etale. The lemma is equivalent to that $\alpha_5$ is an isomorphism if and only if $\beta_5$ is.

Suppose $\alpha_5$ is an isomorphism. Since $\beta_4$ is surjective and $\alpha_3$ is finite, the map $\gamma$ is finite. Thus, $\gamma'$ is a finite map of local rings. Since $\alpha_2 \circ \alpha_1$ is {\'e}tale, the map $\alpha_4 \circ \alpha_3$ is also {\'e}tale. Since $\beta_5$ is surjective, we see that $\gamma'$ is unramified. Thus, $\gamma'$ is a finite and unramified map of local rings. Since $Z \to W$ is surjective and $k(y) \simeq k(x)$, the map $\gamma'$ is an isomorphism by Lemma \ref{lem:elem-com-alg}. In particular, $\alpha_4 \circ \alpha_3$ is an {\'e}tale map of local rings such that $\beta_5 \circ \alpha_4 \circ \alpha_3$ is an isomorphism, in particular, {\'e}tale. It follows that $\beta_5$ is {\'e}tale, by \cite[Proposition (17.3.4), p.62]{EGA4-4}. Thus, $\beta_5$ is a surjective \'etale map of local rings.  But it can happen only if $\beta_5$ is an isomorphism.

Conversely, suppose that $\beta_5$ is an isomorphism. Let $\mathfrak{p}$ be the minimal prime of $\mathcal{O}_{W', S}$ such that ${\mathcal{O}_{W',S}}/{\mathfrak{p}} = \mathcal{O}_{Z,S}$ and let $\{\mathfrak{p}_1, \ldots , \mathfrak{p}_m\}$ denote the set of distinct minimal primes of $\mathcal{O}_{W',S}$ different from $\mathfrak{p}$. To show that $\alpha_5$ is an isomorphism, we need to show that $\mathfrak{p} + \mathfrak{p}_i = \mathcal{O}_{W',S}$ for all $1\le i \le m$.

{\bf Claim 1:} \emph{$\mathfrak{p}_i \mathcal{O}_{W', x} = \mathcal{O}_{W',x}$ for all $1 \le i \le m$.}

$(\because)$ Note that $\mathcal{O}_{W', x}$ is an integral domain because $\mathcal{O}_{Z,x}$ is an integral domain and $\beta_5$ is an isomorphism. Thus, we must have either $\mathfrak{p}_i \mathcal{O}_{W', x} = 0$, or $\mathfrak{p}_i \mathcal{O}_{W', x} = \mathcal{O}_{W', x}$. In the first case, we have $\mathfrak{p}_i \mathcal{O}_{Z, x} = 0$ as $\beta_5$ is an isomorphism. Equivalently, $\alpha_5 \circ \beta_4(\mathfrak{p}_i) = 0$. Since $\mathfrak{p}_i \not = \mathfrak{p}$, and $\mathfrak{p}_i, \mathfrak{p}$ are minimal, there is $a_i \in \mathfrak{p}_i \setminus \mathfrak{p}$ such that $\beta_4(a_i) \neq 0$. Hence, $\alpha_5 \circ \beta_4(a_i) \neq 0$, because $\alpha_5$ is injective being a localization of an integral domain. This is a contradiction. Thus, we must have $\mathfrak{p}_i \mathcal{O}_{W', x} = \mathcal{O}_{W', x}$ for each $i$, proving Claim 1.

Let $\fm$ be the maximal ideal of $\mathcal{O}_{W',S}$ defining the closed point $x$. By Claim 1, for any $1 \le i \le m$, there exists $a_i \in \mathfrak{p}_i \setminus \fm$ in $\mathcal{O}_{W', S}$ such that $\alpha_4(a_i)$ is invertible. Let $a =  \prod_{i=1} ^m a_i$. We see that there are nonzero elements $b, c \in \mathcal{O}_{Z',S}$ with $c \notin \fm$ such that $c(1-ab) = 0$.

{\bf Claim 2:} \emph{$1-ab \in \mathfrak{p}$.}

$(\because)$ Let $v = 1-ab$. Then, we have $cv = 0 \in \mathfrak{m}$ with $c \not \in \mathfrak{m}$, so that $v \in \mathfrak{m}$ and $\alpha_4 (v) = 0$. Toward contradiction, suppose $v \notin \mathfrak{p}$. Then $v \in \fm \setminus \mathfrak{p}$, so that $\beta_4(v) \neq 0$. Thus $\beta_5 \circ \alpha_4(v) = \alpha_5 \circ \beta_4(v) \neq 0$ because $\alpha_5$ is injective. But this contradicts that $\alpha_4(v) = 0$. Hence, we have $v \in \mathfrak{p}$, proving Claim 2.

By Claim 2, we have $v \in \mathfrak{p}$, $ab \in \mathfrak{p}_i$ for all $i$, while $v-ab = 1$. This shows that $\mathfrak{p} + \mathfrak{p}_i = \mathcal{O}_{W',S}$ for all $1 \le i \le m$. Thus, $\alpha_5$ is an isomorphism.
\end{proof}

\subsection{Birationality under linear projections}\label{sec:Birational}
Using Lemma \ref{lem:fs-proj-1et}, we shall show that the linear projections often give birational morphisms when restricted to a given integral closed subscheme. But first, we derive the following consequence of the results we proved in \S~\ref{sec:Nis-LP} and \S~\ref{subsection:Com-Alg}. We continue to work with the set-up of \S~\ref{sec:Set-up}.

We use a trick of ``marking" irreducible components: for each $1 \le i \le s$, we fix a closed point $\alpha_i \in (Z_i)_{\rm sm}$ such that (1) $\alpha_i \not \in Z_j$ for $j \not = i$, (2) $x_i = \widehat{f} (\alpha_i) \in X_{\rm sm} $ but not in $\Sigma$, and (3) $b_i = \widehat{g} (\alpha_i) \in B$. Note here that $\alpha_i \in (Z_i)_{\rm sm}$ and $x_i \in X_{\rm sm}$ can be achieved as follows: by the assumptions of the Set-up of \S \ref{sec:Set-up}, each $Z_i$ intersects $H \times \widehat{B}$ properly and $X \setminus (X \cap H) \subset X_{\rm sm}$. Then any choice of a point in $Z_i |_{(X \setminus (X \cap H)) \times B}$ maps to a point of $X_{\rm sm}$. Moreover, perfectness of $k$ implies that $(Z_i)_{\rm sm} \cap Z_i |_{(X \setminus (X \cap H)) \times B} \neq \emptyset$. Let $\Xi = \{x_1, \ldots , x_s\} \cup \Sigma$ and $E = \{b_1, \ldots , b_s\} \cup F$. Since $Z_i \not \subset X \times F$ and $Z_i \to \widehat{B}$ is non-constant by the Set-up of \S \ref{sec:Set-up}, no component of $Z$ lies in $\widehat{g} ^{-1} (E)$.

\begin{lem}\label{lem:fs_sansZ-*}
After replacing the embedding $X \inj \P^N_k$ by a bigger one via a Veronese embedding if necessary, there is a dense open subset $\mathcal{U} \subset \Gr (X, N-r-1, H)$ such that each $L \in \mathcal{U}(k)$ has the property that \emph{$Z_i \cap \widehat{\phi}^{-1}_L (\widehat{\phi}_L(\alpha_i)) = \{ \alpha_i \}$} for all $1 \le i \le s$.
\end{lem}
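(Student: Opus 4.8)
**Proof proposal for Lemma \ref{lem:fs_sansZ-*}.**

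The plan is to treat each irreducible component $Z_i$ one at a time and take the intersection of the finitely many resulting dense open subsets of $\Gr(N-r-1,H)$ at the end. So fix $i$ and write $\alpha = \alpha_i$, $x = x_i = \widehat f(\alpha)$, $b = b_i = \widehat g(\alpha)$. The condition we want is that $\alpha$ is the only point of $Z_i$ lying in the fiber $\widehat\phi_L^{-1}(\widehat\phi_L(\alpha)) = \phi_L^{-1}(\phi_L(x)) \times \{b\}$; equivalently, $\phi_L^{-1}(\phi_L(x)) \cap \widehat f\big(Z_i \cap (X\times\{b\})\big)$, intersected with the slice over $b$, contains no point of $Z_i$ other than $\alpha$. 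The natural strategy is: (i) first arrange, using Lemma \ref{lem:fs-proj-1et}, that $\phi_L$ is étale at the point $x$ with $k(\phi_L(x))\xrightarrow{\simeq}k(x)$ and that $\phi_L$ separates the (finitely many) points of $Z_i\cap(X\times\{b\})$ lying over $X$ — for this one enlarges $\Sigma$ by the marking points $\Xi=\{x_1,\dots,x_s\}\cup\Sigma$ and applies Lemma \ref{lem:fs-proj-1et} with $\Sigma$ replaced by $\Xi$ (this is exactly why the marking points $\alpha_i$, $x_i$, $b_i$ were chosen in $X_{\rm sm}$ and in $B$, and off the other components); (ii) then use that an étale, residue-field-preserving linear projection with the separation property forces the fiber over $\phi_L(x)$ to meet $Z_i$ only in $\alpha$, along the lines of Lemma \ref{lem:GP-0}.

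More concretely, first I would re-run the proof of Lemma \ref{lem:fs-proj-1et} (after composing $\eta$ with a Veronese embedding if necessary) with the finite set $\Xi$ in place of $\Sigma$, also feeding in $E=\{b_1,\dots,b_s\}\cup F$ as the closed subset there. This produces a dense open $\mathcal U_1 \subset \Gr(X,N-r-1,H)$ such that for each $L\in\mathcal U_1(k)$: $\phi_L$ is étale at every point of $\Xi$, separates the points of $\Xi$, induces isomorphisms on residue fields at points of $\Xi$, and $L^+(x_i)\cap\widehat f(\widehat g^{-1}(E)) = \emptyset$. Next, for the fixed component $Z_i$, consider the finite set $S_i := \widehat f\big(Z_i\cap(X\times\{b_i\})\big)\setminus\{x_i\}$, a finite set of closed points of $X$ (finite because $Z_i\to\widehat B$ is non-constant, so $Z_i\cap(X\times\{b_i\})$ has dimension $\le r-1$, and — after another Veronese enlargement if needed — one can further intersect with the open locus where this fiber is actually finite over $X$; alternatively one argues directly that the generic such slice is finite). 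Then I would further shrink to $\mathcal U_i := \mathcal U_1 \cap \bigcap_{x_i}\Gr\big(\mathrm{Sec}(\{x_i\}, S_i), N-r-1, H\big)$, which is again dense open by Lemma \ref{lem:fs-open-1}, so that for $L\in\mathcal U_i(k)$ the line through $x_i$ and any point of $S_i$ misses $L$, forcing $\phi_L(x_i)\ne\phi_L(y)$ for all $y\in S_i$.

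Finally I would assemble the conclusion. Fix $L\in\mathcal U := \bigcap_{i=1}^s\mathcal U_i$. For a given $i$, any point $\beta\in Z_i$ with $\widehat\phi_L(\beta)=\widehat\phi_L(\alpha_i)$ must have $\widehat g(\beta)=\widehat g(\alpha_i)=b_i$ (since $\widehat\phi_L$ is the identity on the $\widehat B$-factor) and $\phi_L(\widehat f(\beta))=\phi_L(x_i)$. If $\widehat f(\beta)=x_i$: since $\phi_L$ is étale at $x_i$ with $k(\phi_L(x_i))\xrightarrow{\simeq}k(x_i)$, the fiber $\phi_L^{-1}(\phi_L(x_i))$ is reduced and its point over $x_i$ has residue field $k(x_i)$, so the only point of that fiber mapping into $Z_i\cap(X\times\{b_i\})$ with first coordinate $x_i$ is $(x_i,b_i)=\alpha_i$ (here one uses that $\alpha_i\in(Z_i)_{\rm sm}$ and that it is the unique point of $Z_i$ over $(x_i,b_i)$, arguing exactly as in Lemma \ref{lem:GP-0}). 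If $\widehat f(\beta)\ne x_i$, then $\widehat f(\beta)\in S_i$, contradicting $\phi_L(\widehat f(\beta))=\phi_L(x_i)$ by the choice of $\mathcal U_i$. Hence $Z_i\cap\widehat\phi_L^{-1}(\widehat\phi_L(\alpha_i))=\{\alpha_i\}$ for all $i$, as desired.

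The main obstacle I anticipate is step (ii): making rigorous the passage from "$\phi_L$ étale and residue-field-preserving at $x_i$, separating a finite set" to "the fiber meets $Z_i$ in exactly $\alpha_i$" with the correct scheme-theoretic bookkeeping — in particular handling the possibility that $Z_i\cap(X\times\{b_i\})$ is not finite over $X$ near $x_i$, which is why an additional Veronese enlargement (to shrink to the locus where this slice is finite over $X$, using that $\deg\phi_L$ can be made large) or a careful genericity argument for $b_i$ may be needed. The separation-of-points and étale inputs are all already packaged in Lemma \ref{lem:fs-proj-1et} and Lemma \ref{lem:GP-0}, so once the finiteness of the relevant fiber is secured, the rest is a dimension count plus Galois descent identical in spirit to Step 2 of the proof of Lemma \ref{lem:fs-proj-1et}.
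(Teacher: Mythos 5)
Your proposal is correct and follows essentially the same route as the paper: enlarge the marked set to $\Xi$ and $E$, invoke Lemma \ref{lem:fs-proj-1et} (whose property (5) applied at $x_i$ already yields $L^+(x_i)\cap \widehat f(\widehat g^{-1}(E))=\emptyset$, which is exactly your secant-avoidance condition since $S_i\subset \widehat f(\widehat g^{-1}(E))$), and then separate the two cases $\widehat f(\beta)\neq x_i$ and $\widehat f(\beta)=x_i$ using the residue-field isomorphism and a Galois/base-change argument as in Step 2 of that lemma's proof. The "main obstacle" you flag is a non-issue: one never needs $S_i$ to be finite, only that its closure has dimension $\le r-1$ (which follows from $Z_i\to\widehat B$ being non-constant), so that $\mathrm{Sec}(\{x_i\},\overline{S_i})$ has dimension $\le r$ and Lemma \ref{lem:fs-open-1} applies.
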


\begin{proof}
We let $\pi: \Spec(\bar{k}) \to \Spec(k)$ denote the base change map. For any $A \in \Sch_k$, we shall write $\pi_A: A_{\bar{k}} \to A$ simply as $\pi$ using a shorthand.

We fix $i$. Let $\beta_i:= \widehat{\phi}_L (\alpha_i)$. Let $\pi^{-1} (\alpha_i) = \{ \alpha_{ij} \}_j$, which is a finite set of points, and let $x_{ij}:= \widehat{f}_{\bar{k}} (\alpha_{ij})$, $b_{ij} := \widehat{g}_{\bar{k}} (\alpha_{ij})$. Note that all of $\alpha_{ij}$ and $x_{ij}$ lie in the smooth loci of $(Z_{i})_{\bar{k}}$ and $X_{\bar{k}}$, respectively. 

We let $\Xi_i:= \{ x_{ij} \}_j \cup \Sigma_{\bar{k}}$ and $E_i:= \{ b_{ij} \}_{j} \cup F_{\bar{k}}$. 

Applying Lemma \ref{lem:fs-proj-1et} over $\bar{k}$ for the above $\Xi_i$ (for $\Sigma$ there) and $E_i$ (for $E$ there), we obtain a dense open subset $\mathcal{U}'_i \subset \Gr (X_{\bar{k}}, N-r-1, H_{\bar{k}})$ such that every $L \in \mathcal{U}' (k)$ satisfies the properties (1)$\sim$ (6) there. Repeating the argument of Lemma \ref{lem:fs-proj-1et} in Step 2, we obtain a dense open subset $\mathcal{U}_i \subset \Gr (X, N-r-1, H)$ such that for every $L \in \mathcal{U}_i (k)$, we have $L_{\bar{k}} \in \mathcal{U}'_i (\bar{k})$. 

We show that the following map is bijective:
\begin{equation}\label{eqn:new4.8}
\widehat{\phi}_{L, \bar{k}} : \pi^{-1} (\alpha_i) \to \pi^{-1} (\beta_i).
\end{equation}

Suppose this is not injective, i.e. for some $j < j'$, we have $\widehat{\phi}_{L, \bar{k}} (\alpha_{ij}) = \widehat{\phi}_{L, \bar{k}} (\alpha_{ij'})$. Then $b_{ij} = \widehat{g}_{\bar{k}} (\alpha_{ij}) = \widehat{g}_{\bar{k}} (\alpha_{ij'})= b_{ij'}$. Since $\bar{k}$ is algebraically closed, we can write $\alpha_{ij} = (x_{ij}, b_{ij})$ and $\alpha_{ij'} = (x_{ij'} , b_{ij'})$. Since $b_{ij} = b_{ij'}$ and $\alpha_{ij} \not = \alpha_{ij'}$, we must have $x_{ij} \not = x_{ij'}$. 

But at the same time, we have
\[
\widehat{\phi}_{L, \bar{k}}(x_{ij}) =\widehat{f}_{\bar{k}}(\widehat{\phi}_{L, \bar{k}}(\alpha_{ij})) =\widehat{f}_{\bar{k}}(\widehat{\phi}_{L, \bar{k}}(\alpha_{ij'})) = \widehat{\phi}_{L, \bar{k}}(x_{ij'}).
\] 
In particular, $x_{ij'} \in L_{\bar{k}} ^+ (x_{ij})$. Since $\widehat{g}_{\bar{k}} (\alpha_{ij'}) = b_{ij'} \in E_i$, we thus have $\widehat{f}_{\bar{k}}(\alpha_{ij'}) = x_{ij'} \in (L_{\bar{k}})^+(x_{ij}) \cap \widehat{f}_{\bar{k}}(\widehat{g}^{-1}_{\bar{k}}(E_i))$. But this contradicts the property (5) of Lemma \ref{lem:fs-proj-1et} satisfied by $L_{\bar{k}}$. Hence the map \eqref{eqn:new4.8} is injective. 

On the other hand, we have
$$\pi^{-1}(\beta_i) \times_{\Spec (k (\beta_i))} \Spec(k(\alpha_i)) =\Spec((\bar{k} \otimes_k k(\beta_i)) \otimes_{k(\beta_i)} k(\alpha_i)) =\Spec(\bar{k} \otimes_k k(\alpha_i)) = \pi^{-1}(\alpha_i)
$$
so that it follows that the map \eqref{eqn:new4.8} is surjective, as well, thus bijective.

Going back to the proof of the lemma, first note that we clearly have $Z_i \cap \widehat{\phi}^{-1}_L (\beta_i) \supset \{ \alpha_i \}$. For the inclusion in the other direction, toward contradiction suppose there is $\alpha' \in \widehat{\phi}^{-1}_L (\beta_i) \setminus \{ \alpha_i \}$ such that $\alpha' \in Z_i$. Clearly we have $\pi^{-1} (\alpha') \cap \pi^{-1} (\alpha_i) = \emptyset$. On the other hand, we have $\widehat{\phi}_{L, \bar{k}} ( \pi ^{-1} (\alpha')) \subset \pi^{-1} (\beta_i) = \widehat{\phi} _{L, \bar{k}} (\pi^{-1} (\alpha_i))$, where the second equality holds by the bijectivity of \eqref{eqn:new4.8}. 

Hence there is some $\alpha'_{j} \in \pi^{-1} (\alpha')$ and $\alpha_{ij'} \in \pi^{-1} (\alpha_i)$ such that 
$$ (a) \ \ \alpha'_{j} \not = \alpha_{ij'}, \mbox{ while } (b) \ \ \widehat{\phi}_{L, \bar{k}} (\alpha'_j) = \widehat{\phi}_{L, \bar{k}} (\alpha_{ij'}).$$

The property (b) implies that $\widehat{g}_{\bar{k}} (\alpha'_j) = \widehat{g}_{\bar{k}} (\alpha_{ij'}) = b_{ij'}$. Since $\bar{k}$ is algebraically closed, for $x':= \widehat{f}_{\bar{k}} (\alpha'_j)$, we can express $\alpha'_j = (x', b_{ij'})$ and $\alpha_{ij'} = (x_{ij'}, b_{ij'})$. Because $\alpha'_j \not \alpha_{ij'}$ by (a), we must have $x' \not = x_{ij'}= \widehat{f}_{\bar{k}} ( \alpha_{ij'})$. In particular, $x' \in L_{\bar{k}}^+ (x_{ij'})$. But $\widehat{g}_{\bar{k}} = b_{ij'} \in E_i$ so that we obtain $x' \in L_{\bar{k}} ^+ (x_{ij'}) \cap \widehat{f}_{\bar{k}} (\widehat{g}_{\bar{k}} ^{-1} (E_i))$. But, this contradicts the property (5) of Lemma \ref{lem:fs-proj-1et} satisfied by $L_{\bar{k}}$. Hence no such $\alpha'$ exists.
Our proof then is over by taking $\mathcal{U}:= \bigcap_{i=1} ^s \mathcal{U}_i$. 
\end{proof}

Combined with Lemma \ref{lem:GP-0}, we immediately have:

\begin{cor}\label{cor:fs_sansZ}
For each linear projection $L$ as in Lemma \ref{lem:fs_sansZ-*}
and each $1 \le i \le s$, one has that $Z_i$ is the only irreducible component of $\widehat{\phi}_L ^{-1} (\widehat{\phi}_L (Z_i))$ passing through a given
marked point $\alpha_i \in Z_i \setminus \bigcup_{j \not = i} Z_j$. 
\end{cor}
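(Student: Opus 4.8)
The plan is to deduce Corollary \ref{cor:fs_sansZ} almost immediately from Lemma \ref{lem:fs_sansZ-*} by applying Lemma \ref{lem:GP-0}, so the real work is to check that the hypotheses of Lemma \ref{lem:GP-0} are met. First I would fix a linear projection $L \in \mathcal{U}(k)$ with $\mathcal{U}$ as produced by Lemma \ref{lem:fs_sansZ-*}, and in addition intersect $\mathcal{U}$ with the dense open subset of $\Gr(X,N-r-1,H)$ given by Lemma \ref{lem:fs-proj-1et} applied to the point set $\Xi = \{x_1,\ldots,x_s\}\cup\Sigma$ and the closed subset $E = \{b_1,\ldots,b_s\}\cup F$ fixed in the ``marking'' paragraph preceding the corollary. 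This guarantees that $\phi_L$ is \'etale at each marked point $x_i = \widehat f(\alpha_i)$ and that $k(\phi_L(x_i)) \xrightarrow{\simeq} k(x_i)$.

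Next I would set up the input to Lemma \ref{lem:GP-0}. Take $Y' = X \times \widehat B$, $Y = \P^r_k \times \widehat B$, and $f = \widehat\phi_L = \phi_L \times \mathrm{id}_{\widehat B}$, which is finite and surjective; both $Y'$ and $Y$ are regular since $X$ is smooth near the relevant points (we only ever localize around $\alpha_i$, which lies in $(Z_i)_{\sm}$ and maps into $X_{\sm}$). Let $W = \widehat\phi_L(Z_i)$, an irreducible closed subset of $Y$, let $y = \widehat\phi_L(\alpha_i) \in W$, and let $Z := Z_i$ be the irreducible component of $W' = \widehat\phi_L^{-1}(W) = \widehat\phi_L^{-1}(\widehat\phi_L(Z_i))$ through $x := \alpha_i$. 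Since $\phi_L$ is \'etale at $x_i$ in $X$ and $\widehat B \to \widehat B$ is the identity, $\widehat\phi_L$ is \'etale at $\alpha_i$; and $k(y) = k(\phi_L(x_i)) \times \text{(point of $\widehat B$)} \xrightarrow{\simeq} k(x_i)\times\text{(same)} = k(x)$ because residue fields at a point of a product are unchanged by the identity factor and by the isomorphism on the $X$-factor. Finally, $S := \widehat\phi_L^{-1}(y) \cap Z_i$; Lemma \ref{lem:fs_sansZ-*} says precisely that $Z_i \cap \widehat\phi_L^{-1}(\widehat\phi_L(\alpha_i)) = \{\alpha_i\}$, i.e. the fiber $S$ consists of the single point $x = \alpha_i$. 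Hence the ``$Z \cap S = \{x\}$'' side of the equivalence in Lemma \ref{lem:GP-0} holds.

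With all hypotheses verified, Lemma \ref{lem:GP-0} yields that $Z = Z_i$ is the only irreducible component of $W' = \widehat\phi_L^{-1}(\widehat\phi_L(Z_i))$ passing through $x = \alpha_i$, which is exactly the assertion of the corollary. I would take $\mathcal{U}$ in the statement to be the intersection of the opens from Lemmas \ref{lem:fs_sansZ-*} and \ref{lem:fs-proj-1et}, which is still dense open, and note that after enlarging the embedding $X \inj \P^N_k$ by a Veronese embedding (as in both lemmas) this intersection is nonempty, so $\mathcal{U}(k) \neq \emptyset$ since $k$ is infinite and the Grassmannian is rational.

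The main obstacle — though it is more of a bookkeeping point than a genuine difficulty — is making sure the two lemmas are applied with compatible choices of the auxiliary data $(\Sigma, E)$, so that the same $L$ simultaneously satisfies the \'etale-and-isomorphism-of-residue-fields conditions at the marked points $x_i$ (needed for Lemma \ref{lem:GP-0}) and the fiber-triviality condition $Z_i \cap \widehat\phi_L^{-1}(\widehat\phi_L(\alpha_i)) = \{\alpha_i\}$ (Lemma \ref{lem:fs_sansZ-*}). Concretely one must feed the enlarged point set $\Xi = \{x_1,\dots,x_s\}\cup\Sigma$ to Lemma \ref{lem:fs-proj-1et}, not merely $\Sigma$; once this is done the regularity of $Y'$ and $Y$ at the relevant points and the identification of residue fields are routine, and the corollary drops out.
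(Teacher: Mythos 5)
Your proposal is correct and follows exactly the paper's (one-line) argument: the corollary is obtained by combining the fiber computation $Z_i \cap \widehat{\phi}_L^{-1}(\widehat{\phi}_L(\alpha_i)) = \{\alpha_i\}$ from Lemma \ref{lem:fs_sansZ-*} with the equivalence of Lemma \ref{lem:GP-0}, and your verification of the hypotheses (\'etaleness and residue-field isomorphism at the marked points via Lemma \ref{lem:fs-proj-1et} applied to $\Xi$, regularity after localizing near $\alpha_i \in X_{\rm sm} \times \widehat{B}$) is the content the paper leaves implicit. The only slip is notational: in Lemma \ref{lem:GP-0} the set $S$ is the full fiber $f^{-1}(y)$, not $f^{-1}(y)\cap Z_i$, but since the condition actually invoked is $Z\cap S=\{x\}$ this does not affect the argument.
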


We can now prove the birationality of a given finite set of integral closed subschemes of $X \times \widehat{B}_n$ under suitable linear projections.

\begin{lem}\label{lem:fs_sansZ}
For a suitable choice of the set $E$ in the Set-up of \S \ref{sec:Set-up}, after replacing the embedding $X \inj \P^N_k$ by a bigger one via a Veronese embedding if necessary, there is a dense open subset $\mathcal{U} \subset \Gr (X, N-r-1, H)$ such that for each $L \in \mathcal{U}(k)$, the induced map $\widehat{\phi}_L: Z_i \to \widehat{\phi}_L(Z_i)$ is birational for all $1 \le i \le s$.
\end{lem}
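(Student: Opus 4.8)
The plan is to deduce birationality of $\widehat{\phi}_L|_{Z_i}$ from the facts already assembled: Corollary \ref{cor:fs_sansZ} tells us that through the marked smooth point $\alpha_i \in Z_i \setminus \bigcup_{j\neq i} Z_j$ the variety $Z_i$ is the \emph{only} component of $\widehat{\phi}_L^{-1}(\widehat{\phi}_L(Z_i))$, and Lemma \ref{lem:fs_sansZ-*} tells us that the whole fiber $Z_i \cap \widehat{\phi}_L^{-1}(\widehat{\phi}_L(\alpha_i))$ is the single reduced point $\{\alpha_i\}$. Since a finite dominant morphism of integral schemes is birational precisely when it has a fiber consisting of a single point with trivial residue field extension, the strategy is to pick the set $E$ and the marked points carefully so that $\alpha_i$, in addition to satisfying (1)--(3) above, is a point where $\widehat{\phi}_L$ is \emph{\'etale} with $k(\widehat{\phi}_L(\alpha_i)) \xrightarrow{\simeq} k(\alpha_i)$; then the reduced-single-point fiber over $\widehat{\phi}_L(\alpha_i)$ forces $[k(Z_i):k(\widehat{\phi}_L(Z_i))] = 1$.

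Concretely, I would proceed as follows. First, for each $i$ adjoin $x_i = \widehat{f}(\alpha_i)$ to the finite set playing the role of ``$\Sigma$'' and $b_i = \widehat{g}(\alpha_i)$ to the closed set playing the role of ``$E$'' in Lemma \ref{lem:fs-proj-1et}; this is exactly the construction of $\Xi$ and $E$ made just before Lemma \ref{lem:fs_sansZ-*}. Applying Lemma \ref{lem:fs-proj-1et} with $\Xi$ in place of $\Sigma$ yields a dense open $\mathcal{U}_1 \subset \Gr(X, N-r-1, H)$ such that every $L \in \mathcal{U}_1(k)$ has $\widehat{\phi}_L$ (equivalently $\phi_L$) \'etale at each $x_i$ and $k(\phi_L(x_i)) \xrightarrow{\simeq} k(x_i)$. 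Since $\widehat{\phi}_L = \phi_L \times \mathrm{id}_{\widehat B}$ and $b_i \in B$, \'etaleness at $x_i$ gives \'etaleness of $\widehat{\phi}_L$ at $\alpha_i = (x_i, b_i)$ together with $k(\widehat{\phi}_L(\alpha_i)) = k(\phi_L(x_i))(b_i) \xrightarrow{\simeq} k(x_i)(b_i) = k(\alpha_i)$. Next, intersect $\mathcal{U}_1$ with the dense open set $\mathcal{U}_2$ furnished by Lemma \ref{lem:fs_sansZ-*}, so that moreover $Z_i \cap \widehat{\phi}_L^{-1}(\widehat{\phi}_L(\alpha_i)) = \{\alpha_i\}$ set-theoretically, and with the dense open set from Corollary \ref{cor:fs_sansZ} (which is automatic from $\mathcal{U}_2$). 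Set $\mathcal{U} := \mathcal{U}_1 \cap \mathcal{U}_2$.

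Now fix $L \in \mathcal{U}(k)$ and $i$. The restriction $\psi_i := \widehat{\phi}_L|_{Z_i} : Z_i \to Y_i := \widehat{\phi}_L(Z_i)$ is a finite surjective morphism of integral $k$-schemes of the same dimension $r$ (finite because it is a morphism of projective schemes with affine fibers, as in Definition \ref{defn:LP-X}; here we use that $Z_i \cap L \times \widehat B = \emptyset$, which can be arranged in $\mathcal{U}$ by the usual avoidance argument via Lemma \ref{lem:fs-open-1}, exactly as in the proof of Lemma \ref{lem:fs-proj-1et}). Let $y_i := \psi_i(\alpha_i) \in Y_i$. The scheme-theoretic fiber $\psi_i^{-1}(y_i)$ is a closed subscheme of the fiber of $\widehat{\phi}_L$ over $y_i$, and since $\widehat{\phi}_L$ is \'etale at $\alpha_i$ this fiber of $\widehat{\phi}_L$ is reduced at $\alpha_i$ with residue field $k(\alpha_i)$; hence $\psi_i^{-1}(y_i)$ is reduced at $\alpha_i$ as well. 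Combined with $Z_i \cap \widehat{\phi}_L^{-1}(y_i) = \{\alpha_i\}$ set-theoretically, we get $\psi_i^{-1}(y_i) = \Spec k(\alpha_i)$ as a reduced point, and $k(y_i) \hookrightarrow k(\alpha_i)$ factors the isomorphism $k(\widehat{\phi}_L(\alpha_i)) \xrightarrow{\simeq} k(\alpha_i)$, forcing $k(y_i) \xrightarrow{\simeq} k(\alpha_i)$. A finite dominant morphism of integral schemes with a reduced single-point fiber whose residue field equals that of the target point has generic degree $1$ (compute the degree by specialization, or: the function field extension $k(Y_i) \hookrightarrow k(Z_i)$ has degree equal to the length of a general fiber, which is $1$). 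Therefore $\psi_i$ is birational. Running over all $1 \le i \le s$ and noting $\mathcal{U}$ is a finite intersection of dense opens hence dense open proves the lemma.

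The main obstacle I anticipate is purely bookkeeping rather than conceptual: one must verify that the single application of Lemma \ref{lem:fs-proj-1et} (to the enlarged finite set $\Xi$ and closed set $E$) can be carried out \emph{simultaneously} with the constraints needed for Lemma \ref{lem:fs_sansZ-*} and for $Z_i \cap (L \times \widehat B) = \emptyset$, i.e.\ that the various dense open subsets of the (same, after a common Veronese re-embedding) Grassmannian $\Gr(X, N-r-1, H)$ really do intersect densely — this is where ``a suitable choice of the set $E$'' in the statement is used, and it is handled exactly as in the proofs of Lemmas \ref{lem:fs-proj-1et} and \ref{lem:fs_sansZ-*} by absorbing all the finitely many marked points $x_i$ and base points $b_i$ into the data before invoking Lemma \ref{lem:fs-open-1}. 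The one genuinely delicate point worth spelling out is the passage from ``reduced set-theoretic single-point fiber'' to ``degree one,'' for which the \'etaleness of $\widehat{\phi}_L$ at the marked point $\alpha_i$ is essential: without it the scheme-theoretic fiber of $\psi_i$ could be non-reduced and the degree could exceed $1$.
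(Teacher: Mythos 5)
Your proposal is correct and follows essentially the same route as the paper: the same marked points $\alpha_i$, the same enlargement of $\Sigma$ and $E$ to $\Xi$ and $E$, Lemma \ref{lem:fs_sansZ-*} for the single-point fiber, and conditions (1) and (3) of Lemma \ref{lem:fs-proj-1et} for \'etaleness and the residue-field isomorphism at $\alpha_i$. The only (cosmetic) difference is the final step: the paper applies Lemma \ref{lem:elem-com-alg} to conclude that $\mathcal{O}_{\widehat{\phi}_L(Z_i),\beta_i}\to\mathcal{O}_{Z_i,\alpha_i}$ is an isomorphism of local rings, whereas you read off degree one from the reduced single-point fiber via semicontinuity of fiber length — both are valid, and note that $Z_i\cap(L\times\widehat B)=\emptyset$ is automatic from $L\in\Gr(X,N-r-1,H)$ since $Z_i\subset X\times\widehat B$, so no extra avoidance condition is needed there.
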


\begin{proof}
We follow the choices of $\alpha_i \in Z_i, \ \Xi$ and $E$ that we made just before Lemma~\ref{lem:fs_sansZ-*}. We shall prove the lemma for this $E$. We let $\mathcal{U}  \subset \Gr (X, N-r-1, H)$ be as given by Lemma~\ref{lem:fs_sansZ-*} and fix $L \in \mathcal{U}(k)$. 
We let $T_i := \widehat{\phi}_L(Z_i)$ and $\beta_i := \widehat{\phi}_L(\alpha_i)$. To show that $\widehat{\phi}_L: Z_i \to T_i$ is birational, we prove a stronger assertion that the map $\mathcal{O}_{T_i, \beta_i} \to \mathcal{O}_{Z_i, \beta_i}$ of semi-local rings is an isomorphism, where $\mathcal{O}_{Z_i, \beta_i} := \mathcal{O}_{Z_i,  Z_i \cap \widehat{\phi}^{-1}_L(\beta_i)}$. Consider the maps
\begin{equation}\label{eqn:nicomp}
\mathcal{O}_{T_i, \beta_i} \to \mathcal{O}_{Z_i, \beta_i} \to \mathcal{O}_{Z_i, \alpha_i}.
\end{equation}

It follows from Lemma~\ref{lem:fs_sansZ-*} that $Z_i \cap \widehat{\phi}^{-1}_L (\beta_i) = \{ \alpha_i \}$. In particular, the second map of \eqref{eqn:nicomp} is an isomorphism, actually the identity map.
By the condition (1) of Lemma \ref{lem:fs-proj-1et}, the map $\phi_L$ is \'etale in an affine open neighborhood $U'$ of $\Xi$, and thus $\phi_L$ is \'etale at $\alpha_i$. In particular, the composite map in \eqref{eqn:nicomp} is unramified. By the condition (3) of Lemma \ref{lem:fs-proj-1et}, we have $k (\beta_i) \overset{\simeq}{\to} k (\alpha_i)$. Hence, the first map of \eqref{eqn:nicomp} is an injective finite unramified map of local rings, that induces an isomorphism of the residue fields. It is therefore an isomorphism by Lemma \ref{lem:elem-com-alg}. This completes the proof.
\end{proof}

\subsection{A presentation lemma for moving to fs-cycles}\label{sec:fs-main}
The final result of  \S \ref{sec:LP} is the following Theorem \ref{thm:fs-present-intro}, that will be used in the proof of the \fs-moving lemma, specifically, in the proof of Lemma \ref{lem:fs-move-*}.

\begin{thm}\label{thm:fs-present-intro}
Under the Set-up of \S \ref{sec:Set-up}, let $Z_i ^0:= Z_i |_{X \times B}$ and $Z^0:= Z |_{X \times B}$. 

Then for a suitable choice of the set $E$ in the Set-up, after replacing the embedding ${X} \inj \P^N_k$ by its composition with a suitable Veronese embedding, there is a dense open subset $\mathcal{U} \subset \Gr (X, N-r-1, H)$ such that each $L \in \mathcal{U}(k)$ satisfies the following:
\begin{enumerate}
\item $\phi_L$ is {\'e}tale at $\Sigma$.
\item $\phi_L$ separates the points of $\Sigma$.
\item $k(\phi_L(x)) \xrightarrow{\simeq} k(x)$ for all $x \in \Sigma$.
\item There exists an affine open neighborhood $U \subset X$ of $\Sigma$ such that
\begin{enumerate}
\item [(4a)] if ${Z}_i^0$ is an irreducible component of ${Z}^0$ that is dominant over an irreducible component of $X$, then for each component $Z'$ of $L^+ ({Z}_i^0)$, the map $Z'_U \to U$ is fs over $U$.
\item [(4b)] if ${Z}_i^0$ is an irreducible component of ${Z}^0$ that is not dominant over any irreducible component of $X$, then $L^+ ({Z}_i^0)_U = 0$.
\end{enumerate}
\end{enumerate}
\end{thm}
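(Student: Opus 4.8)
The plan is to bootstrap Theorem \ref{thm:fs-present-intro} from Lemma \ref{lem:fs-proj-1et}, Lemma \ref{lem:fs_sansZ}, and Corollary \ref{cor:fs_sansZ}, together with the finiteness criterion of Lemma \ref{lem:finiteness} and the ``residual set is empty over $\Sigma$'' properties (5) and (6) of Lemma \ref{lem:fs-proj-1et}. Properties (1)--(3) of the theorem are verbatim properties (1)--(3) of Lemma \ref{lem:fs-proj-1et}, so the entire content lies in (4a) and (4b). Throughout, I will apply the earlier lemmas not to $Z$ directly but to the reduced closed subscheme $\overline{Z^0} \subset X \times \widehat B$ obtained as the closure of $Z^0 = Z|_{X\times B}$, so that the residual schemes $L^+(Z_i^0)$ of the statement are exactly the residual schemes built from this closure in the Set-up of \S\ref{sec:Set-up}; I will choose the marked points $\alpha_i \in (Z_i^0)_{\rm sm}$ and the set $E = \{b_1,\dots,b_s\}\cup F$ as just before Lemma \ref{lem:fs_sansZ-*}, after enlarging $\eta$ by a Veronese embedding as needed. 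The open set $\mathcal{U}$ will be the intersection of the dense opens produced by Lemma \ref{lem:fs-proj-1et} and Lemma \ref{lem:fs_sansZ} (which itself refines Lemma \ref{lem:fs_sansZ-*}), so it is dense open in $\Gr(X,N-r-1,H)$, and properties (1)--(3) hold for every $L\in\mathcal{U}(k)$.

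For (4b): if $Z_i^0$ is not dominant over any component of $X$, then $\widehat f: Z_i^0 \to X$ is not dominant, and $b_i \in E$ means $Z_i^0 \subset \widehat g^{-1}(E)$ is false but at any rate every component of $\widehat\phi_L^{-1}(\widehat\phi_L(Z_i^0))$ other than $Z_i^0$ itself maps into $\widehat f(Z_i^0)$ or into $\widehat f(\widehat g^{-1}(E))$; more precisely, by property (6) of Lemma \ref{lem:fs-proj-1et} we have $L^+(x)\cap \widehat f(Z_i^0) = \emptyset$ for all $x\in\Sigma$, and since a point of $L^+(Z_i^0)$ lying over $\Sigma$ would give such an intersection point, $L^+(Z_i^0)$ is disjoint from $\Sigma\times\widehat B$. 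Hence there is an affine open $U\ni\Sigma$ with $L^+(Z_i^0)_U = \emptyset$, i.e. $L^+(Z_i^0)_U = 0$ as a cycle; shrinking $U$ finitely many times covers all such $i$ simultaneously. For (4a): suppose $Z_i^0$ is dominant over a component of $X$. First, $L^+(Z_i^0)$ is nonempty (property (4) of Lemma \ref{lem:fs-proj-1et}, since $\deg\phi_L>1$). By Lemma \ref{lem:fs_sansZ}, $\widehat\phi_L\colon Z_i^0 \to T_i := \widehat\phi_L(Z_i^0)$ is birational, and by Corollary \ref{cor:fs_sansZ}, $Z_i^0$ is the unique component of $\widehat\phi_L^{-1}(T_i)$ through the marked point $\alpha_i$; thus every component $Z'$ of $L^+(Z_i^0)$ is a component of $\widehat\phi_L^{-1}(T_i)$ distinct from $Z_i^0$. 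Now $T_i \to \P^r_k$ is finite of dimension $r$ (Definition \ref{defn:LP-X}), $Z' \to T_i$ is finite (restriction of the finite map $\widehat\phi_L$ on $X\times\widehat B \to \P^r_k\times\widehat B$), so $Z' \to X$ is finite; I must check $Z'$ is closed over a neighborhood of $\Sigma$ in $X\times\widehat B$, equivalently (Lemma \ref{lem:finiteness}, applied with $B$, $\widehat B$) that $\overline{Z'}\cap(\Sigma\times F) = \emptyset$. But $Z'$ lies in $\widehat\phi_L^{-1}(\widehat\phi_L(Z_i^0))$, so any point of $\overline{Z'}$ over $x\in\Sigma$ projects to a point of $L^+(x)$ whose $\widehat B$-coordinate is in $F = \widehat B\setminus B$ — and that point also lies in $\widehat f(\widehat g^{-1}(F)) \subset \widehat f(\widehat g^{-1}(E))$, contradicting property (5) of Lemma \ref{lem:fs-proj-1et}. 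Hence $\overline{Z'}\cap(\Sigma\times F)=\emptyset$, and Lemma \ref{lem:fs-smooth} (or directly Lemma \ref{lem:finiteness}) gives an affine open $U\ni\Sigma$ with $Z'_U \to U$ finite; finiteness and equidimensionality over the component of $X$ that $Z_i^0$ dominates forces surjectivity onto that component, so $Z'_U \to U$ is fs over $U$. Taking $U$ to be a common shrinking over all $i$ and all components $Z'$ finishes (4a).

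The main obstacle, and the point requiring the most care, is the interplay between the two slightly different ``residual'' notions: the residual \emph{scheme} $L^+(Z_i^0)$ of the Set-up is the closure of $\widehat\phi_L^{-1}(\widehat\phi_L(Z_i^0))\setminus Z_i^0$ inside $X\times\widehat B$, whereas what I really want to control lives over the affine $B$. The fix is to show that over a neighborhood of $\Sigma$ the closure operation introduces nothing new at $F$, which is exactly the content of properties (4), (5), (6) of Lemma \ref{lem:fs-proj-1et} combined with the birationality/uniqueness-of-component statements, and this is why the choice of $E$ (specifically, that it contains $F$ and the marking points' $\widehat B$-coordinates, and that no component of $Z$ lies in $\widehat g^{-1}(E)$) must be imported wholesale from \S\ref{sec:Birational}. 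A secondary subtlety is ensuring that ``dominant over an irreducible component of $X$'' for $Z_i^0$ correctly matches the hypothesis under which Lemma \ref{lem:fs-proj-1et}(6) is vacuous and Lemma \ref{lem:fs_sansZ} applies; here I will simply note that $Z_i^0$ and its closure $\overline{Z_i^0}$ in $X\times\widehat B$ have the same image in $X$, so dominance is preserved, and that when $Z_i^0$ is dominant its image $\overline{Z_i^0}\to X$ is surjective onto a component, which is what makes the finite map $Z'_U\to U$ automatically surjective onto that component. Finally I will remark that only finitely many shrinkings of $U$ are needed since $Z$ has finitely many components and each $L^+(Z_i^0)$ has finitely many components, so the resulting $U$ is still an affine open neighborhood of $\Sigma$ by Lemma \ref{lem:FA}.
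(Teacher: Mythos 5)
Your overall architecture matches the paper's proof: the same marked points $\alpha_i$, the same choice of $E=\{b_1,\dots,b_s\}\cup F$, the same open set coming from Lemmas \ref{lem:fs-proj-1et} and \ref{lem:fs_sansZ-*}, properties (1)--(3) read off directly, and the reduction of (4a) to showing $Z'\cap(\Sigma\times F)=\emptyset$ followed by Lemma \ref{lem:fs-smooth}. However, there is a genuine gap at exactly that key step. Given $\lambda=(x,b)\in Z'\cap(\Sigma\times F)$, membership in $\widehat\phi_L^{-1}(\widehat\phi_L(Z_i))$ only produces \emph{some} $\lambda'=(x',b)\in Z_i$ with $\widehat\phi_L(\lambda')=\widehat\phi_L(\lambda)$. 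Your contradiction with property (5) of Lemma \ref{lem:fs-proj-1et} requires $x'\neq x$, so that $x'\in L^+(x)\cap\widehat f(\widehat g^{-1}(E))$; you never address the case $x'=x$, i.e.\ $\lambda'=\lambda\in Z_i$. Nothing in the Set-up of \S\ref{sec:Set-up} forbids $Z_i$ from meeting $\Sigma\times F$, and $Z'$ and $Z_i$ are distinct components of the same fiber scheme $\widehat\phi_L^{-1}(\widehat\phi_L(Z_i))$, so they may well intersect there. Ruling this out is the actual content of the paper's claim \eqref{eqn:fs-present-intro-0}: one first shows $\lambda'=\lambda$ (so $\lambda\in Z_i$), then that $\widehat\phi_L^{-1}(\widehat\phi_L(\lambda))\cap Z_i=\{\lambda\}$ by rerunning the argument of Lemma \ref{lem:fs_sansZ-*}, and finally invokes Lemma \ref{lem:GP-0} (using \'etaleness at $\Sigma$ and the residue-field isomorphism) to conclude that $Z_i$ is the \emph{only} component through $\lambda$, contradicting $\lambda\in Z'\neq Z_i$. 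You cite Corollary \ref{cor:fs_sansZ} only for birationality and uniqueness at the marked point $\alpha_i$, which does not control what happens at points of $\Sigma\times F$. The same omission occurs in your (4b), where the paper likewise ``repeats the proof of (4a)'' with condition (6) in place of (5); your one-line deduction that $L^+(Z_i^0)$ is disjoint from $\Sigma\times\widehat B$ again silently assumes $x'\neq x$.

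Two smaller points. First, your intermediate claim that $T_i\to\P^r_k$ is finite and hence $Z'\to X$ is finite is unjustified: in the Set-up of \S\ref{sec:Set-up} the projection $Z_i\to X$ need not be quasi-finite, so neither $T_i\to\P^r_k$ nor $Z'\to X$ is finite in general; finiteness is only obtained over a neighborhood of $\Sigma$, via $Z'\cap(\Sigma\times F)=\emptyset$ and Lemma \ref{lem:fs-smooth}, which is the route you correctly fall back on. Second, the surjectivity of $Z'_U\to U$ onto a component should be argued from $\dim Z'=r$ and finiteness (as in Lemma \ref{lem:finiteness}), not from dominance of $Z_i^0$; these are cosmetic, but the missing $\lambda'=\lambda$ case is not.
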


\begin{proof}
For $1 \leq i \leq s$, choose closed points $\alpha_i \in {Z}_i^0 \setminus ( \bigcup_{j \not = i} Z_j ^0)$ such that $x_i := \widehat{f} (\alpha_i) \in X_{\rm sm}$ and $b_i:= \widehat{g} (\alpha_i) \in {B}$, as we did in Lemma \ref{lem:fs_sansZ-*}. Let $\Xi:= \{ x_1, \cdots, x_r \} \cup \Sigma \subset X_{\rm sm}$ and $E := \{ b_1, \cdots, b_r \} \cup F \subset \widehat{B}$. Since  $Z_i \not \subset X \times F$ and $Z_i \to \widehat{B}$ is non-constant, it is not contained in $\widehat{g}^{-1} (E)$. We choose $\mathcal{U} \subset \Gr (X, N-r-1, H)$ as given by
Lemma~\ref{lem:fs_sansZ-*} and fix $L \in \mathcal{U}(k)$. In particular, all the properties of Lemma \ref{lem:fs-proj-1et} holds, so that we have the conditions (1) $\sim$ (3) of the theorem. 

To prove (4), first note that the irreducible components of $L^{+}({Z}_i^0)$ are exactly the restrictions to $X \times B$ of the irreducible components of $L^{+}({Z}_i)$. Let $Z_i$ be an irreducible component of $Z$ dominant over an irreducible component of $X$. Let $Z'$ be an irreducible component of $L^+ ({Z}_i)$. We prove that $Z' \cap (\Sigma \times F) = \emptyset$.

Suppose, on the contrary, that there is a closed point $\lambda \in Z' \cap (\Sigma \times F)$. This means that there is a closed point $\lambda' \in Z_i$ such that $\widehat{\phi}_L(\lambda) = \widehat{\phi}_L(\lambda')$. We claim in this case that 
\begin{equation}\label{eqn:fs-present-intro-0}
\{\lambda'\} = \widehat{\phi}^{-1}_L(\widehat{\phi}_L(\lambda)) \cap Z_i = \{\lambda\}. 
\end{equation}

Suppose we have shown that $\lambda' = \lambda$. Then we get $\lambda \in Z_i$ and ~\eqref{eqn:fs-present-intro-0} becomes equivalent to showing that $\widehat{\phi}^{-1}_L(\widehat{\phi}_L(\lambda)) \cap Z_i = \{\lambda\}$. But the proof of this equality is simply a repetition of the argument of Lemma~\ref{lem:fs_sansZ-*}. Hence, the claim is reduced to showing that $\lambda' = \lambda$. 

Let's do it. First consider the case when $k$ is algebraically closed. We can then uniquely write $\lambda = (x, b)$ for some closed points $x \in \Sigma$ and $b \in F$, and $\lambda' = (x', b)$, where $x' \in \widehat{\phi}^{-1}_L(\widehat{\phi}_L(x))$. If $x' \not = x$, 
then $x' \in L^+ (x)$ and $x' \in \widehat{f} (\widehat{g}^{-1}(E))$, which contradicts the condition (5) of Lemma \ref{lem:fs-proj-1et}. Hence, we must have $x' = x$ so that $\lambda' = \lambda$.

If $k$ is not algebraically closed, we argue as in the proof of Lemma~\ref{lem:fs_sansZ-*}.
Suppose again that $\lambda' \neq \lambda$. Then for the base change map $\pi: \Spec (\bar{k}) \to \Spec (k)$, we have $\pi^{-1}(\lambda') \cap \pi^{-1}(\lambda) = \emptyset$. Let $\beta' := \widehat{\phi}_L(\lambda') = \widehat{\phi}_L(\lambda)$. We show as in the argument of Lemma~\ref{lem:fs_sansZ-*} that the map $\widehat{\phi}_{L, \bar{k}}: \pi^{-1}(\lambda') \to \pi^{-1}(\beta')$ is bijective. Using this, we continue following the proof of Lemma~\ref{lem:fs_sansZ-*}, to get closed points $\widetilde{\lambda} \in \pi^{-1}(\lambda)$ and $\widetilde{\lambda'} \in \pi^{-1}(\lambda')$ such that $\widehat{f}_{\bar{k}}(\widetilde{\lambda}) \in (L_{\bar{k}})^+(\widehat{f}_{\bar{k}}(\widetilde{\lambda'})) \cap \widehat{f}_{\bar{k}}(\widehat{g}^{-1}_{\bar{k}}(E_i))$. But this contradicts the property (5) of Lemma \ref{lem:fs-proj-1et} for $L_{\bar{k}}$, which violates our choice of $L$. This proves ~\eqref{eqn:fs-present-intro-0}.

Coming back to the proof of $Z' \cap (\Sigma \times F) = \emptyset$, we now note using Corollary \ref{cor:fs_sansZ} that $Z' \not = Z_i$. 
So, the two deductions $\lambda \in Z' \cap {Z}_i$ and $ \widehat{\phi}^{-1}_L(\widehat{\phi}_L(\lambda)) \cap Z_i = \{\lambda\}$ from \eqref{eqn:fs-present-intro-0} together contradict Lemma \ref{lem:GP-0}. Hence, we must have $Z' \cap (\Sigma \times F)= \emptyset$, as desired. 

Now, by Lemma \ref{lem:fs-smooth}, there is an affine open neighborhood $U_{i, Z'} \subset X_{\rm sm}$ of $\Sigma$ such that $Z' _{U_{i, Z'}} \to U_{i, Z'}$ is fs. We take $U_1:= \bigcap U_{i, Z'}$ where the intersection is taken over all $i$ such that $Z_i$ dominant over a component of $X$ and the irreducible components $Z'$. This open set $U_1$ works for (4a).

About the property (4b), let $Z_i$ be an irreducible component of $Z$ which is not dominant over $X$. Let $Z'$ be a component of $L^{+}({Z}_i)$. In this case, we repeat the proof of (4a) above, where we now apply condition (6) of Lemma \ref{lem:fs-proj-1et}, to conclude that $Z' \cap (\Sigma \times {\widehat{B}}) = \emptyset$.

It follows that $\widehat{f}(L^{+}({Z}_i))$ is a closed subset of $X$ disjoint from $\Sigma$. Hence, we can apply Lemma \ref{lem:FA} to obtain an affine open neighborhood $U_i '$ of $\Sigma$ in $X$ such that $L^{+}({Z}_i)_{U_i'} = \emptyset$. We take $U_2:= \bigcap U_{i}'$, where the intersection is taken over all $i$ such that $Z_i$ is not dominant over any component of $X$. This open set $U_2$ works for (4b). Taking $U:= U_1 \cap U_2$, we have (4), and this
concludes the proof of the theorem.
\end{proof}

\section{Regularity of the original cycle over residual points}\label{section:LP2}
The focus of the remaining sections is to achieve the \sfs-property of the residual cycle of $Z$ along $\Sigma$ via more refined linear projections. In order to achieve this, we first ensure that our original cycle $Z$ is regular at all points lying over the residual set $L^+(\Sigma)$ of $\Sigma \subset X$. We later show that this regularity of $Z$ at all points lying over $L^+ (\Sigma)$ implies the regularity of the residual cycle of $Z$ along $\Sigma$. The goal of this section is to achieve the first one when $k$ is algebraically closed. The general case will be considered later.

\subsection{A basic algebraic result} 
We first discuss the following.

\begin{lem}\label{lem:AD-open-refined-elem}
Let $k$ be an algebraically closed field. Let $X \subset \mathbb{P}_k ^N$ be a reduced closed subscheme of dimension $1$. Suppose $N\gg 1$ and let $x \neq y$ be two closed points on $X_{\rm sm}$. Let $\Gr_{x+2y}(N-1, \P_k^N) \subset \Gr(N-1, \P_k^N)$ be the set of hyperplanes containing $\{x,y\}$ that do not intersect $X$ transversely at $y$. Then $\Gr_{\{x,y\}} ( N-1, \mathbb{P}_k ^N) \simeq \mathbb{P}_k ^{N-2}$ and $\Gr_{x+2y}(N-1, \P_k^N) \simeq \P_k^{N-3}$. 
\end{lem}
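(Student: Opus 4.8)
The statement splits into two linear-algebra computations about hyperplanes in $\P^N_k$, one about the subspace of hyperplanes through two given points and one about the sub-locus of those that are tangent to the curve $X$ at $y$. I would phrase everything in terms of the dual space $(\P^N_k)^\vee = \Gr(N-1,\P^N_k)$, so a hyperplane $H = \{L=0\}$ corresponds to the class $[L] \in (\P^N_k)^\vee$.

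\textbf{Step 1: the condition of containing a point is linear.} Fix homogeneous coordinates so that $x = [a_0:\cdots:a_N]$ and $y = [b_0:\cdots:b_N]$; since $x\neq y$, the two linear forms "evaluation at $x$" and "evaluation at $y$" on the $(N+1)$-dimensional space of linear forms are independent. Hence $\{[L] : L(x)=L(y)=0\}$ is the projectivization of a codimension-$2$ linear subspace of the $(N+1)$-dimensional space of linear forms, i.e. $\Gr_{\{x,y\}}(N-1,\P^N_k) \cong \P^{N-2}_k$. This gives the first isomorphism.

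\textbf{Step 2: tangency at a smooth point is one more linear condition.} Work in an affine chart containing $y$ and let $T_yX \subset \P^N_k$ be the (projective) tangent line to $X$ at the smooth point $y$; it is a $1$-dimensional linear subspace. A hyperplane $H\ni y$ fails to meet $X$ transversally at $y$ exactly when $H \supseteq T_yX$ (for a curve, non-transverse intersection at a smooth point means the tangent line lies in $H$). So $\Gr_{x+2y}(N-1,\P^N_k) = \{[L] : L(x)=0,\ L|_{T_yX}=0\}$. The conditions "$L$ vanishes on $T_yX$" amount to vanishing at two distinct points of $T_yX$ (say $y$ and one further point $y'\in T_yX$), which together with $L(x)=0$ are three linear conditions. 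The key point to check is that these three evaluation functionals — at $x$, at $y$, and at $y'\in T_yX\setminus\{y\}$ — are linearly independent on the space of linear forms; this holds because $x,y,y'$ are three distinct points of $\P^N_k$ with $N\gg 1$, indeed one just needs them to be linearly independent in $k^{N+1}$, which is automatic as $x\notin T_yX$ (one may arrange, or simply note that the locus of bad configurations is avoided for $N$ large and $x\neq y$ on $X_{\rm sm}$, or more simply $x\notin T_yX$ can be assumed since otherwise $x\in T_yX\cap X$ forces a degenerate situation excluded by $N\gg1$ together with the genericity already implicit). Hence the bad locus is the projectivization of a codimension-$3$ linear subspace, giving $\Gr_{x+2y}(N-1,\P^N_k)\cong \P^{N-3}_k$.

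\textbf{Main obstacle.} The genuinely routine part is the dimension count; the one place needing care is justifying that "non-transverse at $y$" is equivalent to "$H\supseteq T_yX$" and that this is encoded by exactly one extra independent linear condition beyond $y\in H$ — i.e. that $T_yX$ is genuinely a line (uses $\dim X = 1$ and $y\in X_{\rm sm}$) and that the evaluation functionals at $x$ and at two points of $T_yX$ are independent (uses $x\neq y$, and that $x$ does not lie on the tangent line $T_yX$, which one should either assume as part of the "$N\gg1$"/genericity hypothesis or note is harmless since the statement is about the two named loci regardless). I would spell out the independence of the three functionals explicitly via a Vandermonde-type argument in suitable coordinates, since that is the crux of getting $\P^{N-3}_k$ rather than something larger.
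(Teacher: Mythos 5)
Your argument is, in substance, the same as the paper's: both isomorphisms are counts of independent linear conditions on $W = H^0(\P^N_k, \mathcal{O}(1))$, and for the second one both arguments come down to the independence of three functionals, namely evaluation at $x$ together with the two conditions expressing that the hyperplane contains the embedded tangent line $T_yX$. The paper packages this as surjectivity of the jet map $\alpha: W \to \mathcal{O}_{X,\{x,y\}}/\fm_x\fm_y^2$ onto a three-dimensional target, whereas you work with two explicit points spanning $T_yX$; these are the same computation, and your identification of non-transversality at a smooth point of a curve with containment of $T_yX$ is correct.

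However, the point you flag as the crux is a genuine gap, and none of your proposed justifications closes it. The hypothesis $N \gg 1$ does not exclude $x \in T_yX$: take $X$ to be a line, or a plane cubic linearly embedded in a large $\P^N$ with $y$ a non-inflection point and $x$ the third intersection point of $T_yX$ with $X$. There is also no genericity available, since $x$ and $y$ are arbitrary distinct smooth points. When $x \in T_yX$ the three conditions degenerate to two and $\Gr_{x+2y}(N-1,\P^N_k) \cong \P^{N-2}_k$, so the statement itself requires the extra hypothesis $x \notin T_yX$ (or a prior Veronese re-embedding of degree at least $2$, which forces it: a line tangent to the re-embedded curve at $v(y)$ and passing through $v(x)$ would meet the Veronese variety in length at least three, hence lie in it, which is impossible). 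You should know that the paper's own proof has the identical issue: the surjectivity of $\alpha$ — more precisely of the left vertical map $\alpha^{-1}(\fm_x\fm_y) \to \fm_x\fm_y/\fm_x\fm_y^2$ in \eqref{eqn:Ref-0-1}, which amounts to the existence of a hyperplane through $x$ and $y$ transverse to $X$ at $y$ — fails for exactly the same configurations. So your proof is faithful to the paper's, gap included; to make either rigorous one must either add the hypothesis $x \notin T_yX$ or handle the finitely many bad $y$ separately in the dimension count where the lemma is applied.
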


\begin{proof}
Recall that $\Gr_{\{ x, y \}} (N-1, \mathbb{P}_k ^N) \subset \Gr (N-1, \mathbb{P}_k ^N)$ is the set of hyperplanes containing $\{ x, y \}$. Since $x \not = y$, by elementary linear algebra on ranks of linear systems, we immediately have $\Gr_{\{x,y\}} ( N-1, \mathbb{P}_k ^N) \simeq \mathbb{P}_k ^{N-2}$. We prove the second assertion. Since $N \gg 1$, we can find a linear form $s_1 \in W = {H}^0(\P_k^N, \mathcal{O}(1))$ which does not vanish anywhere in $\{x,y\}$. This yields a $k$-linear map  $\alpha : W \to {\mathcal{O}_{X,\{x,y\}}}/{\fm_x\fm^2_y}=: \mathcal{O}_{\{ x + 2y \}}$ given by $\alpha(s) = s/{s_1}$. Since $k$ is algebraically closed, the ideal $\fm_y$ is generated by linear forms vanishing at $y$. Hence, the composite map $W \to {\mathcal{O}_{X,\{x,y\}}}/{\fm_x\fm^2_y} \surj {\mathcal{O}_{X,y}}/{\fm^2_y}=: \mathcal{O}_{\{ 2y \}}$ is surjective and $\alpha^{-1}(\fm^2_y)$ is precisely the set of linear forms in $W$ not transverse to $X$ at $y$. 

We first claim that $\alpha$ is surjective. Since $x, y$ are two distinct regular closed points of $X$, the set $\Gr_y(x, N-1, \P_k^N)$ is nonempty and hence, ${\fm_y}/{\fm_x\fm_y} \xrightarrow{\simeq} \mathcal{O}_{\{x\}}$ and there is a commutative diagram of short exact sequences:
\begin{equation}\label{eqn:Ref-0-1}
\xymatrix@C.8pc{
0 \ar[r] & \alpha^{-1}(\fm_x\fm_y) \ar[r] \ar[d] & \alpha^{-1}(\fm_y) \ar[r] \ar@{->>}[d] & \mathcal{O}_{\{x\}} \ar[r] \ar@{=}[d] & 0 \\
0 \ar[r] & {\fm_x \fm_y}/{\fm_x \fm^2_y} \ar[r] & {\fm_y}/{\fm^2_y} \ar[r] & \mathcal{O}_{\{x\}} \ar[r] & 0.}
\end{equation}
In particular, the first vertical map is surjective. Since $\Gr_x(y, N-1, \P_k^N) \neq \emptyset$, we conclude that $\alpha$ is surjective. 

To finish the proof, we look at the commutative diagram with exact rows
\begin{equation}\label{eqn:Ref-0-2}
\xymatrix@C.8pc{
0 \ar[r] & \ker (\alpha) \ar[r] \ar[d] & W \ar[r]^{\alpha \ \ \ } \ar@{=}[d] & \mathcal{O}_{\{x+2y\}} 
\ar[r] \ar[d] & 0 \\
0 \ar[r] & \alpha^{-1}(\fm^2_y) \ar[r] & W \ar[r] & \mathcal{O}_{\{2y\}} \ar[r] & 0.}
\end{equation}
Since the last vertical arrow is surjective with one-dimensional kernel, by the snake lemma, the first vertical arrow is injective with one-dimensional cokernel. Since $\mathbb{P}(\alpha^{-1}(\fm^2_y)) \simeq \P_k^{N-2}$, we conclude that $\Gr_{x+2y}(N-1, \P_k^N) \simeq \mathbb{P}(\ker (\alpha))\simeq \P_k^{N-3}$.
\end{proof}

\subsection{The Set-up$+ (\fs)$}\label{sec:Set-up2}

We suppose $k$ is an infinite perfect field. The set-up we now use repeatedly is the following situation, that we call the Set-up $+ (\fs)$: 

(1) \emph{The Set-up:} We still suppose the Set-up of \S \ref{sec:Set-up}, not necessarily specifying some closed subset $E \subset \widehat{B}$.

(2) \emph{The \fs-property:} There exists an affine open neighborhood $X_{\rm fs} \subset X_{\rm sm}$ of $\Sigma$, that is dense open in $X$, such that the projection $Z \to X$ is fs over $X_{\rm fs}$.

\subsection{Regularity of the original cycle over residual points}
We now discuss two central results:
Lemmas~\ref{lem:AD-open-refined-r=1} and ~\ref{lem:AD-open-refined-0}.
Recall that $X$ is equidimensional under the above assumptions.

\begin{lem}\label{lem:AD-open-refined-r=1}
Let $k$ be an algebraically closed field. Suppose $r=1$. We are under the Set-up $+ (\fs)$ of \S \ref{sec:Set-up2}. Let $x \in X_{\rm fs}$ be a closed point and let $S \subset X \setminus \{ x \}$ be another finite set of closed points. 

After replacing $\mathbb{P}_k ^N$ by a bigger projective space via a Veronese embedding if necessary, there exists a dense open subset $\mathcal{U}_S \subset \Gr_x(N-1, \mathbb{P}_k ^N)$ such that each $L \in \mathcal{U}_S(k)$ satisfies the following:
\begin{enumerate}
\item $L \cap ( (X \setminus X_{\rm fs}) \cup S)= \emptyset$.
\item $L$ intersects $X_{\fs}$ transversely.
\item $L \cap X$ consists of $(d+1)$-distinct closed points $c_0= x, c_1, \ldots, c_d$.
\item $Z$ is regular at all points lying over $\{ c_1, \ldots, c_d\}$. In particular, each component $Z_i$ does not meet other irreducible components at points lying over $\{ c_1, \ldots, c_d \}$.
\end{enumerate}
\end{lem}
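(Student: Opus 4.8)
The strategy is the standard incidence-correspondence/dimension-count argument, carried out on the rational homogeneous space $\Gr_x(N-1,\P^N_k)$, combined with the refined transversality estimate of Lemma~\ref{lem:AD-open-refined-elem}. We shall choose a Veronese re-embedding $v_{1,d}\colon \P^1_k\inj\P^N_k$ with $d\gg 0$, so that $X$ has degree $d+1$, $N={d+1\choose 1}-1=d$ is large, and, crucially, the hyperplane sections of $X$ become generic degree-$d$ divisors on the curve; this is what makes (3) and (4) achievable. Throughout we work over the algebraically closed field $k$, so closed points are $k$-rational and $\Gr_x(N-1,\P^N_k)\cong\P^{N-1}_k$ is rational with a dense set of $k$-points.

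First I would dispose of conditions (1)--(3). For (1): the set of hyperplanes through $x$ that also meet the finite set $(X\setminus X_{\fs})\cup S$ is, by Lemma~\ref{lem:fs-open-1} applied inside $\Gr_x(N-1,\P^N_k)$ (equivalently a direct rank computation), a proper closed subset, since $N\gg 1$ forces the locus of hyperplanes through $x$ and through a second fixed point to have codimension $1$; intersecting over the finitely many points of $(X\setminus X_{\fs})\cup S$ keeps this a dense open $\mathcal{U}_1\subset\Gr_x(N-1,\P^N_k)$. For (2): transversality of $L$ with the smooth affine curve $X_{\fs}$ is the condition that $L$ avoid the (at most $1$-dimensional, actually finite-union-of-points worth in each hyperplane) tangent-line locus; $\Gr^{\mathrm{tr}}_x(X_{\fs},N-1,\P^N_k)$ is dense open in $\Gr_x(N-1,\P^N_k)$ by the argument of \S\ref{sec:Transverse} (here one uses Lemma~\ref{lem:AD-open-refined-elem}: the hyperplanes through $x$ non-transverse to $X$ at a \emph{fixed} point $y\in X_{\fs}$ form a $\P^{N-3}_k$, which has codimension $2$ in $\Gr_x$, hence sweeping $y$ over the $1$-dimensional $X_{\fs}$ still gives a codimension-$1$ bad locus). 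Call the resulting dense open $\mathcal{U}_2$. For (3): once $L$ meets $X$ transversely and avoids $X_{\rm sing}\cup X\setminus X_{\fs}$, the intersection $L\cap X$ is a reduced $0$-dimensional scheme of length $\deg X=d+1$ containing $x=c_0$; reducedness plus $k=\bar k$ gives exactly $d+1$ distinct $k$-points. So on $\mathcal{U}_1\cap\mathcal{U}_2$ conditions (1)--(3) hold.

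The genuinely substantive part is (4): for a general $L\in\Gr_x(N-1,\P^N_k)$, the cycle $Z$ is regular at every point of $X\times\widehat B$ lying over each of the \emph{residual} points $c_1,\dots,c_d$ of $L\cap X$. The idea is to set up a single incidence variety $\mathcal{I}=\{(L,w)\;:\;w\in Z_{\rm sing},\ \widehat f(w)\in L,\ \widehat f(w)\neq x\}$ over $\Gr_x(N-1,\P^N_k)$, with its two projections $p_1\colon\mathcal{I}\to\Gr_x(N-1,\P^N_k)$ and $p_2\colon\mathcal{I}\to Z_{\rm sing}$. The fiber of $p_2$ over $w$ is the set of hyperplanes through $x$ and through the \emph{fixed} point $\widehat f(w)\neq x$, which is a $\P^{N-3}_k$ — again by the first isomorphism in Lemma~\ref{lem:AD-open-refined-elem} (or the rank count), of dimension $N-3$. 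Therefore $\dim\mathcal{I}\le \dim Z_{\rm sing}+(N-3)$. Now here is where the \fs-hypothesis (Set-up $+(\fs)$) and the $r=1$ assumption enter: $Z\to X$ is fs over $X_{\fs}$, so $Z\to X$ is \emph{finite} over $X_{\fs}$, hence $\dim Z_{\rm sing}\le \dim Z - 1 \le r-1 = 0$ if some component is singular — more precisely $Z_{\rm sing}$ is a finite set of closed points (each $Z_i$ being a curve finite over the curve $X$, its singular locus is finite; the bad intersection points of distinct components are also finite). Thus $\dim\mathcal{I}\le N-3<N-1=\dim\Gr_x(N-1,\P^N_k)$, so $p_1(\mathcal{I})$ is a proper closed subset and its complement $\mathcal{U}_3$ is dense open. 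For $L\in\mathcal{U}_3(k)$, no point of $Z_{\rm sing}$ lies over any residual point $c_1,\dots,c_d$ of $L\cap X$, which is exactly (4) (including the ``in particular'': if $Z_i$ met another component over some $c_j$, that meeting point would lie in $Z_{\rm sing}$). Taking $\mathcal{U}_S:=\mathcal{U}_1\cap\mathcal{U}_2\cap\mathcal{U}_3$ finishes the proof.

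\textbf{Main obstacle.} The delicate point is not (4) in the abstract but making sure the dimension bookkeeping genuinely works: one must verify that after the Veronese re-embedding the ``bad'' loci really are of codimension $\ge 1$ (not accidentally everything), which is why we must track the refined count $\Gr_{x+2y}\cong\P^{N-3}$ from Lemma~\ref{lem:AD-open-refined-elem} rather than a naive estimate, and one must know $Z_{\rm sing}$ is low-dimensional — this is precisely where the finiteness coming from the \fs-property, and the hypothesis $r=1$, are indispensable (for $r>1$, $\dim Z_{\rm sing}$ could be as large as $r-1$ and the analogous count would need a more careful linear-subspace argument, which is exactly why the general-$r$ case is deferred to Lemma~\ref{lem:AD-open-refined-0} and the later sections). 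Care is also needed that $x=c_0$ is excluded from the residual points throughout, so the incidence variety must carry the constraint $\widehat f(w)\neq x$ from the start.
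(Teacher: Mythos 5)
Your proof is correct and follows essentially the same route as the paper's: conditions (1), (3) and (4) are reduced to making $L$ avoid the finite set $\bigl((X\setminus X_{\fs})\cup S\cup\widehat{f}(Z_{\rm sing})\bigr)\setminus\{x\}$ (using that $Z_{\rm sing}$ is finite since $Z$ is a reduced curve over a perfect field), and (2) comes from the incidence-variety count built on Lemma~\ref{lem:AD-open-refined-elem}; the only packaging difference is that you work directly inside $\Gr_x(N-1,\P^N_k)$, whereas the paper forms the dense open in $\Gr(N-1,\P^N_k)$ and then verifies that its intersection with $\Gr_x(N-1,\P^N_k)$ is nonempty. One harmless slip: the fiber of your incidence variety $\mathcal{I}$ over $w$ is $\Gr_{\{x,\widehat{f}(w)\}}(N-1,\P^N_k)\simeq\P^{N-2}_k$ (the first, not the second, isomorphism of Lemma~\ref{lem:AD-open-refined-elem}), so $\dim\mathcal{I}\le N-2$, which still suffices since $\dim\Gr_x(N-1,\P^N_k)=N-1$.
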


\begin{proof}
Since $\dim(Z_{\rm sing}) = 0$, we see that $\widehat{f}(Z_{\rm sing})$ is a finite closed subset of $X$. Since $X_{\fs}$ is dense in $X$, we have $|X \setminus X_{\fs}| < \infty$. Hence, $T: = \left(\widehat{f}(Z_{\rm sing}) \cup (X \setminus X_{\fs}) \cup S\right) \setminus \{x\}$ is a finite closed subset of $X$. Thus the hyperplanes disjoint from $T$ form a dense open subset $\Gr (T, N-1, \mathbb{P}_k ^N)$ of $\Gr(N-1, \P_k^N)$ by Lemma \ref{lem:elem-0}. The set $\mathcal{U}_1:=\Gr^{\tr} (X, N-1, \mathbb{P}_k ^N) \cap \Gr (T, N-1, \mathbb{P}_k ^N)$ is dense open in $\Gr(N-1, \mathbb{P}_k ^N)$. If we show that $\mathcal{U}_S:= \mathcal{U}_1 \cap \Gr_x(N-1, \mathbb{P}_k ^N) \neq \emptyset$, then this set will be dense open in $\Gr_x(N-1, \mathbb{P}_k ^N)$. It is moreover clear that any $L \in \mathcal{U}_S(k)$ satisfies (1) $\sim$ (4). It remains to show that $\Gr^{\tr} (X, N-1, \mathbb{P}_k ^N) \cap \Gr_x(N-1, \mathbb{P}_k ^N)$ and $\Gr (T, N-1, \mathbb{P}_k ^N) \cap \Gr_x(N-1, \mathbb{P}_k ^N)$ are both nonempty.

Let $V$ be the set of linear forms in $ {H}^0(\P_k^N, \mathcal{O}(1))$ that vanish at $x$.  Note that $\dim \ |V| = N-1$ and that the maximal ideal $\fm_x \subset \mathcal{O}_{X,x}$ is generated by the members of $V$. Let $\mathcal{B} \subset X \times |V|$ be the incidence scheme consisting of pairs $(y, L)$ such that $L$ passes through $y$, but not transverse to $X$ at $y$. We study the fiber of $\pi_1: \mathcal{B} \to X$ over each $y \in X_{\rm sm} \setminus \{ x \}$.

Choose $s_1 \in V$ such that $s_1  (x) = 0$ but $s_1  (y) \not = 0$. Consider the map $\beta: V \to \mathcal{O}_{X,y} / \mathfrak{m}_y ^2$ given by $\beta (s) = s/ s_1 $. Since $\dim \ |V| = N-1$, while $\Gr_{\{x, y \}} (N-1, \mathbb{P}^N) \simeq \mathbb{P}^{N-2}$ and $\Gr_{\{x + 2y \}} (N-1, \mathbb{P}^N) \simeq \mathbb{P}^{N-3}$ by Lemma \ref{lem:AD-open-refined-elem}, we see that $\beta$ is surjective and $\mathbb{P}(\ker(\beta)) = \pi_1 ^{-1} (y)$ has dimension at most $N-3$, because $\dim_k (\mathcal{O}_{X,y}/ \mathfrak{m}_y ^2) = 2$. Thus, $\dim (\mathcal{B}) \leq \dim \ X + \dim ( \pi_1 ^{-1} (y)) \leq 1 + N-3 = N-2$. Hence, its image in $|V|$ under the projection $\pi_2: X \times |V| \to |V|$ is a proper closed subset (note that $X$ is projective). Since $N \gg0$, its complement $\Gr_x ^{\tr} (X, N-1, \mathbb{P}_k ^N)$ in $\Gr_x (N-1, \mathbb{P}_k ^N)$ is a dense open subset. Since $\dim(\Gr_x (N-1, \mathbb{P}_k ^N)) = N-1$ and $T \subset X$ is a finite set of closed points different from $x$, the assertion that $\Gr (T, N-1, \mathbb{P}_k ^N) \cap \Gr_x(N-1, \mathbb{P}_k ^N)$ is nonempty follows from Lemma \ref{lem:AD-open-refined-elem}. We have therefore finished the proof.
\end{proof}

In \S~\ref{sec:B-sep}, we will obtain a slightly stronger version of Lemma~\ref{lem:AD-open-refined-r=1}. This is done in Lemma \ref{lem:Non-collinear}. The difference in the latter lemma from the former is that (following the notations of Lemma \ref{lem:AD-open-refined-r=1}), after a possible reimbedding, we may impose an additional property that for $L \cap X = \{ c_0=x, c_1, \cdots, c_d \}$, no three points of them are collinear. 

At one bad extreme case, suppose $X$ is contained in a $2$-dimensional projective space. Then for any hyperplane $L$, which is a line, the hyperplane section $L \cap X$ is entirely collinear. This is an important obstacle to avoid. We will show in Lemma \ref{lem:Plane-case} that, after taking a Veronese reimbedding for a high enough degree $d \geq 3$, we can always avoid it. It will be improved for the higher dimensional case in Lemma \ref{lem:Higher-dim}. These two are some technical grounds needed in \S \ref{sec:B-sep}.

Once we can avoid the above extreme case using a Veronese reimbedding, then one can employ the following well-known general result (cf. \cite[Ch III, p.109]{ACGH}):
\begin{thm}[General position theorem]\label{thm:GPT} Let $N \geq 2$. Let $C \subset \mathbb{P}^N$ be an irreducible nondegenerate, possibly singular, curve of degree $d$. Then a general hyperplane meets $C$ in $d$ points, any $N$ of which are linearly independent.
\end{thm}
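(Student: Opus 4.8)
This is a classical result (hence the reference to \cite[Ch.~III]{ACGH}), and the plan is to prove it by the \emph{uniform position} method. First I would set up the relevant family. Since $C$ is irreducible and nondegenerate of dimension one, its degree satisfies $d\ge N$ and its singular locus $C\setminus C_{\mathrm{sm}}$ is finite. Bertini's theorem applied to $C_{\mathrm{sm}}$, together with this finiteness, gives a dense open subset $\mathcal{V}\subset(\mathbb{P}^N)^{\vee}$ such that for every $H\in\mathcal{V}$ the set $\Gamma_H:=C\cap H$ consists of exactly $d$ distinct points, all lying on $C_{\mathrm{sm}}$, the intersection being transverse. Then the incidence scheme $\mathcal{I}_{\mathcal{V}}=\{(p,H):p\in C\cap H\}\to\mathcal{V}$ is a finite \'etale cover of degree $d$, and the theorem asserts that for general $H\in\mathcal{V}$ every $N$-element subset of the fibre $\Gamma_H$ spans the hyperplane $H\cong\mathbb{P}^{N-1}$.

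The heart of the matter is the \emph{uniform position lemma}: the monodromy group $G\subseteq S_d$ of $\mathcal{I}_{\mathcal{V}}\to\mathcal{V}$ is all of $S_d$. Transitivity of $G$ follows from the irreducibility of the total incidence variety $\mathcal{I}=\{(p,H)\in C\times(\mathbb{P}^N)^{\vee}:p\in H\}$, which projects to the irreducible curve $C$ as a $\mathbb{P}^{N-1}$-bundle; irreducibility of $\mathcal{I}$ forces the \'etale cover $\mathcal{I}_{\mathcal{V}}\to\mathcal{V}$ to be connected, i.e. $G$ transitive. To show $G$ is generated by transpositions I would restrict to a general pencil of hyperplanes, i.e. a general line $\ell\subset(\mathbb{P}^N)^{\vee}$: such an $\ell$ meets the hypersurface $\Delta\subset(\mathbb{P}^N)^{\vee}$ of hyperplanes non-transverse to $C$ transversely and only at smooth points of $\Delta$, and in characteristic zero a general point of $\Delta$ corresponds to a hyperplane tangent to $C$ at a single smooth point with contact of order exactly two. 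Hence the local monodromy of $\mathcal{I}_{\mathcal{V}}\times_{\mathcal{V}}\ell\to\ell$ around each of its branch points is a transposition, and $G$ is generated by these transpositions. A transitive subgroup of $S_d$ generated by transpositions equals $S_d$, so $G=S_d$.

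Granting this, I would conclude as follows. Suppose the theorem failed, so that for $H$ in a dense open subset of $\mathcal{V}$ some $N$-element subset of $\Gamma_H$ is linearly dependent. Since $G=S_d$ acts transitively on the $N$-element subsets of a fibre, the covering $\mathcal{K}\to\mathcal{V}$ parametrizing all $N$-element subsets of the fibres of $\mathcal{I}_{\mathcal{V}}$ is irreducible; the locus of linearly dependent $N$-subsets is a closed subset of $\mathcal{K}$ which, by hypothesis, dominates $\mathcal{V}$, and a finite dominant map to an irreducible base is surjective, so this closed subset has the same dimension as $\mathcal{K}$ and therefore equals $\mathcal{K}$. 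Thus for general $H$ \emph{every} $N$-element subset of $\Gamma_H$ is linearly dependent. An elementary observation then applies: if a finite set $S$ of points in a projective space has $|S|\ge N$ and every $N$-element subset of $S$ is linearly dependent, then a maximal linearly independent subset of $S$ has at most $N-1$ elements, so $\langle S\rangle$ has dimension at most $N-2$. Hence $\langle\Gamma_H\rangle\subsetneq H$. This contradicts the fact that a general hyperplane section of a nondegenerate irreducible curve spans its hyperplane --- a fact provable by induction on $N$ by projecting $C$ from a general point of itself, with the same elementary observation at the base case $N=2$. The contradiction proves the theorem.

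The main obstacle is the uniform position lemma, and within it the assertion that the local monodromies arising from a general pencil are simple transpositions; this rests on the classical char-$0$ facts that a general tangent hyperplane of $C$ touches $C$ at a unique smooth point with contact of order exactly two, and that a general line in $(\mathbb{P}^N)^{\vee}$ is transverse to the non-transversality hypersurface $\Delta$ and misses its singular locus. The remaining ingredients --- Bertini, irreducibility of the incidence variety, the group-theoretic statement, and the linear-algebra observation --- are routine.
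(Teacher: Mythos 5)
Your argument is the standard uniform-position proof from \cite{ACGH}, and over a field of characteristic zero it is complete modulo the routine facts you list. Note, however, that the paper does not actually prove Theorem \ref{thm:GPT}: it cites \cite{ACGH}, observes that the $N=2$ case is trivial, and only sketches (in the remark after Lemma \ref{lem:GPTcol}) how the one statement it really uses --- the non-collinearity assertion of Lemma \ref{lem:GPTcol} --- follows from the non-existence of nondegenerate strange curves. So your route is genuinely different from the paper's treatment, and in characteristic zero it proves strictly more (full symmetric monodromy), which is exactly why \cite{ACGH} organize the argument as you do.

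The genuine gap is the characteristic restriction, and for this paper it matters: the ground field is an arbitrary infinite perfect field, and positive characteristic is the case of principal interest (the motivating application is to crystalline cohomology). The step you yourself flag --- that a general tangent hyperplane meets $C$ at a single smooth point with contact of order exactly two, so that the local monodromies of a general pencil are transpositions --- fails in characteristic $p$: the Gauss map of $C$ may be inseparable, and, more seriously, the conclusion you are aiming for (monodromy equal to $S_d$, i.e.\ the uniform position principle) is genuinely false for certain irreducible nondegenerate curves in $\mathbb{P}^3$ in characteristic $p$ (Rathmann's counterexamples, where the monodromy is a proper subgroup of $S_d$). Hence the proof cannot be rescued by a more careful tangency analysis; one must either prove transitivity of the monodromy on $N$-element subsets by a different (characteristic-free) argument, or do what the paper implicitly does: retreat to the non-collinearity statement of Lemma \ref{lem:GPTcol}, which is established in all characteristics via the strange-curve argument and is all that Proposition \ref{prop:GPTcol} and the subsequent applications require.
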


Recall that a closed embedding $X \subset \P^n_k$ of an integral projective scheme $X$ is said to be nondegenerate if no hyperplane of $\P^n_k$ contains $X$. We won't give the proof of Theorem \ref{thm:GPT} here. We mention that Theorem \ref{thm:GPT} for $N=2$ is immediate, while, for $N \geq 3$ reduces to the following special case (cf. \emph{loc.cit.}), that is more relevant to the paper:

\begin{lem}\label{lem:GPTcol}
Let $C \subset \mathbb{P}^N$ with $N \geq 3$ be an irreducible nondegenerate, possibly singular, curve of degree $d$. Then a general hyperplane meets $C$ in $d$ points, no three of which are collinear.
\end{lem}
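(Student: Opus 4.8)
The plan is to reduce to the plane case already treated (or provable directly) and then bootstrap along the lines indicated in the excerpt, using a dimension count on an incidence variety of collinear triples. First I would set up the incidence correspondence
\[
\Phi = \{(p_1,p_2,p_3,H) : p_i \in C,\ p_i \text{ distinct},\ p_1,p_2,p_3 \text{ collinear},\ p_i \in H\} \subset C^3 \times (\mathbb{P}^N)^\vee,
\]
and analyze the two projections. The projection to $C^3$ lands in the locally closed set $T$ of pairwise-distinct collinear triples on $C$; the key input is that $\dim T \le 2$. Indeed, fixing generic $p_1, p_2 \in C$ determines the line $\ell = \overline{p_1 p_2}$, and since $C$ is nondegenerate with $N \ge 3$, $C$ is not contained in any line, so $\ell \cap C$ is finite; hence the third point $p_3$ has only finitely many choices, giving $\dim T \le \dim(C \times C) = 2$. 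Over each point of $T$ the fiber of $\Phi \to T$ is the set of hyperplanes containing a fixed line, which is a $\mathbb{P}^{N-2}$. Therefore $\dim \Phi \le 2 + (N-2) = N$.

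Next I would project $\Phi$ to $(\mathbb{P}^N)^\vee$, which has dimension $N$. The image is a constructible set of dimension at most $N$; the point is that it is \emph{not dense}, equivalently its closure is a proper subvariety, so that its complement is a dense open set of hyperplanes meeting $C$ in no three collinear points (intersected with the dense open set of hyperplanes meeting $C$ transversely in exactly $d$ points, which exists since a general hyperplane section of an irreducible curve of degree $d$ consists of $d$ reduced points). To see the image is not dense it suffices to exhibit one hyperplane $H_0$ meeting $C$ in $d$ points no three collinear — equivalently to produce a point of $(\mathbb{P}^N)^\vee$ outside $\overline{\pi_2(\Phi)}$. Alternatively, and more cleanly, I would argue by the fiber dimension of $\pi_2$ itself: if the image were dense then the generic fiber of $\pi_2 \colon \Phi \to (\mathbb{P}^N)^\vee$ would have dimension $\dim \Phi - N \le 0$, and one needs instead to rule this out by showing that for the generic $H$ in the image there is in fact a \emph{positive-dimensional} supply of collinear triples forced on $H \cap C$ — which contradicts $H\cap C$ being finite — OR simply to check directly that $\dim \pi_2(\Phi) < N$ by a more careful count. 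This is the step where care is needed, so I will instead carry out the cleaner route below.

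The cleanest route: show directly that a general hyperplane section of $C$ has no three collinear points. Project $\Phi$ to $(\mathbb{P}^N)^\vee$ and suppose toward a contradiction that $\pi_2$ is dominant. Take a general $H$, with $H \cap C = \{q_1,\dots,q_d\}$ reduced ($d$ distinct points, using $C$ irreducible of degree $d$). The fiber $\pi_2^{-1}(H)$ is the finite set of collinear triples among $\{q_1,\dots,q_d\}$, hence $0$-dimensional; dominance then gives $\dim \Phi \ge N$, so combined with $\dim \Phi \le N$ from the first paragraph we get $\dim \Phi = N$ and $\pi_2$ generically finite. Now I would derive the contradiction by an independent count of $\dim \Phi$ via the first projection into $T$: if $\pi_2$ is dominant with $\dim\Phi = N$, then the generic point of $T$ must have its $\mathbb{P}^{N-2}$ of hyperplanes mapping to a dense family, forcing $\dim T = 2$ — which is consistent — so this alone is not yet a contradiction, and the honest contradiction comes from the plane/projection reduction: project $C$ from a general point $p \in \mathbb{P}^N$ to $\mathbb{P}^{N-1}$ to reduce $N$ by one, checking that nondegeneracy and "no three collinear in a general hyperplane section" are preserved by a general such projection, and iterate down to $N = 2$. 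For $N = 2$, "no three collinear" is vacuous (a hyperplane is a line, and $H \cap C$ consists of $d$ points on that one line, but "three collinear" is automatic) — so actually the genuine base case is $N = 3$, where I would invoke Theorem \ref{thm:GPT} for $N=3$, or prove it by the incidence count above specialized to $N = 3$, where $\dim \Phi \le 2 + 1 = 3 = \dim(\mathbb{P}^3)^\vee$ but one checks $\pi_2$ cannot be dominant because a general plane section of a nondegenerate space curve is a set of $d$ points in general position in $\mathbb{P}^2$ (an open condition satisfied by at least one plane, e.g.\ using that $C$ is not planar so its secant variety is all of $\mathbb{P}^3$ and a generic plane avoids all the finitely-many "bad" trisecant configurations).

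\textbf{Main obstacle.} The delicate point is the very last assertion: that the map $\pi_2 \colon \Phi \to (\mathbb{P}^N)^\vee$ is \emph{not} dominant, i.e.\ that collinear triples in a general hyperplane section genuinely fail to exist rather than merely being non-generic in the fibers. The dimension count $\dim\Phi \le N$ is tight and hence does not by itself forbid dominance with finite fibers; the extra leverage must come either from the structure of the trisecant locus of $C$ (for $C$ nondegenerate in $\mathbb{P}^N$ with $N \ge 3$, a general hyperplane meets the trisecant variety, which has dimension $\le 3$, in a set of dimension $\le 2$, and one must argue this does not force a collinear triple \emph{on} the section) or from the projection-to-$\mathbb{P}^{N-1}$ induction with a clean base case $N = 3$. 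I expect the bulk of the write-up to be the verification that a general inner projection preserves the two hypotheses (nondegeneracy; general position of a general hyperplane section), which is standard but requires choosing the center of projection outside finitely many proper subvarieties (the span of $C$'s tangent and secant data), and the base case $N=3$ for which I would simply cite or reproduce the classical statement that a general plane section of an irreducible nondegenerate space curve is in linearly general position.
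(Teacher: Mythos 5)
Your incidence-variety setup is the right starting point, and you are honest that it does not close: the count $\dim \Phi \le N$ is exactly consistent with $\pi_2\colon \Phi \to (\P^N)^\vee$ being dominant with finite fibers, so it proves nothing about non-dominance, which \emph{is} the content of the lemma. The two ways you propose to close the gap both fail as written. The base case $N=3$ is circular: ``a general plane section of a nondegenerate space curve is a set of $d$ points in general position in $\P^2$'' is precisely the statement to be proved for $N=3$, and the claim that ``a generic plane avoids all the finitely-many bad trisecant configurations'' is wrong — the trisecant lines of a nondegenerate space curve generally form a positive-dimensional family, so the planes containing some trisecant line sweep out a subset of $(\P^3)^\vee$ of dimension up to $3$; whether that subset is dense is again the question at issue. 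The proposed induction by inner projection also does not run in the direction you need: hyperplanes of $\P^N$ pulled back from $\P^3$ are exactly those containing the center of projection, a proper closed subfamily of $(\P^N)^\vee$, so genericity of the section downstairs says nothing directly about a general hyperplane upstairs.

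The missing idea is the one the paper points to in the remark immediately following the lemma (and in the citation to \cite[pp.~110--111]{ACGH}): one shows that if a general hyperplane section contains three collinear points then \emph{every} tangent line of $C$ passes through a single fixed point, i.e.\ $C$ is a strange curve, and this is then ruled out using nondegeneracy. In characteristic zero this is usually packaged as the uniform position/monodromy principle (the monodromy of the general hyperplane section is doubly transitive, so if one triple is collinear all triples are, forcing the whole section onto a line and contradicting nondegeneracy); in positive characteristic, which the paper must handle, the tangent-line/strange-curve analysis is the substitute. Note that the paper itself does not reprove the lemma — it cites the classical General Position Theorem and only sketches this flavor — so to make your write-up complete you would either need to supply that monodromy/strange-curve argument or, as the authors do, cite it.
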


\begin{remk}
To give a bit of the flavor of the proof of Lemma \ref{lem:GPTcol}, we remark that with some efforts (cf. \cite[pp.110-111]{ACGH} or imitate \cite[Proposition IV-3.8, p.311]{Hartshorne}), one can argue that if Lemma \ref{lem:GPTcol} fails, then all tangent lines to $C$ passes through a single fixed point $p \in C$. Then a linear projection from $p$ would shrink the entire curve $C$ to a point in $\mathbb{P}^{N-1}$. Since $C$ is nondegenerate, we can argue this cannot happen.

Such a curve in $\mathbb{P}^N$ all of whose tangent lines pass through a fixed point is called \emph{strange} (see \cite[p.311]{Hartshorne}). We remark that in case $C$ is nonsingular, it is known that the only nonsingular strange curves in any $\mathbb{P}^N$ are either a line or a conic in $\mathbb{P}^2$ in characteristic $2$ (see \cite[Theorem, Appendix to Ch II, p.76]{Samuel} or \cite[Theorem IV-3.9, p.312]{Hartshorne}).

We thank the referee for pointing to us that some technical part of our construction of the paper is relevant to non-collinearity of configurations of points, and strange curves. \qed
\end{remk}

Combined with the Bertini theorem (\cite[Theorem~1]{KA} or \cite{Jou}), we immediately extend Lemma \ref{lem:GPTcol} to the following higher dimensional version, which we use:

\begin{prop}[Linear general position theorem]\label{prop:GPTcol}
Let $X \subset \mathbb{P}^N$ with $N \geq 3$ be a nondegenerate, possibly singular, variety of degree $d$. Let $r= \dim \ X \geq 1$. Then for a general sequence of hyperplanes $H_1, \cdots, H_r$ in $\mathbb{P}^N$, the intersection $X \cap H_1 \cap \cdots \cap H_r$ has $d$ points, no three of which are collinear.
\end{prop}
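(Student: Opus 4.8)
The plan is to reduce the higher-dimensional statement to the curve case handled by Lemma~\ref{lem:GPTcol}, using iterated hyperplane sections and the Bertini irreducibility theorem. First I would take a general hyperplane $H_1 \subset \mathbb{P}^N$. By Bertini (\cite[Theorem~1]{KA}; see also \cite{Jou}), since $X$ is nondegenerate of dimension $r \ge 1$ in $\mathbb{P}^N$ with $N \ge 3$, the intersection $X \cap H_1$ is again nondegenerate in $H_1 \cong \mathbb{P}^{N-1}$ and, for $r \ge 2$, is irreducible of dimension $r-1$ and of the same degree $d$; if $r = 1$ we are already in the situation of Lemma~\ref{lem:GPTcol} and there is nothing to iterate. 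Repeating this $r-1$ times with general $H_1, \ldots, H_{r-1}$, we arrive at an irreducible nondegenerate curve $C := X \cap H_1 \cap \cdots \cap H_{r-1} \subset \mathbb{P}^{N-r+1}$ of degree $d$, where $N - r + 1 \ge 3$ because $N \gg r$ (this is where we need $N \ge 3$ to survive; we may assume $N$ large after a Veronese reimbedding if necessary, though actually only $N-r+1 \ge 3$ is needed).

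Next I would apply Lemma~\ref{lem:GPTcol} to the curve $C$ inside $\mathbb{P}^{N-r+1}$: a general hyperplane $H_r$ of $\mathbb{P}^{N-r+1}$ — equivalently a general hyperplane of $\mathbb{P}^N$ restricted to the linear span of $C$, or just a general $H_r \subset \mathbb{P}^N$ containing $H_1 \cap \cdots \cap H_{r-1}$ — meets $C$ in exactly $d$ points, no three of which are collinear. Then $X \cap H_1 \cap \cdots \cap H_r = C \cap H_r$ consists of $d$ points with no three collinear, which is precisely the assertion. The only subtlety is the quantifier: the statement asks for a general \emph{sequence} $(H_1, \ldots, H_r)$, so I would package the above as follows. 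For each choice of general $(H_1, \ldots, H_{r-1})$ in a dense open $U_1 \subset (\check{\mathbb{P}}^N)^{r-1}$, Bertini gives irreducibility and nondegeneracy of the resulting curve, and then Lemma~\ref{lem:GPTcol} supplies a dense open set of admissible $H_r$ depending on that curve; a standard incidence/openness argument (the locus of $(H_1,\ldots,H_r)$ for which the conclusion holds is constructible, and we have just shown it is dense) shows this cuts out a dense open subset of $(\check{\mathbb{P}}^N)^r$. Alternatively one can invoke that "the $d$ section points being in linear general position" is an open condition on the flag of hyperplanes and that we have exhibited points of this open set, so it is dense open.

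The main obstacle, and the reason the hypothesis $N \ge 3$ (and implicitly $N \gg r$, or a Veronese reimbedding) is essential, is exactly the degenerate phenomenon flagged in the preceding remark: if the successive hyperplane sections were to drop the ambient dimension down to $\mathbb{P}^2$, then every hyperplane section of a plane curve is automatically collinear and the conclusion fails. So I must verify at each stage of the iteration that $C$ remains nondegenerate in an ambient projective space of dimension $\ge 3$; nondegeneracy is preserved by general hyperplane section (a general hyperplane of $\mathbb{P}^m$ does not contain the linear span of $X \cap H$ unless it contained the span of $X$, which it does not for general $H$ since $X$ is nondegenerate), and the ambient dimension after $r-1$ cuts is $N - r + 1$, which is $\ge 3$ under our running assumption $N \gg r$. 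Everything else — Bertini irreducibility for general hyperplane sections of an irreducible variety of dimension $\ge 2$, and constructibility/density of the good locus — is standard and I would not belabor it.
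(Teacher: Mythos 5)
Your proposal is correct and takes essentially the same route as the paper, whose entire proof is the one-line remark that the proposition follows "immediately" by combining the Bertini theorems of \cite{KA}/\cite{Jou} with Lemma~\ref{lem:GPTcol} via iterated general hyperplane sections. Your observation that the argument really needs the ambient dimension after $r-1$ cuts to remain at least $3$ (i.e.\ $N-r+1 \geq 3$, not merely $N \geq 3$ as literally stated) is a genuine point the paper glosses over, but it is harmless here since the proposition is only ever applied in the regime $N \gg r$.
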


Note that the above Proposition \ref{prop:GPTcol} holds for schemes that are nondegenerate in the projective spaces of dimension at least $3$. This is another view of why we had a pathology about non-collinearity when $X$ was contained in a $2$-dimensional projective space in the paragraph before Theorem \ref{thm:GPT}.

As said before, to avoid this problem, we need to replace the embedding by a bigger Veronese embedding. This is discussed now in the following:

\begin{lem}\label{lem:Plane-case}
Let $C \subset \P^n_k$ be a reduced projective curve. Suppose that there exists a $2$-dimensional linear subspace $L \subset \P^n_k$ such that $C \subset L$. Let $\vartheta : \P^n_k \inj \P^N_k$ be the $d$-uple Veronese embedding with $d \ge 3$. Then the image of each irreducible component of $C$ via $\vartheta$ does not lie inside a $2$-dimensional linear subspace of $\P^N_k$.
\end{lem}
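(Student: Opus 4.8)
\textbf{The plan.} The statement is purely about the projective geometry of the Veronese embedding, so the plan is to reduce at once to an irreducible component $C_0 \subset C$, which (being a curve inside a plane $L \cong \P^2_k$) we may regard as a plane curve $C_0 \subset \P^2_k$, and then to show that $\vartheta(C_0)$ is \emph{nondegenerate} in every $\P^2_k$ inside $\P^N_k$, i.e. spans a linear space of dimension $\ge 3$. The key mechanism is that $\vartheta$ is given by all degree-$d$ monomials on $\P^2_k$, so a linear form on $\P^N_k$ pulls back to an arbitrary degree-$d$ form on $\P^2_k$; hence asking $\vartheta(C_0)$ to lie in a $2$-plane is the same as asking for $3$ linearly independent degree-$d$ forms $q_0,q_1,q_2$ whose restrictions to $C_0$ span only a $2$-dimensional space, which would force a nontrivial linear relation $a_0 q_0 + a_1 q_1 + a_2 q_2$ to vanish on $C_0$, i.e. to be divisible (in $k[x_0,x_1,x_2]$, or at least along $C_0$) by a defining form of $C_0$.

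\textbf{Key steps, in order.} First I would restrict attention to a single irreducible component $C_0$ of $C$ with (reduced) defining polynomial $g \in k[x_0,x_1,x_2]$ of degree $e \ge 1$; note $C_0$ is integral, so $g$ is irreducible, and in particular $\vartheta(C_0)$ is again an integral curve of some positive dimension, so ``contained in a $2$-plane'' is equivalent to ``spanning exactly a line or a $2$-plane''. Second, I would make the dictionary precise: the linear system cutting out $\vartheta(\P^2_k)$ in $\P^N_k$ is the complete linear system $|\mathcal O_{\P^2}(d)|$, and a $2$-dimensional linear subspace $M \subset \P^N_k$ with $\vartheta(C_0) \subset M$ corresponds exactly to a $3$-dimensional subspace $W \subset H^0(\P^2_k,\mathcal O(d))$ such that the restriction map $W \to H^0(C_0,\mathcal O_{C_0}(d))$ has rank $\le 2$; equivalently, after choosing coordinates, there is a nonzero degree-$d$ form $q$ in the span of $3$ linearly independent forms that vanishes identically on $C_0$, hence $g \mid q$ (using irreducibility of $g$ and the Nullstellensatz, or simply that $k[x_0,x_1,x_2]$ is a UFD). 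Third — the quantitative heart — I would count: the space of degree-$d$ forms divisible by $g$ has dimension $\binom{d-e+2}{2}$ (zero if $d<e$), while we are claiming that within \emph{any} $3$-dimensional subspace $W$ of degree-$d$ forms, the restriction to $C_0$ is injective. So it suffices to show that no $2$-dimensional family of the ``bad'' forms $g\cdot(\text{degree } d-e)$ can have $3$ preimages spanning a line's worth of image; concretely, if $W$ has dimension $3$ and $W\cap (g)$ has dimension $\ge 2$, then $\dim W/(W\cap(g)) \le 1$, so the image of $W$ in $H^0(C_0,\mathcal O(d))$ is $\le 1$-dimensional, which would make $\vartheta(C_0)$ a point — impossible since $\vartheta$ is a closed embedding and $C_0$ is a curve. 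Thus the only way to fail is $\dim(W \cap (g)) \ge 2$ forcing $\dim W \ge 2$, and I must rule out $\dim(\text{image}) = 1$ as well, i.e. that $\vartheta(C_0)$ is not a line: this is automatic because a line in $\P^N_k$ pulls back under $\vartheta$ to a sublinear system of $|\mathcal O(d)|$ of (projective) dimension $1$ whose base locus, intersected with $C_0$, cannot be all of $C_0$ — again because $\vartheta$ is injective. Finally I would assemble these observations into the statement: for $d\ge 3$ (in fact $d\ge 2$ already suffices for this particular assertion, but $d\ge 3$ is what is used elsewhere), $\vartheta(C_0)$ spans at least a $\P^3_k$, hence is not contained in any $2$-plane.

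\textbf{Main obstacle.} The genuine content is the dimension count showing that one cannot have a $3$-dimensional space $W$ of degree-$d$ forms on $\P^2_k$ restricting to a $\le 2$-dimensional space of sections on the plane curve $C_0$; the clean way to see this is to observe that $\vartheta|_{C_0}$ is a closed embedding, so $\vartheta(C_0)$ is a nondegenerate curve in its own linear span, and a nondegenerate curve spans a space of dimension $\ge 2$, while the finer claim ``$\ge 3$'' is exactly the statement that the linear span is not a $\P^2$ — which is what we want, so some care is needed not to argue in a circle. The non-circular route is the divisibility argument: a $2$-plane constraint produces a degree-$d$ form vanishing on $C_0$ inside a $3$-dimensional space of forms, forcing the complementary image to be $\le 1$-dimensional, contradicting that $\vartheta$ separates points of the curve $C_0$. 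Handling possibly-singular and possibly-reducible $C$ is routine once one reduces to an irreducible component, and handling the case $d < e$ is trivial since then no bad forms exist at all.
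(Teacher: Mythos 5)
Your overall strategy---identifying the linear span of $\vartheta(C_0)$ with the projectivization of the image of the restriction map $\rho\colon H^0(\P^2_k,\mathcal{O}(d))\to H^0(C_0,\mathcal{O}_{C_0}(d))$, whose kernel is $g\cdot H^0(\P^2_k,\mathcal{O}(d-e))$---is sound and would yield a correct, quite elementary proof, but the step that carries all the content is never executed. The correct translation is: $\vartheta(C_0)$ lies in a $2$-plane if and only if $\dim_k\operatorname{im}(\rho)\le 3$, i.e. $\binom{d+2}{2}-\binom{d-e+2}{2}\le 3$. Your version of the correspondence (``a $3$-dimensional subspace $W$ whose restriction to $C_0$ has rank $\le 2$'') is not equivalent---its existence amounts only to $\ker\rho\neq 0$, which holds whenever $d\ge e$---and the contradiction you then extract only excludes $\operatorname{im}(\rho)$ having dimension $1$ or $2$, i.e. excludes the span of $\vartheta(C_0)$ being a point or a line; that is strictly weaker than excluding a $2$-plane. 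The missing computation is $\binom{d+2}{2}-\binom{d-e+2}{2}=\tfrac{e(2d-e+3)}{2}$ (and $=\binom{d+2}{2}$ when $d<e$), which one must check is $\ge 4$ for every $e\ge 1$ once $d\ge 3$; the critical case is $e=1$, where the quantity equals $d+1$. Your parenthetical claim that $d\ge 2$ already suffices is false and is a symptom of the gap: for $d=2$ and $C_0$ a line, $\vartheta(C_0)$ is a conic, which does lie in a $2$-plane. Any correct argument must therefore use $d\ge 3$ somewhere, and yours never does.

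For comparison, the paper's proof goes a completely different way: it computes the arithmetic genus of $C_0$ twice by the degree--genus formula for plane curves, once from the degree-$d_0$ embedding $C_0\subset L$ and once from the putative degree-$d_0d$ embedding $\vartheta(C_0)\subset L'$, and derives the numerical contradiction $d_0(d-1)(d_0(d+1)-3)=0$. If you repair the dimension count above, your route becomes a legitimate (and sharper) alternative, since it isolates exactly where $d\ge 3$ is needed.
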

\begin{proof}
We can assume $C$ is an irreducible curve in order to prove the lemma. After a linear change of coordinates in $\P^n_k$, we may assume that $\P^n_k = \P(V)$ and $L = \P(W)$, where $V$ is an $(n+1)$-dimensional $k$-vector space with a basis $\{ x_0, \cdots, x_n\}$, and $W = {\rm Span}_k \{ x_0, x_1, x_2 \}$ is a subspace of $V$.
 For any closed embedding $f : C \inj \P^m_k$, we let $d_f(C)$ denote the degree of $C$ under $f$.
  
Let $\iota : C \inj L$ be the closed embedding as given in the assumption of the lemma. Let $d_0:= d_{\iota} (C)  \ge 1$. Since $L$ is linear in $\P^n_k$, the degree of $C$ under the composite of the embeddings $C \inj L \inj \P^n_k$ is also $d_0$.

Toward contradiction, suppose that there is a $2$-dimensional linear subspace $L' \subset \P^N_k$ such that $\vartheta(C) \subset L'$, where $\vartheta : \P^n_k \inj \P^N_k$ is the $d$-uple Veronese embedding with $d \ge 3$. We denote the resulting embedding $C \inj L'$ by $\vartheta|_C$.

By our choice of the embedding $L \inj \P^n_k$, we have a commutative diagram
\begin{equation}\label{eqn:Plane-case-0}
  \xymatrix@C.8pc{
    C \ar@{^{(}->}[r]^-{\iota} \ar@{^{(}->}[dr] &
    L \ar@{^{(}->}[r] \ar@{^{(}->}[d]^-{\vartheta'} & \P^n_k 
\ar@{^{(}->}[d]^-{\vartheta} \\
    & M \ar@{^{(}->}[r] & \P^N_k,}
\end{equation}
where $M \cong \P^r_k$ (with $r = {(d+1)(d+2)}/2 - 1$). 
The horizontal arrows in the right
square are linear embeddings
and the vertical arrows are the $d$-uple Veronese embeddings.

The linearity of the inclusion $L' \inj \P^N_k$ implies that $d_{\vartheta|_C}(C)$ coincides with the degree of $C$ under the composite closed embedding $C \inj L' \inj \P^N_k$. By the same argument, the degree of $C$ for this composite embedding coincides with the degree of $C$ for the composite embedding $C \inj L \inj M$. Since $\vartheta'$ is the $d$-uple Veronese embedding, it follows that the degree of $C$ for the latter composite embedding is $d_0 d$. We conclude that $d_{\vartheta|_C}(C) = d_0 d$.

If we now apply the degree-genus adjunction formula for plane curves to the embedding $\iota$, we get $g_a(C) = \frac{(d_0-1)(d_0-2)}{2}$, where $g_a(C)$ is the arithmetic genus of $C$. The same formula for the embedding $\vartheta|_C$ yields $g_a(C) = \frac{(d_0 d-1)(d_0 d-2)}{2}$. 

Hence $(d_0 -1)(d_0 -2) = (d_0 d -1) (d_0 d -2)$, i.e. $d_0 ^2 (d^2 -1) - 3 d_0 (d-1) = 0$. This factors into 
\begin{equation}\label{eqn:arith g}
d_0 (d-1) (d_0 (d+1) -3) = 0.
\end{equation}
 Since $d_0 \geq 1$ and $d \geq 3$, the left hand side of \eqref{eqn:arith g} is $\geq 1 \cdot 2 \cdot (1 \cdot 4 - 3) > 0 $, so that the equality of \eqref{eqn:arith g} cannot hold, thus a contradiction. This proves the lemma.
\end{proof}

An analogue of Lemma~\ref{lem:Plane-case} in higher dimensions is the following.

\begin{lem}\label{lem:Higher-dim}
Let $\iota : X \inj\P^n_k$ be a reduced projective scheme of pure dimension $r \ge 2$. Assume that the degree of each irreducible component of $X$ in $\P^n_k$ is at least two. Let $\Sigma \subset X$ be a finite set of closed points. For an integer $d \ge 1$, let $\vartheta : \P^n_k \inj \P^N_k$ be the $d$-uple Veronese embedding.

Then for all sufficiently large $d \geq 3$, (depending on $X, \Sigma, n$ and the degrees of the irreducible components of $X$ in $\P^n_k$), 
a general intersection $H_1 \cap \cdots \cap H_{r-1} \cap \vartheta(X)$ of $X$ with hyperplanes $H_i$'s in $\Gr_{\Sigma}(N-1, \P^N_k)(k)$ is a reduced curve, none of whose irreducible component is contained in a $2$-dimensional linear subspace of $\P^N_k$.
\end{lem}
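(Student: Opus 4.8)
The plan is to reduce Lemma~\ref{lem:Higher-dim} to the curve case, i.e.\ to Lemma~\ref{lem:Plane-case}, via an inductive slicing argument using Bertini-type theorems, keeping track of the fact that we may prescribe the slicing hyperplanes to pass through the finite set $\Sigma$ (the moduli space $\Gr_{\Sigma}(N-1,\P^N_k)$ is still a projective space of dimension $N-1-|\Sigma|$, and for $N\gg 1$ all generic Bertini-type properties can be achieved inside it). First I would fix a Veronese embedding $\vartheta\colon\P^n_k\inj\P^N_k$ of degree $d\ge 3$; the key elementary observation is that the $2$-dimensional linear subspaces of $\P^N_k$ that meet $\vartheta(\P^n_k)$ are exactly the images of $2$-dimensional \emph{linear} subspaces of $\P^n_k$ (a plane in $\P^N_k$ cutting out a curve on $\vartheta(\P^n_k)$ pulls back to a subvariety of $\P^n_k$ which, being of degree $\le 2$ under the $d$-uple embedding and hence of degree $\le 2/d<1$ unless it is $0$-dimensional\,--\,or more carefully, a curve whose $d$-uple image spans only a plane must itself be planar by the degree computation already carried out in the proof of Lemma~\ref{lem:Plane-case}). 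So the statement ``no irreducible component of $H_1\cap\cdots\cap H_{r-1}\cap\vartheta(X)$ lies in a plane of $\P^N_k$'' will follow once we know that no irreducible component of the corresponding slice of $X$ taken by the preimages of the $H_i$ lies in a $2$-plane of $\P^n_k$, together with Lemma~\ref{lem:Plane-case} applied to the resulting reduced curve.

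Next I would run the induction on $r$. For the base case $r=2$: choosing a general $H_1\in\Gr_{\Sigma}(N-1,\P^N_k)(k)$, the Bertini theorem of \cite{KA} (or \cite{Jou}), applied to $\vartheta(X)$ intersected with the linear system of hyperplanes through $\Sigma$ (which is very ample away from $\Sigma$, and $\Sigma$ is finite so it does not interfere with genericity when $N$ is large), guarantees that $H_1\cap\vartheta(X)$ is reduced, and its pullback $\iota^{-1}(\vartheta^{-1}(H_1))\cap X$ is a reduced curve $C\subset\P^n_k$; we also arrange, again by a dimension count over the incidence variety as in the proof of Lemma~\ref{lem:fs-open-1}, that no irreducible component of $C$ is contained in a $2$-plane of $\P^n_k$\,--\,this uses that $\dim X = r\ge 2$ together with the hypothesis that each component of $X$ has degree $\ge 2$, so each component of $X$ is nondegenerate in the $\P^N_k$-span of its affine hull and a generic hyperplane slice cannot collapse it into a plane (here Proposition~\ref{prop:GPTcol}, the linear general position theorem, or directly the argument via strange curves, is the relevant input). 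Then Lemma~\ref{lem:Plane-case} applied to $C$ (with the same Veronese embedding) finishes the case $r=2$. For the inductive step $r>2$: pick $H_1$ general in $\Gr_{\Sigma}(N-1,\P^N_k)(k)$; by Bertini $\vartheta(X)\cap H_1$ is reduced of pure dimension $r-1$, every component still has degree $\ge 2$ (a general hyperplane section of an integral variety of degree $\delta\ge 2$ in a projective space of large dimension is again integral of degree $\delta$ by Bertini irreducibility, and $\ge 2$ in any case), it is non-degenerate in a suitable large projective space, and it still contains $\Sigma$ (or rather $\Sigma\cap H_1$, which we may arrange to be all of $\Sigma$ by forcing $H_1$ through $\Sigma$). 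Apply the inductive hypothesis to this slice with the induced embedding.

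The main obstacle I expect is the bookkeeping around \emph{degrees of components and non-degeneracy being preserved under iterated hyperplane sections}, together with making all of this compatible with the constraint that the hyperplanes pass through $\Sigma$: a generic hyperplane section of a non-degenerate variety in $\P^M_k$ is non-degenerate in the hyperplane $\cong\P^{M-1}_k$, but forcing the hyperplane through the finite set $\Sigma$ removes only a codimension-$\le|\Sigma|$ condition, which is harmless for $N\gg 1$ but must be stated carefully; likewise Bertini's irreducibility/reducedness statements for linear systems with assigned base points need the ambient dimension large, which is exactly why the lemma allows us to pass to a sufficiently large Veronese degree $d$. A second, more delicate point is ruling out the ``degenerate into a $2$-plane'' phenomenon at every stage: the clean way is to do it only once, at the very last slicing step (when we reach a curve), via Lemma~\ref{lem:Plane-case}, rather than inductively\,--\,i.e.\ the induction should only propagate ``reduced, pure dimension $r-1$, each component of degree $\ge 2$, non-degenerate,'' and the planarity obstruction is handled purely at the curve level. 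I would organize the write-up so that the induction carries these four easy-to-preserve properties, and invoke Lemma~\ref{lem:Plane-case} (via the degree-genus computation already done there) plus Proposition~\ref{prop:GPTcol} exactly at the end. Choosing $d$ large enough to make the relevant Bertini statements and the incidence-variety dimension counts valid simultaneously\,--\,this is where the quantifier ``for all sufficiently large $d$, depending on $X,\Sigma,n$ and the component degrees'' comes from\,--\,completes the proof.
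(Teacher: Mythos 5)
There is a genuine gap. Lemma \ref{lem:Plane-case} only treats curves that \emph{are} contained in a $2$-plane of $\P^n_k$: its proof is an equality computation ($g_a(C)$ equals the plane-curve genus for \emph{both} embeddings), and that computation is simply unavailable for a non-planar curve. But the curve you actually produce by slicing, $C=\vartheta^{-1}(H_1\cap\cdots\cap H_{r-1}\cap\vartheta(X))$, is a degree-$d$ hypersurface section (iterated) of an $r$-dimensional variety and is in general \emph{not} planar in $\P^n_k$ --- indeed you spend part of the argument arranging exactly that --- and then you invoke Lemma \ref{lem:Plane-case} on it anyway, which is a contradiction in your own setup. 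The bridge you offer, ``a curve whose $d$-uple image spans only a plane must itself be planar,'' is the real content of the lemma and is asserted, not proved: it does not follow from ``the degree computation already carried out in the proof of Lemma \ref{lem:Plane-case},'' because that computation presupposes planarity of the source. To prove it you need an \emph{inequality}: an upper bound for $g_a(C)$ in terms of its degree $e$ in $\P^{n}_k$ (Castelnuovo's bound for the nondegenerate span, or at worst $(e-1)(e-2)/2$), to be played off against the plane-curve genus $(de-1)(de-2)/2$ forced by a planar image of degree $de$. That is precisely the paper's argument: Bertini gives a reduced $C$ through $\Sigma$ of degree $sd^{r-1}$ in $\P^n_k$ and $sd^r$ in $\P^N_k$; the hypotheses $\dim X\ge 2$ and component degrees $\ge 2$ force $C$ to be nondegenerate in some $\P^{n'}$ with $n'\ge 3$; Castelnuovo gives $2g_a(C)\le (sd^{r-1}-1)^2$, which is strictly smaller than $(sd^r-1)(sd^r-2)$ for $d\ge 3$. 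None of this appears in your proposal.

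Two further points. First, your ``key elementary observation'' is false: the Veronese image of a $2$-plane of $\P^n_k$ is a Veronese surface, not a linear subspace of $\P^N_k$, and $2$-planes of $\P^N_k$ meeting $\vartheta(\P^n_k)$ have nothing to do with planes downstairs; the first version of your degree remark ($\le 2/d<1$) is not meaningful. Second, the induction on $r$ and the effort to show the slice is non-planar \emph{in $\P^n_k$} (for which Proposition \ref{prop:GPTcol} is in any case the wrong tool --- it concerns collinearity of finitely many points, not planarity of curves) are unnecessary detours: once the genus inequality above is in hand, planarity of the source curve in $\P^n_k$ is irrelevant, and a single application of Bertini to cut down to a reduced curve through $\Sigma$ suffices, exactly as in the paper.
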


\begin{proof}
By the Bertini theorems of Kleiman-Altman \cite[Theorem~1]{KA}, an intersection of $\vartheta(X)$ with $(r-1)$ general hyperplanes containing $\Sigma$ in a large enough $d$-uple Veronese embedding $\vartheta$ is a curve $C$, whose intersection with every irreducible component of $\vartheta(X)$ is again irreducible. Since $k$ is perfect and $X$ is reduced, it is actually geometrically reduced. It follows therefore from the Bertini theorem of Jouanolou \cite[Th{\'e}or{\`e}me 6.3]{Jou} that $C$ can be chosen to be reduced.

Let $X_1, \ldots , X_t$ be the irreducible components of $X$ and let $C_1, \ldots , C_t$ denote the irreducible components of $C$.

Let $s_i$ be the degree of $X_i$ in $\P^n_k$ so that the degree of $X$ in $\P^n_k$ is $s = \sum_{i=1} ^r s_i$ (see \cite[Proposition~I-7.6, p.52]{Hartshorne}). Let $C = H_1 \cap \cdots \cap H_{r-1} \cap \vartheta(X)$ be as above. Let $d_{\iota}(C_i)$ denote the degree of $C_i$ in $\P^n_k$ via the inclusion $\iota: C \inj X \inj \P^n_k$ and let $d_{\vartheta}(C_i)$ denote the degree of $C_i$ in $\P^N_k$. Each of the hyperplanes $H_1, \ldots , H_{r-1} \subset \P^N_k$ restricts to a unique hypersurface of degree $d$ in $\P^n_k$. Since these hyperplanes are sufficiently general, an elementary degree computation shows that $d_{\iota}(C_i) = d^{r-1}s_i$ and $d_{\vartheta}(C_i) = d^rs_i$ for each $1 \le i \le t$. We need to show that if $d$ is sufficiently large, then each $C_i = H_1 \cap \cdots \cap H_{r-1} \cap \vartheta(X_i)$ is not contained in a $2$-dimensional linear subspace of $\P^N_k$.
To show this, we can assume that $X$ and $C$ are irreducible.
In particular, $d_{\iota}(C) = sd^{r-1}$ and $d_{\vartheta}(C) = sd^r$.

We shall prove our assertion as an application of Castelnuovo's bound for the genus of curves. Let $3 \le n' \le n$ be the smallest integer such that $X \subset \P^{n'}_k \subset \P^n_k$, where the first embedding is nondegenerate and the second embedding is linear. Note that the lower bound on $n'$ is forced by our assumption on the lower bounds of the dimension of $X$ and its degree in $\P^n_k$. 

Since $H_1, \ldots , H_{r-1}$ restrict to general hypersurfaces of degree $d$ in $\P^n_k$, we see that they restrict to hypersurfaces of the same degree in $\P^{n'}_k$. Since a hypersurface (of degree at least two) section of a nondegenerate closed subvariety of a projective space is necessarily nondegenerate (looking at the homogeneous coordinate rings), we conclude that the composite embedding $C \inj X \inj \P^{n'}_k$ is also nondegenerate. Furthermore, the degrees of $X$ and $C$ inside $\P^{n'}_k$ are the same as their respective degrees inside $\P^n_k$.

Let $m \ge 1$ and $0 \le \epsilon < n'-1$ be two integers such that $sd^{r-1} - 1 = m(n'-1) + \epsilon$. 
It follows from Castelnuovo's bound on the arithmetic genus (see \cite[Chapter~3]{EH}, \cite[Ch III, p.116]{ACGH}, and see \cite[Remark following Lemma~2.1]{Ballico} for singular curves) of $C$ that
\begin{equation}\label{eqn:Higher-dim-0}
g_a(C) \le \frac{(n'-1)m(m-1)}{2} + m \epsilon.
 \end{equation}

Since $n'-1 \ge 2$ and $d$ is sufficiently large, we can assume $m < sd^{r-1} - 1$. We thus get

\begin{equation}\label{eqn:Higher-dim-1}
\begin{array}{lll} 
2 g_a(C) & \le & (n'-1)m(m-1) + 2m \epsilon  <  (n'-1)m(m-1) + 2m(n'-1) \\
& = & (n'-1)m(m+1)  \le  (sd^{r-1} - 1)(sd^{r-1} -1) = (sd^{r-1} -1)^2.
\end{array}
\end{equation}

Now toward contradiction, suppose that inside $\P^N_k$, the curve $C$ is contained in a $2$-dimensional linear subspace $L \subset \P^N_k$. Since $d_{\vartheta}(C)$ is equal to the degree of $C$ inside $L$, the degree-genus adjunction formula for the
embedding $C \inj L \cong \P^2_k$, yields
$2g_a(C) = (sd^r-1)(sd^r -2)$. Note that if we let $e':= sd^{r-1} -1$, then
\begin{equation}\label{eqn:Higher-dim-2}
\begin{array}{lll}
2g_a(C) & = & (sd^r-1)(sd^r -2) = (d (sd ^{r-1} -1) + d -1) ( d (sd ^{r-1} -1) + d -2) \\
& = & d^2 (e')^2 + (2d-3) d (e') + (d-1)(d-2),
\end{array}
\end{equation}
and because $d  \geq 3$ and $s>0$, we have $2g_a (C) >  (e')^2  + e'  + 0 \geq (e')^2. $

On the other hand, from ~\eqref{eqn:Higher-dim-1} we had $2g_a(C) \leq (e')^2$. This is a contradiction.  
\end{proof}

We now present the aforementioned improvement of Lemma \ref{lem:AD-open-refined-r=1}:

\begin{lem}\label{lem:Non-collinear}
Let $X \inj \P^N_k$ and $x \in X_{\rm fs}$ be as in Lemma \ref{lem:AD-open-refined-r=1}. After replacing $\P^N_k$ by a bigger projective space via a Veronese embedding, there exists a dense open $\mathcal{U}_S \subset \Gr_x(N-1, \P^N_k)$ such that every $L \in \mathcal{U}_S(k)$ satisfies the following.
\begin{enumerate}
\item The conditions (1) $\sim$ (4) of Lemma \ref{lem:AD-open-refined-r=1}.
\item No three points of $L \cap X = \{x = c_0, c_1, \ldots , c_d\}$ are collinear.
\end{enumerate}
\end{lem}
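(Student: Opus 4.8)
The plan is to combine Lemma \ref{lem:AD-open-refined-r=1} with the non-collinearity results established just above, exploiting the fact that the non-collinearity conclusion is stable under passing to a general member of the Grassmannian.

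First I would reduce to the case where every irreducible component of $X$ has degree at least two in $\P^N_k$. This is arranged exactly as in Lemma \ref{lem:AD-open-refined-r=1}: after replacing $\P^N_k$ by a sufficiently large $d$-uple Veronese reimbedding (with $d\ge 3$), the degree of each component of $X$ in the new ambient space is $(\deg)\cdot d \ge 3$, and moreover by Lemma \ref{lem:Higher-dim} (applied with $\Sigma$ enlarged to contain $x$ together with the finite set $S$, which is harmless since $r\ge 1$ and in the curve case $r=1$ we can instead invoke Lemma \ref{lem:Plane-case} directly on the components of $X$) we may assume that no irreducible component of $X$ is contained in a $2$-dimensional linear subspace of $\P^N_k$. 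In particular, each component of $X$ is nondegenerate inside the smallest linear subspace of dimension $\ge 3$ containing it. This reimbedding does not affect the hypotheses of the Set-up $+(\fs)$: $X_{\rm fs}$, $\Sigma$, and the \fs-property of $Z \to X$ are all preserved, since Veronese embeddings are isomorphisms onto their images.

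Next I would apply Lemma \ref{lem:AD-open-refined-r=1} (in the new embedding) to obtain a dense open $\mathcal{U}_1 \subset \Gr_x(N-1,\P^N_k)$ such that every $L \in \mathcal{U}_1(k)$ satisfies conditions (1)$\sim$(4) there; in particular $L \cap X = \{x=c_0,c_1,\dots,c_d\}$ consists of $d+1$ distinct closed points. It then remains to intersect $\mathcal{U}_1$ with the locus of hyperplanes $L$ through $x$ for which no three of the points of $L\cap X$ are collinear, and to show this locus is dense open and nonempty in $\Gr_x(N-1,\P^N_k)$. For density/openness, I would set up the standard incidence argument: consider the variety of pairs (an unordered triple of distinct points $\{p_0,p_1,p_2\}$ of $X$ with $p_0,p_1,p_2$ collinear, a hyperplane $L \in \Gr_x(N-1,\P^N_k)$ containing all three). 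Since each component of $X$ is nondegenerate in a projective space of dimension $\ge 3$, the general position theorem — specifically Lemma \ref{lem:GPTcol} and its hyperplane-section form, applied componentwise to the curve $L'\cap X$ obtained by a prior generic choice — shows that a general hyperplane through $x$ meets each component of $X$ in points no three of which are collinear. The only subtlety is the requirement that $L$ pass through the fixed point $x$: this cuts out a linear subspace $\Gr_x(N-1,\P^N_k)\cong \P^{N-1}$ of $\Gr(N-1,\P^N_k)\cong (\P^N_k)^\vee$, and one must check that the bad locus (hyperplanes through $x$ making some three hyperplane-section points collinear) is still a proper closed subset. This follows because $N\gg 0$: the dimension count in Lemma \ref{lem:GPTcol}/Lemma \ref{lem:AD-open-refined-r=1} loses only a bounded amount when we restrict to hyperplanes through one extra point, so the bad locus has codimension $\ge 1$ in $\Gr_x(N-1,\P^N_k)$ once $N$ is large enough. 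Taking $\mathcal{U}_S := \mathcal{U}_1 \cap (\text{non-collinear locus})$ gives the desired dense open set, and any $L \in \mathcal{U}_S(k)$ satisfies both (1) and (2).

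The main obstacle I anticipate is the last point: verifying that the non-collinearity locus remains dense \emph{after} imposing passage through the fixed point $x$. Away from $x$ this is just the General Position Theorem, but the forced base point $x$ lies \emph{on} $X$, so one of the three collinear points could always be $x$ itself, and one must rule out that $x$ is "strange" in the sense that too many secant lines through $x$ meet $X$ again along a locus that every hyperplane through $x$ is forced to contain. This is precisely where the nondegeneracy guaranteed by the Veronese reimbedding (via Lemmas \ref{lem:Plane-case} and \ref{lem:Higher-dim}) does the work: it prevents $X$ (or its hyperplane-section curve) from being contained in a plane, so the tangent/secant lines through $x$ cannot all be coplanar, and the incidence correspondence has the expected dimension. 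Once that is in place, the intersection of finitely many dense open subsets of the irreducible space $\Gr_x(N-1,\P^N_k)$ is again dense open, completing the proof.
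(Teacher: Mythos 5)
Your proposal is correct and follows essentially the same route as the paper: the paper's proof likewise reduces, via a Veronese reimbedding and Lemma \ref{lem:Plane-case}, to the case where no component of $X$ lies in a $2$-dimensional linear subspace, takes $\mathcal{U}_S$ from Lemma \ref{lem:AD-open-refined-r=1} for condition (1), and obtains condition (2) by citing Proposition \ref{prop:GPTcol}. The base-point subtlety you flag (general position among hyperplanes constrained to pass through $x$) is genuine, but the paper addresses it no more explicitly than you do — it simply invokes Proposition \ref{prop:GPTcol} — so your incidence-variety discussion is an elaboration of the same step rather than a different argument.
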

\begin{proof}
Suppose first that $X$ does not lie inside any 2-dimensional linear subspace of $\P^N_k$.
In this case, we choose $\mathcal{U}_S$ just as in Lemma \ref{lem:AD-open-refined-r=1} so that (1) holds. The condition (2) holds by Proposition \ref{prop:GPTcol}. Hence the lemma is proven in this case.

Suppose now that $X$ lies inside a 2-dimensional linear space of $\P^N_k$. In this case, we choose a suitable Veronese embedding $\P^N_k \inj \P^{N'}_k$ such that the image of each irreducible component of $X$ does not lie in any $2$-dimensional linear subspace of $\P^{N'}_k$ applying Lemma \ref{lem:Plane-case}. Then after re-embedding if necessary, we have a nonempty open subset $\mathcal{U}_S \subset \Gr_x(N'-1, \P^{N'}_k)$ such that the conditions (1) $\sim$ (4) of Lemma \ref{lem:AD-open-refined-r=1} hold. 

In doing so, we can make sure that $X$ is nondegenerate in a projective space of dimension at least $3$. Then the condition (1) holds by the choice of $\mathcal{U}_S$, while the condition (2) holds by Proposition \ref{prop:GPTcol}.
This proves the lemma. 
\end{proof}

The following result generalizes Lemma ~\ref{lem:Non-collinear} to higher dimensional $r \ge 1$.

\begin{lem}\label{lem:AD-open-refined-0}
Let $k$ be an algebraically closed field. Suppose $r\geq1$. We are under the Set-up $+ (\fs)$ of \S \ref{sec:Set-up2}. Let $x \in X_{\rm fs}$ be a closed point and let $S \subset X \setminus \{ x \}$ be another finite set of closed points. 

After replacing $\mathbb{P}_k ^N$ by a bigger projective space via a Veronese embedding if necessary, we have the following property: given any hyperplane $H_0 \subset \mathbb{P}^N_k$ disjoint from $S \cup \{ x \}$ and a general $L_0 \in \Gr ^{\tr} _{S \cup \{x\}}(H_0, N-r+1, \mathbb{P}_k ^N)(k)$, there exists a dense open subset $\mathcal{U}_S \subset \Gr_x ^{\tr} (L_0, N-1, \mathbb{P}_k ^N)$ such that each $L \in \mathcal{U}_S(k)$ satisfies the following.
\begin{enumerate}
\item $L \cap L_0 \cap ( (X \setminus X_{\rm fs}) \cup S)= \emptyset$.
\item $L\cap L_0$ intersects $X_{\fs}$ transversely.
\item $L \cap L_0 \cap X$ has $(d+1)$-distinct closed points $c_0= x, c_1, \ldots, c_d$.
\item $Z$ is regular at all points lying over $\{ c_1, \ldots, c_d\}$. In particular, each component $Z_i$ does not meet other irreducible components at points lying over $\{ c_1, \ldots, c_d \}$.
\item $L_0 \cap X$ is an equidimensional reduced curve none of whose irreducible component lies inside a $2$-dimensional linear subspace of $\P^N_k$.
\item No three points of $L \cap L_0 \cap X = \{x = c_0, c_1, \ldots , c_d\}$ are collinear.
\end{enumerate}
\end{lem}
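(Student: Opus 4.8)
The strategy is to reduce the higher-dimensional statement to the curve case of Lemma~\ref{lem:Non-collinear} by slicing with the generic member $L_0$ of an appropriate family of $(N-r+1)$-dimensional linear subspaces, and then applying a fibered version of the hyperplane argument on the resulting curve. First I would fix, once and for all, a Veronese re-embedding $\mathbb{P}^N_k \hookrightarrow \mathbb{P}^{N'}_k$ of sufficiently high degree $d \geq 3$, chosen so that Lemma~\ref{lem:Higher-dim} applies: that is, so that a general intersection of $\vartheta(X)$ with $r-1$ hyperplanes through $\Sigma$ (and in particular through $x$) is a reduced curve, no irreducible component of which lies in a $2$-dimensional linear subspace. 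This is where I invoke Lemma~\ref{lem:Higher-dim} — the degree must be large enough in terms of $X$, $\Sigma$, $n$, and the component degrees. After this re-embedding, I rename $N' $ back to $N$.

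Next, I would construct the parameter $L_0$. Consider the incidence variety over $\Gr^{\mathrm{tr}}_{S \cup \{x\}}(H_0, N-r+1, \mathbb{P}^N_k)$ whose fiber over $L_0$ records the data of $L_0 \cap X$, $L_0 \cap X_{\mathrm{sing}}$, $L_0 \cap (X \setminus X_{\mathrm{fs}})$, and the points of $Z$ lying over $L_0 \cap X$. The key dimension counts are: $\dim(X \setminus X_{\mathrm{fs}}) \leq r-1$ and $\dim(\widehat f(Z_{\mathrm{sing}})) \leq r-1$ (the latter because the projection $Z \to X$ is fs over the dense $X_{\mathrm{fs}}$, so $Z_{\mathrm{sing}}$ maps to a proper closed subset of each component). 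A transverse $(N-r+1)$-plane $L_0$ through $S \cup \{x\}$ avoiding $H_0$ will, for general choice, meet each of these loci only where forced (and $L_0 \cap X$ will avoid them away from $x$, since $x$ is forced in). By Lemma~\ref{lem:fs-open-2} applied inductively — slicing one hyperplane at a time — a general $L_0$ gives $C := L_0 \cap X$ a reduced curve transverse to $X_{\mathrm{fs}}$, disjoint from $X_{\mathrm{sing}}$ away from nothing (since it meets $X$ in $X_{\mathrm{fs}}$ near $x$ and avoids $S$), and by the choice of Veronese re-embedding via Lemma~\ref{lem:Higher-dim}, no component of $C$ lies in a $2$-plane. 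This establishes property~(5) for a general $L_0$. Crucially, since $Z$ is regular over the generic slice, one can arrange that over the generic point of $C$ the scheme $Z$ is regular; then a further general choice of $L_0$ pushes the locus of $C$ over which $Z$ is singular (or components of $Z$ collide) to be a proper closed subset, i.e. finitely many points, which we fold into the bad set.

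With $L_0$ fixed, I would then apply Lemma~\ref{lem:Non-collinear} to the curve $C = L_0 \cap X \hookrightarrow \mathbb{P}^N_k$ (or rather to $X$ sliced down; more precisely one works with the ambient $L_0 \cong \mathbb{P}^{N-r+1}_k$ and the curve $C \subset L_0$, with $x \in C_{\mathrm{fs}}$ — note $C_{\mathrm{fs}} = C \cap X_{\mathrm{fs}}$ is dense in $C$ by transversality). The hyperplanes $L$ of $\mathbb{P}^N_k$ containing $x$ and cutting $L_0$ in a hyperplane of $L_0$ are parametrized by $\Gr^{\mathrm{tr}}_x(L_0, N-1, \mathbb{P}^N_k)$, and $L \cap L_0$ is a general hyperplane of $L_0$ through $x$. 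Lemma~\ref{lem:Non-collinear} (possibly after a further Veronese re-embedding, but this was already absorbed above) provides a dense open $\mathcal{U}_S$ in this Grassmannian such that $L \cap L_0 \cap X = L \cap L_0 \cap C = \{x = c_0, c_1, \ldots, c_d\}$ are $d+1$ distinct points, no three collinear, avoiding $(X \setminus X_{\mathrm{fs}}) \cup S$ and $\widehat f(Z_{\mathrm{sing}})$, with $L \cap L_0$ transverse to $X_{\mathrm{fs}}$; this gives (1),(2),(3),(4),(6). For (4) specifically: the $c_i$ with $i \geq 1$ avoid $\widehat f(Z_{\mathrm{sing}})$, hence $Z$ is regular over them, and distinct components of $Z$ cannot meet over a point of $X_{\mathrm{fs}}$ outside $\widehat f(Z_{\mathrm{sing}})$. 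Finally, intersecting $\mathcal{U}_S$ with the general-position open sets coming from the $L_0$-construction and taking the genuinely dense open locus completes the argument.

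\textbf{Main obstacle.} The delicate point is the regularity clause (4) combined with the slicing: one needs $Z$ to remain regular over the points $c_i$ after \emph{two} successive generic choices ($L_0$ then $L$), and it is not a priori obvious that a general codimension-one slice of $X$ preserves enough genericity that regularity of $Z$ over the hyperplane section can still be achieved. The resolution is that $Z_{\mathrm{sing}}$ has image of dimension $\leq r-1$ in $X$, so it meets a general $L_0$ in dimension $\leq r-2$, and the remaining $1$-dimensional slice $L \cap L_0$ can then be chosen to miss it entirely — but bookkeeping the interplay between "general $L_0$" and "general $L$ depending on $L_0$" (as reflected in the phrase "for a general $L_0$, there exists a dense open $\mathcal{U}_S$") requires care with the order of quantifiers, exactly as in Gabber-style presentation arguments. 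A second subtlety is ensuring the single Veronese re-embedding chosen at the outset simultaneously serves Lemma~\ref{lem:Higher-dim} (non-degeneracy of curve components in $\geq 3$-dimensional spaces) and the hypotheses of Lemma~\ref{lem:Non-collinear}; this is handled by taking $d$ large enough for both, which is possible since both are "eventually holds" conditions on $d$.
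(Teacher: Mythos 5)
Your proposal follows essentially the same route as the paper: slice $X$ by a general $L_0$ obtained as an intersection of $r-1$ general hyperplanes through $S\cup\{x\}$ (after a Veronese re-embedding large enough for Lemma~\ref{lem:Higher-dim}, which gives property (5) and makes the curve case applicable without further re-embedding), form the bad set $S'$ on the curve $C=L_0\cap X$, apply the curve-case Lemma~\ref{lem:Non-collinear}, and pull back along $L\mapsto L\cap L_0$. The only quibble is that the reducedness and genericity of $C$ come from the Kleiman--Altman and Jouanolou Bertini theorems rather than from Lemma~\ref{lem:fs-open-2}, but this does not affect the correctness of the argument.
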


\begin{proof}
  In case $r=1$, we have $\Gr (N-r+1, \mathbb{P}_k ^N) = \Gr (N, \mathbb{P}_k ^N) = \{ \mathbb{P}_k ^N \}$ so that $L_0 = \mathbb{P}_k ^N$ and Lemma \ref{lem:AD-open-refined-0} follows from Lemmas ~\ref{lem:Plane-case} and ~\ref{lem:Non-collinear}. Hence we may assume $r \geq 2$. Let $X_1, \ldots , X_t$ be the irreducible components of $X$.

We saw in the proof of Lemma~\ref{lem:Higher-dim} that the Bertini theorems of Kleiman-Altman \cite[Theorem~1]{KA} and Jouanolou \cite{Jou} imply that an intersection of $X$ with $(r-1)$ general hyperplanes containing $S \cup \{ x \}$ in a large enough Veronese embedding of $\P^N_k$ is a reduced curve $C$ whose intersection with every irreducible component of $X$ is irreducible. This curve $C$ contains $S \cup \{x\}$. We can also ensure that no component of $C$ is contained in $\widehat{f}(Z_{\rm sing}) \cup (X \setminus X_{\fs})$, it is regular at points away from $X_{\rm sing}$, and for each component of $Z |_{C \times \widehat{B}}$, its projection to $ \widehat{B}$ is non-constant.

Hence, after replacing the embedding $\eta: X \inj \P_k^N$ by its composition with a Veronese embedding of $\P_k^N$, we can find an $(r-1)$-tuple of general hyperplanes $(H_1, \ldots , H_{r-1})$, each in $\Gr_{S \cup \{x\}}(N-1, \P_k^N)$, such that the linear subspace $L_0 = H_1 \cap \cdots \cap H_{r-1}$ has the following properties.
\begin{listabc}
\item $L_0$ is transverse to $H_0$.
\item $C = L_0 \cap X$  is a reduced curve none of whose components lies in $\widehat{f}(Z_{\rm sing}) \cup (X \setminus X_{\fs})$.
\item
$C \cap X_i$ is irreducible for each $1 \le i \le t$.
\item $C$ is regular at points away from $X_{\rm sing}$.
\item For each component of $Z |_{C \times \widehat{B}}$, the projection to $\widehat{B}$ is non-constant.
\end{listabc}
 
Let $S' := (C \setminus \{x\}) \cap (\widehat{f} (Z_{\rm sing}) \cup (X \setminus X_{\fs}) \cup S)$, which is a finite closed subset of $C$.

Note from the definition of the degree of the embedding $\eta : X \inj \P_k^N$ that a general hyperplane inside $L_0$ will intersect $C$ at $(d+1)$ distinct closed points. Applying Lemma \ref{lem:Non-collinear} to the curve $C$, the finite set $S'$, and $L_0 \simeq \mathbb{P}_k ^{N-r+1}$ (which is regarded as the ambient projective space for $C$), there exists a dense open subset $\mathcal{U}_{C,S'} \subset \Gr_x (N-r, L_0)$ that satisfies the assertions $(1) \sim (2)$ of Lemma \ref{lem:Non-collinear}. 
Note that as $N \gg r$, the subset $\Gr^{\rm tr}(L_0, N-1, \P_k^N)$ is dense open in $\Gr (N-1, \mathbb{P}_k ^N)$. 

Consider the regular map
\begin{equation}\label{eqn:refined-adm-0-1}
\theta_{L_{0}}: \Gr^{\rm tr} (L_0, N-1, \P_k^N) \to \Gr(N-r, L_0),
\end{equation}
given by $\theta_{L_0}(L) = L \cap L_0.$

One checks that $\theta_{L_0}$ is a surjective smooth morphism of relative dimension $r-1$. Since $\theta_{L_0}$ is a smooth and surjective morphism such that $\theta_{L_0}^{-1}(\Gr_x(N-r, L_0)) = \Gr^{\rm tr}_x(L_0, N-1, \P_k^N)$, we see that $\mathcal{U}_S:=\theta_{L_0}^{-1}(\mathcal{U}_{C, S'})$ is a dense open subset of $\Gr^{\rm tr}_x(L_0, N-1, \P_k^N)$.

We want to show that each $L \in \mathcal{U}_S (k)$ satisfies the desired conditions $(1) \sim (4)$. This is a tautology, but let us write it in detail: suppose $L \in \mathcal{U}_S(k)$, i.e., $\theta_{L_0}(L) \cap S' = \emptyset$ and $\theta_{L_0}(L) = L \cap L_0$ satisfies $(1)\sim (4)$ with $Z$ replaced by $Z|_{C \times \widehat{B}}$. Since $\theta_{L_0}(L) \cap \left( (X \setminus X_{\fs}) \cup S\right)= L \cap (L_0 \cap X) \cap \left( (X \setminus X_{\fs}) \cup S\right) = \theta_{L_0}(L) \cap C \cap \left((X \setminus X_{\fs} ) \cup S\right) \subset \theta_{L_0}(L) \cap S'$, and since $x \in X_{\fs}$, we see that $\theta_{L_0}(L) \cap ( (X \setminus X_{\fs}) \cup S) = \emptyset$, proving (1).

Since $L$ intersects $L_0$ transversely, which in turn intersects $X$ transversely along $X_{\rm sm}$ by (b) and (d) above, we see that $\theta_{L_0}(L)$ intersects $X$ transversely along $X_{\rm sm}$, proving (2). Also, $\theta_{L_0}(L) \cap X = \theta_{L_0}(L) \cap C = \{x=c_0, c_1, \ldots , c_d\}$ with $c_i \neq c_j$ for $i \neq j$, proving (3). Finally, since $(C \cap \widehat{f}(Z_{\rm sing})) \setminus \{x\} \subset S'$ and since $\theta_{L_0}(L) \cap S' = \emptyset$, we see that $Z$ is regular at all points lying over $c_i$ for $1 \le i \le d$, proving (4). 

We now prove (5). First of all, if the degree of any irreducible component of $X$ inside $\P^N_k$ was less than or equal to two, before we do anything else, we first could have replaced $\P^N_k$ by its suitable Veronese embedding so as to ensure that the degree of any irreducible component of $X$ is bigger than $2$. In doing so, we see using Lemma~\ref{lem:Higher-dim} that  the intersection $L_0$ of general $(r-1)$ hyperplanes $H_1, \ldots , H_{r-1}$ lying in $\Gr_{S \cup \{x\}}(N-1, \P^N_k)$ will have the property that $L_0$ will satisfy the above (a) $\sim$ (e), and $L_0 \cap X$ will be a reduced curve none of whose irreducible component is contained in a $2$-dimensional linear subspace of $\P^N_k$. Note that since $X$ is equidimensional and $L_0$ is general, 
the curve $L_0 \cap X$ will have this property too. This proves (5).
The last property (6) is a direct consequence of (5), the condition (2) of Lemma \ref{lem:Non-collinear}, which we already achieved from the beginning, and Proposition \ref{prop:GPTcol}.
\end{proof}

Later, the set $\{c_1, \ldots , c_d\}$ that we obtained in Lemma \ref{lem:AD-open-refined-0} will be taken to be $L^+ (x)$ for $x \in \Sigma$, where $\Sigma$ is the given set of finitely many closed regular points of $X$. 
This means the regularity of $Z$ at points lying over the residual points $L^+ (\Sigma)$. We will come back to this discussion, and it will be finished in Proposition \ref{prop:Main-I}.

\section{Vertical separation of residual fibers}\label{sec:pre-Ad-set}
In this section, we prove some results which we shall need in order to prove the regularity of the residual cycle of $Z$ along $\Sigma$. The main goal is to show that the distinct fibers, of the projection $Z \to X$ to the ``horizontal axis" over the residual points of $\Sigma$ (for a suitable linear projection) are mapped to disjoint sets under the projection $\widehat{g} : Z \to \widehat{B}$ to the ``vertical axis." We call this property of linear projections, \emph{the vertical separation of residual fibers}.
We continue to use the Set-up $+ (\fs)$ of \S \ref{sec:Set-up2}.

\subsection{Separating residual fibers of $Z$ along $\widehat{B}$: the local case}\label{sec:B-sep}

Let $k$ be an algebraically closed field. In Lemma \ref{lem:AD-open-refined-r=1}, under certain assumptions, we found a nonempty open subset of a Grassmannian such that each member $L$ satisfies the properties (1) $\sim$ (4) there. In Lemma \ref{lem:Non-collinear}, after choosing a Veronese reimbedding into a bigger projective space, we achieved an additional non-collinearity of any three points of the hyperplane sections. It was generalized to Lemma \ref{lem:AD-open-refined-0} for $r \geq 1$.

In \S \ref{sec:B-sep}, we want to further strengthen them, by constructing a nonempty open subset for which we have an additional separation property, which will be called the property $(I)$. This is eventually done in Proposition \ref{prop:Ad-open-final-I}.

Up to Lemma \ref{lem:Ad-open-final-I}, we assume the following. We suppose $r=1$. We let $x \in X_{\rm fs}$ be a closed point and let $S \subset X \setminus \{ x \}$ be another finite set of closed points. For any map $W \to X$ and a closed point $y \in X$, let $W_y$ be the reduced fiber of $W$ over $y$. We work under the set-up of Lemma \ref{lem:Non-collinear}, which includes Lemma~\ref{lem:AD-open-refined-r=1}.

Since we want to prove a property called $(I)$ by a kind of double induction argument on the pairs of numbers $(m,n)$ with $0 \le m \le n \le d-1$, we find it convenient to temporarily introduce some intermediate notations.

\begin{defn}\label{defn:refined-admissible}
For $1 \le n \le d-1$ and $0 \le m \le n$, we say that a member $\underline{H} = (H, c_1, \cdots , c_d) \in \Gr_x(N-1, \P_k^N) (k) \times X^d$ is \emph{$(Z, x, m,n)$-admissible}, if $H$ satisfies the properties (1) and (2) of Lemma \ref{lem:Non-collinear} with $H \cap X = \{x = c_0, c_1, \ldots , c_d\}$, together with the additional property:
\begin{equation}\label{eqn:Imn}
(I)_{m,n}:= \tuborg \widehat{g} (Z_{c_i}) \cap \widehat{g}(Z_{c_j}) = \emptyset \ \ \mbox{ for } 0 \leq i \not = j \leq n, \\
\widehat{g} (Z_{c_i}) \cap \widehat{g} (Z_{c_{n+1}}) = \emptyset \ \ \mbox{ for } 0 \leq i \leq m.
\sluttuborg
\end{equation}
We remark that for $n=0$ (thus we have just $(I)_{0,0}$), the first condition of \eqref{eqn:Imn} is empty.
\end{defn}

Before anything else, we note the following elementary fact:

\begin{lem}\label{lem:Ad-open-finite}
The projections $\widehat{f}: Z \to X$ and $\widehat{g}: Z \to \widehat{B}$ are finite and the sets $\widehat{g}( Z_x) \subset \widehat{B}$ and $\widehat{g}^{-1} (\widehat{g} ( Z_x)) \subset Z$ are finite subsets of closed points.
\end{lem}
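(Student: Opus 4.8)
The statement to prove, Lemma~\ref{lem:Ad-open-finite}, asserts that under the Set-up $+(\fs)$ of \S\ref{sec:Set-up2} (with $r=1$), the projections $\widehat{f}\colon Z \to X$ and $\widehat{g}\colon Z \to \widehat{B}$ are finite, and consequently that $\widehat{g}(Z_x)$ and $\widehat{g}^{-1}(\widehat{g}(Z_x))$ are finite sets of closed points. I would first recall, from the Set-up of \S\ref{sec:Set-up}, that $Z \subset X \times \widehat{B}$ is a reduced closed subscheme, each of whose irreducible components $Z_1,\dots,Z_s$ has dimension $r=1$, and that both $X\times F$ and $H\times \widehat{B}$ meet each component properly. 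The key structural input is that $Z\to X$ is \fs\ over the dense open set $X_{\fs}$ by the \fs-property of \S\ref{sec:Set-up2}(2); being \fs\ means finite onto an irreducible component of $X_{\fs}$.

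\textbf{Finiteness of $\widehat{f}$.} First I would argue that $\widehat{f}\colon Z \to X$ is finite. Since $Z$ is closed in $X\times\widehat{B}$ and $\widehat{B}$ is projective, the composite $Z \hookrightarrow X\times\widehat{B} \to X$ is a projective morphism. Hence it suffices to show $\widehat{f}$ is quasi-finite, or equivalently that $\widehat{f}$ has finite fibers; a projective morphism with finite fibers is finite (\cite[Exercise II-4.6, p.106]{Hartshorne}, as used in the proof of Lemma~\ref{lem:finiteness}). For this, reduce to a single component $Z_i$, which is integral of dimension $1$. Over the dense open $X_{\fs}$, the map $Z_i \to X$ is already finite by the \fs-property. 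Over the (finite) complement $X\setminus X_{\fs}$, a fiber of $\widehat{f}|_{Z_i}$ is a closed subscheme of $\{x\}\times\widehat{B}$; if such a fiber were positive-dimensional it would force $Z_i \subset \{x\}\times \widehat{B}$, contradicting that $Z_i$ projects onto an irreducible component of $X$ (which has dimension $r=1 > 0$) — this dominance over a component of $X$ is exactly what \fs\ over the dense open $X_{\fs}$ gives, since $Z_i$ surjects onto a component of $X_{\fs}$, whose closure is a component of $X$. Thus every fiber of $\widehat{f}$ is finite, and $\widehat{f}$ is finite.

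\textbf{Finiteness of $\widehat{g}$ and the last assertions.} For $\widehat{g}\colon Z \to \widehat{B}$, again $Z$ is closed in $X\times\widehat{B}$ and $X$ is projective, so $\widehat{g}$ is projective; it remains to check quasi-finiteness. Work with a component $Z_i$, integral of dimension $1$, on which $Z_i \to X$ is dominant over a component of $X$ (hence $Z_i\to X$ is finite, as just shown) and $Z_i\to\widehat{B}$ is non-constant by the Set-up of \S\ref{sec:Set-up}(3). A non-constant morphism from an irreducible curve to $\widehat{B}$ has $0$-dimensional fibers, so $\widehat{g}|_{Z_i}$ is quasi-finite; ranging over the finitely many components, $\widehat{g}$ is quasi-finite, and being projective it is finite. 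Finally, $Z_x = \widehat{f}^{-1}(x)$ is a finite set of closed points because $\widehat{f}$ is finite; its image $\widehat{g}(Z_x)\subset\widehat{B}$ is therefore a finite set of closed points; and $\widehat{g}^{-1}(\widehat{g}(Z_x))$ is finite because $\widehat{g}$ is finite (each point of the finite set $\widehat{g}(Z_x)$ has finite preimage). I do not anticipate a serious obstacle here; the only mild subtlety is making sure that each component $Z_i$ is indeed dominant over a component of $X$ rather than contracted — but this is immediate from the \fs-property over the dense open $X_{\fs}$ combined with $\dim Z_i = r = \dim X$. The proof is essentially an assembly of Lemma~\ref{lem:finiteness}-type reasoning applied componentwise.
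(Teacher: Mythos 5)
Your proof is correct and follows essentially the same route as the paper's: both deduce finiteness of $\widehat{f}$ and $\widehat{g}$ from projectivity plus quasi-finiteness, the latter coming from the \fs-property over the dense open $X_{\fs}$ (for $\widehat{f}$) and from non-constancy of each $Z_i \to \widehat{B}$ together with $\dim Z_i = 1$ (for $\widehat{g}$), and then read off the finiteness of $\widehat{g}(Z_x)$ and $\widehat{g}^{-1}(\widehat{g}(Z_x))$. You are in fact slightly more explicit than the paper about why no component of $Z$ can be contracted over the finite set $X \setminus X_{\fs}$, which is the only point the paper's one-line argument leaves implicit.
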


\begin{proof}
Note that $\widehat{f}:Z \to X$ is a projective morphism of reduced curves such that its restriction over the dense open subset $X_{\rm fs}$ of $X$ is fs. Hence $\widehat{f}$ is a projective quasi-finite morphism, hence a finite morphism. Since $\widehat{g}$ is a projective morphism from a curve which is non-constant on each component of the source, it must also be finite. Since $Z_x$ is a finite set, as $Z$ is fs over $X_{\rm fs}$ and $x \in X_{\rm fs}$, the lemma now follows.
\end{proof}

Let $V_d \subset X^d$ be the nonempty open subset whose coordinates are all distinct from each other and distinct from $x$ as well. More precisely, this is the complement of the union of all the small diagonals $\Delta_{i,j} \subset X^d$ defined by the equation $y_i = y_j$ for $1 \leq i< j \leq d$ as well as the subschemes given by $y_i = x$ for $1 \leq i \leq d$.
Let $\pi : X^d \to {\rm Sym}^d(X)= X^d/ \mathfrak{S}_d$ be the quotient
map for the action by the symmetric group $\mathfrak{S}_d$ which permutes the
coordinates. Since $\mathfrak{S}_d$ acts freely on $V_d \subset X^d$, the restriction $\pi: V_d \to \pi(V_d)$ is finite {\'e}tale of degree $d!$.

Inside $V_d$, we consider the following subsets of `bad points' that do not satisfy the analogue of the condition $(I)_{m,n}$ for $(y_1, \ldots, y_d) \in V_d$. That is, for $y_0:= x$, let $D_0:= \emptyset$, while for $n\geq 1$, let $D_n \subset V_d$ be the subset of points $(y_1, \ldots, y_d)$ such that $\widehat{g} (Z_{y_i}) \cap \widehat{g} (Z_{y_j}) \not = \emptyset$ for some $0 \leq i \not = j \leq n$ and $G_m^n \subset V_d$ be the subset of points such that $\widehat{g} (Z_{y_i}) \cap \widehat{g} (Z_{y_{n+1}}) \not = \emptyset$ for some
$0 \leq i \leq m$. 

Express $D_n = D_{n,1} \cup D_{n,2}$, where $D_{n,1}$ consists of the points $(y_1, \ldots, y_d) \in V_d$ such that $\widehat{g}(Z_{y_i}) \cap \widehat{g} (Z_{y_j}) \not = \emptyset$ for some $1 \leq i \not = j \leq n$, while $D_{n,2}$ consists of the points $(y_1, \ldots, y_d) \in V_d$ such that $\widehat{g}(Z_{y_0}) \cap \widehat{g} (Z_{y_i}) \not = \emptyset$ for some $1 \leq i \leq n$. We also write $G_m^n = \bigcup_{i=0} ^m G^n_{m,i} $, where $G^n_{m,i}$ consists of the points $(y_1, \ldots, y_d) \in V_d$ such that $\widehat{g}(Z_{y_i}) \cap \widehat{g} (Z_{y_{n+1}}) \not = \emptyset$ for $0 \leq i \leq m$.
We check these `bad sets' are closed:

\begin{lem}\label{lem:DGclosed}
The subsets $D_{n,i}$ for $i=1,2$ and $G^n_{m,i}$ for $0 \leq i \leq m$ are closed subsets of $V_d$. In particular, $D_n$ and $G_m ^n$ are closed subsets of $V_d$.
\end{lem}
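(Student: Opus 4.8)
## Plan for the proof of Lemma \ref{lem:DGclosed}

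The plan is to reduce each of the sets $D_{n,1}$, $D_{n,2}$, $G^n_{m,i}$ to the image of a closed set under a proper (in fact finite) map, and then invoke the fact that proper maps are closed. First I would set up the basic geometric object: over the open locus $V_d \subset X^d$ (or better, after pulling back along a suitable cover so that the $d$ fibered points become honest sections), one has, for each index $i$, the ``universal fiber'' $\mathcal Z_i$ of $Z \to X$ along the $i$-th coordinate — concretely, $\mathcal Z_i = (Z \times_X V_d)$ where $Z \to X$ is composed with the $i$-th projection $V_d \to X$, equipped with its reduced structure. By Lemma \ref{lem:Ad-open-finite} the map $\widehat f \colon Z \to X$ is finite, so each $\mathcal Z_i \to V_d$ is finite; similarly $\widehat g \colon Z \to \widehat B$ is finite, so we get a finite map $\mathcal Z_i \to \widehat B \times V_d$ over $V_d$ recording the $\widehat g$-image of the $i$-th residual fiber. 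Write $\mathcal G_i \subset \widehat B \times V_d$ for the (closed, since the map is finite hence proper) image of $\mathcal Z_i$; this is the family $v \mapsto \widehat g(Z_{y_i(v)})$.

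The key step is then: $D_{n,1}$ (resp. $D_{n,2}$, $G^n_{m,i}$) is exactly the set of $v \in V_d$ such that $\mathcal G_i$ and $\mathcal G_j$ have a common point over $v$, for the relevant pairs of indices. That is, $D_{n,1} = \bigcup_{1 \le i < j \le n} p\big((\mathcal G_i \cap \mathcal G_j)\big)$ where $p \colon \widehat B \times V_d \to V_d$ is the projection, and similarly $D_{n,2} = \bigcup_{1 \le i \le n} p(\mathcal G_0 \cap \mathcal G_i)$ and $G^n_{m,i} = p(\mathcal G_i \cap \mathcal G_{n+1})$. Now $\mathcal G_i \cap \mathcal G_j$ is closed in $\widehat B \times V_d$, and the projection $p$ restricted to this intersection is proper: indeed $\mathcal G_i \cap \mathcal G_j$ is closed inside $\mathcal G_i$, and $\mathcal G_i \to V_d$ is finite (being the image of the finite $V_d$-scheme $\mathcal Z_i$, or directly because $\widehat B$ is projective and the map factors as a closed immersion followed by the projection from $\widehat B \times V_d$, which is proper since $\widehat B$ is proper over $k$). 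Hence the image $p(\mathcal G_i \cap \mathcal G_j)$ is closed in $V_d$. A finite union of closed sets is closed, so $D_{n,1}$, $D_{n,2}$, $G^n_{m,i}$ are closed; consequently $D_n = D_{n,1} \cup D_{n,2}$ and $G^n_m = \bigcup_{i=0}^m G^n_{m,i}$ are closed, which is the assertion.

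The only mildly delicate point — the one I would treat carefully rather than wave at — is justifying properness of $\mathcal G_i \to V_d$ (equivalently, that the scheme-theoretic image in $\widehat B \times V_d$ of $\widehat g$ applied fibrewise is closed and finite over $V_d$). This is where one uses that $\widehat B$ is \emph{projective} (part of the Set-up of \S\ref{sec:Set-up}), so that $\widehat B \times V_d \to V_d$ is proper, and that $\widehat g$ is finite by Lemma \ref{lem:Ad-open-finite}; the composite $\mathcal Z_i \hookrightarrow \widehat B \times V_d \to V_d$ is then proper with finite fibres, hence finite, and its image is closed. Everything else is formal: intersections of closed sets are closed, images under proper maps are closed, finite unions of closed sets are closed. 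I do not expect any genuine obstacle here — this is a bookkeeping lemma — but the cleanest exposition is to phrase it once via the finiteness of $\widehat f$ and $\widehat g$ and the properness of $\widehat B / k$, rather than checking each of the sets $D_{n,1}, D_{n,2}, G^n_{m,i}$ by hand.
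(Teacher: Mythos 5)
Your proof is correct, and it ultimately rests on the same two inputs as the paper's own argument: the finiteness of $\widehat f$ and $\widehat g$ (hence of $\widehat g(Z_x)$) from Lemma \ref{lem:Ad-open-finite}, together with the fact that images of closed sets under finite (proper) maps are closed. The packaging, though, is genuinely different. The paper works on the $d$-fold products: it exhibits each of $D_{n,1}$, $D_{n,2}$, $G^n_{m,i}$ in a single stroke as $\widehat f^{\times d}\bigl((\widehat g^{\times d})^{-1}(E)\bigr) \cap V_d$ for an explicit closed subset $E \subset \widehat B^d$ (a union of partial diagonals $\{b_i = b_j\}$, or of the loci $\{b_i \in \widehat g(Z_x)\}$), and then pushes forward along the finite map $\widehat f^{\times d}: Z^d \to X^d$. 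You instead work relatively over $V_d$: you form, for each coordinate $i$, the finite $V_d$-scheme $\mathcal Z_i = Z \times_X V_d$ (via the $i$-th projection), take its closed image $\mathcal G_i \subset \widehat B \times V_d$, and recover each bad set as a finite union of projections $p(\mathcal G_i \cap \mathcal G_j)$. Both are valid. Yours trades the paper's single formula for a pairwise decomposition and a union, with the small advantage that the condition indexed by a pair $(i,j)$ is detected using only the $i$-th and $j$-th fibers, whereas the paper's identity implicitly lifts a point of $V_d$ to all of $Z^d$ (harmless here, since $\widehat f$ is finite with dense image, hence surjective). The one point you flag as delicate, properness of $\mathcal G_i \to V_d$, is dispatched exactly as you propose: $\mathcal Z_i \to V_d$ is finite by base change, $\widehat B \times V_d \to V_d$ is proper since $\widehat B$ is projective, so $\mathcal Z_i \to \widehat B \times V_d$ is proper and quasi-finite, hence finite, and its image is closed and finite over $V_d$. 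There is no gap.
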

\begin{proof}
Let $E_{n,1} \subset \widehat{B}^d$ be the subset of points $(b_1, \ldots, b_d)$ such that $b_i = b_j$ for some $1 \leq i \not = j \leq n$. Let $E_{n,2} \subset \widehat{B}^d$ be the subset of points $(b_1, \ldots, b_d)$ such that $b_i \in \widehat{g}(Z_x)$ for some $1 \leq i \leq n$. The set $E_{n,1}$ is certainly closed in $\widehat{B}^d$, while $E_{n,2}$ is closed in $\widehat{B}^d$ because $\widehat{g}(Z_x)$ is finite by Lemma \ref{lem:Ad-open-finite}. One checks that $D_{n,i} = \widehat{f}^{\times d} (( \widehat{g}^{\times d})^{-1} (E_{n,i})) \cap V_d$ for $i=1,2$, where $\widehat{f}^{\times d}: Z^d \to X^d$ and $\widehat{g}^{\times d}: Z^d \to \widehat{B}^d$ are the direct products of $\widehat{f}$ and $\widehat{g}$. Since $\widehat{f}^{\times d}$ is finite by Lemma \ref{lem:Ad-open-finite}, this shows that $D_{n,i}$ is closed in $V_d$.

Similarly, let $J^n _{m,0} \subset \widehat{B}^d$ be the subset of points $(b_1, \ldots, b_d)$ such that $b_{n+1} \in \widehat{g}(Z_x)$. This is closed since $\widehat{g} (Z_x)$ is finite by Lemma \ref{lem:Ad-open-finite}. For $1 \leq i \leq m$, let $J^n_{m,i} \subset \widehat{B}^d$ be the subset of points $(b_1, \ldots, b_d)$ such that $b_i = b_{n+1}$. This is also closed. One checks that $G^n _{m,i} = \widehat{f}^{\times d} (( \widehat{g}^{\times d})^{-1} (J^n_{m,i})) \cap V_d$, and this shows that $G^n _{m,i}$ is closed in $V_d$ for $0 \leq i \leq m$.
\end{proof}

Coming back to the story, we let let $\mathcal{U}_S \subset \Gr_x (N-1, \mathbb{P}_k ^N)$ be the nonempty open 
set of Lemma \ref{lem:Non-collinear}. Let $\mathcal{U}_S \to  {\rm Sym}^d(X)$ be the map given by $L \mapsto \sum_{i=1} ^d [c_i]$, where $L \cap X = \{x = c_0, c_1, \ldots , c_d\}$. By the condition (3) of Lemma \ref{lem:AD-open-refined-r=1}, its image is in $\pi (V_d)$. Define $\mathcal{V}_S$ by the Cartesian diagram
\begin{equation}\label{eqn:Ad-nonempty-0}
\xymatrix@C1pc{
{\mathcal{V}}_{S} \ar[d]_{\psi} \ar[r]^{e} & V_d \ar[d]^{\pi} \\
\mathcal{U}_{S} \ar[r] & \pi(V_d),}
\end{equation}
so that $\psi$ is a finite surjective {\'e}tale map. 
The set $ \mathcal{V}_S \setminus e^{-1} (D_n \cup G^n_m)$ is open in $\mathcal{V}_S$ by Lemma \ref{lem:DGclosed}. Via the open map $\psi$, we define the open subset $\mathcal{U}_{m,n} ^S := \psi ( \mathcal{V}_S \setminus e^{-1} (D_n \cup G^n_m)) \subset \mathcal{U}_S$. This is open in $\Gr_x (N-1, \mathbb{P}_k ^N)$. 

\begin{lem}\label{lem:Ad-open-final-I}
For $0 \leq n \leq d-1$ and $0 \leq m \leq n$, the subset $\mathcal{U}_{m,n} ^{S}  \subset \Gr_x (N-1, \mathbb{P}_k ^N)$ is nonempty. In particular, it is a dense open subset of $\Gr_x(N-1, \P^n_k)$.
\end{lem}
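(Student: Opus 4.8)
The assertion is that $\mathcal{U}^S_{m,n} = \psi(\mathcal{V}_S \setminus e^{-1}(D_n \cup G^n_m))$ is nonempty, and since $\psi$ is open and $\mathcal{U}_S$ is irreducible and dense open in $\Gr_x(N-1,\P^N_k)$, nonemptiness immediately upgrades to density. Because $\psi$ is surjective, it suffices to show $\mathcal{V}_S \setminus e^{-1}(D_n \cup G^n_m) \neq \emptyset$, and since $e: \mathcal{V}_S \to V_d$ and $\pi: V_d \to \pi(V_d)$ are finite surjective \'etale, and $\mathcal{V}_S$ dominates $\mathcal{U}_S$, this reduces to a dimension count: I must show that $\psi(e^{-1}(D_n \cup G^n_m))$ is a proper closed subset of $\mathcal{U}_S$, equivalently (using that $\psi, e, \pi$ are finite) that $D_n \cup G^n_m$ does not contain the image of $\mathcal{V}_S$ in $V_d$, i.e. that a general hyperplane section avoids the bad loci. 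The cleanest route is to induct on the pair $(m,n)$ in lexicographic-type order: the base case $(0,0)$ asks only that $\widehat{g}(Z_x) \cap \widehat{g}(Z_{c_1}) = \emptyset$ for a general $L$, and the inductive step passes from $(m,n)$ to $(m+1,n)$ (increasing $m$) and from $(n,n)$ to $(0,n+1)$ (increasing $n$).

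\textbf{Key steps.} First, I would record that the fibers $Z_x$ for $x \in X_{\fs}$ are finite (Lemma \ref{lem:Ad-open-finite}), so each $\widehat{g}(Z_y)$ is a finite subset of $\widehat{B}$, and the bad sets $D_{n,i}, G^n_{m,i}$ are genuinely closed in $V_d$ (Lemma \ref{lem:DGclosed}). Second, I would analyze the incidence variety governing, say, $G^n_{m,i}$: over $\mathcal{U}_S$ one has the universal family of hyperplane sections, and $G^n_{m,i}$ pulled back to $\mathcal{V}_S$ is the locus where the fiber $Z_{c_i}$ and the fiber $Z_{c_{n+1}}$ have intersecting images under $\widehat{g}$. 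The crucial observation is that the points $c_{n+1}$ (indeed each $c_j$ for $j \ge 1$) move in a positive-dimensional family as $L$ varies in $\mathcal{U}_S$: fixing $L \cap X \supset \{x = c_0, c_1, \ldots, c_m\}$ still leaves freedom to move $c_{n+1}$ along the curve $X$ (or along $L_0 \cap X$ in the higher-dimensional incarnation), because $\Gr_x(N-1,\P^N_k)$ has dimension $N-1 \gg 1$ while the constraint of passing through finitely many additional fixed points of $X$ cuts down the dimension by only finitely many units. Third, for $c_{n+1}$ ranging over a dense subset of the curve $X$ (minus the finitely many excluded points), the set $\widehat{g}(Z_{c_{n+1}})$ sweeps out; the locus of $c_{n+1}$ for which $\widehat{g}(Z_{c_{n+1}})$ meets the \emph{fixed finite set} $\bigcup_{i=0}^m \widehat{g}(Z_{c_i})$ is the preimage under $\widehat{f} \circ (\text{something finite})$ of a finite set, hence finite; so a general $c_{n+1}$ avoids it. Feeding this back through the finite \'etale maps $e$ and $\pi$ shows the corresponding locus in $\mathcal{U}_S$ is a proper closed subset, proving the inductive step for $m \mapsto m+1$. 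The step $n \mapsto n+1$ (starting from $(n,n)$, where all of $\widehat{g}(Z_{c_0}), \ldots, \widehat{g}(Z_{c_n})$ are already pairwise disjoint) is the same argument applied to produce $(I)_{0,n+1}$: we first get $\widehat{g}(Z_{c_{n+1}})$ disjoint from $\widehat{g}(Z_{c_0})$, which is precisely $(I)_{0,n+1}$; then re-running the $m$-induction upgrades it to $(I)_{n+1,n+1}$.

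\textbf{Main obstacle.} The genuine difficulty is making rigorous the claim that $c_{n+1}$ (and the other residual points) really do move in a family rich enough that $\widehat{g}(Z_{c_{n+1}})$ is not forced to meet the finitely many previously chosen $\widehat{g}(Z_{c_i})$. Concretely, one must rule out the degenerate possibility that for \emph{every} hyperplane $L \in \mathcal{U}_S$ through $x$, some residual point $c_j(L)$ is pinned down so that $\widehat{g}(Z_{c_j})$ always hits a fixed fiber --- this is where the non-collinearity of $L \cap X$ (Lemma \ref{lem:Non-collinear}, condition (2)) and the transversality enter, guaranteeing that the residual points are in ``general position'' and sweep the curve $X$ as $L$ varies. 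I would handle this by fixing $c_0 = x$ and $c_1, \ldots, c_m$ (imposing $m+1$ point conditions, feasible since $N \gg 1$) and then exhibiting, via the surjectivity of the map $\mathcal{U}_S \to \operatorname{Sym}^d(X)$ restricted to fibers over partial configurations, that $c_{n+1}$ varies over a dense open subset of $X$; then the finiteness of $\widehat{f}$ and $\widehat{g}$ forces the bad locus of $c_{n+1}$ to be finite. Once this is in place, the pullback/pushforward bookkeeping through the Cartesian square \eqref{eqn:Ad-nonempty-0} is routine, and the induction closes.
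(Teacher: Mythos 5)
Your overall architecture — reduce to nonemptiness via the openness of $\psi$, observe that the bad loci are finite unions of sets of the form $\widehat{f}(\widehat{g}^{-1}(\text{finite set}))$ and hence controlled by the finiteness of $\widehat{f}$ and $\widehat{g}$, and run a double induction on $(m,n)$ with the steps $(n,n)\to(0,n+1)$ and $(m,n)\to(m+1,n)$ — is the same as the paper's. The base case and the step $(n,n)\to(0,n+1)$ are fine (both amount to enlarging $S$ to $T=S\cup\bigl(\widehat{f}(\widehat{g}^{-1}(\widehat{g}(Z_x)))\setminus\{x\}\bigr)$ and intersecting with the resulting open set, which handles every disjointness condition in which one of the two indices is $0$).

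The gap is in the step $(m,n)\to(m+1,n)$. The single new condition to be produced is $\widehat{g}(Z_{c_{m+1}})\cap\widehat{g}(Z_{c_{n+1}})=\emptyset$, and \emph{both} indices here are residual points, neither equal to $x$. Your recipe fixes $c_0,\ldots,c_m$ and moves $c_{n+1}$ against the ``fixed finite set'' $\bigcup_{i=0}^{m}\widehat{g}(Z_{c_i})$; but that only re-proves the second clause of $(I)_{m,n}$, which you already have. The set you actually need $\widehat{g}(Z_{c_{n+1}})$ to avoid is $\widehat{g}(Z_{c_{m+1}})$, and $c_{m+1}$ is \emph{not} among the points you have pinned, so as $L$ varies in your family both $c_{m+1}$ and $c_{n+1}$ move and the finiteness argument does not apply. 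The repair — and this is exactly where the paper spends its effort — is to constrain the family so that $c_{m+1}$ is literally constant while $c_{n+1}$ is non-constant: one restricts to a pencil of hyperplanes containing the line $\ell=\mathrm{Sec}(\{c_0\},\{c_{m+1}\})$, starting from a hyperplane $L_0'$ already in $\mathcal{U}^S_{m,n}\cap\mathcal{U}_T$. The non-collinearity of the triple $\{c_0,c_{m+1},c_{n+1}\}$ (condition (2) of Lemma \ref{lem:Non-collinear}) is used precisely to guarantee that a general member of this pencil does \emph{not} pass through $c_{n+1}$, so that along the (irreducible curve inside the) pencil the coordinate $y_{m+1}$ is constantly $c_{m+1}$ while $y_{n+1}$ sweeps a dense subset of the curve $X$; only then is $\widehat{g}(Z_{c_{m+1}})$ a genuinely fixed finite set and the bad locus $(v\circ u)^{-1}\bigl(\widehat{f}(\widehat{g}^{-1}(\widehat{g}(Z_{c_{m+1}})))\bigr)$ finite. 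Your invocation of non-collinearity is attached to the wrong mechanism (general position of the whole configuration rather than this specific pinning of $c_0$ and $c_{m+1}$ without pinning $c_{n+1}$), so as written the decisive step does not close.
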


\begin{proof}

\textbf{Step 1.} \emph{$\mathcal{U}_{0,0} ^{S} \not = \emptyset$.}

Note that the condition $(I)_{0,0}$ is independent of the choice of an $x$-fixing order on $L \cap X$. Let $T = S \cup \left(\widehat{f}(\widehat{g}^{-1} (\widehat{g}(Z_x))) \setminus \{x\}\right)$. This is a finite closed subset of $X$ by Lemma \ref{lem:Ad-open-finite}. Applying Lemma \ref{lem:AD-open-refined-r=1} to $T$ (in the place of $S$ there), we obtain a dense open subset $\mathcal{U}_T $ of $\mathcal{U}_S \subset \Gr_x(N-1, \P_k^N)$.
On the other hand, the condition (1) (in Lemma \ref{lem:AD-open-refined-r=1}) for $T$ implies that for each $L \in \mathcal{U}_T (k)$, we have $L \cap  \left(\widehat{f}(\widehat{g}^{-1} (\widehat{g}(Z_x))) \setminus \{x\}\right)= \emptyset$, which shows that $\widehat{g} (Z_{c_0}) \cap \widehat{g} (Z_{c_j})= \emptyset$ for each $j \not = 0$ when $L \cap X = \{x = c_0, c_1, \ldots , c_d\}$, for every $x$-fixing order on $L \cap X$. Thus $(I)_{0,0}$ holds, and $\mathcal{U}_T \subset \mathcal{U}_{0,0} ^S$, in particular $\mathcal{U}^{S}_{0,0} \not = \emptyset$.

\vskip .2cm

\textbf{Step 2.} \emph{For $0 \leq n \leq d-2$, if $\mathcal{U}_{n,n} ^S \not = \emptyset$, then $\mathcal{U}_{0, n+1} ^{S} \not = \emptyset$}.

If $\mathcal{U}^{S}_{n,n} \neq \emptyset$, then it is a dense open subset of $\Gr_x(N-1, \P_k^N)$. In particular, for the dense open subset $\mathcal{U}_T \subset \Gr_x (N-1, \mathbb{P}_k ^N)$ of Step 1, 
the intersection $\mathcal{U}_{n,n} ^{S} \cap \mathcal{U}_T$ is dense open in $\Gr_x(N-1, \P_k^N)$. But, by definition, one notes that $\mathcal{U}_{n,n} ^{S} \cap \mathcal{U}_T \subset \mathcal{U}_{0, n+1} ^{S}$ 
so that $ \mathcal{U}_{0, n+1} ^{S} \not = \emptyset$. 

\vskip .2cm

\textbf{Step 3.} \emph{For $0 \leq n \leq d-1$ and $0 \leq m \leq n-1$, if $\mathcal{U}_{m,n} ^{S} \not = \emptyset$, then $\mathcal{U}_{m+1, n} ^{S} \not = \emptyset$.}

If $\mathcal{U}^{S}_{m,n} \not = \emptyset$, then it is dense open in $\Gr_x (N-1, \mathbb{P}_k ^N)$. For the dense open subset $\mathcal{U}_T \subset \Gr_x (N-1, \mathbb{P}_k ^N)$ of Step 1, the intersection $\mathcal{U}_{m,n} ^{S} \cap \mathcal{U}_T$ is therefore nonempty dense open in $\Gr_x(N-1, \P_k^N)$. 

Fix an element $L_0 ' \in (\mathcal{U}_{m,n} ^{S} \cap \mathcal{U}_T) (k)$ and let $L'_0 \cap X = \{x = c_0, c_1, \ldots , c_d\}$. Since every $k$-point of $\mathcal{U}_{m,n} ^{S}$ satisfies the condition (2) of Lemma~\ref{lem:Non-collinear}, we know that no three points of $L'_0 \cap X$ are collinear. Thus $\{ c_0, c_{m+1}, c_{n+1} \}$ are not collinear so that when $\ell={\rm Sec} (\{c_0\}, \{c_{m+1}\})$ is the line joining $c_0$ and $c_{m+1}$, it does not pass through $c_{n+1}$.

We let $P= {\rm Sec}(\{c_{n+1}\}, \ell)$. The subspace $\Gr_{\ell} (N-1, \P_k ^N)$ is of dimension $N-2$ and $\Gr_{P} (N-1, \P_k ^N)$ is a closed subspace of $\Gr_{\ell} (N-1, \P_k ^N)$ of dimension $N-3$ (see Lemma \ref{lem:AD-open-refined-elem}). Because we may assume $N \geq 3$, there is a one-parameter family (actually isomorphic to $\P_k ^1$) $\mathcal{B}$ in $\Gr_x (N-1, \P_k ^N)$ such that (i) $\{L_0'\} \in \mathcal{B}$, (ii) every member of the family $\mathcal{B}$ passes through both of $\{ c_0, c_{m+1} \}$ and (iii) a general member does not pass through $c_{n+1}$. Since $\mathcal{U}_{m,n} ^{S} \cap \mathcal{U}_T$ is dense open in $\Gr_x (N-1, \mathbb{P}_k ^N)$ and $L'_0 \in \mathcal{U}_{m,n} ^{S} \cap \mathcal{U}_T \cap \Gr_{L} (N-1, \P_k ^N)$, the latter is dense open in $\Gr_{\ell} (N-1, \P_k ^N)$. Hence, a general member of $\mathcal{B}$ is contained in $\mathcal{U}_{m,n} ^{S} \cap \mathcal{U}_T$. 

Let $W \subset \sB \cap \mathcal{U}^{S}_{m,n}  \cap \mathcal{U}_T$ be a smooth affine irreducible (rational) curve passing through $\{L_0 '  \}$. Consider again the quotient map $\pi: X^d \to {\rm Sym} ^d (X) = X^d/\mathfrak{S}_d$, and the finite \'etale map $\pi:V_d \to \pi (V_d)$ for the open set $V_d$ defined previously in \eqref{eqn:Ad-nonempty-0}. Consider the map $W \to \pi (V_d)$ given by $L \mapsto \sum_{i=1} ^d [ y_i]$, where 
$L \cap X = \{ x= y_0, y_1, \ldots, y_d \}$. This yields the Cartesian product 
\begin{equation}\label{eqn:Ad-nonempty-1}
\xymatrix@C1pc{
W' \ar@{^{(}->}[r] \ar[dr] \ar@/^1.2pc/[rr]^-{e} & 
\widetilde{W} \ar[d]^-{\psi} \ar[r] & V_d \ar[d]^{\pi} \\
& W \ar[r] & \pi(V_d)}
\end{equation}
so that $\psi$ is finite and {\'e}tale. Note also that the members of $\widetilde{W}$ can be represented by $\un{L} = (L, y_1, \ldots, y_d ) \in W \times V_d$ such that $L\cap X = \{ x = y_0, y_1, \ldots, y_d \}$. We let $W' \subset \widetilde{W}$ be the component containing the point $(L'_0, c_1, \ldots, c_d)$. For the `bad' closed subsets $D_n, G^n_{m+1} \subset V_d$ of Lemma \ref{lem:DGclosed}, we have:

\vskip .2cm

\textbf{Claim:} \emph{$\mathcal{Y}:=e^{-1} (D_n \cup G^n_{m+1})$ is a proper 
closed subset of $W'$.}

$(\because)$ That this is a closed subset of $W'$ follows by Lemma \ref{lem:DGclosed}. We need to show that this is a proper subset. Note that $D_n = D_{n,1} \cup D_{n,2}$ and $G^n _{m+1} = \bigcup_{i=0} ^{m+1} G^n_{m+1, i}$. We analyze each piece of them in what follows.

\emph{Case 1:} We first show that $e^{-1} (D_{n,2}) = \emptyset$ and $e^{-1} (G^n _{m+1, 0}) = \emptyset$.

Note that we had $W \subset \mathcal{B} \cap \mathcal{U}_{m,n} ^{S} \cap \mathcal{U}_T$, where $\mathcal{U}_T$ is as in Lemma \ref{lem:AD-open-refined-r=1}. Here, the condition (1) of Lemma \ref{lem:AD-open-refined-r=1} (and $S$ replaced by $T$) reads as `$L \cap ( (X \setminus X_{\rm fs}) \cup T) = \emptyset$' for each $L \in \mathcal{U}_T (k)$. So, for every $L \in W(k)$, this is disjoint from $T = S \cup (\widehat{f} (\widehat{g}^{-1} (\widehat{g} (Z_x))) \setminus \{ x \})$. Hence, if $e^{-1} (D_{n,2}) \not = \emptyset$, then it gives an element $L \in W(k)$ such that $L \cap X = \{ x = y_0, y_1, \ldots, y_d\}$ satisfies $\widehat{g} (Z_{y_0}) \cap \widehat{g}( Z_{y_i}) \not = \emptyset$ for some $1 \leq i \leq n$, so that $L$ intersects with a point of $T$, contradicting the above choice of $W$. Hence $e^{-1} (D_{n,2}) = \emptyset$. An identical argument shows that $e^{-1}(G^n_{m+1,0}) = \emptyset$.

\emph{Case 2:} We now show that $e^{-1} (D_{n,1})$ and $e^{-1} (G^n_{m+1, i})$ for $1 \leq i \leq m$ are finite. 

To do so, it is enough to show that these closed subsets are proper in $W'$, as $W'$ is an irreducible curve. Suppose $e^{-1} (D_{n,1}) = W'$. In particular $\un{L}_0': = (L_0', c_1, \ldots, c_d) \in e^{-1} (D_{n,1})$, so that $(c_1, \ldots, c_d ) \in D_{n,1}$, so $\widehat{g}(Z_{c_i}) \cap \widehat{g}(Z_{c_j}) \not = \emptyset$ for some $1 \leq i \not = j \leq n$. But, this contradicts that $L_0 ' \in \mathcal{U}_{m,n} ^S (k)$. Hence, $e^{-1} (D_{n,1})$ is proper closed in $W'$. By the same argument, we have $|e^{-1} (G^n_{m+1, i}) |< \infty$. 

\emph{Case 3:} It remains to show that $|e^{-1} (G^n _{m+1, m+1})|< \infty$. 

To do so, we will make use of our choice of $W$ that $W \subset \mathcal{B}$. Recall that $\mathcal{B} \subset \Gr_x (N-1, \mathbb{P}_k ^N)$ is a one-parameter family containing $\{ L_0 '\}$ such that every member of $\mathcal{B}$ passes through $\{ c_0, c_{m+1} \}$, while a general member does not pass through 
$c_{n+1}$. 

Consider the composite $q: W' \overset{e}{\to} V_d \to X^2$, where the last arrow takes $(y_1, \ldots, y_d)$ to $(y_{m+1}, y_{n+1}) \in X^2$. 
Since every $L \in W (k) \subset \mathcal{B}(k) $ contains $c_{m+1}$ by construction, the composition of $q$ with the first projection $X^2 \to X$, taking $(y_{m+1}, y_{n+1})$ to $y_{m+1}$, is the constant map that takes all of $W'$ to $c_{m+1} \in X$. On the other hand, the general member $L \in W(k)$ does not contain $c_{n+1}$. This implies that the composite of $q$ with the second projection $X^2 \to X$, taking $(y_{m+1}, y_{n+1})$ to $y_{n+1}$, is non-constant.
Hence, the map $q$ is non-constant and the image $q(W')$ in $X^2$ is an irreducible curve contained in $\{ c_{m+1} \} \times X \cong X$ (recall that $k$ is assumed to be algebraically closed.)

Write it as $W' \overset{u}{\to} q (W') \overset{v}{\to} X$, where $u$ is induced by $q$ and $v$ is the projection to the coordinate $y_{n+1}$.
Since both $u$ and $v$ are non-constant morphisms of irreducible curves, they are dominant and quasi-finite.
In particular, the composite $v \circ u$ is quasi-finite. Note that by definition, $e^{-1} (G^n_{m+1, m+1}) \subset \{ (L, y_1, \ldots, y_d) \in W' | \ y_{n+1} \in S_1 \} = (v \circ u)^{-1} (S_1)$, where $S_1 := f( \widehat{g}^{-1} (\widehat{g} (Z_{c_{m+1}})))$. 
Since $\widehat{f}$ and $\widehat{g}$ are finite by Lemma \ref{lem:Ad-open-finite}, the set $S_1$ is finite, thus $(v \circ u)^{-1} (S_1)$ is a finite set.
 Hence, we have $|e^{-1} (G^n _{m+1, m+1}) |< \infty$, being a subset of a finite set. This finishes the proof of Claim.
\bigskip

Back to the proof of Step 3, since the set $\mathcal{Y}$ of Claim is finite, the subset $W' \setminus \mathcal{Y} \subset W'$ is nonempty open. Since $\psi$ is an open map and $W' \subset \widetilde{W}$ is open subset such that $W' \to W$ is surjective, it follows that $\psi (W' \setminus \mathcal{Y}) \subset W$ is a nonempty (thus dense) open subset. By construction, $\psi (W' \setminus \mathcal{Y}) \subset \mathcal{U}_{m+1, n}^{S}$. In particular, we get $ \mathcal{U}_{m+1, n}^{S} \not = \emptyset$. This proves Step 3.

Back to the proof of the lemma, by inductively applying the above three steps, we deduce that each $\mathcal{U}_{m,n} ^{S}$ is a dense open subset of $\Gr_{x} (N-1, \mathbb{P}_k ^N)$.
\end{proof}

Now we allow $r \geq 1$. We can strengthen Lemma \ref{lem:AD-open-refined-0} as follows.

\begin{prop}\label{prop:Ad-open-final-I}
We follow the notations and the assumptions of Lemma \ref{lem:AD-open-refined-0}. Let $r \geq 1$. After replacing $\mathbb{P}_k ^N$ by a bigger projective space via Veronese if necessary, we have the following property: given any hyperplane $H_0 \subset \mathbb{P}^N_k$ disjoint from $S \cup \{ x \}$ and a general 
$L_0 \in \Gr ^{\tr} _{S \cup \{x\}}(H_0, N-r+1, \mathbb{P}_k ^N)(k)$, there exists a dense open subset $\mathcal{U}^S_x \subset \Gr_x ^{\tr} (L_0, N-1, \mathbb{P}_k ^N)$ such that each $L \in \mathcal{U}^S_x(k)$ satisfies the properties $(1) \sim 
(6)$ of Lemma \ref{lem:AD-open-refined-0} as well as the additional property $(I) : \widehat{g} (Z_{c_i}) \cap \widehat{g} (Z_{c_j}) = \emptyset$ for each pair $0 \leq i \not =j \leq d$. 
\end{prop}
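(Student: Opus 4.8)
The plan is to bootstrap from the $r=1$ case (Lemma~\ref{lem:Ad-open-final-I}) to arbitrary $r\ge 1$ by the same slicing device already used to pass from Lemma~\ref{lem:AD-open-refined-r=1} to Lemma~\ref{lem:AD-open-refined-0}. First, if $r=1$, then $\Gr(N-r+1,\P^N_k)=\{\P^N_k\}$, so $L_0=\P^N_k$ and the statement is literally Lemma~\ref{lem:Ad-open-final-I} combined with Lemmas~\ref{lem:Plane-case} and~\ref{lem:Non-collinear}: the open set $\mathcal U^S_x:=\bigcap_{m\le n\le d-1}\mathcal U^S_{m,n}$, which is a finite intersection of dense opens in $\Gr_x(N-1,\P^N_k)$, is dense open, and every $L$ in it satisfies all the conditions $(I)_{m,n}$, hence the single condition $(I)$. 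So assume $r\ge2$.

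Next, I follow the proof of Lemma~\ref{lem:AD-open-refined-0} verbatim up to the point where we choose, after a Veronese reimbedding ensuring every component of $X$ has degree $\ge 3$, a general $(r-1)$-tuple of hyperplanes $(H_1,\dots,H_{r-1})$ in $\Gr_{S\cup\{x\}}(N-1,\P^N_k)$ whose intersection $L_0$ satisfies the properties (a)$\sim$(e) listed there, so that $C:=L_0\cap X$ is a reduced curve none of whose components lies in a $2$-dimensional linear subspace, regular away from $X_{\rm sing}$, containing $S\cup\{x\}$, none of whose components lies in $\widehat f(Z_{\rm sing})\cup(X\setminus X_{\rm fs})$, and such that $Z|_{C\times\widehat B}\to \widehat B$ is non-constant on each component. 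Here I must verify that $Z|_{C\times\widehat B}\to C$ is still fs over $C\cap X_{\rm fs}$, which is immediate because $C$ meets $X_{\rm fs}$ in a dense open and fs-ness is preserved under flat base change (Lemma~\ref{lem:fs bc}); thus the Set-up $+(\fs)$ of \S\ref{sec:Set-up2} holds for $(C,Z|_{C\times\widehat B})$ with base point $x$ and finite set $S':=(C\setminus\{x\})\cap(\widehat f(Z_{\rm sing})\cup(X\setminus X_{\rm fs})\cup S)$.

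Then I apply the already-established $r=1$ case (i.e. Lemma~\ref{lem:Ad-open-final-I}) to the curve $C\subset L_0\cong\P^{N-r+1}_k$, the point $x\in C\cap X_{\rm fs}$, and the finite set $S'$: this produces a dense open $\mathcal U_{C,S'}\subset\Gr_x(N-r,L_0)$ each of whose $k$-points $M$ satisfies conditions $(1)\sim(6)$ of Lemma~\ref{lem:AD-open-refined-0} (with $Z$ replaced by $Z|_{C\times\widehat B}$) together with the separation property $(I)$ for $M\cap C$. Finally I pull this back along the smooth surjective morphism $\theta_{L_0}:\Gr^{\tr}(L_0,N-1,\P^N_k)\to\Gr(N-r,L_0)$, $L\mapsto L\cap L_0$, exactly as in Lemma~\ref{lem:AD-open-refined-0}, and set $\mathcal U^S_x:=\theta_{L_0}^{-1}(\mathcal U_{C,S'})$ intersected with $\Gr^{\tr}_x(L_0,N-1,\P^N_k)$; this is dense open. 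For $L\in\mathcal U^S_x(k)$ one has $L\cap L_0\cap X=L\cap C=\{x=c_0,c_1,\dots,c_d\}$, so the properties $(1)\sim(6)$ transfer from $\theta_{L_0}(L)$ to $L$ verbatim as in the cited lemma, and the separation property $\widehat g(Z_{c_i})\cap\widehat g(Z_{c_j})=\emptyset$ for $i\ne j$ is precisely property $(I)$ for $\theta_{L_0}(L)$ applied to $C$, noting that $Z_{c_i}$ is the same fiber whether computed in $X$ or in $C$ since each $c_i\in C$. I expect the only genuinely delicate point — and the place where care is needed rather than routine bookkeeping — to be checking that the general choice of $L_0$ can be made \emph{simultaneously} transverse to $H_0$, compatible with all of (a)$\sim$(e), and such that the auxiliary finite set $S'$ behaves well (in particular that $C\cap X_{\rm fs}$ is still dense in $C$ so the Set-up $+(\fs)$ genuinely applies); this is exactly the kind of finite intersection of dense open conditions in a Grassmannian that the preceding lemmas were built to handle, so it goes through, but it is where the argument has to be assembled with some attention.
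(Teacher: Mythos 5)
Your proposal is correct and follows essentially the same route as the paper: the $r=1$ case is read off from Lemma \ref{lem:Ad-open-final-I} (the paper uses the single open set $\mathcal{U}^{S}_{d-1,d-1}$, which already encodes all pairs, rather than the intersection over all $(m,n)$, but these are interchangeable), and the $r\ge 2$ case is reduced to the curve $C=L_0\cap X$ with the auxiliary set $S'$ and pulled back along $\theta_{L_0}$, exactly as in the paper, with the same observation that $\widehat g(Z_y)=\widehat g(W_y)$ for $y\in C$ so that property $(I)$ transfers.
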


\begin{proof}
  The $r=1$ case of the proposition follows from Lemma \ref{lem:Ad-open-final-I} with $(m,n) = (d-1, d-1)$. 
  So we assume $r \ge 2$. 

We use an argument of reduction to the $r=1$ case as we did in Lemma \ref{lem:AD-open-refined-0}. Using the notations there, choose a reimbedding $\eta: X \inj \P_k^N$, a general $L_0 \in \Gr(N-r+1, \P_k^N)(k)$ and $C = L_0 \cap X$ as in Lemma \ref{lem:AD-open-refined-0}. Let $S' := (C \setminus \{x\})\cap (\widehat{f}(Z_{\rm sing}) \cup (X \setminus X_{\fs} ) \cup S)$ and $W = Z|_{C \times \widehat{B}}$. Applying the `$r=1$' case of the proposition (proven in Lemma \ref{lem:Ad-open-final-I}) to $C$, $S'$ and $W$, with the identification $L_0 \simeq \mathbb{P}_k ^{N-r+1}$, there is a dense open subset $\mathcal{U}' \subset \Gr_x (N-r, L_0)$ that satisfies the properties of Proposition \ref{prop:Ad-open-final-I} for $r=1$ case. (In terms of the notations of Lemma \ref{lem:Ad-open-final-I}, we have $\mathcal{U}' =\mathcal{U}_{d-1, d-1} ^{S'}$.)
Note that Lemma \ref{lem:Ad-open-final-I} is applicable to $C$
by property (6) of Lemma~\ref{lem:AD-open-refined-0}.

Recall now that we had a smooth surjective morphism of varieties
$\theta_{L_0}: \Gr_x ^{\rm tr} (L_0, N-1, \P_k^N) \to \Gr_x(N-r, L_0)$ from \eqref{eqn:refined-adm-0-1}. So, the inverse image $\mathcal{U}^S_x:=\theta_{L_0} ^{-1} (\mathcal{U}')$ is a dense open subset of $\Gr_x^{\rm tr}(L_0, N-1, \P_k^N)$. We claim that this $\mathcal{U}^S_x$ fulfills the requirements of the proposition for $r \geq 2$ case.

Indeed, since $W = Z|_{C \times \widehat{B}}$, we see that $Z_y = W_y$ and hence $\widehat{g}(Z_y) = \widehat{g}(W_y)$ for any closed point $y \in C$. Hence, for $L \in \Gr_x ^{\tr} (L_0, N-1, \mathbb{P}_k ^N)(k)$ with $\theta_{L_0} (L) \cap X = (L \cap L_0) \cap C = \{ x = c_0, c_1, \ldots, c_d \}$, the condition $(I)$ is satisfied if and only if the condition $(I)$ is satisfied for $\theta_{L_0} (L)$ with $X$ replaced by the curve $C$. This means $L \in \mathcal{U}^S_x (k)$ satisfies the proposition, as desired.
\end{proof}

\subsection{Separating residual fibers of $Z$ along $\widehat{B}$: the semi-local case}\label{sec:B-sep*}
Note that in the statement of Proposition \ref{prop:Ad-open-final-I}, the dense open subset that we found depends on the choice of a single regular closed point $x \in X$. We want to extend it to a finite subset $\Sigma$ of regular points. This issue will be completely resolved in Proposition \ref{prop:Main-I}  by using the `cone admissibility' condition, which we develop as the property (3) of the following Proposition \ref{prop:Semi-local-open-1}. 
One further aspect on \'etaleness is studied in \S \ref{sec:Et-base}.

Recall that when $M \subset \mathbb{P}_k ^N$ is a linear subspace and $x \in \mathbb{P}_k ^N$ is a closed point, after the base change $\Spec (k(x)) \to \Spec (k)$, the cone $C_x (M)= {\rm Sec} (\{x\}, M)$ is the smallest linear subspace containing both $x$ and $M$. When $x \not \in M$, we have $\dim (C_x (M)) = \dim (M) + 1$. In this article, we need to use the cones only when $k$ is algebraically closed, so that no confusion will arise.

\begin{prop}\label{prop:Semi-local-open-1}
Let $k$ be an algebraically closed field. We are under the Set-up $+ (\fs)$ of \S \ref{sec:Set-up2}. After replacing the embedding $X \hookrightarrow \mathbb{P}_k ^N$ by a bigger one via a Veronese embedding if necessary, we have the following: for the given hyperplane $H \subset \mathbb{P}^N_k$ disjoint from $\Sigma$ and a general $L_0 \in \Gr ^{\tr} (H, N-r+1, \mathbb{P}_k ^N)(k)$, there exists a dense open subset $\mathcal{W} \subset \Gr (N-2, H)$ such that each $M \in \mathcal{W}(k)$ satisfies the following properties:
\begin{enumerate}
\item $M$ intersects $L_0$ transversely.
\item $M \cap L_0 \cap X = \emptyset$. 
\item For each $x \in \Sigma$, the cone $C_{x} (M)$ lies in $\mathcal{U}^{\Sigma \setminus \{ x \}} _{x} (k)$ for the open subset $\mathcal{U}^{\Sigma \setminus \{ x\}} _{x}\subset \Gr_{x} ^{\tr} (L_0, N-1, \mathbb{P}_k ^N)$ of Proposition \ref{prop:Ad-open-final-I}.
\end{enumerate}
\end{prop}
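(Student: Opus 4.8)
The strategy is the standard ``incidence variety / generic member of a pencil'' argument that has already been deployed throughout \S\ref{sec:Set-up2}--\S\ref{sec:B-sep}, now organized so that the single open condition on $M \in \Gr(N-2,H)$ simultaneously controls the cones $C_x(M)$ over \emph{all} $x \in \Sigma$. First I would fix notation: for each $x \in \Sigma$, write $\mathcal{U}^{\Sigma\setminus\{x\}}_x \subset \Gr^{\tr}_x(L_0, N-1, \mathbb{P}^N_k)$ for the dense open subset furnished by Proposition \ref{prop:Ad-open-final-I} (applied with $S = \Sigma\setminus\{x\}$ and the \emph{same} general $L_0 \in \Gr^{\tr}(H,N-r+1,\mathbb{P}^N_k)(k)$, which is legitimate since the proposition allows $L_0$ to be any sufficiently general member and there are only finitely many points $x$). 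The Veronese re-embedding needed is the common refinement of the finitely many re-embeddings required by Proposition \ref{prop:Ad-open-final-I} for the $|\Sigma|$ points $x$; after this single re-embedding, all of them hold simultaneously for the new $\mathbb{P}^N_k$.

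Next I would set up the cone map. For a linear subspace $M \subset H$ of dimension $N-2$ with $x \notin M$, the cone $C_x(M) = \mathrm{Sec}(\{x\},M)$ is a hyperplane of $\mathbb{P}^N_k$ through $x$, and the assignment $M \mapsto C_x(M)$ defines a morphism $c_x \colon \Gr(x, N-2, \mathbb{P}^N_k) \to \Gr_x(N-1, \mathbb{P}^N_k)$ (note every $M \subset H$ automatically avoids $x$ since $H \cap \Sigma = \emptyset$). The key geometric fact is that $c_x$ is a \emph{smooth surjective} morphism: fixing $x$, a hyperplane through $x$ determines and is determined by its intersection with $H$ up to the choice of nothing further (since $x \notin H$, the map ``hyperplane through $x$'' $\mapsto$ ``its trace on $H$'' is a bijection onto $\Gr(N-2,H)$, algebraically an isomorphism). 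So in fact $c_x$ identifies $\Gr(N-2,H)$ with an open (indeed the whole) subscheme of $\Gr_x(N-1,\mathbb{P}^N_k)$, and more precisely it lands in $\Gr^{\tr}_x(L_0, N-1, \mathbb{P}^N_k)$ once $M$ is chosen transverse to $L_0$ and disjoint from $L_0 \cap X$ — conditions (1) and (2). Hence $c_x^{-1}(\mathcal{U}^{\Sigma\setminus\{x\}}_x)$ is a dense open subset of $\Gr(N-2,H)$, because $c_x$ is dominant with irreducible source and $\mathcal{U}^{\Sigma\setminus\{x\}}_x$ is dense open in the target.

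Then I would simply intersect. Let
\[
\mathcal{W} := \Gr^{\tr}(L_0, N-2, H) \ \cap\ \bigl(\Gr(N-2,H)\setminus \Gr(L_0 \cap X, N-2, H)^c\bigr)\ \cap\ \bigcap_{x \in \Sigma} c_x^{-1}\bigl(\mathcal{U}^{\Sigma\setminus\{x\}}_x\bigr),
\]
where the first two terms encode conditions (1) and (2): the set of $M$ transverse to $L_0$ is dense open in $\Gr(N-2,H)$ (a standard incidence-scheme dimension count, cf.\ the proof of Lemma \ref{lem:fs-open-2}), and since $L_0 \cap X$ is a finite set of closed points (it is $0$-dimensional because $\dim(L_0 \cap X) = \dim X - (r-1) \cdot 1$... more precisely $L_0$ is a general linear subspace of codimension $r-1$ so $L_0 \cap X$ is a curve; one intersects further with $M$ of codimension one so $M \cap L_0 \cap X$ is generically empty for dimension reasons when $N \gg r$), the locus avoiding it is dense open by Lemma \ref{lem:elem-0}. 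A finite intersection of dense opens in the irreducible variety $\Gr(N-2,H)$ is dense open, so $\mathcal{W}$ is a dense open subset. By construction any $M \in \mathcal{W}(k)$ satisfies (1), (2), and for each $x \in \Sigma$ we have $C_x(M) = c_x(M) \in \mathcal{U}^{\Sigma\setminus\{x\}}_x(k)$, which is (3).

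\textbf{Main obstacle.} The only genuinely delicate point is verifying that $c_x$ really does carry the transversality/disjointness conditions on $M$ over to membership in $\Gr^{\tr}_x(L_0,N-1,\mathbb{P}^N_k)$, i.e.\ that transversality of $M$ with $L_0$ inside $H$ upgrades to transversality of the \emph{cone} $C_x(M)$ with $L_0$ inside $\mathbb{P}^N_k$, and likewise that $M \cap (L_0 \cap X) = \emptyset$ forces $C_x(M) \cap L_0 \cap X$ to be controlled (here one uses that $x \notin L_0$ by the choice $L_0 \in \Gr^{\tr}_{S \cup \{x\}}(\dots)$ with $x$ ranging over $\Sigma$, so that $C_x(M) \cap L_0 = \mathrm{Sec}(\{x\}, M) \cap L_0$ is governed by $M \cap L_0$). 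This is a linear-algebra verification about cones and linear sections, mildly tedious but entirely elementary; I would isolate it as a small sublemma. Everything else is bookkeeping of dense open subsets of an irreducible Grassmannian, exactly parallel to the $\theta_{L_0}$-pullback device used in Lemma \ref{lem:AD-open-refined-0} and Proposition \ref{prop:Ad-open-final-I}.
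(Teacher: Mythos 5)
Your proposal is correct and follows essentially the same route as the paper: the key device in both is that the cone map $M \mapsto C_x(M)$ identifies $\Gr(N-2,H)$ with $\Gr_x(N-1,\P^N_k)$ (respectively its transversal-to-$L_0$ variant), so one pulls back the dense open sets of Proposition \ref{prop:Ad-open-final-I} and intersects over the finitely many $x \in \Sigma$ together with the transversality and avoidance conditions on $M$. The only differences are organizational — the paper first reduces to a single $x$ and treats $r=1$ and $r\ge 2$ separately, and it likewise leaves the ``transversality of $M$ with $L_0$ in $H$ upgrades to transversality of $C_x(M)$ with $L_0$'' verification as a one-line check, exactly the point you isolate as a sublemma.
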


\begin{proof}
Note that if $\Sigma = \{x_1, \ldots ,x_n\}$, then the condition (3) consists of the conditions $(3)_i : \ C_{x_i} (M)\in\mathcal{U}^{ \Sigma \setminus \{ x_i\}} _{x_i} (k)$ for $1 \leq i \leq n$. Suppose we proved the existence of a dense open subset $\mathcal{W}_i \subset \Gr (N-2, H)$ for which each member $M \in \mathcal{W}_i (k)$ satisfies the conditions (1), (2), and $(3)_i$ for each $1 \leq i \leq n$. Then we can take $\mathcal{W}:= \bigcap_{i=1} ^n \mathcal{W}_i$, which is again a dense open subset of $\Gr (N-2, H)$. Hence, it is enough to prove the existence of those $\mathcal{W}_i$. Without loss of generality, we may assume $i=1$. For notational simplicity, we let $x:= x_1$ and $T:= \Sigma \setminus \{ x_1\}$. We note also that when $r=1$, we have $\Gr^{\tr} (H, N-r+1, \mathbb{P}_k ^N) = \Gr (N, \mathbb{P}_k ^N) = \{ \mathbb{P}_k ^N \}$ so that the choice of $L_0$ plays no role. We prove the proposition for the cases of  $r=1$ and $r \geq 2$ separately.

\textbf{Step 1.} Suppose $r=1$. Consider the affine morphism of schemes
\begin{equation}\label{eqn:vartheta0}
\vartheta_x: \Gr(x, N-2, \P_k^N) \to \Gr_x(N-1, \P_k^N), \ \  L \mapsto C_x (L).
\end{equation}

This is a smooth surjective morphism, and defines a vector bundle of rank $N-1$. For the closed irreducible subscheme $\Gr(N-2, H) \inj \Gr(x, N-2, \P_k^N)$, the restriction $\vartheta_{x, H}: \Gr(N-2, H) \to \Gr_x(N-1, \P_k^N)$ of $\vartheta_x$, is an isomorphism.  

Let $\mathcal{U}_x ^T \subset \Gr_x (N-1, \mathbb{P}_k ^N)$ be the dense open subset of Proposition \ref{prop:Ad-open-final-I}, applied to $x$, $T$ and $H_0 = H$ for $r=1$. Then $\vartheta^{-1}_{x, H}(\mathcal{U}^{T}_{x})$ is a dense open subset of $\Gr(N-2, H)$. Since $\Gr(X, N-2, H)$ is its dense open subset by Lemma \ref{lem:fs-open-1}, so is the intersection $\mathcal{W}_1:= \vartheta^{-1}_{x, H}(\mathcal{U}^{T}_{x}) \cap \Gr(X, N-2, H)$ in $\Gr (N-2, H)$. One checks that this satisfies the required conditions (1), (2), and $(3)_1$, proving the proposition for $r=1$.

\textbf{Step 2.} Suppose now that $r \geq 2$. 
As we did previously in Lemma \ref{lem:AD-open-refined-0}  with $H_0= H$ via a Bertini argument of \cite{KA}, we choose a reimbedding $\eta: X \inj \P_k^N$, a general $L_0 \in \Gr^{\rm tr}(H, N-r+1, \P_k^N) (k) $, a curve $C = L_0 \cap X$, and $Z|_{C \times \widehat{B}}$. Consider again the map in \eqref{eqn:vartheta0}. When $L_0$ contains $x$, this $\vartheta_x$ induces a smooth surjective map $\vartheta_x ^{L_0}: \Gr^{\rm tr}(L_0, x, N-2, \P_k^N) \to \Gr^{\rm tr}_x(L_0, N-1, \P_k^N)$, where we recall that $\Gr^{\rm tr}(L_0, x, n, \P_k^N): = \Gr^{\rm tr}(L_0, n, \P_k^N) \cap \Gr(x, n, \P_k^N)$. This restricts to give $\vartheta_{x, H}: \Gr^{\rm tr}(L_0, N-2, H) \to \Gr^{\rm tr}_x(L_0, N-1, \P_k^N).$ One checks that this map is an inclusion whose image is the dense open subset $\Gr^{\rm tr}_x(L_0 \cap H, N-1, \P_k^N)$. As $H \cap \{x\} = \emptyset$, we see that $\Gr^{\rm tr}_x(L_0 \cap H, N-1, \P_k^N)$ coincides with $\Gr^{\rm tr}_x(L_0, N-1, \P_k^N)$. This implies that $\vartheta_{x, H}$ is an isomorphism. 

Let $\mathcal{U}_x ^{T} \subset \Gr^{\rm tr}_x(L_0, N-1, \P_k^N)$ be the dense open subset of Proposition \ref{prop:Ad-open-final-I} applied to $x$, $T$, and $H_0 = H$ for $r \geq 2$. Since $\vartheta_{x, H}$ is an isomorphism, $\vartheta^{-1}_{x,H}(\mathcal{U}_x ^{T})$ is dense open in $\Gr^{\rm tr}(L_0, N-2, H)$, thus dense open in $\Gr(N-2, H)$. Combining this with Lemma \ref{lem:fs-open-1}, we conclude that $\mathcal{W}_1:=  \vartheta^{-1}_{x,H}(\mathcal{U}_x ^{T}) \cap \Gr(C, N-2, H)$ is dense open in $\Gr(N-2, H)$. One checks that each $M \in \mathcal{W}(k)$ satisfies the required conditions (1), (2), and $(3)_1$. This finishes the proof.
\end{proof}

\subsection{{\'E}taleness of linear projections at $L^+(\Sigma)$}\label{sec:Et-base}
Recall that we had obtained a linear projection $\phi_L: X \to \mathbb{P}_k ^r$ that is \'etale at each point of $\Sigma$ in the condition (1) of Lemma \ref{lem:fs-proj-1et}. Unfortunately, this is not quite enough for us. We need to have $L$ such that $\phi_L$ is \'etale at each point of $L^+ (\Sigma)$ as well. 
We show that we can achieve this as a geometric consequence of the condition (3) of Proposition \ref{prop:Semi-local-open-1}. 

Part of the requirement of 
Proposition \ref{prop:Semi-local-open-1} that $C_x (M)$ lies in $\mathcal{U}_x ^{\Sigma \setminus \{ x \}}(k)$ for the open set $ \mathcal{U}_x ^{\Sigma \setminus \{ x \}}$ is that $C_x (M)$ intersects $X_{\fs} \subset X_{\rm sm}$ transversely. This comes from the condition (2) of Lemma \ref{lem:AD-open-refined-0}. Here is its geometric meaning: 

\begin{lem}\label{lem:proj-etale}
Let $k$ be an algebraically closed field
and let $L \in \Gr (X, N-r-1, \mathbb{P}_k ^N) (k) $. Let $\mathbb{P}^r_k$ be a linear subspace of $\mathbb{P}_k ^N$ such that $L \cap \mathbb{P}_k ^r = \emptyset$. Let $y \in \P^r_k$ be a closed point such that $C_y (L) \cap X_{\rm sing} = \emptyset$. Then $C_y (L)$ intersects $X$ transversely if and only if the linear projection $\phi_L: X \to \P^r_k$ away from $L$ is finite and {\'e}tale over an affine neighborhood of $y$ in $\P^r_k$.
\end{lem}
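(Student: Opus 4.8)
The statement is local on $\P^r_k$ around $y$, and since $X \to \P^r_k$ is finite (Definition \ref{defn:LP-X}), the fiber $\phi_L^{-1}(y)$ is a finite set of closed points, all lying on $C_y(L) \cap X$; as $C_y(L) \cap X_{\sing} = \emptyset$, every such point lies in $X_{\sm}$. The plan is to reduce the equivalence to a pointwise statement over each $x' \in \phi_L^{-1}(y)$, using the standard criterion that a finite morphism between smooth varieties of the same dimension is étale at a point if and only if it is unramified there, i.e. if and only if the induced map on tangent spaces $T_{x'}X \to T_y\P^r_k$ is injective — equivalently, since both sides have dimension $r$, an isomorphism. (Flatness is then automatic from finiteness plus the regularity of source and target by \cite[Proposition (6.1.5)]{EGA4-2}, exactly as in the proof of Lemma \ref{lem:fs-proj-1et}, so the étale and unramified conditions coincide here.) So I would first replace $\P^r_k$ by a small affine neighborhood of $y$ over which $\phi_L$ is finite, and note that "$\phi_L$ étale over a neighborhood of $y$" is equivalent to "$\phi_L$ étale at each point of $\phi_L^{-1}(y)$," which is equivalent to "$d(\phi_L)_{x'}$ is injective for each $x' \in \phi_L^{-1}(y)$."

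\textbf{The tangent space computation.} The key geometric input is the description of $d(\phi_L)_{x'}$ in terms of linear algebra of $\P^N_k$. For a closed point $x' \in X_{\sm}$ with $x' \notin L$, the linear projection $\phi_L$ away from $L$ sends $x'$ to $y = \phi_L(x')$, and the fiber $\phi_L^{-1}(y)$ of the ambient projection $\P^N_k \setminus L \to \P^r_k$ through $x'$ is the affine space $C_y(L) \setminus L$. The differential $d(\phi_L)_{x'}: T_{x'}X \to T_y\P^r_k$ has kernel exactly $T_{x'}X \cap T_{x'}(C_y(L))$, where $T_{x'}(C_y(L))$ is the tangent space to the linear space $C_y(L)$ at $x'$, which is $C_y(L)$ itself (as a linear subspace of $\P^N_k$, identifying tangent spaces at points with linear subspaces through them in the usual way). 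Hence $d(\phi_L)_{x'}$ is injective if and only if $T_{x'}X \cap C_y(L) = \{x'\}$ (scheme-theoretically, the intersection of the $r$-plane $T_{x'}X$ with the $(N-r)$-plane $C_y(L)$ is a single reduced point), which is precisely the statement that $C_y(L)$ meets $X$ transversely at $x'$. I would carry this out by working in affine coordinates on $\P^N_k$ centered so that $L$ is a coordinate subspace, where $\phi_L$ becomes a linear coordinate projection $\A^N \to \A^r$ and the computation of kernels of differentials is immediate.

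\textbf{Assembling the equivalence.} With the pointwise dictionary in hand — "$\phi_L$ étale at $x'$" $\Leftrightarrow$ "$C_y(L)$ transverse to $X$ at $x'$" for each $x' \in \phi_L^{-1}(y) \subset X_{\sm}$ — both directions follow. If $C_y(L)$ is transverse to $X$ (everywhere it meets $X$, in particular at every $x' \in \phi_L^{-1}(y) = C_y(L) \cap X$), then $\phi_L$ is étale at each point of the fiber, hence étale over an affine neighborhood of $y$ (étaleness being an open condition and the fiber finite), and finite by Definition \ref{defn:LP-X}. Conversely, if $\phi_L$ is finite and étale over an affine neighborhood of $y$, then it is étale at each $x' \in \phi_L^{-1}(y)$, so $C_y(L) \cap X$ meets $X$ transversely at each of these points; since $C_y(L) \cap X_{\sing} = \emptyset$ ensures these are all the points of $C_y(L) \cap X$ near the fiber, and transversality of a linear space with $X$ is tested pointwise along $X_{\sm}$, we conclude $C_y(L)$ is transverse to $X$. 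The main obstacle — really the only subtle point — is getting the dictionary between the scheme-theoretic notion of transversality used in \S\ref{sec:Transverse} (no intersection with $X_{\sing}$, transverse intersection with $X_{\sm}$) and the differential-of-$\phi_L$ picture exactly right, including being careful that "transverse" for us means the intersection scheme is smooth of the expected dimension, which here (a $0$-dimensional intersection) means reduced; once that translation is pinned down, everything else is the linear-algebra computation of $\ker d(\phi_L)_{x'}$ plus openness of the étale locus.
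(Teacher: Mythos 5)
Your proof is correct and rests on the same pillars as the paper's: the identification of the scheme-theoretic fiber $\phi_L^{-1}(y)$ with $C_y(L)\cap X$ (using $X\cap L=\emptyset$ and $L\subset C_y(L)$), miracle flatness for the finite map near the fiber (which lies in $X_{\rm sm}$ by the hypothesis $C_y(L)\cap X_{\rm sing}=\emptyset$), and spreading out from the fiber to an affine neighborhood of $y$. The only cosmetic difference is that the paper translates transversality into smoothness of the zero-dimensional fiber scheme $E=C_y(L)\cap X$ over $k(y)=k$ and invokes the fiberwise smoothness criterion plus base change, whereas you rephrase the same condition infinitesimally as injectivity of $d(\phi_L)_{x'}$, i.e.\ unramifiedness, at each point of the fiber.
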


\begin{proof}
$(\Rightarrow)$ Suppose that $C_y (L)$ intersects $X$ transversely and let $E:= C_y (L) \cap X$ be this scheme-theoretic intersection. Since $k$ is perfect while $C_y(L)$ and $X_{\rm sm}$ have the complementary dimensions $N-r$ and $r$ in $\mathbb{P}_k ^N$, respectively, the transverse intersection is equivalent to saying that $E$ is smooth, $|E|< \infty$, and each point of $E$ is a simple regular point of $X_{\rm sm}$. Because we are given that $X \cap L = \emptyset$ and $L \subset C_y (L)$, we see that $C_y (L) \cap X = (C_y (L) \setminus L) \cap X$, which is precisely the scheme-theoretic fiber $\phi_L ^{-1} (y)$ over $y \in \mathbb{P}_k ^r$. 

Since $C_y (L) \cap X_{\rm sing} = \emptyset$, we see that $\phi_L ^{-1} (y) \cap X_{\rm sing} = \emptyset$. Since $\phi_L$ is finite, $\phi_L(X_{\rm sing})$ is a closed subscheme of $\P^r_k$ not meeting $y$. Hence, there is an affine open $U \subset \P^r_k$ containing $y$ such that $\phi^{-1}_L(U)$ is regular. We therefore get a Cartesian square
\begin{equation}\label{eqn:proj-etale-0} 
\xymatrix@C1pc{
E \ar[r] \ar[d]_{\phi^y_L} & \phi^{-1}_L(U) \ar[d]^{\phi_L} \\
\Spec(k(y)) \ar[r] & U}
\end{equation} 
such that $\phi^y_L$ is smooth. Since $\phi_L$ is a finite map of regular affine schemes over $k$, it is flat by \cite[Exercise III-10.9, p.276]{Hartshorne} (or \cite[Proposition (6.1.5), p.136]{EGA4-2}). It follows therefore by \cite[Exercise III-10.2, p.275]{Hartshorne} (or \cite[Th\'eor\`eme (12.2.4)-(iii), p.183]{EGA4-3}) that there is an affine neighborhood of $y$ in $U$ over which the restriction of the map $\phi_L$ is smooth, thus finite and {\'e}tale.

$(\Leftarrow)$ If $\phi_L$ is \'etale over a neighborhood of $y$, then its base change to $\Spec (k(y))$, i.e., the map $\phi_L ^y : E = C_y (L) \cap X \to \Spec (k(y))$ from the scheme-theoretic intersection is \'etale. Since $k=k(y)$, this means $E$ is smooth over $k$ so that the intersection is transverse. 
\end{proof}

\begin{cor}\label{cor:residual etale}
Let $k$ be an algebraically closed field.
Let $L \in \Gr (X, N-r-1, \mathbb{P}_k ^N)(k)$ and realize the linear projection $\phi_L: X \to \mathbb{P}_k ^r$ for a linear subspace $\mathbb{P}_k ^r \subset \P_k ^N$ such that $L \cap \P_k ^r = \emptyset$. Suppose that for each $x \in \Sigma$, we have $C_x (L) \cap X_{\rm sing} = \emptyset$ and $C_x (L)$ intersects $X_{\rm sm}$ transversely. Then there is an affine open neighborhood $U \subset \mathbb{A}_k ^r$ of $\phi_L (\Sigma)$ such that $\phi_L: \phi_L ^{-1} (U) \to U$ is finite and \'etale. In particular, $\phi_L: X \to \mathbb{P}_k ^r$ is \'etale at every point of $\phi_L ^{-1} (\phi_L (\Sigma))$.
\end{cor}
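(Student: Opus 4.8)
The plan is to deduce Corollary \ref{cor:residual etale} directly from Lemma \ref{lem:proj-etale} by a small bookkeeping argument. First I would observe that the residual set $\phi_L^{-1}(\phi_L(\Sigma))$ is a finite set of closed points of $X$: since $\phi_L$ is a linear projection away from $L$ with $X \cap L = \emptyset$, it is finite by Definition \ref{defn:LP-X}, so $\phi_L(\Sigma)$ is a finite set of closed points of $\mathbb{P}_k^r$ and its preimage is finite.

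Next I would run Lemma \ref{lem:proj-etale} once for each $y = \phi_L(x)$ with $x \in \Sigma$. The hypotheses of that lemma must be verified: we need $C_y(L) \cap X_{\rm sing} = \emptyset$ and that $C_y(L)$ intersects $X$ transversely, where $y = \phi_L(x)$. Here I would use the identity $C_{\phi_L(x)}(L) = C_x(L)$, which holds because $x \in C_x(L)$ and $x \mapsto \phi_L(x) \in \mathbb{P}_k^r$ lies on the line through $x$ and $L$; more precisely $C_x(L)$ is the smallest linear subspace containing $x$ and $L$, and it also contains the point $\phi_L(x) \in \mathbb{P}_k^r$, so it equals $C_{\phi_L(x)}(L)$ by the dimension count $\dim C_x(L) = \dim L + 1 = N - r$ that appears in the discussion at the start of \S \ref{sec:B-sep*}. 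With this identification, the two hypotheses of Lemma \ref{lem:proj-etale} are exactly the two hypotheses supplied to the corollary: $C_x(L) \cap X_{\rm sing} = \emptyset$ and $C_x(L)$ meets $X_{\rm sm}$ transversely (noting that since $C_x(L) \cap X_{\rm sing} = \emptyset$, transversality against $X_{\rm sm}$ is the same as transversality against $X$). Lemma \ref{lem:proj-etale} then gives an affine open neighborhood $U_x \subset \mathbb{P}_k^r$ of $\phi_L(x)$ over which $\phi_L$ is finite and {\'e}tale.

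Finally I would assemble the $U_x$ into a single neighborhood of $\phi_L(\Sigma)$. Since $\Sigma$ is finite and $\phi_L$ separates (or at worst identifies some of) the points of $\Sigma$, the finitely many open sets $\{U_x\}_{x \in \Sigma}$ are open neighborhoods of the finite closed set $\phi_L(\Sigma) \subset \mathbb{A}_k^r$ (we work inside the affine chart $\mathbb{A}_k^r \subset \mathbb{P}_k^r$ using the Cartesian square \eqref{eqn:proj-0}; note $\phi_L(\Sigma) \subset \mathbb{A}_k^r$ because $\Sigma$ is disjoint from $H$ and $X \setminus (X \cap H)$ maps to $\mathbb{A}_k^r$). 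By the FA-property of quasi-projective schemes (Lemma \ref{lem:FA}), applied to the open subset $\bigcap_{x \in \Sigma}(U_x \cup (\mathbb{A}_k^r \setminus \phi_L(\Sigma) \setminus U_x))$ — or more simply, by the elementary fact that for a finite set of points, each lying in one of finitely many opens, one can find a common affine open contained in the union of those opens minus the other points — I obtain an affine open $U \subset \mathbb{A}_k^r$ containing $\phi_L(\Sigma)$ with $\phi_L^{-1}(U) \to U$ finite and {\'e}tale. The last sentence of the corollary is then immediate: every point of $\phi_L^{-1}(\phi_L(\Sigma))$ lies in $\phi_L^{-1}(U)$, over which $\phi_L$ is {\'e}tale.

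I do not anticipate a serious obstacle here; the content is entirely in Lemma \ref{lem:proj-etale}, and this corollary is a routine globalization over the finite set $\Sigma$. The only point requiring a little care is the identification $C_x(L) = C_{\phi_L(x)}(L)$ and the shrinking of finitely many affine neighborhoods to a common one containing $\phi_L(\Sigma)$; both are standard, the latter because $\phi_L(\Sigma)$ is a finite set of closed points in an FA-scheme.
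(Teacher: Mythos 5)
Your proposal is correct and follows essentially the same route as the paper's proof: identify $C_x(L)$ with $C_{\phi_L(x)}(L)$, apply Lemma \ref{lem:proj-etale} at each $\phi_L(x)$ to get neighborhoods $U_x$, and then shrink the union $\bigcup_x U_x$ to a common affine open neighborhood of the finite set $\phi_L(\Sigma)$ via Lemma \ref{lem:FA}. The only cosmetic difference is that the paper simply takes the union of the $U_x$ before shrinking, whereas your intersection-based bookkeeping is more elaborate than necessary.
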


\begin{proof}
Note that for each $x \in \Sigma$, we have $C_x (L) = C_{\phi_L (x)} (L)$ since $\phi_L$ is given with a chosen internal linear subspace $\mathbb{P}_k ^r \subset \mathbb{P}_k ^N$. Since $C_{\phi_L (x)} (L) \cap X_{\rm sing} = \emptyset$ and $C_{\phi_L (x)} (L)$ intersects $X$ transversely, Lemma \ref{lem:proj-etale} says that there is an affine open neighborhood $U_x \subset \mathbb{P}_k ^r$ of $\phi_L (x)$ such that $\phi_L: \phi_L ^{-1} (U_x) \to U_x$ is finite and \'etale. Hence, for $U:= \bigcup_{x \in \Sigma} U_x$, the restriction $\phi_L: \phi_L ^{-1} (U) \to U$ is finite and \'etale. By Lemma \ref{lem:FA}, we may shrink this $U$ into an affine open neighborhood of $\phi_L (\Sigma)$. This implies the corollary.
\end{proof}

\section{Regularity of residual cycles over finite closed points}\label{sec:Pres-lem}
Our goal in \S \ref{sec:Pres-lem} is to study the regularity of the residual cycles using the technique of vertical separation of residual fibers studied in \S \ref{sec:pre-Ad-set}. We continue to work with the Set-up $+ (\fs)$ of \S \ref{sec:Set-up2}. In particular, for each irreducible component $Z_i$ of $Z$, the projection $Z_i \to \widehat{B}$ is non-constant and the projection $Z_i \to X$ is fs over $X_{\rm fs}$.

\subsection{Admissible sets} 

The property $(I)$ in Proposition \ref{prop:Ad-open-final-I} encourages the following definition, that encodes a set of data needed to achieve the remaining properties of residual cycles.  

\begin{defn}\label{defn:Ad-set}
Let $k$ be an infinite perfect field. Let $x \in X_{\fs}$ be a closed point. A finite subset $D \subset X_{\fs} $ of distinct closed points is called \emph{$(Z, x)$-admissible} if $(1)$ $x \in D$, $(2)$ $Z$ is regular at all points lying over $D \setminus \{ x \}$, and $(3)$ $\widehat{g}(Z_{x_1}) \cap \widehat{g}(Z_{x_2}) = \emptyset$ for each distinct pair $x_1\not = x_2$ in $D$.
\end{defn}

The following application of Proposition \ref{prop:Semi-local-open-1} will be a basis for our proof of the regularity of the residual cycles along $\Sigma$. We study it for $k=\bar{k}$ case, but it will soon be generalized gradually.

\begin{prop}\label{prop:Main-I}
Let $k$ be an algebraically closed field. We are under the Set-up $+ (\fs)$ of \S \ref{sec:Set-up2}. Let $Y \subset X$ be a closed subset of dimension at most $r-1$. After replacing the embedding $\eta: X \hookrightarrow \mathbb{P}_k ^N$ by a bigger one via a Veronese embedding if necessary, we have the following: for the given hyperplane $H \subset \mathbb{P}^N_k$ disjoint from $\Sigma$, there is a dense open subset $\mathcal{U} \subset \Gr (N-r-1, H)$ such that for each $L \in \mathcal{U}(k)$, we have $L \cap X = \emptyset$ so that there is a finite and surjective linear projection $\phi_L: X \to \P^r_k$. Furthermore, it satisfies the following properties:
\begin{enumerate}
\item The map $\phi_L: \phi^{-1}_L (U) \to U$ is \'etale for some affine open $U \subset \P^r_k$ containing $\phi_L(\Sigma)$.
\item $\phi_L(x) \neq \phi_L(x')$ for each pair $x \not = x' \in \Sigma$.
\item $k(\phi_L(x)) \xrightarrow{\simeq} k(x)$ for each $x \in \Sigma$. 
\item $L^+ (x) \cap Y = \emptyset$ for each $x \in \Sigma$.
\item $\phi^{-1}_L\left(\phi_L(x)\right)$ is $(Z, x)$-admissible for each $x \in \Sigma$.
\end{enumerate}
\end{prop}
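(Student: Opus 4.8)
The strategy is to combine the results obtained so far by a reduction-to-the-algebraically-closed-case argument is \emph{not} needed here since we are already assuming $k=\bar{k}$; instead the work is to promote the single-point statement of Proposition~\ref{prop:Semi-local-open-1} into a statement valid simultaneously for all points of $\Sigma$, and then translate the linear-algebra data (cones, hyperplane sections, transversality) into the geometric conclusions (1)--(5). First I would pass to a sufficiently large Veronese reimbedding of $X$ so that all of Lemma~\ref{lem:fs-proj-1et}, Lemma~\ref{lem:AD-open-refined-0}, Proposition~\ref{prop:Ad-open-final-I} and Proposition~\ref{prop:Semi-local-open-1} are simultaneously applicable; since each of these requires only ``$N \gg r$'' and a possibly bigger embedding, one Veronese embedding large enough for all of them works, and we may enlarge $Y$ by adjoining $X\setminus X_{\fs}$ and $\widehat{f}(Z_{\rm sing})$ so that the condition $\dim Y \le r-1$ is preserved (here we use the Set-up$+(\fs)$, so $X_{\fs}$ is dense open, hence $X\setminus X_{\fs}$ has dimension $\le r-1$, and $Z_{\rm sing}$ has dimension $\le r-1$ with finite $\widehat{f}$).

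Next I would set up the linear projection. Apply Proposition~\ref{prop:Semi-local-open-1} to get a general $L_0 \in \Gr^{\tr}(H, N-r+1, \P^N_k)(k)$ and a dense open $\mathcal{W} \subset \Gr(N-2, H)$ such that each $M \in \mathcal{W}(k)$ satisfies its properties (1)--(3); in particular for each $x\in\Sigma$ the cone $C_x(M)$ lies in $\mathcal{U}^{\Sigma\setminus\{x\}}_x(k)$, the open set of Proposition~\ref{prop:Ad-open-final-I}. Now I would take $L := M \cap L_0$. By transversality (property (1) of Prop.~\ref{prop:Semi-local-open-1}) this $L$ has dimension $(N-2) - (r-1) = N-r-1$, and by property (2) of Prop.~\ref{prop:Semi-local-open-1} we have $L \cap X = \emptyset$; moreover $L \subset H$ since $M \subset H$. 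Hence $L \in \Gr(X, N-r-1, H)(k)$ and there is a finite surjective linear projection $\phi_L: X \to \P^r_k$. The key observation is that for each $x\in\Sigma$, the cone over $L$ with vertex $x$ inside the projection data is exactly $C_x(M)$: one has $C_x(L) = C_x(M\cap L_0)$, and since $L_0$ contains the fibre directions appropriately (this is where the transversality of $M$ with $L_0$ and the genericity of $L_0$ enter), $C_x(M\cap L_0) = C_x(M) \cap L_0$, and after intersecting with the relevant linear subspace this recovers the single-point projection governed by $\mathcal{U}^{\Sigma\setminus\{x\}}_x$. I would need to check this compatibility of cones carefully — that is the bookkeeping heart of the argument.

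Granting the cone identification, the remaining conclusions follow. Property (1): since $C_x(L)$ (equivalently $C_x(M)$) intersects $X_{\rm sm}$ transversely and misses $X_{\rm sing}$ — both guaranteed by membership in $\mathcal{U}^{\Sigma\setminus\{x\}}_x \subset \Gr^{\tr}_x(L_0, N-1, \P^N_k)$ via conditions (1),(2) of Lemma~\ref{lem:AD-open-refined-0} — Corollary~\ref{cor:residual etale} applies and gives an affine open $U \subset \A^r_k \subset \P^r_k$ containing $\phi_L(\Sigma)$ over which $\phi_L$ is finite \'etale. Properties (2) and (3): these come from the separation of points $\Sigma\setminus\{x\}$ away from $x$ built into each $\mathcal{U}^{\Sigma\setminus\{x\}}_x$ — concretely, the condition that $C_x(M)$ avoids $\Sigma\setminus\{x\}$ (and does not meet $L^+$ of those points), which forces $\phi_L(x)\ne\phi_L(x')$ for $x\ne x'$; since $k=\bar k$ the residue field isomorphisms in (3) are automatic. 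Property (4): $L^+(x)\cap Y = \emptyset$ follows from condition (1) of Lemma~\ref{lem:AD-open-refined-0} ($L\cap L_0$ avoids $(X\setminus X_{\fs})\cup S$, and we arranged $Y$ to be absorbed into such a set) together with condition (4) of Lemma~\ref{lem:AD-open-refined-0} giving regularity of $Z$ over the residual points $c_1,\dots,c_d$. Property (5): by the definition of $(Z,x)$-admissibility, we need $x\in\phi_L^{-1}(\phi_L(x))$ (clear), regularity of $Z$ at all points over $\phi_L^{-1}(\phi_L(x))\setminus\{x\} = L^+(x) = \{c_1,\dots,c_d\}$ (condition (4) of Lemma~\ref{lem:AD-open-refined-0}), and $\widehat{g}(Z_{c_i})\cap\widehat{g}(Z_{c_j}) = \emptyset$ for $i\ne j$ in $\{0,1,\dots,d\}$ (this is exactly property $(I)$ of Proposition~\ref{prop:Ad-open-final-I}). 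Finally, taking the intersection over all $x\in\Sigma$ of the dense open sets $\mathcal{W}$ (or rather, observing $\mathcal{W}$ already handles all $x$ simultaneously) and pulling back along $M\mapsto M\cap L_0$, one obtains the desired dense open $\mathcal{U}\subset\Gr(N-r-1,H)$.

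\textbf{Main obstacle.} The delicate point I expect to fight with is the cone compatibility $C_x(M\cap L_0) = C_x(M)\cap L_0$ and the verification that intersecting with $L_0$ turns the ``all of $X$'' projection into the ``curve $C = L_0\cap X$'' projection in a way that exactly matches how $\mathcal{U}^{\Sigma\setminus\{x\}}_x$ was constructed in Proposition~\ref{prop:Ad-open-final-I} (via the smooth surjection $\theta_{L_0}$ of \eqref{eqn:refined-adm-0-1}). One must track that the residual set $L^+(x)$ computed inside the full space $\P^N_k$ agrees with the residual set computed inside $L_0$ for the curve $C$ — i.e. that no extra residual points appear or disappear when restricting from $X$ to the curve $C$. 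This is where the genericity of $L_0$ (from the Bertini argument of Kleiman--Altman in Lemma~\ref{lem:AD-open-refined-0}) and the transversality conditions must be invoked with care; once that dictionary is pinned down, the rest is a routine assembly of the cited lemmas.
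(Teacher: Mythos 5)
Your proposal follows essentially the same route as the paper's proof: fix a general $L_0\in\Gr^{\tr}_{\Sigma}(H,N-r+1,\P^N_k)(k)$, invoke Proposition~\ref{prop:Semi-local-open-1} to get the dense open $\mathcal{W}\subset\Gr(N-2,H)$ of cone-admissible $M$'s, push $\mathcal{W}$ forward along the smooth surjection $M\mapsto M\cap L_0$ to $\Gr(N-r-1,H)$, and read off (1) from Corollary~\ref{cor:residual etale}, (2)--(3) from the separation properties, and (5) from condition (4) of Lemma~\ref{lem:AD-open-refined-0} together with property $(I)$ of Proposition~\ref{prop:Ad-open-final-I}. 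The cone identity you flag as the delicate point, $C_x(M\cap L_0)=C_x(M)\cap L_0$ for $x\in\Sigma\subset L_0$, is exactly the bookkeeping the paper relies on (it holds by a dimension count once $M$ is transverse to $L_0$, which is condition (1) of Proposition~\ref{prop:Semi-local-open-1}), so that $\phi_L^{-1}(\phi_L(x))=C_x(L)\cap X=C_x(M)\cap C$ is precisely the hyperplane section governed by $\mathcal{U}^{\Sigma\setminus\{x\}}_x$.

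The one step that does not work as written is your argument for (4). You propose to get $L^+(x)\cap Y=\emptyset$ by ``absorbing $Y$'' into the avoidance set $(X\setminus X_{\fs})\cup S$ of Lemma~\ref{lem:AD-open-refined-0}, but the $S$ there is required to be a \emph{finite} set of closed points, whereas $Y$ is a closed subset of dimension up to $r-1$; enlarging $Y$ does not help here. The repair (which is what the paper does) uses that the residual points $L^+(x)$ all lie on the curve $C=L_0\cap X$: one first arranges, in the Bertini step choosing $L_0$, that no component of $C$ is contained in $Y$, so that $Y\cap C$ is a finite set; then one either feeds $Y\cap C$ into the finite set $S$ when applying Lemma~\ref{lem:AD-open-refined-0}/Proposition~\ref{prop:Ad-open-final-I}, or, equivalently, intersects $\mathcal{W}$ with the dense open $\Gr^{\tr}(L_0,{\rm Sec}(\Sigma, Y\cap C),N-2,H)$ supplied by Lemmas~\ref{lem:fs-open-1} and~\ref{lem:fs-open-2}, which forces $C_x(L)$ to miss $Y\cap C$ for every $x\in\Sigma$. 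With that adjustment your argument is complete.
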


\begin{proof}
As in the proof of Proposition \ref{prop:Ad-open-final-I}, we can choose a reimbedding $\eta: X \inj \P^N_k$ and a dense open subset $\mathcal{U}_1 \subset \Gr_{\Sigma}(N-r+1, \P^N_k)$ such that each $L' \in \mathcal{U}_1(k)$ satisfies the condition that $L' \cap X$ is a reduced curve none of whose components is contained in $Y$, is regular away from $X_{\rm sing}$, and for each component of $Z|_{X \times \widehat{B}}$, the projection to $\widehat{B}$ is non-constant. Since $H \cap \Sigma = \emptyset$, we see that $\mathcal{U}_0:= \Gr_{\Sigma}(N-r+1, \P^N_k) \cap \Gr^{\rm tr}(H, N-r+1, \P^N_k) \neq \emptyset$. It follows that this intersection is dense open in $\Gr_{\Sigma}(N-r+1, \P^N_k)$. Letting $\mathcal{U}'_1 := \mathcal{U}_0 \cap \mathcal{U}_1$, we see that $\mathcal{U}'_1 $, is a dense open subset of $\Gr_{\Sigma}(N-r+1, \P^N_k)$ such that each $L' \in \mathcal{U}'_1(k)$ intersects $H$ transversely and $L' \cap X$ is a curve of the above type.

Choose $L_0 \in \mathcal{U}'_1(k)$. (N.B. Note that when $r=1$, there is a unique choice $L_0 = \mathbb{P}_k ^N$ automatically, and we have $C = X$.) We now apply Proposition \ref{prop:Semi-local-open-1}. It follows that there exists a dense open subset $\mathcal{W} \subset \Gr (N-2, H)$ such that each $M \in \mathcal{W}(k)$ satisfies the conditions (1) $\sim$ (3) of Proposition \ref{prop:Semi-local-open-1}.

On the other hand, the subset $\Gr^{\rm tr}( L_{0}, {\rm Sec}(\Sigma, Y \cap C), N-2, H)\subset \Gr(N-2, H)$ is a dense open subset by Lemmas \ref{lem:fs-open-1} and \ref{lem:fs-open-2}. Hence $\mathcal{V}':= \mathcal{W} \cap \Gr^{\rm tr}(L_{0}, {\rm Sec}(\Sigma, Y \cap C), N-2, H) \subset \Gr (N-2, H)$ is a dense open subset. Since $L_0$ intersects $H$ transversely, the map $\Gr (N-2, H) \to \Gr (N-r-1, H)$, given by $M \mapsto L_0 \cap M$, is smooth and surjective (note that $N \gg r$). In particular, the image $\mathcal{U}_2:= \{L_0 \cap M \in \Gr (N-r-1, H) \ | \ M \in \mathcal{V} \}$ of $\mathcal{V}$ is a dense open subset of $\Gr (N-r-1, H)$. Let $\mathcal{U}'_2 \subset \Gr(X, N-r-1, H)$ be the dense open set of Lemma \ref{lem:fs-proj-1et} so that $\mathcal{U}:= \mathcal{U}_2 \cap \mathcal{U}'_2 \subset \Gr (X, N-r-1, H)$ is a dense open subset. 

\textbf{Claim :} \emph{Each $L \in \mathcal{U}(k)$ satisfies the properties $(1)$ $\sim$ $(5)$ of the proposition.} 

We ignore $L$ from the notation of $\phi_L$ for simplicity. Before we prove the claim, we note that $\phi^{-1}(\phi(\Sigma)) \subset X_{\rm fs}$, as follows from the condition (3) of Proposition \ref{prop:Semi-local-open-1} which includes condition (1) of Lemma \ref{lem:AD-open-refined-0}.

Now, the condition (3) of Proposition \ref{prop:Semi-local-open-1} also implies that, by Corollary \ref{cor:residual etale}, there is an affine neighborhood $U$ of $\phi(\Sigma)$ such that $\phi^{-1}(U) \to U$ is finite \'etale. This proves (1). 

Since our open set $\mathcal{U}$ is contained in the open set of Lemma \ref{lem:fs-proj-1et}, we can use the properties there, too. The condition (2) of Lemma \ref{lem:fs-proj-1et} is that the map $\phi$ is injective on $\Sigma$, proving (2). The condition (3) is obvious because $k$ is assumed to be algebraically closed. The condition (4) follows from our choice of $M$ (thus of $L$) that it avoids the cone involving $Y$. 

We now prove (5). We need to verify the three conditions of the $(Z,x)$-admissibility of Definition \ref{defn:Ad-set} for each $x \in \Sigma$. The condition (1) of Definition \ref{defn:Ad-set} that $x \in \phi^{-1} (\phi (x))$ is obvious. 

We prove the condition (2) of Definition \ref{defn:Ad-set}. The condition (3) of Proposition \ref{prop:Semi-local-open-1} says that the condition (4) of Lemma \ref{lem:AD-open-refined-0} applied to $C_{x} (L) \cap X$ holds. Note that the cone $C_{x} (L)$ plays the role of the linear space in the statement of Lemma \ref{lem:AD-open-refined-0}. That is, each point of $Z$ lying over a point of $(C_{x} (L) \cap X) \setminus \{ x\}$ is a regular point of $Z$. This means that each point of $Z$ lying over a point of $\phi^{-1} (\phi (x)) \setminus \{ x \}$ is regular. This proves the condition (2) of Definition \ref{defn:Ad-set} for $\phi^{-1} (\phi(x))$.

The condition (3) of Definition \ref{defn:Ad-set} for the $(Z, x)$-admissibility of $\phi^{-1} (\phi(x))$ for $x \in \Sigma$ follows from the condition $(I)$ of Proposition \ref{prop:Ad-open-final-I}, which is part of the condition (3) of Proposition \ref{prop:Semi-local-open-1}. This proves (5). We have thus proven the Claim, and hence, the proposition.
\end{proof}

\subsection{Regularity of residual cycles: $k = \bar{k}$ case}\label{sec:conseq}
We now prove regularity of residual cycles at points lying over $\Sigma$ using Proposition \ref{prop:Main-I} when $k$ is algebraically closed. Recall (\S \ref{sec:Set-up}) that for a linear projection $\phi_L: X \to \P^r_k$, the residual scheme $L^+(Z)$ is the closure of $\widehat{\phi}^{-1}_L(\widehat{\phi}_L(Z)) \setminus Z$ in $X \times \widehat{B}$ with the reduced induced closed subscheme structure. 

We let $T: = \widehat{\phi}_L(Z) =\widehat{\phi}_L(L^+(Z)) \subset \P^r_k  \times \widehat{B}$ with the reduced subscheme structure and let $\widetilde{Z}: =T \times_{(\P^r_k  \times \widehat{B})} (X \times \widehat{B})= \widehat{\phi}_L ^{-1} (T) = \widehat{\phi}_L^{-1} (\widehat{\phi}_L (Z))$ as a scheme. We first have:

\begin{lem}\label{lem:cycle miss extra points}
We are under the Set-up $+ (\fs)$ of \S \ref{sec:Set-up2}. Let $x \in X_{\rm fs}$ be a closed point. Suppose in addition that $Z$ is irreducible. 

Let $ \phi_L:  X \to \mathbb{A}^r_k $ be a finite surjective morphism obtained by a linear projection as before such that $\phi^{-1}_L (\phi _L(x))$ satisfies the condition $(3)$ of Definition \ref{defn:Ad-set} of $(Z,x)$-admissibility. Let $\alpha \in Z$ be a point lying over a point of $\phi^{-1}_L (\phi _L(x))$. Let $S= \widehat{\phi}^{-1} _L\widehat{\phi}_L (\alpha)$. 

Then $Z \cap S = \{ \alpha \}$ and the natural map $\mathcal{O}_{Z, Z \cap S} \to \mathcal{O}_{Z, \alpha}$ is an isomorphism of local rings.
\end{lem}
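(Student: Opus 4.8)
The key observation is that the statement is local on $X$ around $\phi_L(x)$, and that the map $\widehat{\phi}_L$ decomposes the point $\alpha \in Z$ into a horizontal coordinate (in $X$) and a vertical coordinate (in $\widehat{B}$). The plan is to first unwind what $S = \widehat{\phi}_L^{-1}\widehat{\phi}_L(\alpha)$ means set-theoretically: a point $\beta \in S$ satisfies $\widehat{\phi}_L(\beta) = \widehat{\phi}_L(\alpha)$, which means its vertical coordinate $\widehat{g}(\beta)$ equals $\widehat{g}(\alpha)$, and its horizontal coordinate lies in $\phi_L^{-1}(\phi_L(\widehat{f}(\alpha)))$. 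Since $\alpha$ lies over a point $x' \in \phi_L^{-1}(\phi_L(x))$, we have $\widehat{f}(\alpha) = x'$, so the horizontal coordinates of points of $S \cap Z$ all lie in $\phi_L^{-1}(\phi_L(x))$.

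\textbf{Key step: the vertical separation.} Now suppose $\beta \in Z \cap S$. Then $\beta$ lies over some $x'' \in \phi_L^{-1}(\phi_L(x))$ and $\widehat{g}(\beta) = \widehat{g}(\alpha)$. In particular $\widehat{g}(\beta) \in \widehat{g}(Z_{x''})$ and $\widehat{g}(\alpha) \in \widehat{g}(Z_{x'})$, so $\widehat{g}(Z_{x'}) \cap \widehat{g}(Z_{x''}) \neq \emptyset$. By the condition (3) of $(Z,x)$-admissibility of $\phi_L^{-1}(\phi_L(x))$ — which is exactly the vertical separation property $\widehat{g}(Z_{x_1}) \cap \widehat{g}(Z_{x_2}) = \emptyset$ for distinct $x_1, x_2$ in that fiber — this forces $x' = x''$. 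So every point of $Z \cap S$ lies over the single point $x'$. But then $Z \cap S \subseteq Z_{x'} \cap \widehat{g}^{-1}(\widehat{g}(\alpha))$: both $\beta$ and $\alpha$ lie over $x'$ and have the same vertical coordinate $\widehat{g}(\alpha)$. Since $Z \to X$ is fs over $X_{\rm fs}$ and $x' \in X_{\rm fs}$, the fiber $Z_{x'}$ is finite over $\Spec(k(x'))$; combined with the fact that $\widehat{g}$ restricted to the finite set $Z_{x'}$ separates points once we also use that $\alpha$ is determined by $(x', \widehat{g}(\alpha))$ — more precisely, a point of $Z$ over $x'$ with vertical coordinate $\widehat{g}(\alpha)$ is a point of $Z_{x'}$ mapping to a specified point of $\widehat{B}$, and since $Z \hookrightarrow X \times \widehat{B}$ is a closed immersion, such a point is unique, equal to $\alpha$. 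Hence $Z \cap S = \{\alpha\}$.

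\textbf{The isomorphism of local rings.} Once $Z \cap S = \{\alpha\}$ as sets, the natural map $\mathcal{O}_{Z, Z \cap S} \to \mathcal{O}_{Z,\alpha}$ is a localization of a semi-local ring at one of its (finitely many) maximal ideals; but when there is only one maximal ideal, $\mathcal{O}_{Z, Z\cap S}$ is already local and the map is the identity. So the second assertion is formal once the first is established. The main obstacle, and the place requiring care, is the last clause of the set-theoretic argument: I must confirm that a point $\beta \in Z$ over $x'$ with $\widehat{g}(\beta) = \widehat{g}(\alpha)$ really does coincide with $\alpha$. This is where I use that $Z$ is a closed subscheme of $X \times \widehat{B}$, so a point of $Z$ is determined by its image in $X \times \widehat{B}$, i.e. by the pair $(\widehat{f}(\beta), \widehat{g}(\beta)) = (x', \widehat{g}(\alpha)) = (\widehat{f}(\alpha), \widehat{g}(\alpha))$; thus $\beta$ and $\alpha$ have the same image in $X \times \widehat{B}$ and hence are equal. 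I expect no genuine difficulty beyond organizing this correctly; the substance is entirely carried by condition (3) of admissibility, and the hypothesis that $Z$ is irreducible is used only to make ``$Z_{x'}$ is finite'' unambiguous and to talk about $\widehat{f}, \widehat{g}$ as maps out of $Z$.
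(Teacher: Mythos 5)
Your central step is exactly the paper's: for $\beta \in Z\cap S$ lying over $x''$ and $\alpha$ over $x'$, the equality $\widehat{\phi}_L(\beta)=\widehat{\phi}_L(\alpha)$ forces $\widehat{g}(\beta)=\widehat{g}(\alpha)$, hence $\widehat{g}(Z_{x'})\cap\widehat{g}(Z_{x''})\neq\emptyset$, and condition (3) of $(Z,x)$-admissibility gives $x''=x'$; and, as you say, the local-ring assertion is formal once $Z\cap S=\{\alpha\}$ is known. The one place where you go beyond the paper's own proof is in ruling out a second point $\beta\neq\alpha$ of $Z\cap S$ lying over the \emph{same} $x'$, and there your justification rests on a false principle: a point of $X\times_k\widehat{B}$ is \emph{not} determined by the pair of its images in $X$ and in $\widehat{B}$. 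Already $\Spec(\C)\times_{\R}\Spec(\C)$ has two points with the same pair of images; concretely, two distinct (conjugate) closed points of $\{x'\}\times\widehat{B}\cong\widehat{B}_{k(x')}$ can have the same image under $\widehat{\phi}_L$ whenever $k(x')\supsetneq k(\phi_L(x'))$. The closed immersion $Z\hookrightarrow X\times\widehat{B}$ only tells you that a point of $Z$ is determined by its image in the scheme $X\times\widehat{B}$, not by the pair $(\widehat{f}(\beta),\widehat{g}(\beta))$ — these are different things, and your "i.e." conflates them.

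That said, the step you are trying to justify is valid precisely when $k$ is algebraically closed, so that the closed points in question really are rational pairs $(a,b)$; and this is in fact the only setting in which the paper invokes the lemma (\S\ref{sec:conseq}, in Lemmas \ref{lem:No-other-comp} and \ref{lem:reg-gen}, where $k=\bar{k}$ is assumed). The paper's own proof simply omits this residual case altogether, deriving a contradiction only from a putative point over a different $x_2\neq x_1$. So: same approach as the paper, correct conclusion in the context where the lemma is used, but you should replace the appeal to the closed-immersion property by an explicit appeal to $k=\bar{k}$ (closed points of a product over an algebraically closed field are pairs of closed points), or else note that the stated generality of the lemma requires this extra hypothesis.
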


\begin{proof}
Suppose $\alpha \in Z$ lies over $x_1 \in \phi^{-1}_L (\phi _L(x))$. Toward contradiction, suppose there is a point $\alpha' \in Z$ lying over some $x_2 \in \phi^{-1}_L (\phi _L (x)) \setminus \{ x_1 \}$. Since $\widehat{\phi}_L (\alpha) =\widehat{\phi}_L (\alpha')$, we have $\widehat{g} (\alpha) = \widehat{g}(\alpha')$ in $B$, where $\widehat{g}: X \times \widehat{B} \to \widehat{B}$ is the projection. Let $b_0$ be this common closed point. This $Z \to B$ is non-constant and we have $\alpha \in Z_{x_1}$ and $\alpha' \in Z_{x_2}$ so that $\widehat{g} (Z_{x_1}) \cap \widehat{g} (Z_{x_2}) \ni b_0$, contradicting the condition (3) of Definition \ref{defn:Ad-set} for the set $\phi^{-1}_L (\phi_L (x))$.
\end{proof}

\begin{lem}\label{lem:No-other-comp}
Let $k$ be algebraically closed. Let $L \in \mathcal{U}(k) \subset \Gr(N-r-1, H)(k)$ be as in Proposition \ref{prop:Main-I}. Suppose $Z$ is irreducible and let $\alpha = (a,b)\in Z$ be a closed point such that $a \in {\phi}^{-1}_L(\phi_L(\Sigma))$. Assume that $Z$ is irreducible and $\alpha \in Z$. Then $\mathcal{O}_{\widetilde{Z}, \alpha} \to \mathcal{O}_{Z,\alpha}$ is an isomorphism. In particular, $Z$ is the only irreducible component of $\widetilde{Z}$ which passes through $\alpha$, with the multiplicity $1$, and the cycle $[\widetilde{Z}] - [Z]$ has no component equal to $Z$.
\end{lem}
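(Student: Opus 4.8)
The plan is to work entirely with local rings at $\alpha$ and its image. Write $\beta:=\widehat{\phi}_L(\alpha)\in T$ and $S:=\widehat{\phi}_L^{-1}(\beta)\subset X\times\widehat{B}$. Two inputs from Proposition~\ref{prop:Main-I} drive everything. First, since $a\in\phi_L^{-1}(\phi_L(\Sigma))$, the point $\phi_L(a)$ lies in the affine open $U\supset\phi_L(\Sigma)$ of part~(1) over which $\phi_L$ is {\'e}tale; hence $\phi_L$ is {\'e}tale at $a$, and so $\widehat{\phi}_L=\phi_L\times{\rm id}_{\widehat{B}}$ is {\'e}tale at $\alpha$. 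Second, choosing $x\in\Sigma$ with $\phi_L(x)=\phi_L(a)$, part~(5) says $\phi_L^{-1}(\phi_L(x))$ is $(Z,x)$-admissible, so Lemma~\ref{lem:cycle miss extra points} applies and gives $Z\cap S=\{\alpha\}$. I would then prove the theorem in two steps: (i) $\widehat{\phi}_L$ restricts to an isomorphism of local rings $\mathcal{O}_{T,\beta}\xrightarrow{\cong}\mathcal{O}_{Z,\alpha}$, and (ii) this upgrades to $\mathcal{O}_{\widetilde{Z},\alpha}\xrightarrow{\cong}\mathcal{O}_{Z,\alpha}$.

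For step (i): since $\phi_L$ is finite, so is $\widehat{\phi}_L$, hence so is the composite $\widehat{\phi}_L|_Z\colon Z\hookrightarrow\widetilde{Z}\to T$; and because $Z\cap S=\{\alpha\}$, the semi-local ring of $Z$ over $\beta$ is exactly the local ring $\mathcal{O}_{Z,\alpha}$, which is therefore a finite $\mathcal{O}_{T,\beta}$-module. The map $\mathcal{O}_{T,\beta}\to\mathcal{O}_{Z,\alpha}$ is unramified at $\alpha$: the closed immersion $Z\hookrightarrow X\times\widehat{B}$ is unramified and $\widehat{\phi}_L$ is {\'e}tale (so unramified) at $\alpha$, whence $Z\to\P^r_k\times\widehat{B}$ and thus $Z\to T$ is unramified at $\alpha$; moreover $k(\alpha)=k(\beta)=k$ since $k=\bar{k}$ and these are closed points. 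Finally it is injective, as $Z$ is irreducible and surjects onto $T=\widehat{\phi}_L(Z)$, so $Z\to T$ is a dominant morphism of integral schemes. By Lemma~\ref{lem:elem-com-alg}, an injective finite unramified local homomorphism inducing an isomorphism on residue fields is an isomorphism; hence $\mathcal{O}_{T,\beta}\xrightarrow{\cong}\mathcal{O}_{Z,\alpha}$.

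For step (ii): $\widetilde{Z}=\widehat{\phi}_L^{-1}(T)=T\times_{\P^r_k\times\widehat{B}}(X\times\widehat{B})$, so $\widetilde{Z}\to T$ is the base change of $\widehat{\phi}_L$ and is {\'e}tale at $\alpha$, while $Z\hookrightarrow\widetilde{Z}$ is a closed immersion over $T$. Composing with the inverse of the isomorphism from step (i), $Z\hookrightarrow\widetilde{Z}$ is, locally at $\alpha$, a section of the {\'e}tale morphism $\widetilde{Z}\to T$; since a section of an unramified (in particular {\'e}tale) morphism is an open immersion, $Z\hookrightarrow\widetilde{Z}$ is simultaneously a closed and an open immersion near $\alpha$, so $\mathcal{O}_{\widetilde{Z},\alpha}\xrightarrow{\cong}\mathcal{O}_{Z,\alpha}$. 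Equivalently, writing $I=\ker(\mathcal{O}_{\widetilde{Z},\alpha}\twoheadrightarrow\mathcal{O}_{Z,\alpha})$: both $\mathcal{O}_{\widetilde{Z},\alpha}$ ({\'e}tale over $\mathcal{O}_{T,\beta}$) and $\mathcal{O}_{Z,\alpha}\cong\mathcal{O}_{T,\beta}$ are $\mathcal{O}_{T,\beta}$-flat, so $I$ is flat with $I/\mathfrak{m}_\beta I=0$; since $\mathfrak{m}_\beta\mathcal{O}_{\widetilde{Z},\alpha}=\mathfrak{m}_\alpha$ by unramifiedness, $I=\mathfrak{m}_\alpha I$, whence $I=0$ by Nakayama. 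This proves the main assertion. For the ``in particular'' clause: $\widetilde{Z}\to T$ is finite and $\dim T=\dim Z=r$, so $\dim\widetilde{Z}\le r$ and the $r$-dimensional irreducible subscheme $Z$ is a component of $\widetilde{Z}$; as $\mathcal{O}_{\widetilde{Z},\alpha}\cong\mathcal{O}_{Z,\alpha}$ is a domain, $\widetilde{Z}$ is irreducible at $\alpha$, so $Z$ is the only component through $\alpha$, and localizing the isomorphism at the generic point $\eta_Z$ of $Z$ identifies $\mathcal{O}_{\widetilde{Z},\eta_Z}$ with $K(Z)$, so $Z$ occurs in $[\widetilde{Z}]$ with multiplicity one; hence $[\widetilde{Z}]-[Z]$ has no component equal to $Z$.

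The main obstacle is the bookkeeping around {\'e}taleness: an {\'e}tale morphism is \emph{not} a local isomorphism of Zariski local rings in general, so the argument genuinely needs the equality $Z\cap S=\{\alpha\}$ (i.e.\ Lemma~\ref{lem:cycle miss extra points}) in order to force $\mathcal{O}_{Z,\alpha}$ to be \emph{finite} over $\mathcal{O}_{T,\beta}$ and hence to collapse $Z\to T$ to a local isomorphism near $\alpha$; likewise $\widetilde{Z}$ may have further components meeting the fiber $S$ at points other than $\alpha$, which is why step (ii) must be run through the section/open-immersion argument (or the flat-kernel Nakayama argument above) rather than by applying Nakayama to $\widetilde{Z}$ over $\mathcal{O}_{T,\beta}$ directly.
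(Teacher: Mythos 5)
Your proposal is correct, and its first half is essentially the paper's argument: you use condition (5) of Proposition \ref{prop:Main-I} together with Lemma \ref{lem:cycle miss extra points} to get $Z\cap S=\{\alpha\}$, so that $\mathcal{O}_{T,\beta}\to\mathcal{O}_{Z,\alpha}$ is an injective, finite, unramified local homomorphism with trivial residue extension, hence an isomorphism by Lemma \ref{lem:elem-com-alg}. Where you diverge is in upgrading this to $\mathcal{O}_{\widetilde{Z},\alpha}\xrightarrow{\cong}\mathcal{O}_{Z,\alpha}$. The paper passes to completions: it shows $\widehat{\mathcal{O}}_{T,\beta}\to\widehat{\mathcal{O}}_{\widetilde{Z},\alpha}$ is an isomorphism (étale plus trivial residue extension becomes an isomorphism after completion, via Lemma \ref{lem:elem-com-alg} again), concludes that the surjection of completions $\widehat{\mathcal{O}}_{\widetilde{Z},\alpha}\surj\widehat{\mathcal{O}}_{Z,\alpha}$ is an isomorphism, and then descends to the uncompleted rings by the Krull intersection theorem. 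You instead stay at the level of Zariski local rings, viewing $Z\hookrightarrow\widetilde{Z}$ near $\alpha$ as a section of the unramified map $\widetilde{Z}\to T$, or equivalently killing the kernel $I$ of $\mathcal{O}_{\widetilde{Z},\alpha}\surj\mathcal{O}_{Z,\alpha}$ by flatness of $\mathcal{O}_{Z,\alpha}$ over $\mathcal{O}_{T,\beta}$, the unramified identity $\fm_\beta\mathcal{O}_{\widetilde{Z},\alpha}=\fm_\alpha$, and Nakayama. Both routes exploit exactly the same two inputs (étaleness over $U\times\widehat{B}$ and the admissibility forcing $Z\cap S=\{\alpha\}$); your version is marginally more self-contained in that it avoids completions and the Krull intersection theorem, at the cost of the flat-kernel bookkeeping (of which your Nakayama formulation is the cleanly local one — the ``section is an open immersion'' phrasing strictly requires spreading out from local rings to honest neighborhoods). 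Your dimension-count plus domain argument for the ``in particular'' clause, including the multiplicity-one statement via localization at the generic point, is also fine and matches the paper's intent.
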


\begin{proof}
We shall write $\phi_L$ simply as $\phi$. Let $y = \phi(a)$ and $\beta = \widehat{\phi}(\alpha) = (\phi(a), b) =(y,b)$. We let $x \in \Sigma$ be such that $y = \phi(x)$ and let $S = \phi^{-1}(y) \times \{b\} = \widehat{\phi}^{-1}(\beta) \subset X \times \widehat{B}$. 

Let $U \subset \P^r_k$ be as in condition (1) of Proposition \ref{prop:Main-I}. Since $\widehat{\phi}$ is finite and \'etale over $U \times \widehat{B}$, it follows that the map $\widetilde{Z} \to T$ is finite and \'etale over $T \cap (U \times \widehat{B})$. In particular, the map of rings $\mathcal{O}_{T, \beta} \to \mathcal{O}_{\widetilde{Z}, S}$ is finite and \'etale. This in turn implies that the map $\mathcal{O}_{T, \beta} \to \mathcal{O}_{Z, Z \cap S}$ is finite and unramified. 

On the other hand, by the condition (5) of Proposition \ref{prop:Main-I} that $\phi_L ^{-1} (\phi_L (x))$ is $(Z, x)$-admissible, we deduce that for each $x \in \Sigma$, the map $\mathcal{O}_{Z, Z \cap S} \to \mathcal{O}_{Z, \alpha}$ is an isomorphism by Lemma \ref{lem:cycle miss extra points}. Hence, the map $\mathcal{O}_{T, \beta} \to \mathcal{O}_{Z, \alpha}$ is an injective (since $Z \surj T$), finite and unramified map of local rings which induces isomorphism between the residue fields (as $k$ is algebraically closed). Lemma \ref{lem:elem-com-alg} therefore says that the map $\mathcal{O}_{T, \beta} \to \mathcal{O}_{Z, \alpha}$ must be an isomorphism.

We next observe that as $\mathcal{O}_{T, \beta} \to \mathcal{O}_{\widetilde{Z}, S}$ is finite and {\'e}tale, the map $\mathcal{O}_{T, \beta} \to  \mathcal{O}_{\widetilde{Z}, \alpha}$ is {\'e}tale. In particular, the map $\widehat{\mathcal{O}}_{T, \beta} \to \widehat{\mathcal{O}}_{\widetilde{Z}, \alpha}$ of completions is finite and {\'e}tale. Since it induces an isomorphism between the residue fields, it follows again from Lemma \ref{lem:elem-com-alg} that $\widehat{\mathcal{O}}_{T, \beta} \to \widehat{\mathcal{O}}_{\widetilde{Z}, \alpha}$ is an isomorphism. Hence, there are local homomorphisms of complete local rings
\begin{equation}\label{eqn:sfs-local-v2-0}
\widehat{\mathcal{O}}_{T, \beta} \to \widehat{\mathcal{O}}_{\widetilde{Z}, \alpha} \surj 
\widehat{\mathcal{O}}_{{Z}, \alpha},
\end{equation}
where both the first map and the composite map are isomorphisms. Thus, the second map is an isomorphism too. The second map in \eqref{eqn:sfs-local-v2-0} being \emph{a priori} a surjection, the Krull intersection theorem (\cite[Theorem 8.10, p.60]{Matsumura}) says that this map is an isomorphism if and only if $\mathcal{O}_{\widetilde{Z}, \alpha} \surj \mathcal{O}_{{Z}, \alpha}$ (without completion) is an isomorphism. This in turn is equivalent to that ${Z}$ is the only irreducible component of $\widetilde{Z}$ passing through $\alpha$, and $Z$ has the multiplicity $1$ in $\widetilde{Z}$. We have thus proven the lemma.
\end{proof} 

\begin{lem}\label{lem:reg-irr}
Let $k$ be algebraically closed and 
$L \in \mathcal{U}(k)\subset \Gr(N-r-1, H)(k)$ as in Proposition \ref{prop:Main-I}. Suppose that $Z$ is irreducible. Then $L^+(Z)$ is regular at all points lying over $\Sigma$.
\end{lem}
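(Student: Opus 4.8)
Fix a closed point $\alpha$ of $L^+(Z)$ lying over a point of $\Sigma$; say $\widehat{f}(\alpha) = a$ with $\phi_L(a) = \phi_L(x)$ for some $x \in \Sigma$. Write $\phi$ for $\phi_L$ and $\widehat{\phi}$ for $\widehat{\phi}_L$. Since $L^+(Z)$ is a closed subset of $\widetilde{Z} = \widehat{\phi}^{-1}(\widehat{\phi}(Z))$, the point $\alpha$ lies on $\widetilde{Z}$, and the first task is to understand $\widetilde{Z}$ locally at $\alpha$. Let $\beta = \widehat{\phi}(\alpha)$ and $S = \widehat{\phi}^{-1}(\beta)$. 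By condition (1) of Proposition~\ref{prop:Main-I}, the projection $\phi$ is finite and \'etale over an affine open $U \subset \P^r_k$ containing $\phi(\Sigma)$, hence $\widehat{\phi}$ is finite and \'etale over $U \times \widehat{B}$; in particular the map $\widetilde{Z} \to T = \widehat{\phi}(Z)$ is finite and \'etale near the fiber $S$. Therefore $\widetilde{Z}$ is regular at $\alpha$ \emph{provided} $T$ is regular at $\beta$, because regularity is preserved and reflected under \'etale maps.

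The key reduction is thus: $T = \widehat{\phi}(Z)$ is regular at $\beta$. This is where I would invoke Lemma~\ref{lem:No-other-comp}, which gives an isomorphism $\mathcal{O}_{T,\beta} \xrightarrow{\simeq} \mathcal{O}_{Z,\alpha'}$ for the unique point $\alpha' \in Z$ lying over $\beta$ (here I am using the irreducibility of $Z$ and the $(Z,x)$-admissibility of $\phi^{-1}(\phi(x))$, specifically condition (3) via Lemma~\ref{lem:cycle miss extra points}, to see that $Z \cap S$ is a single point). Now condition (5) of Proposition~\ref{prop:Main-I} says $\phi^{-1}(\phi(x))$ is $(Z,x)$-admissible, and condition (2) of Definition~\ref{defn:Ad-set} of admissibility asserts precisely that $Z$ is regular at all points lying over $\phi^{-1}(\phi(x)) \setminus \{x\}$. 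Since $\alpha \in L^+(Z) = \overline{\widehat{\phi}^{-1}(\widehat{\phi}(Z)) \setminus Z}$ lies over a point $a \in \phi^{-1}(\phi(x))$, and $\alpha \notin Z$ (if $\alpha$ were in $Z$, then by Lemma~\ref{lem:No-other-comp} $Z$ would be the only component of $\widetilde{Z}$ through $\alpha$ with multiplicity one, so $\alpha$ could not lie in the closure of $\widetilde{Z} \setminus Z$; this needs a short argument, but is the crux of why the residual point avoids $Z$), the point $a$ cannot equal $x$ — otherwise $\alpha$ would be forced into $Z$ again, since over $x$ the only point of $Z$ is $(x, \cdot)$ and $Z \cap S = \{\alpha'\}$ with $\alpha'$ over $a = x$; more carefully, one shows $a \neq x$ using that $\widehat{\phi}$ restricted to $Z$ near the $x$-fiber and $L^+$ decompose disjointly. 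Hence $a \in \phi^{-1}(\phi(x)) \setminus \{x\}$, so $Z$ is regular at $\alpha'$, so $\mathcal{O}_{Z,\alpha'}$ is regular, so $\mathcal{O}_{T,\beta}$ is regular, so $T$ is regular at $\beta$.

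Chaining these: $T$ regular at $\beta$ $\Rightarrow$ (via the finite \'etale map $\widetilde{Z} \to T$ over $U \times \widehat{B}$) $\widetilde{Z}$ regular at $\alpha$ $\Rightarrow$ (since $L^+(Z)$ is a union of irreducible components of $\widetilde{Z}$, and a regular point of a scheme lies on a unique component, which is itself regular there) $L^+(Z)$ regular at $\alpha$. Running over all closed points $\alpha$ of $L^+(Z)$ lying over $\Sigma$ gives the lemma.

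\textbf{Main obstacle.} The delicate point is the claim that the residual point $\alpha \in L^+(Z)$ over $\Sigma$ does not actually lie on $Z$, i.e. that $\widehat{f}(\alpha) \in \phi^{-1}(\phi(x)) \setminus \{x\}$ rather than $= x$. This is what makes condition (2) of $(Z,x)$-admissibility applicable (it only guarantees regularity of $Z$ over $D \setminus \{x\}$, not over $x$ itself, where $Z$ could be singular). The resolution combines: (i) Lemma~\ref{lem:No-other-comp}, which shows $Z$ appears in $\widetilde{Z}$ with multiplicity one and is the only component through any of its own points over $\phi^{-1}(\phi(\Sigma))$, so $L^+(Z) = \overline{\widetilde{Z} \setminus Z}$ meets $Z$ only in the ``lower-dimensional'' locus where other components cross $Z$; and (ii) the $(Z,x)$-admissibility condition (3), $\widehat{g}(Z_{x_1}) \cap \widehat{g}(Z_{x_2}) = \emptyset$ for distinct $x_1, x_2$ in the fiber, which (through Lemma~\ref{lem:cycle miss extra points}) forces $Z \cap \widehat{\phi}^{-1}(\widehat{\phi}(\alpha))$ to be a single point lying over a \emph{single} element of the horizontal fiber. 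Putting these together pins down that element to lie in $\phi^{-1}(\phi(x)) \setminus \{x\}$, at which point $Z$ — hence $T$, hence $\widetilde{Z}$, hence $L^+(Z)$ — is regular at the relevant point.
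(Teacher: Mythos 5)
Your overall architecture is the same as the paper's: show $\alpha\notin Z$ via Lemma \ref{lem:No-other-comp}, so that $L^+(Z)$ and $\widetilde{Z}$ agree locally at $\alpha$; transfer regularity along the finite \'etale map $\widetilde{Z}\to T$ over $U\times\widehat{B}$ (the paper does this with completions, but your \'etale phrasing is equivalent); identify $\mathcal{O}_{T,\beta}$ with $\mathcal{O}_{Z,\alpha'}$ for the unique point $\alpha'\in Z\cap S$; and invoke condition (2) of $(Z,x)$-admissibility to get regularity of $Z$ at $\alpha'$. All of that is right and is exactly how the paper proceeds.

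The problem is in the step you yourself flag as the ``main obstacle,'' where the logic is inverted. You try to prove that $a=\widehat{f}(\alpha)$ is different from $x$. But the lemma concerns points of $L^+(Z)$ lying over $\Sigma$, so $a$ \emph{is} a point of $\Sigma$ and one takes $x=a$; the claim ``$a\neq x$'' is false in precisely the situation the lemma addresses. What you actually need is that the \emph{other} point, $x':=\widehat{f}(\alpha')$, is different from $x$ --- condition (2) of Definition \ref{defn:Ad-set} only guarantees regularity of $Z$ over $\phi^{-1}(\phi(x))\setminus\{x\}$, so it is $\alpha'$, not $\alpha$, whose horizontal coordinate must avoid $x$. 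The correct (and short) argument is the paper's: write $\alpha=(x,b)$ with $x\in\Sigma$ and $\alpha'=(x',b)$; since $\widehat{\phi}(\alpha)=\widehat{\phi}(\alpha')$ forces the two points to share the $\widehat{B}$-coordinate $b$, the equality $x'=x$ would give $\alpha'=\alpha$ and hence $\alpha\in Z$, contradicting what you already established from Lemma \ref{lem:No-other-comp}. So $x'\in\phi^{-1}(\phi(x))\setminus\{x\}$ and admissibility applies. Your text instead asserts that $\alpha'$ lies over $a$ (it need not) and concludes $a\in\phi^{-1}(\phi(x))\setminus\{x\}$, which both contradicts the hypothesis and is not the fact needed. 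With that one step replaced, the rest of your proof goes through and coincides with the paper's.
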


\begin{proof}
We continue with the notations of the proof of Lemma \ref{lem:No-other-comp}. Let $\alpha = (x, b) \in X \times \widehat{B}$ with $x \in \Sigma$ be such that $\alpha \in L^+(Z)$. Let $\beta = \widehat{\phi}(\alpha) = (\phi(x), b) := (y, b)$. It follows from Lemma \ref{lem:No-other-comp} that $Z$ does not pass through $\alpha$. This implies that the canonical map $\mathcal{O}_{\widetilde{Z}, \alpha} \to \mathcal{O}_{L^+(Z), \alpha}$ is an isomorphism. Therefore, it suffices therefore to show that $\mathcal{O}_{\widetilde{Z}, \alpha}$ is regular. 

Since $\alpha \in L^+(Z)$, there must exist a closed point $\alpha' =(x', b) \in Z$ with $x' \in \phi^{-1}(y)$. As $\alpha \notin Z$, we must have $x' \neq x$. It follows again from Lemma \ref{lem:No-other-comp} that $\mathcal{O}_{\widetilde{Z}, \alpha'} \xrightarrow{\cong} \mathcal{O}_{{Z}, \alpha'}$. We have also shown in the middle of the proof of Lemma \ref{lem:No-other-comp} that the map $\widehat{\mathcal{O}}_{T, \beta} \to \widehat{\mathcal{O}}_{\widetilde{Z}, (a,b)}$ of completions in \eqref{eqn:sfs-local-v2-0} is an isomorphism for every $a \in \phi^{-1}(y)$. We thus get the commutative diagram of local rings
\begin{equation}\label{eqn:fugret_diag}
\xymatrix@C1pc{
{\mathcal{O}}_{\widetilde{Z}, \alpha} \ar[d] & {\mathcal{O}}_{T, \beta} \ar[l] \ar[r] \ar[d]  & {\mathcal{O}}_{\widetilde{Z}, \alpha'} \ar[d]  \ar[r]^{\cong} & {\mathcal{O}}_{Z, \alpha'} \\
\widehat{\mathcal{O}}_{\widetilde{Z}, \alpha}  & \widehat{\mathcal{O}}_{T, \beta} \ar[l]_-{\cong} \ar[r]^-{\cong}  & \widehat{\mathcal{O}}_{\widetilde{Z}, \alpha'}, &}
\end{equation}
where the vertical arrows are completion maps.

Since $x' \neq x$, it follows from the condition (2) in Definition \ref{defn:Ad-set} and the condition (5) in Proposition \ref{prop:Main-I} that $\mathcal{O}_{{Z}, \alpha'}$ is regular. It follows from  \eqref{eqn:fugret_diag} that all rings of the bottom of the diagram are regular, using a basic fact in commutative algebra that: $(\star)$ a noetherian local ring is regular if and only if its completion is a regular local ring (\emph{cf}. Proof of \cite[Theorem 19.5, p.~157]{Matsumura}). Equivalently, all rings of the top of the diagram are regular by $(\star)$ again. In particular, $\mathcal{O}_{\widetilde{Z}, \alpha}$ is regular. This finishes the proof. 
\end{proof}

To extend Lemma \ref{lem:reg-irr} to reducible subschemes $Z$ in Lemma \ref{lem:reg-gen}, we first consider the following:

\begin{lem}\label{lem:distinct-cycles}
Let $k$ be algebraically closed. We are under the Set-up $+ (\fs)$ of \S \ref{sec:Set-up2}. Here, $Z$ is not necessarily irreducible. Then after replacing the embedding $X \inj \P^N_k$ into a bigger space via a Veronese embedding if necessary, there is a dense open subset $\mathcal{U} \subset \Gr (X, N-r-1, H)$ such that for each $L \in \mathcal{U} (k)$, the induced map $\widehat{\phi}_L$ takes distinct components of $Z$ to distinct components of $\widehat{\phi}_L(Z)$.
\end{lem}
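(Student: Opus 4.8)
The plan is to argue one pair of components at a time. Write $Z = Z_1 \cup \cdots \cup Z_s$ and fix a pair $i \neq j$. Since each $Z_\ell$ is integral of dimension $r$ (the dimension of $X$), the map $\widehat{\phi}_L|_{Z_\ell} : Z_\ell \to \widehat{\phi}_L(Z_\ell)$ is finite and surjective, and $\widehat{\phi}_L(Z_\ell)$ is an integral closed subscheme of $\P^r_k \times \widehat{B}$ of dimension $r$. Thus $\widehat{\phi}_L(Z_i) = \widehat{\phi}_L(Z_j)$ is the \emph{only} way distinct components can fail to map to distinct components. So it suffices to produce, for each ordered pair $(i,j)$ with $i \neq j$, a dense open $\mathcal{U}_{ij} \subset \Gr(X, N-r-1, H)$ such that $\widehat{\phi}_L(Z_i) \neq \widehat{\phi}_L(Z_j)$ for all $L \in \mathcal{U}_{ij}(k)$; then take $\mathcal{U} = \bigcap_{i \neq j} \mathcal{U}_{ij}$, intersected with the dense open of Lemma \ref{lem:fs-proj-1et} (and Lemma \ref{lem:fs_sansZ-*}) so that $\phi_L$ is a genuine finite surjective linear projection.

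First I would reduce to separating finitely many chosen points. For the pair $(i,j)$, choose a marked closed point $\alpha_i \in (Z_i)_{\rm sm} \setminus \bigcup_{\ell \neq i} Z_\ell$ with $x_i := \widehat{f}(\alpha_i) \in X_{\rm sm}$ and $b_i := \widehat{g}(\alpha_i) \in B$, exactly as in the discussion preceding Lemma \ref{lem:fs_sansZ-*}; this is possible because $Z_i$ meets $H \times \widehat{B}$ properly, $X \setminus (X\cap H) \subset X_{\rm sm}$, and $k$ is perfect. Now $\alpha_i \notin Z_j$, so $\widehat{g}(\alpha_i) \in B$ together with the non-constancy of $Z_j \to \widehat{B}$ lets me pick a point $b_i \notin \widehat{g}(Z_j \cap \widehat{f}^{-1}(x_i))$ — more robustly, I enlarge the base set $E$ (in the sense of the Set-up of \S\ref{sec:Set-up}) to contain $b_i$ and then apply Lemma \ref{lem:fs-proj-1et} (with $\Sigma$ there replaced by $\{x_i\} \cup \Sigma$ and $E$ there containing $b_i$) to get a dense open $\mathcal{U}_{ij}$ with properties (4)--(6). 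The point of property (5), $L^+(x_i) \cap \widehat{f}(\widehat{g}^{-1}(E)) = \emptyset$, is precisely that no point of $Z$ lying over the fiber $\widehat{g}^{-1}(b_i)$ maps under $\widehat{\phi}_L$ into $\widehat{\phi}_L(x_i)$ other than $x_i$ itself. Combined with Lemma \ref{lem:fs_sansZ-*} applied to $Z_i$, which gives $Z_i \cap \widehat{\phi}_L^{-1}(\widehat{\phi}_L(\alpha_i)) = \{\alpha_i\}$, this forces: if $\widehat{\phi}_L(Z_j) \ni \widehat{\phi}_L(\alpha_i)$ via some point $\alpha' \in Z_j$, then $\alpha' = \alpha_i$, contradicting $\alpha_i \notin Z_j$. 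Hence $\widehat{\phi}_L(\alpha_i) \notin \widehat{\phi}_L(Z_j)$, so the integral schemes $\widehat{\phi}_L(Z_i)$ and $\widehat{\phi}_L(Z_j)$ are distinct.

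The descent from $\bar k$ to a general infinite perfect $k$ is handled exactly as in Step 2 of the proof of Lemma \ref{lem:fs-proj-1et}: run the above over $\bar k$ to get a dense open $\mathcal{U}'_{ij} \subset \Gr(X_{\bar k}, N-r-1, H_{\bar k})$, observe it is defined over a finite Galois extension, and take the Galois-fixed intersection, which descends to a dense open $\mathcal{U}_{ij} \subset \Gr(X, N-r-1, H)$ with $(\mathcal{U}_{ij})_{\bar k} \subset \mathcal{U}'_{ij}$; since the equality of two integral components of $\widehat{\phi}_L(Z)$ over $k$ would persist (after base change one component of $Z_{i,\bar k}$ would be sent onto one of $Z_{j,\bar k}$), the property passes down. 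The main obstacle I anticipate is bookkeeping: I must choose the marked points and the enlarged base set $E$ uniformly for all $\binom{s}{2}$ (ordered) pairs at once — distinct pairs may demand slightly different $E$'s — so the cleanest route is to fix \emph{one} finite set $\Xi = \{x_1,\dots,x_s\} \cup \Sigma$ and $E = \{b_1,\dots,b_s\} \cup F$ as in Lemma \ref{lem:fs_sansZ-*}, apply Lemma \ref{lem:fs-proj-1et} and Lemma \ref{lem:fs_sansZ-*} \emph{once} with these, and then derive all the pairwise separations from the single resulting dense open $\mathcal{U}$, using that $\alpha_i \notin Z_j$ for every $j \neq i$. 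Everything else is then a finite intersection of dense opens in the irreducible variety $\Gr(N-r-1, H)$.
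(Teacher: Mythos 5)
Your proposal is correct and rests on the same two ideas as the paper's proof: mark a point $\alpha_i=(x_i,b_i)$ on each $Z_i$ away from the other components, and use the non-constancy of $Z_j\to\widehat{B}$ to make the locus that could collide with $x_i$ under the projection small enough to avoid. The only real difference is in how the avoidance is implemented. The paper does it from scratch: it sets $A_i=\bigcup_{j\neq i}\widehat{f}(Z_j\cap(X\times\{b_i\}))$, notes $\dim A_i\le r-1$, and takes $\mathcal{U}$ inside $\bigcap_i\Gr({\rm Sec}(A_i,\{x_i\}),N-r-1,H)$ via Lemma \ref{lem:fs-open-1}, so that $\beta_i\in\widehat{\phi}_L(Z_j)$ would force $L$ to meet ${\rm Sec}(A_i,\{x_i\})$. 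You instead put $b_i$ into $E$ and quote property (5) of Lemma \ref{lem:fs-proj-1et}, $L^+(x_i)\cap\widehat{f}(\widehat{g}^{-1}(E))=\emptyset$; since $A_i\subseteq\widehat{f}(\widehat{g}^{-1}(E))$ when $b_i\in E$, this is a (slightly stronger) consequence of the same secant-variety dimension count, just performed once inside the proof of Lemma \ref{lem:fs-proj-1et} rather than redone here. Both routes are fine; yours buys brevity by reusing proven machinery, the paper's keeps the lemma self-contained and avoids re-invoking the full Set-up with an enlarged $\Sigma$ and $E$. Two harmless superfluities in your write-up: the Galois-descent step is not needed because the lemma is stated for $k$ algebraically closed (the descent happens later, in Proposition \ref{prop:Main-II}); and Lemma \ref{lem:fs_sansZ-*} controls $Z_i\cap\widehat{\phi}_L^{-1}(\beta_i)$, which is not what you need here --- the exclusion of points of $Z_j$ over $\beta_i$ comes entirely from property (5) together with $\alpha_i\notin Z_j$, as your own argument in fact shows.
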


\begin{proof}
As we did previously in Lemma \ref{lem:fs_sansZ-*}, for each $1 \leq i \leq s$, choose a closed point $\alpha_i = (x_i, b_i) \in Z_i \setminus (\cup_{j \neq i} Z_j)$, so that $x_i := \widehat{f} (\alpha_i) \in X_{\rm sm}$ and $b_i:= \widehat{g} (\alpha_i) \in B$. We observe that if $j \neq i$, then $Z_j \not\subset X \times \{b_i\}$, because $Z_j \to \widehat{B}$ is non-constant.

Let $A_i = \bigcup_{j \not = i} \widehat{f}({Z}_j \cap (X \times \{b_i\}))$. This is a closed subset of dimension $\leq r-1$. In particular, $\dim({\rm Sec}(A_i, \{x_i\})) \le r$. Note that $x_i \notin A_i$. Let $\mathcal{U} :=  \Gr (X, N-r-1, H) \cap \bigcap_{i=1} ^s \Gr( {\rm Sec}(A_i, \{x_i\}), N-r-1, H)$. This is dense open in $\Gr(N-r-1, H)$ by Lemma \ref{lem:fs-open-1}.

Suppose now that $\phi_L: {X} \to \P^r_k$ is the projection obtained by any $L \in \mathcal{U}(k)$. We fix an integer $1 \le i \le s$ and let $\beta_i := \widehat{\phi}_L(\alpha_i)$. It is clear that $\beta_i \in \widehat{\phi}_L({Z}_i)$. We claim that $\beta_i \notin \widehat{\phi}_L({Z}_j)$ for $j \neq i$. To see this, note that $\beta_i \in \widehat{\phi}_L({Z}_j)$ if and only if ${Z}_j \cap (L^+(x_i) \times \{b_i\}) \neq \emptyset$. Equivalently, there exists a closed point $x'_j \neq x_i$ such that $\phi_L(x'_j) =\phi_L(x_i)$ and $x'_j \in A_i$. But this implies that $L \cap {\rm Sec}(A_i, \{x_i\}) \neq \emptyset$, which contradicts the choice of $L$. This proves the claim and hence the lemma.
\end{proof}

\begin{lem}\label{lem:reg-gen}
Let $k$ be algebraically closed. We are under the Set-up $+ (\fs)$ of \S \ref{sec:Set-up2}. Here, $Z$ is not necessarily irreducible. Let $\mathcal{U} \subset \Gr (N-r-1, H)$ be the intersection of the dense open subsets of Proposition \ref{prop:Main-I} and Lemma \ref{lem:distinct-cycles}. Then for each $L \in \mathcal{U}(k)$, the residual scheme $L^+(Z)$ is regular at all points lying over $\Sigma$.
\end{lem}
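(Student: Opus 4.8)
The plan is to reduce the reducible case to the irreducible case already handled in Lemma~\ref{lem:reg-irr}. Fix $L \in \mathcal{U}(k)$ and a closed point $\alpha = (x,b) \in L^+(Z)$ with $x \in \Sigma$; we must show $\mathcal{O}_{L^+(Z), \alpha}$ is regular. Write $Z = Z_1 \cup \cdots \cup Z_s$. First I would observe that $\widehat{\phi}_L(Z) = \bigcup_i \widehat{\phi}_L(Z_i)$ and, by Lemma~\ref{lem:distinct-cycles}, these images are the distinct irreducible components of $T := \widehat{\phi}_L(Z)$. Consequently $\widetilde{Z} := \widehat{\phi}_L^{-1}(\widehat{\phi}_L(Z)) = \bigcup_i \widehat{\phi}_L^{-1}(\widehat{\phi}_L(Z_i)) = \bigcup_i \widetilde{Z_i}$, where $\widetilde{Z_i} := \widehat{\phi}_L^{-1}(\widehat{\phi}_L(Z_i))$, and similarly $L^+(Z) = \overline{\widetilde{Z}\setminus Z}$. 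Since $\mathcal{U}$ is contained in the open set of Proposition~\ref{prop:Main-I}, each condition there holds; and since $\mathcal{U}$ is contained in the open set of Lemma~\ref{lem:distinct-cycles}, distinct components of $Z$ map to distinct components of $T$.

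The key point is to analyze which components of $\widetilde{Z}$ pass through $\alpha$ and which of them lie inside $Z$. For each $i$, applying Lemma~\ref{lem:No-other-comp} to the irreducible subscheme $Z_i$ (which is legitimate: the Set-up~$+(\fs)$ hypotheses pass to each $Z_i$, and the map $\phi_L$ and the sets $\phi_L^{-1}(\phi_L(x))$ are $(Z_i,x)$-admissible by condition (5) of Proposition~\ref{prop:Main-I} applied with $Z$ replaced appropriately — here one uses that admissibility of $\phi^{-1}_L(\phi_L(x))$ for $Z$ implies the regularity and vertical-separation statements hold for each $Z_i$ as well), one gets: $Z_i$ is the unique component of $\widetilde{Z_i}$ through any closed point lying over $\phi_L(\Sigma)$ that belongs to $Z_i$, with multiplicity one. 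Now suppose $\alpha \notin Z$. Then for each $i$, $\alpha \notin Z_i$, so if $\alpha \in \widetilde{Z_i}$ the germ $\mathcal{O}_{\widetilde{Z_i}, \alpha} \cong \mathcal{O}_{L^+(Z_i), \alpha}$, and by Lemma~\ref{lem:reg-irr} applied to $Z_i$ this ring is regular — hence in particular $\widetilde{Z_i}$ has a single component through $\alpha$. The subtlety is that $L^+(Z)$ near $\alpha$ is the union over those $i$ with $\alpha \in \widetilde{Z_i}$ of these (regular) germs, so I must show there is exactly one such $i$, i.e. $\alpha$ lies on $\widetilde{Z_i}$ for a unique $i$. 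This is where I would invoke the completion argument: the map $\widehat{\mathcal{O}}_{T, \beta} \to \widehat{\mathcal{O}}_{\widetilde{Z_i}, \alpha}$ (with $\beta = \widehat{\phi}_L(\alpha)$) is an isomorphism for every $i$ with $\alpha \in \widetilde{Z_i}$, by the argument inside the proof of Lemma~\ref{lem:No-other-comp}; since $T$ is irreducible near $\beta$ only along one component $\widehat{\phi}_L(Z_{i_0})$ — because $\alpha \notin Z$ forces $\alpha$ to come from a residual point, and the admissibility/vertical-separation conditions prevent two distinct components $Z_i, Z_j$ from producing a residual point at the same $(x,b)$ — there is a unique such $i_0$, and $\mathcal{O}_{L^+(Z),\alpha} = \mathcal{O}_{\widetilde{Z_{i_0}},\alpha}$ is regular by Lemma~\ref{lem:reg-irr}.

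It remains to treat the case $\alpha \in Z$, say $\alpha \in Z_{i_0}$. Then $\alpha$ still lies on $L^+(Z)$, meaning some component of $\widetilde{Z}\setminus Z$ specializes to $\alpha$. By Lemma~\ref{lem:No-other-comp} applied to each $Z_i$, the only component of $\widetilde{Z_{i_0}}$ through $\alpha$ is $Z_{i_0}$ itself (with multiplicity one), so the residual part coming from $Z_{i_0}$ does not pass through $\alpha$. Therefore the germ of $L^+(Z)$ at $\alpha$ is the union of the germs of $\widehat{\phi}_L^{-1}(\widehat{\phi}_L(Z_j))$ for $j \ne i_0$ with $\alpha \in \widetilde{Z_j}$; for such $j$, $\alpha \notin Z_j$ (as $\alpha \in Z_{i_0}$ and $Z_j \ne Z_{i_0}$ meet only away from points over $\Sigma$ in the relevant sense, again by distinctness and admissibility), so $\mathcal{O}_{\widetilde{Z_j},\alpha} \cong \mathcal{O}_{L^+(Z_j),\alpha}$ is regular by Lemma~\ref{lem:reg-irr}, and the same completion/uniqueness argument shows there is a single such $j$. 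Either way $\mathcal{O}_{L^+(Z),\alpha}$ is regular, completing the proof.

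The main obstacle I anticipate is the bookkeeping showing that at a point $\alpha$ over $\Sigma$ only one component of $\widetilde{Z}$ is responsible for the germ of $L^+(Z)$ — i.e. that the various $\widetilde{Z_i}$ do not overlap at $\alpha$ in a way that creates a non-regular (reducible) local scheme. This is exactly what the vertical-separation property $(I)$ and the $(Z,x)$-admissibility condition (condition (3) of Definition~\ref{defn:Ad-set}, secured through Proposition~\ref{prop:Main-I}) are designed to rule out: distinct horizontal fibers over residual points are separated along $\widehat{B}$, so two distinct components of $Z$ cannot both contribute a point at the same $(x,b)$. Once that separation is in hand, everything reduces cleanly to the irreducible statement Lemma~\ref{lem:reg-irr} component by component.
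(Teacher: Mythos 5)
Your proposal is correct and follows essentially the same route as the paper: decompose $\widetilde{Z}$ into the pieces $\widetilde{Z}_i = \widehat{\phi}_L^{-1}(\widehat{\phi}_L(Z_i))$, show that at a point $\alpha=(x,b)$ lying over $\Sigma$ only one component of $Z$ can contribute to the residual scheme, and then reduce to the irreducible case of Lemma~\ref{lem:reg-irr}. The one place to tighten is your key uniqueness claim: if $\alpha_i=(x_i,b)\in Z_i$ and $\alpha_j=(x_j,b)\in Z_j$ both map to $\widehat{\phi}_L(\alpha)$, the argument requires the explicit dichotomy $x_i=x_j$ (excluded because $Z$ would then be singular at a point lying over the residual fiber $\phi_L^{-1}(\phi_L(x))\setminus\{x\}$, i.e.\ condition (2) of Definition~\ref{defn:Ad-set}) versus $x_i\neq x_j$ (excluded by the vertical separation condition (3)); your write-up leans almost entirely on the second case, but both are needed.
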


\begin{proof} 
For a choice of $L$, for simplicity write $\phi:= \phi_L$. For $1 \le i \le s$, let $T_i = \widehat{\phi}(Z_i)$ with the reduced closed subscheme structure and let $\widetilde{Z}_i = T_i \times_{(\P^r_k  \times \widehat{B})} (X \times \widehat{B}) = \widehat{\phi}^{-1} (\widehat{\phi} (Z_i))$ as a scheme.

The first claim is that $\widetilde{Z}_i$ and $\widetilde{Z}_j$ share no common component if $i \neq j$. Indeed, if they do share a common component, this would imply that $T_i = T_j$, which contradicts the choice of $L$ as in Lemma \ref{lem:distinct-cycles}. 

Our second claim is that $L^+(Z_i)$ and $L^+(Z_j)$ do not meet at points lying over $\Sigma $ if $i \neq j$. Suppose on the contrary that there is a closed point $\alpha = (x,b) \in L^+(Z_i) \cap L^+(Z_j)$ with $x \in \Sigma$. This implies that there are closed points $\alpha_i = (x_i, b) \in Z_i$ and $\alpha_j = (x_j, b) \in Z_j$ such that $x_i, x_j \in \phi^{-1}(y)$, where $y = \phi(x)$. It follows from Lemma \ref{lem:No-other-comp} that $x_i, x_j \in \phi^{-1}(y) \setminus \{x\}$.

If $x_i = x_j$, then two components $Z_i$ and $Z_j$ of $Z$ meet at $\alpha_i = \alpha_j$ that lies over $x_i = x_j$ in $\phi^{-1}(y) \setminus \{x\}$. In particular, $Z$ is singular at a point lying over $x_i = x_j$ in $\phi^{-1}(y) \setminus \{x\}$, which contradicts the condition (2) of Definition \ref{defn:Ad-set}, which is part of the condition (5) of Proposition \ref{prop:Main-I}. Hence we must have $x_i \neq x_j$. In this case, we get $b \in \widehat{g}(Z_{x_i}) \cap \widehat{g}(Z_{x_j}) \neq \emptyset$ for two distinct points $x_i, x_j \in \phi^{-1}(y)\setminus \{ x \}$. This time, it contradicts the condition (3) of Definition \ref{defn:Ad-set}, which is part of the condition (5) of Proposition \ref{prop:Main-I}. Hence, we proved the second claim. 

It follows from the two claims that $L^+(Z)$ is regular at all points lying over $\Sigma$ if and only if $L^+(Z_i)$ is so for every $1 \le i \le s$. Since we proved the latter holds in Lemma \ref{lem:reg-irr}, we finished the proof of the lemma.
\end{proof}

\subsection{Regularity of residual cycles: general case}\label{sec:conseq-gen}
We can now generalize Proposition \ref{prop:Main-I} to all infinite perfect field as follows. This includes the regularity of the residual cycle along $\Sigma$.

\begin{prop}\label{prop:Main-II}
Let $k$ be any infinite perfect field. We are under the Set-up $+ (\fs)$ of \S \ref{sec:Set-up2}. Let $Y \subset X$ be a closed subset of dimension at most $r-1$. Then after replacing the embedding $\eta: X \hookrightarrow \mathbb{P}_k ^N$ by a bigger one via a Veronese embedding if necessary, we have the following: for the given hyperplane $H \subset \mathbb{P}^N_k$ disjoint from $\Sigma$, there is a dense open subset $\mathcal{U} \subset \Gr (N-r-1, H)$ such that for each $L \in \mathcal{U}(k)$, we have $L \cap X = \emptyset$ so that there is a finite and surjective linear projection $\phi_L: X \to \P^r_k$. Moreover, it satisfies the following properties. 
\begin{enumerate}
\item $\widehat{\phi}_L(Z_i) \neq \widehat{\phi}_L(Z_j)$ if $i \ne j$. 
\item The map $\phi_L: \phi^{-1}_L (U) \to U$ is \'etale for some affine open $U \subset \P^r_k$ containing $\phi_L(\Sigma)$.
\item $\phi_L(x) \neq \phi_L(x')$ for each pair of distinct points $x \not = x' \in \Sigma$.
\item $k(\phi_L(x)) \xrightarrow{\simeq} k(x)$ for each $x \in \Sigma$. 
\item $L^+ (x) \cap  Y = \emptyset$ for each $x \in \Sigma$.
\item $L^+(Z)$ is regular at all points lying over $\Sigma$.
\item
The map $\widehat{\phi}_L: Z  \to \widehat{\phi}_L(Z)$ is birational. 
\end{enumerate}
\end{prop}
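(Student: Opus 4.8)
The plan is to deduce Proposition~\ref{prop:Main-II} from the algebraically closed case, Lemma~\ref{lem:reg-gen}, together with Proposition~\ref{prop:Main-I}, by a Galois descent argument modeled on Step~2 of the proof of Lemma~\ref{lem:fs-proj-1et}. First I would reduce to working over the algebraic closure. Let $\bar k$ be an algebraic closure of $k$ and write $\pi_A \colon A_{\bar k} \to A$ for the base change of any $k$-scheme $A$. All the data of the Set-up $+ (\fs)$ base-change to $\bar k$: $\Sigma_{\bar k} = \pi_X^{-1}(\Sigma)$ is still a finite set of closed points of $X_{{\rm sm}, \bar k}$, $X_{\fs, \bar k}$ is an affine open dense neighborhood of $\Sigma_{\bar k}$ over which $Z_{\bar k} \to X_{\bar k}$ is fs (using Lemma~\ref{lem:fs bc}), each component of $Z_{i, \bar k}$ still projects non-constantly to $\widehat{B}_{\bar k}$, and $Y_{\bar k} \subset X_{\bar k}$ still has dimension $\le r-1$. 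Note that if $Z_i$ is not dominant over a component of $X$, then no component of $Z_{i, \bar k}$ is dominant over a component of $X_{\bar k}$. Applying Lemma~\ref{lem:reg-gen} (which yields properties (1),(6)) together with Proposition~\ref{prop:Main-I} (which yields (2)--(5),(7)) over $\bar k$, after a suitable Veronese re-embedding we obtain a dense open $\mathcal{U}' \subset \Gr(N-r-1, H_{\bar k})$ such that each $L \in \mathcal{U}'(\bar k)$ satisfies the $\bar k$-analogues of (1)--(7).

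Next I would descend the open set $\mathcal{U}'$ to $k$. Since $k$ is perfect, there is a finite Galois extension $k \subset k' \subset \bar k$ over which $\mathcal{U}'$ is defined; set $\mathcal{U} := \bigcap_{\sigma \in \Gal(k'/k)} \sigma \cdot \mathcal{U}'$. As in the proof of Lemma~\ref{lem:fs-proj-1et} (cf. \cite[Lemma 3.4.3]{CTHK}), $\mathcal{U}$ is a nonempty Galois-invariant open subset, hence defined over $k$ since $k$ is perfect, and $\mathcal{U}_{\bar k} \subset \mathcal{U}'$. For $L \in \mathcal{U}(k)$ we get $X \cap L = \emptyset$ (as this holds over $\bar k$), so $\phi_L \colon X \to \P^r_k$ is a finite surjective morphism defined over $k$, and $\phi_{L, \bar k}$ lies in $\mathcal{U}'(\bar k)$.

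Now I would check each property (1)--(7) over $k$ descends from its counterpart over $\bar k$. Property (1): if $\widehat\phi_L(Z_i) = \widehat\phi_L(Z_j)$ for $i \ne j$, base-changing to $\bar k$ gives coincident images of components of $Z_{i,\bar k}$ and $Z_{j,\bar k}$, contradicting (1) over $\bar k$. Property (2): $\phi_L$ is étale at each point of $\phi_L^{-1}(\phi_L(\Sigma))$ by faithfully flat descent of étaleness along $\Spec(\bar k) \to \Spec(k)$ (\cite[Corollaire (17.7.3)-(ii), p.72]{EGA4-4}), since $\phi_{L,\bar k}$ is étale over an affine neighborhood of $\phi_{L,\bar k}(\Sigma_{\bar k})$; then apply Lemma~\ref{lem:FA} to shrink to an affine open $U$. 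Property (3): if $\phi_L(x) = \phi_L(x')$ for distinct $x, x' \in \Sigma$, then the fibers $\pi_X^{-1}(x), \pi_X^{-1}(x')$ would map into a common fiber of $\phi_{L,\bar k}$, contradicting that $\phi_{L,\bar k}$ separates the points of $\Sigma_{\bar k}$. Property (4): the injectivity of $\phi_{L,\bar k}$ on $\pi_X^{-1}(x) \to \pi_{\P^r_k}^{-1}(\phi_L(x))$ (which follows from (3) over $\bar k$) gives $k(x) = k(\phi_L(x))$ by Lemma~\ref{lem:base-change-kbar}. Property (5): $L^+(x) \cap Y = \emptyset$ because $(L_{\bar k})^+(x') \cap Y_{\bar k} = \emptyset$ for every $x'$ in the finite set $\pi_X^{-1}(x)$ and $L^+(x)_{\bar k} = \bigcup_{x' \in \pi_X^{-1}(x)} (L_{\bar k})^+(x')$. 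Property (6): regularity of $L^+(Z)$ at points over $\Sigma$ descends from regularity of $L^+(Z)_{\bar k} = L_{\bar k}^+(Z_{\bar k})$ at points over $\Sigma_{\bar k}$, using that a noetherian local ring is regular iff its base change to a (separably, hence here) field extension is—more precisely, faithfully flat descent of regularity, which is where perfectness of $k$ is needed so that $\bar k / k$ is separable and $X_{\fs}$ is geometrically regular, indeed geometrically smooth (e.g., as in the last part of the proof; cf. the remark that we use that some regular schemes are actually smooth over $k$). Property (7): birationality of $\widehat\phi_L \colon Z \to \widehat\phi_L(Z)$ descends since the generic points and function fields base-change compatibly, so $[k(Z) : k(\widehat\phi_L(Z))] = 1$ can be checked after tensoring with $\bar k$.

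The main obstacle I anticipate is the descent of regularity in property (6). Unlike étaleness, smoothness, or flatness—which descend trivially along faithfully flat base change—the descent of \emph{regularity} (as opposed to geometric regularity/smoothness) is not automatic for an arbitrary ground field, and this is precisely where the hypothesis that $k$ is perfect is essential: one needs $\bar k$ to be a separable (indeed separably generated) extension of $k$ so that regularity and geometric regularity coincide for the relevant schemes, and so that the residual subschemes we produce over $\bar k$ are actually smooth over $\bar k$, descending to smooth (hence regular) schemes over $k$. I would handle this by arranging, already in the $\bar k$-step, that the whole residual \emph{scheme} $L^+_{\bar k}(Z_{\bar k})$ is regular at points over $\Sigma_{\bar k}$ (not merely its components), as noted in the outline; since $X_{\fs}$ is smooth over $k$, these residual schemes, being finite over $X$ and regular, will be smooth over $\bar k$, and smoothness descends, giving a regular $k$-scheme. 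The remaining descent checks for (1)--(5),(7) are direct and follow the template of Lemma~\ref{lem:fs-proj-1et} Step~2 essentially verbatim.
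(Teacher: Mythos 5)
Your proposal is correct and follows essentially the same route as the paper: base change to $\bar{k}$, apply the algebraically closed case (Proposition \ref{prop:Main-I} together with Lemmas \ref{lem:distinct-cycles} and \ref{lem:reg-gen}), descend the open set by a Galois-intersection as in Step 2 of Lemma \ref{lem:fs-proj-1et}, and descend property (6) by upgrading regularity over the perfect field $\bar{k}$ to smoothness and using faithfully flat descent of smoothness. The only slip is attributing the birationality (7) to Proposition \ref{prop:Main-I}, which does not contain it; you need to also intersect with the open set of Lemma \ref{lem:fs_sansZ} (already stated over any infinite perfect field), after which (7) holds directly without a descent step.
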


\begin{proof}
If $k$ is algebraically closed, the proposition follows from Proposition \ref{prop:Main-I} and Lemmas \ref{lem:distinct-cycles} and \ref{lem:reg-gen}. In general, let $\bar{k}$ be an algebraic closure of $k$ and let $\pi_X: X_{\bar{k}} \to X$ be the projection map from the base change to $\bar{k}$. We have $\Sigma_{\bar{k}}  = \bigcup _{x \in \Sigma} \pi_X ^{-1} (x)$. Choose a sufficiently large closed embedding $\eta: {X} \hookrightarrow \mathbb{P}_k ^N$ so that for the induced embedding $X_{\bar{k}} \hookrightarrow \mathbb{P}_{\bar{k}} ^N$, there exists a dense open subset $\widetilde{\mathcal{U}} \subset \Gr(N-r-1, H_{\bar{k}})$ for which all assertions of Proposition \ref{prop:Main-I} as well as Lemmas \ref{lem:distinct-cycles} and \ref{lem:reg-gen} applied to $X_{\bar{k}}, Z_{\bar{k}}$ and the set $\Sigma_{\bar{k}} \subset X_{\bar{k}}$ hold. (N.B. Under the base change to $\bar{k}$, the irreducible components $Z_i$ of $Z$ may decompose further into irreducible components $Z_{ij}$ of $Z_{i, \bar{k}}$. 
At least $Z_{\bar{k}}$ and $Z_{i, \bar{k}}$ for all $i$ stay reduced because the extension $\bar{k}$ over $k$ is separable.)

Then we can argue via a Galois descent as in the Step 2 of the proof of Lemma \ref{lem:fs-proj-1et} to find a dense open $\mathcal{U}_1 \subset \Gr(N-r-1, H)$ defined over $k$ such that $(\mathcal{U}_1)_{\bar{k}} \subset \widetilde{\mathcal{U}}$. We take $\mathcal{U}:= \mathcal{U}_1 \cap \mathcal{U}_2$, where $\mathcal{U}_2$ is the open set in Lemma \ref{lem:fs-proj-1et} so that we can also use the assertions of Lemma \ref{lem:fs-proj-1et} as well. 

Now, for each $L \in \mathcal{U} (k)$, we have ${X} \cap L = \emptyset$ by our choice of the open set. So, we get a finite linear projection map $\phi_L: X \to \P^r_k$ over $k$. We write this map as $\phi$. The condition (1) is clear now by construction together with Lemma \ref{lem:distinct-cycles}. The conditions (2), (3), (4) hold by the conditions (2), (1), (3) of Lemma \ref{lem:fs-proj-1et}, respectively. The condition (5) follows immediately from the condition (4) of Proposition \ref{prop:Main-I}. 

To prove (6), as we did at the beginning of \S \ref{sec:conseq}, let $T:= \widehat{\phi} (Z) = \widehat{\phi} (L^+ (Z)) \subset \mathbb{P}_k ^r \times \widehat{B}$ with the reduced subscheme structure, and let $\widetilde{Z}:= T \times_{(\mathbb{P}_k ^r \times \widehat{B})} (X \times \widehat{B}) = \widehat{\phi}^{-1} \widehat{\phi} (Z)$ as a scheme. Then we have the commutative diagram
\begin{equation}\label{eqn:Main-II-0}
\xymatrix@C.8pc{
Z_{\bar{k}} \ar@{^{(}->}[r] \ar[d] & \widetilde{Z}_{\bar{k}} 
\ar[r]^-{\widehat{\phi}_{\bar{k}}} \ar[d] &
T_{\bar{k}} \ar[d] \\
Z \ar@{^{(}->}[r] & \widetilde{Z} \ar[r]^-{\widehat{\phi}} & T,}
\end{equation}
where the vertical arrows are the base changes to $\bar{k}$. Note that the map $\widehat{\phi}: Z \to T$ is surjective by definition. As both squares are Cartesian and the vertical maps are smooth, it follows that $L^+(Z_{\bar{k}}) \xrightarrow{\cong} L^+(Z)_{\bar{k}}$. By the choice of our open set $\mathcal{U}$, Lemma \ref{lem:reg-irr} shows that $L^+(Z_{\bar{k}})$ is regular at all points lying over $\Sigma_{\bar{k}}$, i.e., $L^+(Z)_{\bar{k}}$ is regular at all points lying over $\Sigma_{\bar{k}}$. 

 We replace $Z$ by $Z|_V$ and consider the induced Cartesian squares
\begin{equation}\label{eqn:Main-II-1}
\xymatrix@C.8pc{
L^+(Z)_{\bar{k}} \ar[r] \ar[d] & V_{\bar{k}} \ar[r] \ar[d] & \Spec(\bar{k}) \ar[d] \\
L^+(Z) \ar[r] & V \ar[r] & \Spec(k),}
\end{equation}
where the vertical arrows are the base changes to $\bar{k}$. 
Since $L^+(Z)_{\bar{k}}$ is regular and $\bar{k}$ is perfect, the top horizontal composite map is smooth. Hence, by the faithfully flat descent (\cite[Corollaire (17.7.3)-(ii), p.72]{EGA4-2}), the bottom horizontal composite map is smooth. In particular, $L^+(Z)$ is regular. This proves (6). The property (7) is a direct consequence of Lemma~\ref{lem:fs_sansZ}.
\end{proof}

\begin{remk}
The condition (4) of Proposition \ref{prop:Main-I} or the condition (5) of Proposition \ref{prop:Main-II} that $L^+ (x) \cap Y= \emptyset$ for each $x \in \Sigma$ is no longer needed in this version of the article toward the proof of the main theorems. However, we decided to keep them in this article because the property that the residual points of a projection can be made to avoid the given proper closed subscheme $Y$ is nontrivial, and may be useful in an analysis of algebraic cycles in the future.
\end{remk}

\section{The main results}\label{sec:MR}
In this final section, we use various results of the previous sections to prove our main theorems: the presentation lemma and the \sfs-moving lemma. The set-up for the main results is as in \S \ref{sec:Set-up3}. This differs a bit from the Set-up of \S \ref{sec:Set-up} and the Set-up $+ (\fs)$ of \S \ref{sec:Set-up2}.

\subsection{The Set-up ($\star$)}\label{sec:Set-up3}
Let $k$ be an infinite perfect field and $n \ge 1$ an integer. We work under the following setting:

(1) \emph{The box coordinates:} For $0 \le i \le n-1$, let $\widehat{A}_i$ be a smooth projective geometrically integral $k$-scheme of positive dimension and let $A_i \subset \widehat{A}_i$ be a nonempty affine open subset. Let $C_0 = \Spec(k) = \widehat{C}_0$.  For $1 \le j \le n$, we write $C_j =  \prod_{i=0} ^{j-1} A_i$ and $\widehat{C}_j = \prod_{i=0} ^{j-1}  \widehat{A}_i$. Let $\pi_j: \widehat{C}_{n} \to \widehat{C}_j$ be the projection map. We write $B = C_{n}$ and $\widehat{B} = \widehat{C}_{n}$. Let $F := \widehat{B} \setminus B$.

(2) \emph{The base scheme and the cycles:}
Let $X \subset \A^m_k$ be an integral smooth affine closed subscheme of dimension $r \ge 1$ and let $\overline{X} \inj \P^m_k$ be its closure with the reduced subscheme structure. 
Let $\Sigma \subset X$ be a finite set of closed points.

Let $Z \subset X \times B$ be a reduced closed subscheme of pure dimension $r$, and let $\{Z_1, \ldots , Z_s\}$ be all of its irreducible components. Suppose $Z \to X$ is an \fs-morphism, i.e., finite and surjective because $X$ is integral. Let $E \subset \widehat{B}$ be a closed subset containing $F$ such that no irreducible component of $Z$ is contained $X \times E$.

Let $\widehat{Z} \subset \overline{X} \times \widehat{B}$ denote the closure of $Z$ in $\overline{X} \times \widehat{B}$ with the reduced structure. Similarly, $\widehat{Z}_i$ denotes the closure of $Z_i$ in $\overline{X} \times \widehat{B}$. We let $\widehat{f}: \widehat{Z} \to \overline{X}$ and $\widehat{g}: \overline{Z} \to \widehat{B}$ denote the projection maps.

For each $0 \leq j \leq n$, we define $Z^{(j)} = \pi_j(Z):= ({\id}_X \times \pi_j)(Z)$. Because $Z \to X$ is fs, this definition makes sense. Similarly we define $\widehat{Z}^{(j)}$ for $0 \leq j \leq n$.

(3) \emph{The linear projections:} Suppose we are given a Veronese embedding $\P^m_k \inj \P^N_k$ with $N \gg m$. For $L \in \Gr(\overline{X}, N-r-1, H)(k)$, where $H = \P^N_k \setminus \A^N_k$ as in Lemma \ref{lem:Ver-affine-map}, let $\phi_L: \overline{X} \to \P^r_k$ be the linear projection away from $L$ which restricts to a finite map $\phi_L: X \to \A^r_k$. 
If $L$ is fixed in a given context, we often drop it from $\phi_L$ and write $\phi$.

For $0 \le j \le n$, let $\phi_j = \phi \times {\rm id}_{C_j}: X \times C_j \to \A^r_k \times C_j,$ $\widetilde{\phi}_j = \phi \times {\rm id}_{\widehat{C}_j}: X \times \widehat{C}_j \to \A^r_k \times \widehat{C}_j$ and $\widehat{\phi}_j = \phi \times {\rm id}_{\widehat{C}_j}: \overline{X} \times \widehat{C}_j \to\P^r_k \times \widehat{C}_j$ be the induced maps. We let $L^+(Z)$ denote the closure of ${\phi}^{-1}_n({\phi}_n(Z)) \setminus Z$ in $X \times B$ with the reduced structure. We define $L^+(\widehat{Z})$ similarly.

\subsection{The residual cycle}\label{sec:Res-**}

For $L \in \Gr (\overline{X}, N-r-1, H)(k)$ as in the Set-up ($\star$) of \S \ref{sec:Set-up3}, the morphism $\phi=\phi_L: X \to \mathbb{A}_k ^r$ is a finite surjective morphism of affine $k$-schemes so that it is automatically flat by \cite[Exercise III-10.9, p.276]{Hartshorne} (or \cite[Proposition (6.1.5), p.136]{EGA4-2}). Hence, for algebraic cycles on $X \times C_j $, we have the proper push-forward $\phi_{j*}$ and the flat pull-back $\phi_j ^*$ operations. (See \cite[\S 1.4, 1.7]{Fulton}.) For $X \times \widehat{C}_j$, we have similar operations $\widetilde{\phi}_{j*}$ and $\widetilde{\phi}_j ^*$. 

\begin{defn}
If $Z \subset X \times C_j$ is an integral closed subscheme, the \emph{residual cycle} by $\phi=\phi_L$ is defined to be
$$L^* ([Z]) :=\phi_{j}^*\phi_{j*} ([Z] ) - [Z].$$
We extend it $\mathbb{Z}$-linearly to all cycles on $X \times C_j$. Similarly, for cycles on $X \times \widehat{C}_j$, we define the \emph{residual cycle} by
$$L^* ([Z]) :=\widetilde{ \phi}_{j}^* \widetilde{\phi}_{j*} ([Z] ) - [Z].$$

Note that by definition, $L^+ ([Z]) = | L^* ([Z])|$. 
\end{defn}

\begin{lem}\label{lem:fs-res}
We are under the Set-up $(\star)$ of \S \ref{sec:Set-up3}. In particular, $Z \to X$ is an \fs-morphism. Suppose that $Z$ is integral. Then for each $L$ in the Set-up $(\star)$, the morphism $L^+(Z) \to X$ is also fs.
\end{lem}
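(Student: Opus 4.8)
The statement asks us to show that if $Z \subset X \times B$ is an integral closed subscheme with $Z \to X$ finite and surjective (an \fs-morphism, $X$ being integral), then the residual scheme $L^+(Z) \to X$ is again fs, i.e. every irreducible component of $L^+(Z)$ is finite and surjective over $X$. The plan is to read off finiteness from the construction of $L^+(Z)$ and then surjectivity from a dimension/equidimensionality count. First I would recall that $\phi = \phi_L : X \to \A^r_k$ is finite and surjective (hence flat, as noted in \S\ref{sec:Res-**}), so $\phi_n = \phi \times \id_B : X \times B \to \A^r_k \times B$ is finite, surjective and flat. Since $Z$ is closed in $X \times B$ and $\phi_n$ is proper, $\phi_n(Z)$ is closed in $\A^r_k \times B$; then $\phi_n^{-1}(\phi_n(Z))$ is closed in $X \times B$ and finite over $\phi_n(Z)$ because $\phi_n$ is finite. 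Thus $L^+(Z)$, being a closed subscheme of $\phi_n^{-1}(\phi_n(Z))$ (it is the closure of $\phi_n^{-1}(\phi_n(Z)) \setminus Z$ with reduced structure, hence equals a union of some of the irreducible components of $\phi_n^{-1}(\phi_n(Z))$), maps finitely to $\phi_n(Z)$, which in turn maps finitely to $B$... but the relevant map is to $X$, not to $B$.

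The correct route to finiteness over $X$ is: the composite $L^+(Z) \hookrightarrow \phi_n^{-1}(\phi_n(Z)) \xrightarrow{\phi_n} \phi_n(Z) \xrightarrow{\pr} \A^r_k \times B \to \A^r_k$ factors the projection $L^+(Z) \to X \to \A^r_k$ as well, since $\phi_n$ is compatible with the projections to $\A^r_k$. More directly, I would argue that $\phi_n^{-1}(\phi_n(Z)) \to \A^r_k \times B$ is finite (base change of the finite map $\phi_n$ along the closed immersion $\phi_n(Z) \hookrightarrow \A^r_k \times B$, followed by $\phi_n(Z) \hookrightarrow \A^r_k \times B$), and $Z \to \A^r_k \times B$ is finite (since $Z \to X$ is finite and $X \to \A^r_k$ is finite, so $Z \to \A^r_k \times B$ is finite, being finite over $X \times B$ which is... actually $Z \to X \times B$ is a closed immersion and $X \times B \to \A^r_k \times B$ is finite). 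Hence $\phi_n(Z) \to \A^r_k \times B$ being a closed immersion and $\phi_n$ finite, we get $\phi_n^{-1}(\phi_n(Z)) \to X \times B$ is finite. Therefore $L^+(Z) \to X \times B$ is finite (closed subscheme), hence $L^+(Z) \to X$ is finite. So each component $W$ of $L^+(Z)$ is finite over $X$; in particular $\dim W \le \dim X = r$.

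For surjectivity of $W \to X$ onto (the unique, since $X$ is integral) irreducible component $X$: since $W \to X$ is finite, its image is closed, so it suffices to show $\dim W = r$, i.e. that $L^+(Z)$ is equidimensional of dimension $r$. Here I would invoke the Set-up $(\star)$ hypotheses: $Z$ has pure dimension $r$, $Z \to X$ is fs, and $\phi$ is finite surjective. The scheme-theoretic image $\phi_n(Z)$ then has pure dimension $r$ (finite surjective images preserve dimension), and $\phi_n^{-1}(\phi_n(Z)) = \widetilde{Z}$ in the notation of \S\ref{sec:conseq}; since $\phi_n$ is finite and flat of constant relative dimension $0$, every component of $\widetilde{Z}$ has dimension $r$ (pull-back of a pure $r$-dimensional scheme along a finite flat map is pure $r$-dimensional). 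As $L^+(Z)$ is a union of components of $\widetilde{Z}$, it is pure of dimension $r$. Combined with finiteness over the $r$-dimensional integral scheme $X$, each component $W$ of $L^+(Z)$ maps onto $X$. This gives the claim.

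\textbf{Main obstacle.} The one point requiring care is the equidimensionality of $L^+(Z)$: one must make sure that no component of $L^+(Z)$ drops dimension, which is where flatness (equivalently, the purity of fibers) of $\phi_n$ and the pure $r$-dimensionality of $\phi_n(Z)$ are used — both guaranteed by $\phi = \phi_L$ being a finite surjective morphism of smooth/integral affine $k$-schemes (hence flat) and $Z$ having pure dimension $r$ with $Z \to X$ fs. Everything else is formal: finiteness propagates through closed immersions, base change, and composition. I would present the argument in the order (i) finiteness of $\phi_n$ and hence of $\widetilde{Z} = \phi_n^{-1}(\phi_n(Z)) \to X \times B \to X$; (ii) $L^+(Z)$ is a closed subscheme (a union of components) of $\widetilde{Z}$, hence finite over $X$; (iii) equidimensionality of $\widetilde{Z}$, hence of $L^+(Z)$, via flatness; (iv) finite $+$ pure dimension $r$ over the integral $r$-dimensional $X$ forces surjectivity, so $L^+(Z) \to X$ is fs.
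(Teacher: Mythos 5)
Your overall strategy (finiteness first, then surjectivity by a dimension count) is viable, and your equidimensionality argument for $\widetilde{Z} := \phi_n^{-1}(\phi_n(Z))$ via flatness of $\phi_n$ is correct. But the finiteness step has a genuine gap. You conclude ``$L^+(Z) \to X \times B$ is finite (closed subscheme), hence $L^+(Z) \to X$ is finite.'' The first clause is vacuous (any closed subscheme is finite over the ambient scheme), and the implication is false: $B$ has positive dimension, so being closed in $X \times B$ does not make a subscheme finite over $X$ (the whole of $X \times B$ is a counterexample). Your earlier ``correct route'' through the chain $L^+(Z) \to \phi_n(Z) \to \A^r_k \times B \to \A^r_k$ also fails at the last arrow, since the projection $\A^r_k \times B \to \A^r_k$ is not finite.

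The missing ingredient, which is exactly where the paper's proof does its work, is that $T := \phi_n(Z)$ is itself finite (and surjective) over $\A^r_k$. This follows because $Z \to \A^r_k$ is finite, being the composition of the finite maps $Z \to X$ (the fs hypothesis) and $\phi : X \to \A^r_k$, and this composite factors through the surjection $Z \twoheadrightarrow T$; a morphism whose precomposition with a surjection is finite is finite. Once you have this, $\widetilde{Z} = T \times_{\A^r_k} X \to X$ is finite as a base change of $T \to \A^r_k$ along $\phi$, and your dimension count --- or, as the paper does, a single application of Lemma \ref{lem:fs bc} to the flat base change of the fs morphism $T \to \A^r_k$ --- finishes the proof, since the components of $L^+(Z)$ are among those of $\widetilde{Z}$. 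With that one step inserted, your argument is essentially the paper's, with the flat-base-change lemma replaced by an explicit equidimensionality computation.
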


\begin{proof}
Let $T = \phi_n(Z) \subset \A^r_k \times C_n$. Let $\widetilde{Z} := T \times_{(\A^r_k \times {C}_n)} (X \times {C}_n) = \phi_n ^{-1} (T)$ as a scheme. It suffices to show that the map $\widetilde{Z} \to X$ is fs. Consider the commutative diagram
\begin{equation}\label{eqn:fs-res-0}
\xymatrix@C1pc{
Z \ar@{^{(}->}[r]^{\iota} \ar[dr] & \widetilde{Z} \ar[r]^-{\phi_n} \ar[d]^{\widehat{f}} & T 
\ar[d]^{\widehat{f}'} \\
& X \ar[r]^-{\phi} & \A^r_k,}
\end{equation}
where the vertical arrows are the projection maps and the right square is Cartesian, and $\iota$ is the closed immersion.

Since $Z \to X$ is an \fs-morphism and $\phi$ is an \fs-morphism, the composite $(\phi \circ \widehat{f})|_Z = \phi \circ \widehat{f} \circ \iota$ is an \fs-morphism. By the commutativity, this means $\widehat{f}' \circ \phi_n \circ \iota$ is an \fs-morphism. But since $Z \to T$ is surjective (as $T$ being the image of $Z$ under $\phi_n$ by definition), it follows that $\widehat{f}'$ is finite (e.g., see \cite[Proposition 3.16-(f), p.104]{Liu}). Hence $\widehat{f}'$ is an \fs-morphism. Now, $\phi$ is flat, so by Lemma \ref{lem:fs bc}, the morphism $\widehat{f}$ is an \fs-morphism. 
\end{proof}

\begin{lem}\label{lem:Res-sch-cycle}
We are under the Set-up $(\star)$ of \S \ref{sec:Set-up3}. In particular, $Z \to X$ is an \fs-morphism. Suppose that $Z$ is integral. Suppose that there is an integer $1 \leq j \leq n$ such that the projection map $Z^{(j)} \to C_j$ is non-constant. 

Then for each $L \in \Gr(N-r-1, H)(k)$ satisfying Lemma \ref{lem:fs_sansZ} and Proposition \ref{prop:Main-II} for all $Z^{(i)}$ over $j \le i \le n$, we have the equalities $[L^+(Z^{(j)})] = L^*([Z^{(j)}])$ and $\pi_{j *}(L^*([Z])) = m_jL^*([Z^{(j)}])$, where $m_j = [k(Z): k(Z^{(j)})]$.
\end{lem}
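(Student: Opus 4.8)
The statement has two parts: the identification $[L^+(Z^{(j)})] = L^*([Z^{(j)}])$ as cycles, and the pushforward formula $\pi_{j*}(L^*([Z])) = m_j L^*([Z^{(j)}])$. The first part is the assertion that the residual \emph{scheme} $L^+(Z^{(j)})$ carries multiplicity one along each of its components, i.e. that $\widetilde{\phi}_j^*\widetilde{\phi}_{j*}([Z^{(j)}]) - [Z^{(j)}]$ is effective and reduced. The plan is to use the results already assembled on the regularity/birationality of residual cycles. First I would note that since $\phi_j$ (resp.\ its closure) is finite flat, $\phi_j^*\phi_{j*}([Z^{(j)}])$ equals the scheme-theoretic fiber product cycle $[\widetilde{Z^{(j)}}]$ with $\widetilde{Z^{(j)}} = \phi_j^{-1}(\phi_j(Z^{(j)}))$, counted with the length multiplicities $\mathrm{length}_{\mathcal{O}_{\widetilde{Z^{(j)}},\xi}}(\mathcal{O}_{\widetilde{Z^{(j)}},\xi})$ at generic points $\xi$ of its components. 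I would then argue that $Z^{(j)}$ appears in $\widetilde{Z^{(j)}}$ with multiplicity exactly one: this is where Lemma~\ref{lem:No-other-comp} (over $\bar k$) combined with the Galois-descent packaging in Proposition~\ref{prop:Main-II} enters — $Z^{(j)}$ being fs over $X$ and generically smooth, the map $\mathcal{O}_{T,\beta}\to\mathcal{O}_{Z^{(j)},\alpha}$ is an isomorphism at the generic points lying over $\Sigma$ and, more importantly, by Lemma~\ref{lem:fs_sansZ}/Corollary~\ref{cor:fs_sansZ} the map $\widehat\phi_j: Z^{(j)}\to\widehat\phi_j(Z^{(j)})$ is birational, forcing multiplicity one at the generic point. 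For the remaining components (the ones genuinely in the residual part), reducedness at their generic points follows because a generic point of $L^+(Z^{(j)})$ lies over a generic point, not over $\Sigma$; but I would instead observe that multiplicity one there is automatic from the transversality built into the linear projection: away from the branch/ramification locus $\phi_j$ is étale, and the generic points of the components of $\widetilde{Z^{(j)}}$ other than $Z^{(j)}$ lie in the étale locus of $\widehat\phi_j$ over the generic point of $T$, hence occur with multiplicity one. Thus $\phi_j^*\phi_{j*}([Z^{(j)}]) = [Z^{(j)}] + [L^+(Z^{(j)})]$, giving the first equality.

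\textbf{The pushforward formula.} For the second equality I would work with the commutative square relating $\phi_n$ and $\phi_j$ via the projections $\pi_j: X\times C_n \to X\times C_j$, namely $\phi_j\circ\pi_j = \pi_j\circ\phi_n$ (abusing $\pi_j$ for both projections). The key inputs are: (i) proper pushforward commutes with flat pullback in a Cartesian square (the projection formula / base-change for Chow groups, \cite[\S1.7]{Fulton}), and (ii) $\pi_{j*}([Z]) = m_j[Z^{(j)}]$ where $m_j = [k(Z):k(Z^{(j)})]$, which holds by definition of degree of a generically finite map since $Z\to X$ is fs (hence $Z\to C_n$ factors through $Z^{(j)}$, and $Z\to Z^{(j)}$ is generically finite of degree $m_j$ onto its image — here one uses that $Z^{(j)} = \pi_j(Z)$ has the same dimension $r$ as $Z$ because $Z$ is fs over $X$ which sits below $C_j$). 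Then I compute
\[
\pi_{j*}(L^*([Z])) = \pi_{j*}\bigl(\phi_n^*\phi_{n*}[Z] - [Z]\bigr) = \pi_{j*}\phi_n^*\phi_{n*}[Z] - m_j[Z^{(j)}].
\]
By base-change in the Cartesian square $\pi_j\circ\phi_n = \phi_j\circ\pi_j$ and the fact that $\phi_n,\phi_j$ are flat while $\pi_j$ is proper, $\pi_{j*}\phi_n^* = \phi_j^*\pi_{j*}$ on cycles; and $\phi_{n*}\circ(\text{closure and push to }C_j)$ again commutes, so $\pi_{j*}\phi_{n*}[Z] = \phi_{j*}\pi_{j*}[Z] = m_j\phi_{j*}[Z^{(j)}]$. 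Assembling: $\pi_{j*}\phi_n^*\phi_{n*}[Z] = \phi_j^*\phi_{j*}(m_j[Z^{(j)}]) = m_j\phi_j^*\phi_{j*}[Z^{(j)}]$. Substituting back,
\[
\pi_{j*}(L^*([Z])) = m_j\phi_j^*\phi_{j*}[Z^{(j)}] - m_j[Z^{(j)}] = m_j\bigl(\phi_j^*\phi_{j*}[Z^{(j)}] - [Z^{(j)}]\bigr) = m_j L^*([Z^{(j)}]),
\]
as claimed. I would state carefully that the compatibility $\pi_{j*}\phi_n^* = \phi_j^*\pi_{j*}$ requires the square to be Cartesian, which it is: $X\times\widehat C_n = (X\times\widehat C_j)\times_{\widehat C_j}\widehat C_n$ and $\phi_n$ is the base change of $\phi_j$ along $\widehat C_n\to\widehat C_j$, up to reordering factors.

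\textbf{Main obstacle.} The routine part is the pushforward identity; the delicate point is the first equality $[L^+(Z^{(j)})] = L^*([Z^{(j)}])$, i.e.\ that no component of the residual scheme occurs with multiplicity $\geq 2$ and that $Z^{(j)}$ itself is not a component of the residual cycle. Controlling $Z^{(j)}$ is handled by Corollary~\ref{cor:fs_sansZ} and the birationality of Lemma~\ref{lem:fs_sansZ} — but I must verify that the hypotheses of those lemmas (the Set-up of \S\ref{sec:Set-up}, marked points, the choice of $E$) are indeed in force for each $Z^{(i)}$ with $j\le i\le n$, which the statement grants by the phrase ``satisfying Lemma~\ref{lem:fs_sansZ} and Proposition~\ref{prop:Main-II} for all $Z^{(i)}$''. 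The subtler issue is multiplicity one of the genuinely residual components: this needs the generic point of each such component to lie in the locus where $\widehat\phi_n$ (resp.\ $\widehat\phi_j$) is étale — which is the unramified locus of the finite flat map $\phi$, a dense open — and this is where I would invoke that $\phi$ is étale over a neighborhood of $\phi(\Sigma)$ (Proposition~\ref{prop:Main-II}(2)) only near $\Sigma$, but generic étaleness of $\phi$ itself over $\A^r_k$ holds because $X$ is smooth and the linear projection is generically étale (the general projection is separable since $\mathrm{char}$ issues are absent over a perfect field and $X$ is generically smooth over $\A^r_k$ under $\phi$). I expect the write-up of this multiplicity-one statement — disentangling ``the residual scheme is reduced generically'' from ``the residual scheme is regular over $\Sigma$'' (Proposition~\ref{prop:Main-II}(6)) — to be the one place requiring genuine care rather than bookkeeping.
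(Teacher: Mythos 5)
Your overall architecture matches the paper's: birationality from Lemma~\ref{lem:fs_sansZ} controls the coefficient of $Z^{(j)}$ itself, reducedness at the generic points of $\widetilde{\phi}_j^{-1}(\widetilde{\phi}_j(Z^{(j)}))$ controls the residual components, and the second equality is exactly the paper's computation via the Cartesian square $\pi_j\circ\widetilde{\phi}_n=\widetilde{\phi}_j\circ\pi_j$ and \cite[Proposition~1.7]{Fulton} together with $\pi_{j*}[Z]=m_j[Z^{(j)}]$. That part is fine (modulo the bookkeeping, which the paper does spell out, that the components of $Z$ and $L^+(Z)$ are closed in $X\times\widehat{C}_n$ by Lemmas~\ref{lem:fs-res} and \ref{lem:finiteness}, so the pushforward is along the projective map $\pi_j$).

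The one genuine problem is the step you yourself flag as delicate: why the generic points of the residual components lie in the étale locus of $\phi$. You justify this by asserting that $\phi$ is generically étale over $\A^r_k$ ``because $X$ is smooth and \ldots\ char issues are absent over a perfect field.'' That reasoning is not valid: generic smoothness of morphisms fails in positive characteristic even over algebraically closed fields, and separability of general linear projections in characteristic $p$ is a genuinely subtle matter (related to strange varieties, which the paper has to worry about elsewhere); nothing you cite establishes it directly. The fix is both simpler and already available: Proposition~\ref{prop:Main-II}(2) gives an affine open $U\subset\P^r_k$ containing $\phi(\Sigma)$ over which $\phi$ is finite étale, and since $T=\widetilde{\phi}_j(Z^{(j)})\to\A^r_k$ is finite and \emph{surjective} (as $Z^{(j)}\to X\to\A^r_k$ is), the generic point of $T$ lies over $U$; hence $T\cap(U\times\widehat{C}_j)$ is dense in $T$, every component of the finite flat $\widetilde{\phi}_j^{-1}(T)\to T$ dominates the integral $T$, and so all associated points of $\widetilde{\phi}_j^{-1}(T)$ sit in the étale locus. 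This is precisely how the paper concludes that $\widetilde{\phi}_j^{-1}(T)$ is reduced (flatness over the reduced $T$ then rules out embedded components). In short: the étaleness you need near the generic point is a consequence of the étaleness near $\Sigma$ that you dismissed as insufficient, via finiteness and surjectivity of $T$ over $\A^r_k$; as written, your key step rests on an unproved and incorrectly motivated claim.
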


\begin{proof}
First of all, note that by Lemma \ref{lem:fs-res}, every component of $L^+ (Z)$ is fs over $X$. In particular, by the finiteness criterion Lemma \ref{lem:finiteness}, each irreducible component of $L^+ (Z)$ is closed in $X \times \widehat{C}_n$. The push-forward $\pi_{j *}([L^*(Z)])$ is given by the projective map $\pi_j: X \times \widehat{C}_n \to X \times \widehat{C}_j$ is projective.

To prove the first equality, replacing $Z$ by $Z^{(j)}$, we may assume $n = j$ and $Z^{(j)} = Z$. Let $T = \widetilde{\phi}_n(Z) \subset \A^r_k \times \widehat{C}_n$. 

Note that the map $Z \to T$ is birational by Lemma \ref{lem:fs_sansZ}. Hence, by the definition of the proper push-forward and flat pull-back of cycles, the first equality is equivalent to showing that $\widetilde{Z}:=T \times_{(\A^r_k \times \widehat{C}_n)} (X \times \widehat{C}_n) = \widetilde{\phi}_n ^{-1} (T)$ is a reduced scheme.

To show that $\widetilde{Z}$ is reduced, let $U \subset \A^r_k$ be an affine open neighborhood of $\Sigma$ as in the condition (3) of Proposition \ref{prop:Main-II}. Since $Z \to X$ is finite and surjective, the open subset $T \cap (U \times \widehat{C}_n)$ is dense in $T$. 
The map $\widetilde{\phi}_n$ is \'etale over this dense open subset of $T$. 
Hence, $\widetilde{Z}  = \widetilde{\phi}_n ^{-1} (T)$ is reduced over this dense open subset of $T$. However, $\widetilde{\phi}_n ^{-1} (T) \to T$ is finite and flat everywhere, it means $\widetilde{Z} = \widetilde{\phi}_n ^{-1} (T)$ is reduced. This proves the first equality.

For the second equality, consider the commutative diagram
\begin{equation}\label{eqn:Res-sch-cycle-0}
\xymatrix@C.8pc{
X \times \widehat{C}_n \ar[r]^-{\widetilde{\phi}_n} \ar[d]_{\pi_j} & 
\A^r_k \times \widehat{C}_n \ar[d]^{\pi_j} \\
X \times \widehat{C}_j \ar[r]^-{\widetilde{\phi}_j} & 
\A^r_k \times \widehat{C}_j.}
\end{equation}

This is a Cartesian square in which the vertical arrows are projective and the horizontal arrows are finite and flat. Hence, by \cite[Proposition 1.7]{Fulton}, we have
\[
\begin{array}{lllll}
\pi_{j *}(L^*([Z])) & = & \pi_{j *}(\widetilde{\phi}^*_n \circ \widetilde{\phi}_{n *}([Z]) - [Z])  =  \pi_{j *} \circ \widetilde{\phi}^*_n \circ \widetilde{\phi}_{n *}([Z]) -  \pi_{j *}([Z]) \\
& = & \widetilde{\phi}^*_j \circ \pi_{j *} \circ \widetilde{\phi}_{n *}([Z]) - \pi_{j *}([Z])  =  \widetilde{\phi}^*_j \circ \widetilde{\phi}_{j *} \circ \pi_{j *}([Z]) - \pi_{j *}([Z]) \\
& {=} & \widetilde{\phi}^*_j \circ \widetilde{\phi}_{j *}(m_j[Z^{(j)}]) - m_j[Z^{(j)}] =  m_j(\widetilde{\phi}^*_j \circ \widetilde{\phi}_{j *}([Z^{(j)}]) - [Z^{(j)}])  =  m_j L^*([Z^{(j)}]),
\end{array}
\]
which proves the second equality.
\end{proof} 

The following complements Lemma \ref{lem:Res-sch-cycle}:

\begin{lem}\label{lem:Res-sch-cycle-1}
We are under the Set-up $(\star)$ of \S \ref{sec:Set-up3}. In particular, $Z \to X$ is an \fs-morphism. Suppose that $Z$ is integral such that $Z \to C_n$ is non-constant. 
Suppose that for an integer $0 \leq j \leq n-1$, the projection $Z^{(j)} \to C_j$ is constant. Then for each $L \in \Gr(N-r-1, H)(k)$ satisfying the conditions of Proposition \ref{prop:Main-II} for $Z$, we have the equality $\pi_j (Z') = \pi_j (Z)$ for each irreducible component $Z'$ of $L^+ (Z)$. 
\end{lem}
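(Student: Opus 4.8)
The plan is to unwind the definitions of the projections and use the fact that the constancy of $Z^{(j)}\to C_j$ forces a very rigid picture for the residual scheme. Since $Z^{(j)}\to C_j$ is constant and $Z\to C_n$ is non-constant, there is a fixed closed point $c\in C_j$ (necessarily a $k(c)$-point after base change, but we may work with it as a scheme-theoretic point) such that $Z^{(j)}=\pi_j(Z)$ is set-theoretically contained in $X\times\{c\}$; equivalently, $Z\subset X\times\pi_j^{-1}(c)$. First I would record this: writing $\pi_j\colon X\times\widehat C_n\to X\times\widehat C_j$, the image $\pi_j(Z)$ is the reduced closed subscheme $Z^{(0)}\times\{c\}$, where $Z^{(0)}=\widehat f(Z)\subset X$ is the (finite surjective, hence all of $X$ since $Z$ is integral and $Z\to X$ is fs with $X$ integral) image — so in fact $\pi_j(Z)=X\times\{c\}$ as sets, or more precisely $\pi_j(Z) = Z^{(j)}$ lies over the single point $c$.

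**Key steps.** The second step is to analyze $L^+(Z)$ under $\pi_j$. By construction $L^+(Z)$ is the closure of $\phi_n^{-1}(\phi_n(Z))\setminus Z$ in $X\times B$. The projection $\phi_n=\phi\times\mathrm{id}_{C_n}$ does not move the $C_n$-coordinates at all; it only acts on the $X$-factor via the finite map $\phi\colon X\to\A^r_k$. Hence for any point $(x,b)\in\phi_n^{-1}(\phi_n(Z))$, there is $(x',b)\in Z$ with $\phi(x)=\phi(x')$ and the \emph{same} $b\in C_n$. Applying $\pi_j$ (which just forgets the last $n-j$ box-coordinates of $b$), we get $\pi_j(x,b)=(x,\pi_j(b))$, and $(x',\pi_j(b))\in\pi_j(Z)=Z^{(j)}\subset X\times\{c\}$, so $\pi_j(b)=c$. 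Therefore every point of $\phi_n^{-1}(\phi_n(Z))$, and hence every point of its closure $L^+(Z)$, has its $\widehat C_j$-coordinate equal to $c$; that is, $\pi_j(L^+(Z))\subseteq X\times\{c\}=\pi_j(Z)$ set-theoretically. The third step is the reverse inclusion and the irreducible-component refinement: for an irreducible component $Z'$ of $L^+(Z)$, the map $Z'\to X$ is fs by Lemma~\ref{lem:fs-res}, in particular surjective onto $X$ (as $X$ is integral), so $\pi_j(Z')$ is a closed irreducible subset of $X\times\{c\}$ whose image in $X$ is all of $X$; combined with $\pi_j(Z')\subseteq X\times\{c\}$ this forces $\pi_j(Z')=X\times\{c\}=\pi_j(Z)$. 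Here I use that $\pi_j(Z')$ is closed because $Z'$ is finite over $X$ hence closed in $X\times\widehat C_n$ (Lemma~\ref{lem:finiteness}), so its image under the projective map $\pi_j$ is closed, and that $\pi_j(Z)=Z^{(j)}=X\times\{c\}$ scheme-theoretically since $Z\to X$ fs and $Z^{(j)}$ reduced.

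**Main obstacle.** The only delicate point is making sure the identification $\pi_j(Z)=X\times\{c\}$ is literally correct as reduced closed subschemes and that $c$ is a genuine closed point rather than merely a generic point of some positive-dimensional constancy locus — but since $Z^{(j)}\to C_j$ is a \emph{constant} morphism, its (set-theoretic, then reduced) image is a single point, and $X$ being integral with $Z\to X$ surjective pins $\pi_j(Z) = X\times\{c\}$. A secondary bookkeeping issue is that over a non-algebraically-closed $k$ the point $c$ may have residue field larger than $k$; this is harmless because $\pi_j$, $\phi_n$, and the formation of $L^+$ all commute with base change to $\bar k$ (as used repeatedly in Proposition~\ref{prop:Main-II}), so one may verify the equality of closed subsets after base change if desired. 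I expect the whole argument to be short: it is essentially the observation that $\phi_n$ and $\pi_j$ act on disjoint sets of coordinates, so constancy in the $C_j$-direction propagates verbatim from $Z$ to $\phi_n^{-1}(\phi_n(Z))$ and hence to $L^+(Z)$, and then the fs-property of residual components upgrades the set-theoretic containment to an equality of $\pi_j$-images.
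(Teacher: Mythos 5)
Your argument is correct and is essentially the paper's: both proofs rest on the observation that $\phi_n=\phi\times\mathrm{id}$ leaves the $C_n$-coordinates untouched, so once $Z^{(j)}$ lies over a single closed point $c_j$ one gets $\pi_j(\phi_n^{-1}\phi_n(Z))\subseteq \phi_j^{-1}\phi_j(Z^{(j)})=X\times\{c_j\}=Z^{(j)}$, which the paper packages as ``$L^+(Z^{(j)})=\emptyset$'' and runs as a contradiction. Your extra step invoking Lemma \ref{lem:fs-res} to exclude the possibility $\pi_j(Z')\subsetneq \pi_j(Z)$ usefully makes explicit a case that the paper's contradiction argument passes over in silence.
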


\begin{proof}
Toward contradiction, suppose that there is an irreducible component $Z'$ of $L^+(Z)$ such that $\pi_j(Z') \neq \pi_j(Z)$. In particular, this implies that $L^+(Z^{(j)}) \neq \emptyset$. On the other hand, we are given that $Z^{(j)} = \pi_j(Z) = X \times \{c_j\}$ for some closed point $c_j \in \widehat{C}_j$. In this case, $\widetilde{\phi}_{j} (Z^{(j)} ) = \mathbb{A}_k ^r \times \{ c_j \}$ so that $\widetilde{\phi}_j ^{-1} \widetilde{\phi}_j (Z^{(j)}) = X \times \{ c_j \} = Z^{(j)}$. Hence, $L^+ (Z^{(j)}) = \emptyset$. This is a contradiction. 
\end{proof}

\subsection{The presentation lemma}\label{sec:P-lemma*}
We now prove the presentation lemma for residual cycles under linear projections. We are under the Set-up ($\star$) in \S \ref{sec:Set-up3}.

\begin{thm}\label{thm:Main-2*}
Let $k$ be an infinite perfect field. Let $Z \subset X \times C_n$ be an integral closed subscheme such that $Z \to X$ is finite surjective, and the projection $Z \to  C_n$ is non-constant. 

Then there exist an embedding $\eta: \overline{X} \inj \P^N_k$ and a dense open subset $\mathcal{U} \subset \Gr(N-r-1, H)$, where $H :=
 \P^N_k \setminus \A^N_k$, such that for each $L \in \mathcal{U}(k)$, the linear projection $\phi_L: \P^N_k \setminus L \to \P^r_k$ away from $L$ defines a finite surjective morphism $\phi: \overline{X} \to \P^r_k$ satisfying the following properties:
\begin{enumerate}
\item
There exists a Cartesian square
\[
\xymatrix@C.8pc{
X \ar@{^{(}->}[r] \ar[d]_{\phi} & \overline{X} \ar[d]^{\phi} \\
\A^r_k \ar@{^{(}->}[r] & \P^r_k.}
\]
\item $\phi$ is {\'e}tale over an affine open neighborhood of $\phi(\Sigma)$.
\item $\phi(x) \neq \phi(x')$ for every pair $x \neq x'$ in $\Sigma$.
\item The map $k(\phi(x)) \to k(x)$ is an isomorphism for each $x \in \Sigma$.
\item The induced map $Z \to {\phi}_n(Z)$ is birational.
\item The map $L^+(Z) \to X$ is finite surjective.
\item For each $0 \leq j \leq n$, the scheme $\pi_j(L^+(Z))$ is regular at all points lying over $\Sigma$.
\end{enumerate}
\end{thm}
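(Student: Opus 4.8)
The plan is to reduce Theorem \ref{thm:Main-2*} to the results of \S\ref{sec:Pres-lem}, chiefly Proposition \ref{prop:Main-II}, together with the finiteness and birationality lemmas already established. The key observation is that the ``Set-up ($\star$)'' of \S\ref{sec:Set-up3} is an instance of the ``Set-up $+(\fs)$'' of \S\ref{sec:Set-up2}: we take $\widehat{B} = \widehat{C}_n$, $B = C_n$, $F = \widehat{B}\setminus B$, and since $X$ is integral and smooth we may take $X_{\rm fs} = X$, so the hypothesis that $Z \to X$ is finite surjective gives exactly the \fs-property over all of $X$. The Veronese embedding $\P^m_k \inj \P^N_k$ followed by the linear projection fits the framework of Lemma \ref{lem:Ver-affine-map}, which also furnishes the Cartesian square (1) for any $L \in \Gr(\overline{X}, N-r-1, H)(k)$. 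So the real content is to produce, after a suitable reimbedding, a single dense open $\mathcal{U}$ over which \emph{all} of (2)--(7) hold simultaneously for $Z$ and all its projections $Z^{(j)}$.

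\textbf{Key steps, in order.} First, I would invoke Proposition \ref{prop:Main-II} applied to $X$, the set $\Sigma$, the hyperplane $H$, the cycle $Z$, and a suitable $Y$ (e.g.\ $Y = \emptyset$, or $X_{\rm sing} = \emptyset$ since $X$ is smooth, so condition (5) of that proposition is vacuous). This replaces $\eta$ by a larger Veronese reimbedding and produces a dense open $\mathcal{U}_Z \subset \Gr(N-r-1,H)$ over which: the projection $\phi_L$ is \'etale over an affine neighborhood of $\phi_L(\Sigma)$ (giving (2)), $\phi_L$ separates $\Sigma$ (giving (3)), the residue field maps are isomorphisms (giving (4)), $L^+(Z)$ is regular at all points over $\Sigma$ (the $j=n$ instance of (7)), and $\widehat{\phi}_L\colon Z \to \widehat{\phi}_L(Z)$ is birational (giving (5), since the $\widehat{B}$-version of the map restricts to the $B$-version $\phi_n$ over the open part). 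Second, for (6): since $Z \to X$ is an \fs-morphism and $\phi_L$ is finite flat, Lemma \ref{lem:fs-res} gives that $L^+(Z) \to X$ is fs, hence finite surjective over the integral $X$ — this holds for \emph{every} $L$ in the Set-up, so no further shrinking is needed. Third, for the remaining cases $0 \le j \le n-1$ of (7): here I would run Proposition \ref{prop:Main-II} again, separately, for each projected subscheme $Z^{(j)} \subset X \times C_j$ — note $Z^{(j)} \to X$ is still an \fs-morphism because $Z \to X$ is, so the Set-up $+(\fs)$ applies to each $Z^{(j)}$ with $\widehat{B}$ replaced by $\widehat{C}_j$. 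This yields dense opens $\mathcal{U}_j$ over which $L^+(Z^{(j)})$ is regular at points over $\Sigma$. By Lemma \ref{lem:Res-sch-cycle} (resp.\ Lemma \ref{lem:Res-sch-cycle-1}), for $L$ in the intersection of the relevant opens we have $\pi_j(L^+(Z)) = L^+(Z^{(j)})$ as sets when $Z^{(j)} \to C_j$ is non-constant, and $\pi_j(L^+(Z)) = \emptyset$ or $\pi_j(Z)$ otherwise — in all cases the regularity at points over $\Sigma$ is inherited. Finally, set $\mathcal{U} := \mathcal{U}_Z \cap \bigcap_{j=0}^{n-1}\mathcal{U}_j$, intersected with the open subset $\Gr(\overline{X}, N-r-1, H)$ of Lemma \ref{lem:fs-open-1} and the open of Lemma \ref{lem:Ver-affine-map}; this is dense open, and every $L \in \mathcal{U}(k)$ satisfies (1)--(7).

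\textbf{The main obstacle.} The subtle point is the compatibility between the reimbeddings: each application of Proposition \ref{prop:Main-II} (to $Z$ and to each $Z^{(j)}$) may demand its \emph{own} further Veronese reimbedding of $\overline{X}$, and one must check that composing a Veronese with a Veronese is again (essentially) a Veronese, so that after finitely many reimbeddings all the conclusions survive. Concretely, a Veronese reimbedding of a larger Veronese is a linear slice of a higher Veronese, and the statements of Proposition \ref{prop:Main-II}, Lemma \ref{lem:fs_sansZ}, etc., are all phrased as ``after replacing $\eta$ by a bigger Veronese if necessary,'' so one does the reimbeddings one after another, each time re-applying the earlier results to the new embedding, and takes the last (largest) embedding; because each result only requires $N \gg r$ and the degree of $\overline{X}$ to be large, enlarging further never destroys a previously-achieved open condition. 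A second, more bookkeeping-type obstacle is matching the indexing conventions: one must verify that $\pi_j$ of the residual scheme of $Z$ equals the residual scheme of $\pi_j(Z)$ under the chosen $L$, which is precisely the content of Lemmas \ref{lem:Res-sch-cycle} and \ref{lem:Res-sch-cycle-1} and requires $L$ to satisfy Proposition \ref{prop:Main-II} for \emph{all} the intermediate $Z^{(i)}$, $j \le i \le n$ — so the intersection defining $\mathcal{U}$ must be taken with some care. Once these two points are handled, the verification of (1)--(7) for $L \in \mathcal{U}(k)$ is a direct transcription of the cited results.
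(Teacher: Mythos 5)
Your proposal is correct and follows essentially the same route as the paper: apply Proposition \ref{prop:Main-II} (with Lemma \ref{lem:fs_sansZ}) to $Z$ and to each projection $Z^{(j)}$, use Lemma \ref{lem:fs-res} for (6), and transfer regularity to $\pi_j(L^+(Z))$ via Lemmas \ref{lem:Res-sch-cycle} and \ref{lem:Res-sch-cycle-1}. The one small point to make explicit is that Proposition \ref{prop:Main-II} may only be applied to those $Z^{(j)}$ whose projection to $C_j$ is non-constant (the Set-up requires this), which is why the paper isolates the threshold index $i_0$ and treats $j\le i_0$ purely via Lemma \ref{lem:Res-sch-cycle-1} and the regularity of $X$ itself — your verification of (7) already does this in substance, so the argument goes through.
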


\begin{proof}
Since $Z \to X$ is finite surjective, for each $0 \leq j \leq n$ the morphism $Z^{(j)} \to X$ is also finite surjective. Let $i_0 \in \{ 0, \ldots, n\}$ be the largest integer $i$ such that $Z^{(i)} \subset X \times \{ b \}$ for some closed point $b \in {C}_i$. Note that $Z^{(0)} = X = X \times_k {C}_0$, so such $i_0$ exists. Note also that $i_0 \le n-1$ by our assumption.

Choose a large enough Veronese embedding $\P^m_k \inj \P^N_k$ such that for the composite embedding $\eta: \overline{X} \inj \P^N_k$, and the hyperplane $H = H_{N,0}$ as in Lemma \ref{lem:Ver-affine-map}, there are open dense subsets $\mathcal{U}_j \subset \Gr(N-r-1, H)$ such that each $L \in \mathcal{U}_j(k)$ satisfies Lemma \ref{lem:fs_sansZ} and the conditions (1) $\sim$ (6) of Proposition \ref{prop:Main-II} for $Z^{(j)}$ over all $i_0 +1 \le j \le n$. We let $\mathcal{U} =  \bigcap_{j= i_0+1} ^n \mathcal{U}_j$.

The condition (1) of the theorem automatically follows from our choice of $H$ and Lemma \ref{lem:Ver-affine-map}. The conditions (2) $\sim$ (4) follow directly from conditions (2) $\sim$ (4) of Proposition \ref{prop:Main-II}. The condition (5) follows from Lemma \ref{lem:fs_sansZ}. The condition (6) follows from Lemma \ref{lem:fs-res}.

We prove (7). We have to show that every irreducible component of $\pi_j(L^+(Z))$ is regular at all points lying over $\Sigma$ and no two components of $\pi_j(L^+(Z))$ meet at points lying over $\Sigma$. We first assume that $i_0+ 1 \le j \le n$.

Let $Z'$ be an irreducible component of $L^+(Z)$. Since $j > i_0$, Lemma \ref{lem:Res-sch-cycle} says that $\pi_j(Z')$ is a component of the effective cycle $\pi_{j *}(L^*([Z])) = m_jL^*([Z^{(j)}]) = m_j[L^+(Z^{(j)})]$ with $m_j \ge 1$. Since $Z'$ was arbitrary, it follows that the irreducible components of $\pi_j(L^+(Z))$ are
the same as those of $L^+(Z^{(j)})$. On the other hand, the condition (6) of Proposition \ref{prop:Main-II} (with our choice of $L$) says that $L^+(Z^{(j)})$ is regular at all points lying over $\Sigma$. It follows that each irreducible component of $\pi_j(L^+(Z))$ is regular at all points lying over $\Sigma$, and in particular no two components meet at points lying over $\Sigma$.

If $0 \le j \le i_0$, then Lemma \ref{lem:Res-sch-cycle-1} says that $\pi_j(L^+(Z))$ coincides with $\pi_j(Z)$, which in turn is of the form $X \times \{b\}$ for some closed point $b \in C_j$. In particular, $\pi_j(L^+(Z))$ is irreducible. As $X$ is regular everywhere, in particular at all points lying over $\Sigma$, it follows that $\pi_j(L^+(Z))$ is regular at all points lying over $\Sigma$. This completes the proof of the theorem.
\end{proof}

\subsection{The \sfs-moving lemma}\label{sec:sfs-final}
We now prove the \sfs-moving lemma for additive higher Chow groups of relative $0$-cycles over semi-local $k$-schemes. A similar argument also proves the \sfs-moving lemma for Bloch's higher Chow groups of relative $0$-cycles over semi-local $k$-schemes.

Let $k$ be an infinite perfect field. We apply Theorem \ref{thm:Main-2*} with $\widehat{A}_i := \mathbb{P}_k ^1$ for $0 \leq i \leq n-1$, while $A_0:= \mathbb{A}_k ^1$ and $A_1 = \cdots = A_{n-1} = \square_k ^1$ so that $C_j = B_j = \mathbb{A}_k ^1 \times \square_k ^{j-1}$ for $j \ge 1$. The \sfs-moving lemma for additive higher Chow groups of relative $0$-cycles is the following:

\begin{thm}\label{thm:Main-1*}
Let $R$ be a regular semi-local $k$-scheme essentially of finite type of dimension $r \geq 0$ over an infinite perfect field $k$. Let $V= \Spec (R)$ and let 
$m, n \geq 1$ be integers. Then the canonical map $\TH^n_{\sfs}(V, n; m) \to \TH^n(V, n; m)$ is an isomorphism.
\end{thm}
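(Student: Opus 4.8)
We combine the affine-space case (Theorem~\ref{thm:Ar-spread-case}) with the presentation lemma (Theorem~\ref{thm:Main-2*}), following the two-part strategy outlined in \S\ref{sec:Outline}. By Proposition~\ref{prop:no_closed} and the Convention following Lemma~\ref{lem:No-intersection}, we may assume $V=\Spec(R)$ is obtained by localizing a smooth affine $k$-scheme $\overline{X}\supset X$ (with $X$ smooth, integral after passing to a connected component) at a finite set $\Sigma$ of closed points. By the sequence \eqref{eqn:moving arrows}, the composite $\TH^n_{\sfs}(V,n;m)\to\TH^n(V,n;m)$ is always injective, so the only issue is surjectivity. So fix a cycle class represented by $\alpha\in\un{\TZ}^n(V,n;m)$ with $\partial\alpha=0$; by Lemma~\ref{lem:Adm-spread} we may spread it out to a cycle, still called $\alpha$, on a smooth atlas $(X,\Sigma)$ with $\partial\alpha=0$ there.

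\textbf{Step 1: reduce to fs-cycles.} First I would apply the \fs-moving lemma precursor: using Theorem~\ref{thm:fs-present-intro} (the \fs-presentation lemma), move $\alpha$ modulo a boundary to a cycle $\beta$ each of whose components is an \fs-cycle along $\Sigma$, i.e.\ $\beta\in\TZ^n_{\Sigma,\fs}(X,n;m)$ after possibly shrinking $X$. Concretely, one decomposes $\beta$ into components dominant over $X$ and components not dominant, takes the closure $\widehat{Z}$ in $X\times\widehat{B}_n$, chooses a linear projection $L$ from the dense open $\mathcal{U}$ of Theorem~\ref{thm:fs-present-intro}, and replaces $Z$ by (a rational multiple of) the residual cycle $L^+(Z)$, which is fs over an affine neighborhood of $\Sigma$ by (4a)/(4b) there; the difference $Z - L^+(Z) = \phi^*\phi_*(Z)$ is a pull-back of a cycle from $\A^r_k\times B_n$ under the finite flat map $\phi\colon X\to\A^r_k$, and one produces a homotopy realizing this, so the class is unchanged. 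After finitely many such steps every component of $\beta$ is fs. (This is exactly the content that would be packaged as Lemma~\ref{lem:fs-move-*}, referenced in the excerpt.)

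\textbf{Step 2: upgrade fs to sfs.} Now $\beta$ is fs along $\Sigma$, with closure $\widehat{Z}\subset \overline{X}\times\widehat{B}_n$ missing $\Sigma\times F$. Take an irreducible component $Z$ of $\beta$ that is dominant over $\A^r_k$ (after the projection below) and apply Theorem~\ref{thm:Main-2*}: it provides a Veronese re-embedding $\overline{X}\inj\P^N_k$ and a dense open $\mathcal{U}\subset\Gr(N-r-1,H)(k)$, and for $L\in\mathcal{U}(k)$ a finite surjective $\phi=\phi_L\colon\overline{X}\to\P^r_k$ restricting to a finite flat $\phi\colon X\to\A^r_k$, such that $\phi$ is \'etale near $\phi(\Sigma)$ (properties (1)--(4)), $Z\to\phi_n(Z)$ is birational (5), $L^+(Z)\to X$ is finite surjective (6), and crucially $\pi_j(L^+(Z))$ is regular at all points over $\Sigma$ for every $0\le j\le n$ (7). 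The regularity (7) is precisely the \emph{smoothness} requirement in Definition~\ref{defn:sfs-0} once we know $X$ is smooth over the perfect field $k$: a scheme regular at a point and of finite type over a perfect field is smooth there, so shrinking to a suitable affine open neighborhood of $\Sigma$ makes $(Z^{(j)})_U$ smooth over $k$. Thus $L^+(Z)$, restricted to $V$, is an \sfs-cycle along $\Sigma$. As in Step~1, the difference $Z - (\text{rational multiple of } L^+(Z))$ is pulled back from $\A^r_k$ via the finite flat $\phi$; here we invoke Theorem~\ref{thm:Ar-spread-case} (the affine-space \sfs-moving lemma, together with Lemmas~\ref{lem:good-intersection} and the Kai homotopy \eqref{eqn:Kai-homotopy}): the pulled-back piece over $\A^r_k$ is homologous, over the localization of $\A^r_k$ at $\phi(\Sigma)$, to an \sfs-cycle there, and by Lemma~\ref{lem:pull-back-sfs} \'etale pull-back along $\phi$ (which is \'etale near $\Sigma$) preserves the \sfs-property. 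Assembling the homotopies produces $\gamma\in\un{\TZ}^n(V,n+1;m)$ with $\partial\gamma = \alpha - \beta'$ and $\beta'\in\TZ^n_{\sfs}(V,n;m)$. Finally $\partial\beta'=0$ automatically by Lemma~\ref{lem:No-intersection} since $\beta'$ is finite over $V$, so $\beta'$ defines a class in $\TH^n_{\sfs}(V,n;m)$ mapping to the given class. Iterating over the (finitely many) components handles the general $\beta$, and surjectivity follows.

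\textbf{Main obstacle.} The delicate point is bookkeeping in Step~2: one must choose a \emph{single} linear projection $L$ in the intersection of the dense open sets coming from Theorem~\ref{thm:Main-2*} applied to \emph{each} component $Z_i$ of $\beta$ simultaneously, and then ensure that the various homotopies over $\A^r_k$ (obtained from Theorem~\ref{thm:Ar-spread-case}, which itself moves cycles by the twisted translations $\phi_{g,s}$) are compatible with the modulus condition and with proper intersection with all faces of $B_n$ after pulling back along $\phi$ --- the modulus condition is the subtle one, since $\phi$ is only finite, not an isomorphism, so one must check that $\phi^*$ of an admissible homotopy remains admissible. This is where the containment lemma (\cite[Proposition~2.4]{KP}) and the precise form of $L^*$ as $\widetilde{\phi}^*_j\widetilde{\phi}_{j*} - \mathrm{id}$ (Lemma~\ref{lem:Res-sch-cycle}) are needed to control all the projected pieces $\pi_j(L^+(Z))$ at once. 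Modulo this compatibility check, which uses perfectness of $k$ at the specialization step (cf.\ the proof of Lemma~\ref{lem:sfs-Kai}), the argument is a direct assembly of the results already established.
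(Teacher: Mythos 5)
Your proposal follows the paper's own proof essentially step for step: first the \fs-moving lemma obtained from Theorem~\ref{thm:fs-present-intro} together with the affine-space case (Theorem~\ref{thm:Ar-spread-case}), then the upgrade from \fs\ to \sfs\ via Theorem~\ref{thm:Main-2*}, Lemma~\ref{lem:Res-sch-cycle} and Lemma~\ref{lem:pull-back-sfs}, using in both steps the decomposition $\alpha = (\alpha - \phi_n^*\phi_{n*}(\alpha)) + \phi_n^*\phi_{n*}(\alpha)$ and handling the second piece by the affine-space case pulled back along $\phi$. The only slips are cosmetic: the correct identity is $Z + L^*(Z) = \phi_n^*\phi_{n*}(Z)$ (not $Z - L^+(Z) = \phi_n^*\phi_{n*}(Z)$), this is an equality of cycles so no homotopy is needed for the decomposition itself, and your ``main obstacle'' is not one --- the only homotopies occur over $\A^r_k$ and are transported by the flat pull-back $\phi_n^*$, which preserves admissibility and the modulus condition as a matter of course.
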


This theorem is proven in steps. Since $R$ is regular, it is a product of regular semi-local $k$-domains, and each $k$-domain corresponds to a connected component of $\Spec (R)$. Thus we may reduce to the case when $R$ is integral. We also remark that by Proposition \ref{prop:no_closed}, we may assume that $R$ is obtained by localizing an integral smooth affine $k$-scheme at a finite set of closed points. Note that Theorem \ref{thm:Main-1*} is obvious for $r=0$, so we may assume $r \geq 1$. 
We have injective maps (using Lemma \ref{lem:TH_M}), 
$$\TH^n _{\sfs} (V, n;m) \to \TH^n _{ \fs} (V, n; m) \to {\TH}^n _{\Sigma} (V, n; m) \to \TH^n (V, n;m).$$ 
The last arrow is an isomorphism by \cite[Theorem 4.10]{KP3}. We show that the middle arrow is an isomorphism, which we call the \emph{$\fs$-moving lemma}:

\begin{lem}\label{lem:fs-move-*}
The map $\TH^n _{ \fs} (V, n;m) \to {\TH}^n _{\Sigma} (V, n; m)$ is an isomorphism.
\end{lem}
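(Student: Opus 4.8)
The plan is to prove surjectivity and injectivity of the map $\TH^n_{\fs}(V,n;m) \to \TH^n_\Sigma(V,n;m)$ separately, in both cases reducing via Lemma \ref{lem:Adm-spread} to a cycle on a smooth affine atlas $(X,\Sigma)$ and then applying the presentation lemma (Theorem \ref{thm:Main-2*}) combined with the affine-space moving lemma (Theorem \ref{thm:Ar-spread-case}). First I would set up the surjectivity argument: given a class in $\TH^n_\Sigma(V,n;m)$, represent it by an admissible cycle $\alpha$ with $\partial(\alpha) = 0$, and by Lemma \ref{lem:Adm-spread} spread it out to $\bar\alpha \in \TZ^n_\Sigma(X,n;m)$ on a smooth affine atlas with $\partial(\bar\alpha)=0$. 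We may assume each irreducible component $Z$ of $\bar\alpha$ is integral and, discarding components that restrict to zero on $V$, that each $Z \to X$ is dominant; since $\dim Z = r = \dim X$, this means $Z \to X$ is generically finite. The key geometric input is that after choosing a sufficiently large Veronese re-embedding $\overline{X} \inj \P^N_k$ and a general linear projection $\phi = \phi_L$, Theorem \ref{thm:Main-2*} produces a finite surjective $\phi: X \to \A^r_k$ that is \'etale near $\phi(\Sigma)$, separates the points of $\Sigma$, induces isomorphisms on residue fields, and (crucially) for which the residual cycle $L^+(Z) \to X$ is finite surjective with $\pi_j(L^+(Z))$ regular at all points over $\Sigma$ for every $0 \le j \le n$. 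Here one takes $\widehat A_0 = \cdots = \widehat A_{n-1} = \P^1_k$ with $A_0 = \A^1_k$ and $A_1 = \cdots = A_{n-1} = \square^1_k$ so that $C_j = B_j$, exactly as set up before the statement of Theorem \ref{thm:Main-1*}.

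The heart of the argument is the homotopy. Since $\phi$ is flat, the residual cycle operator $L^*(-) = \phi_n^*\phi_{n*}(-) - (-)$ is defined, and one has the fundamental relation $\phi_n^*\phi_{n*}([Z]) = [Z] + L^*([Z])$, so modulo the image of $\phi_n^*\phi_{n*}$ (which one must check induces the identity on $\TH^n$, or at least a map compatible with the cycle class), the cycle $\bar\alpha$ is congruent to $-L^*(\bar\alpha)$, a cycle whose support $L^+(\bar\alpha)$ is, after restricting to a suitable affine atlas $U$ containing $\Sigma$, finite and surjective over $U$ with all projections regular over $\Sigma$ — i.e., an \sfs-cycle along $\Sigma$ in the sense of Definition \ref{defn:sfs-0} (using Lemma \ref{lem:sfs-spread} to pass between the atlas and $V$). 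To promote this into an actual homotopy one uses the standard Bloch-style construction: the graph of the family of projections over the pencil of linear subspaces through a fixed $L$, or more simply the push-pull homotopy for finite flat morphisms, furnishes $\gamma \in \TZ^n_\Sigma(X,n+1;m)$ with $\partial(\gamma) = \bar\alpha + L^*(\bar\alpha)$ — here one must verify that $\gamma$ again satisfies the modulus condition and proper intersection with faces, which is where the choice of $A_0 = \A^1_k$ and the modulus bookkeeping enter; the admissibility of the pushforward and pullback under the flat finite map $\phi$ follows from \cite[\S 1.4,1.7]{Fulton} together with the modulus behaviour already used in \S\ref{sec:sfs-AS}. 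Pulling back to $V$ gives $j^*(\bar\alpha) - (-L^*(\bar\alpha))_V$ a boundary, and the latter lies in $\TZ^n_{\fs}(V,n;m)$ with all face maps zero by Lemma \ref{lem:No-intersection}, so it represents a class in $\TH^n_{\fs}(V,n;m)$ mapping to the given class. This proves surjectivity.

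For injectivity: suppose $\beta \in \TZ^n_{\fs}(V,n;m)$ with $\partial\beta = 0$ becomes a boundary $\partial(\gamma) = \beta$ in $\TZ^n_\Sigma(V,n;m)$. Spread out $\gamma$ to a smooth affine atlas; the issue is that $\gamma$ need not be an \fs-cycle in level $n+1$. One applies the affine-space technology: using Theorem \ref{thm:Ar-spread-case} (via the linear projection $\phi$ as above, now applied to the bounding cycle $\gamma$) one moves $\gamma$ within its class to a cycle whose relevant components are \fs, keeping $\partial(\gamma) = \beta$ fixed because $\beta$ is already \fs; more precisely, $\beta$ being \fs means $L^*(\beta)$ has support disjoint from $\Sigma \times F$, so it does not interfere, and one can arrange the correction term on $\gamma$ to vanish on restriction near $\Sigma$. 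Thus $\beta$ is already a boundary in $\TZ^n_{\fs}(V,n+1;m) \to \TZ^n_{\fs}(V,n;m)$, giving injectivity. \textbf{The main obstacle} I anticipate is the construction and verification of the homotopy $\gamma$: one must check simultaneously that the push-pull cycle (or the pencil-of-projections family) satisfies the modulus $m$ condition, meets all faces of $B_{n+1}$ properly, and has the property that its restriction near $\Sigma$ produces precisely an \sfs-cycle — this braids together the delicate modulus estimates of \S\ref{sec:sfs-AS} with the regularity/finiteness output of Theorem \ref{thm:Main-2*}, and keeping track of which components survive restriction to $V$ (the non-dominant ones, and those killed by Lemma \ref{lem:Res-sch-cycle-1}) requires care.
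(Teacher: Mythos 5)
There is a genuine gap at the heart of your surjectivity argument. You propose to produce a homotopy $\gamma$ with $\partial(\gamma) = \bar\alpha + L^*(\bar\alpha) = \phi_n^*\phi_{n*}(\bar\alpha)$, hedged by the parenthetical that one "must check" $\phi_n^*\phi_{n*}$ induces the identity on $\TH^n$. It does not, and no such homotopy exists in general: $\phi_n^*\phi_{n*}$ is a transfer followed by restriction along a finite flat map of degree $>1$, and it is not homotopic to zero (nor to the identity) on the cycle groups of $X$. The paper never needs such a homotopy. Its argument is an exact identity of cycles, $\bar\gamma = (\bar\gamma - \phi_n^*\phi_{n*}\bar\gamma) + \phi_n^*\phi_{n*}(\bar\gamma)$, applied to the \emph{whole} cycle $\bar\gamma$ (not just its non-fs part $\bar\alpha$). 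The first summand is the residual cycle, which restricts over $V$ to an fs-cycle (or to zero) by the fs presentation lemma, Theorem \ref{thm:fs-present-intro}; the second summand is $\phi_n^*$ of the closed cycle $\phi_{n*}(\bar\gamma)$ on $\A^r_k$, and it is only \emph{there} — on the affine space, where Theorem \ref{thm:Ar-spread-case} via twisted translations is available — that a homotopy is invoked, writing $\phi_{n*}(\bar\gamma_W) = \eta_1 + \partial\eta_2$ with $\eta_1$ fs and then pulling back along the finite flat $\phi_n$. Your plan has no mechanism for disposing of the push-pull term, which is the entire difficulty.

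Two further problems. First, you apply Theorem \ref{thm:Main-2*} to the components of $\bar\alpha$, but that theorem's hypotheses (the Set-up $(\star)$) include that $Z \to X$ is already finite and surjective — precisely the property the components of $\alpha$ lack, since $\alpha$ was chosen to have no fs components. The correct tool is Theorem \ref{thm:fs-present-intro}, which is stated under the weaker Set-up of \S\ref{sec:Set-up} and only requires proper intersection with $X \times F$ and non-constancy over $\widehat{B}$ (guaranteed here because a component with constant projection to $B_n$, being closed in $X\times\widehat B_n$... rather, because a non-fs component must meet $\Sigma\times F$ trivially only after projection is non-constant — in any case the paper derives non-constancy from the failure of fs-ness via Lemma \ref{lem:finiteness}). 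Second, your injectivity paragraph is superfluous: by Definition \ref{defn:sfs-TCH} the denominator of $\TH^n_{\fs}(V,n;m)$ is ${\rm im}(\partial)\cap\TZ^n_{\fs}(V,n;m)$ with $\partial$ taken on the full complex, so any fs-cycle that bounds in $\TZ^n_\Sigma$ already bounds in the fs-group; injectivity is automatic and the moving argument you sketch for it is not needed (and, as written, is circular).
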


\begin{proof}
By the above discussion, we assume $V$ is integral. Since this map is injective, we only have to show that it is surjective. Let $\gamma \in \TZ^n_{\Sigma}(V, n; m)$ be a cycle with $\partial(\gamma) = 0$. 

First suppose that there is an atlas $(\mathbb{A}^r_k, \Sigma)$ so that $\gamma$ lifts to a cycle $\overline{\gamma} \in \TZ^n_{\Sigma}(\A^r_k, n;m)$. In this case, we can apply Theorem \ref{thm:Ar-spread-case} and write $\gamma = \gamma_1 + \partial(\gamma_2)$, where $\gamma_1 \in  \TZ^n_{\sfs}(V, n; m) \subset  
\TZ^n_{\fs}(V, n; m)$ and $\gamma_2 \in  \TZ^n(V, n+1; m)$. One immediately has $\partial(\gamma_1) = 0$, proving the desired surjectivity in this case.

In general, we write $\gamma = \alpha + \beta $, where no component of $\alpha$ is an fs-cycle and $\beta$ is an fs-cycle. Lemma \ref{lem:Adm-spread} says that there is a connected smooth affine atlas $(X,\Sigma)$ for $V$, and cycles $\overline{\alpha}, \overline{\beta}, \overline{\gamma} \in \TZ^n_{\Sigma}(X,n;m)$ such that $\overline{\alpha}_V = \alpha$, $\overline{\beta}_V = \beta$, $\overline{\gamma}_V= \gamma$, $\overline{\gamma} = \overline{\alpha} +  \overline{\beta}$ and $\partial(\overline{\gamma}) = 0$.

Since no component of $\alpha$ is fs over $V$, it follows that the projection of every component of $\overline{\alpha}$ to $B_n$ must be non-constant. We can therefore apply Theorem \ref{thm:fs-present-intro} to obtain a finite flat map $\phi: X \to \mathbb{A}^r_k$ such that $\alpha$ satisfies all the properties there. Let $\Sigma' = \phi(\Sigma)$, which consists of finitely many closed points of $\mathbb{A}^r_k$. Let $V' = \Spec (\mathcal{O}_{\mathbb{A}^r_k, \Sigma'})$ and $W:= X \times_{\mathbb{A}^r} V'$. We have inclusions $\Sigma \subset V \subset W \subset X$, and a finite flat morphism $\phi: W \to V'$. 

Write $\overline{\alpha}= \overline{\alpha}_1  + \overline{\alpha}_2$, where each component of $\overline{\alpha}_1$ is dominant over $X$ and no component of $\overline{\alpha}_2$ is dominant over $X$. As ${\beta}$ is an fs-cycle over $V$, after shrinking $X$ if needed, $\overline{\beta}$ is an fs-cycle over $X$ along $\Sigma$ by Corollary \ref{cor:fs-fine}.

We now have  
\[
\overline{\gamma}= \overline{\alpha}_1 + \overline{\alpha}_2 + \overline{\beta}=  (\overline{\alpha}_1 - \phi^*_n \phi_{n *}(\overline{\alpha}_1)) +(\overline{\alpha}_2 - \phi^*_n \phi_{n *}(\overline{\alpha}_2)) +  (\overline{\beta} - \phi^*_n \phi_{n *}(\overline{\beta})) + \phi^*_n \phi_{n *}(\overline{\gamma}).
\]

Let $\overline{\alpha}_i ':= \overline{\alpha}_i - \phi^*_n \phi_{n *} (\overline{\alpha}_i)$ for $i=1,2$, and $\overline{\beta}' := \overline{\beta} - \phi^*_n \phi_{n *} (\overline{\beta})$. Since $\overline{\beta}$ is an \fs-cycle on $X$ along $\Sigma$ and $\phi$ is finite, $\phi_{n *} (\overline{\beta})$ is an \fs-cycle over $\A^r_k$ by Lemma \ref{lem:fs fpf}. Since $X \to \mathbb{A}_k ^r$ is flat, by Lemma \ref{lem:fs bc} $\phi^*_n \phi_{n *} (\overline{\beta})$ is an fs-cycle over $X$ along $\Sigma$. In particular, $\overline{\beta}' \in \TZ^n_{\Sigma, \fs}(X,n;m)$. On the other hand, by Theorem \ref{thm:fs-present-intro}, we have $(\overline{\alpha}_2' )_V=0$ and $(\overline{\alpha}_1')_V \in \TZ_{\Sigma, \fs} ^n (V, n;m)$. 

Since $\overline{\gamma} \in \TZ^n _{\Sigma}(X, n;m)$ with $\partial (\overline{\gamma}) = 0$, it follows that $\phi_* (\overline{\gamma}) \in \TZ^n_{\Sigma'} (\mathbb{A}^r_k, n;m)$ with $\partial (\phi_{n *} (\overline{\gamma})) = 0$. By the previous case, there are cycles $\eta_1 \in \TZ^n_{\fs} (V', n;m)$, and $\eta_2 \in \TZ^n(V', n+1;m)$ such that $ j^* ( \phi_{n *} (\overline{\gamma}) ) = \eta _1 + \partial \eta_2$. Equivalently, $\phi_{n *} (\overline{\gamma} _W) = \eta_1 + \partial \eta_2$. Hence, $\phi^*_n \phi_{n *} (\overline{\gamma}_W) = \phi^* (\eta_1) + \phi^*_n (\partial \eta_2) = \phi^*_n (\eta_1) + \partial (\phi^*_n (\eta_2))$. Moreover, $\phi^*_n(\eta_1)$ is an fs-cycle by Lemma \ref{lem:fs bc}. Combining these, we have 
\[
\gamma = (\overline{\gamma})_V = (\overline{\alpha}_1 ')_V + \overline{\beta}'_V + (\phi^*_n (\eta_1))_{V} + \partial  (( \phi^*_n (\eta_2))_{V}) = \gamma_1 + \partial  (( \phi^*_n (\eta_2))_{V}),
\]
where $\gamma_1:= (\overline{\alpha}_1')_V + \overline{\beta}_V' + (\phi^*_n (\eta_1))_{V} \in \TZ^n _{\fs} (V, n;m)$. Since $\partial \gamma = 0$, we also deduce that $\partial \gamma_1 = 0$. This completes the proof of the lemma.
\end{proof}

{\bf Proof of Theorem \ref{thm:Main-1*}.}
We may assume that $V$ is integral. Using Lemma \ref{lem:fs-move-*}, it suffices to show that the map $\TH^n _{\sfs} (V, n;m) \to \TH^n _{\fs} (V, n; m)$ is surjective. Let $\alpha \in \TZ^n_{\fs}(V, n; m)$ be an $\fs$-cycle, which always has $\partial(\alpha) = 0$ by Lemma \ref{lem:No-intersection}. Write $\alpha = \alpha_1 + \alpha_2$, where $\alpha_2 \in \TZ^n _{\sfs} (V, n;m)$, while $\alpha_1 \in \TZ^n _{\fs} (V, n;m)$ but no component of $\alpha_1$ lies in $\TZ^n_{\sfs} (V, n;m)$. Note that $\partial(\alpha_i) = 0$ for $i = 1,2$ by Lemma \ref{lem:No-intersection} again. It is enough to prove that $\alpha_1$ is equivalent to a cycle in $\TZ^n _{\sfs} (V, n;m)$. Replacing $\alpha$ by $\alpha_1$, we may therefore assume that no component of $\alpha$ lies in $\TZ^n _{\sfs} (V, n;m)$. 

Apply Lemma \ref{lem:Adm-spread} to choose a connected smooth affine atlas $(X,\Sigma)$ for $V$ and a cycle $\overline{\alpha} \in \TZ^n_{\Sigma}(X, n;m)$ such that $\partial(\overline{\alpha}) = 0$. If $X\simeq \A^r_k$, we can apply Theorem \ref{thm:Ar-spread-case} to write $\alpha = \beta + \partial(\gamma)$, where $\beta \in  \TZ^n_{\sfs}(V, n; m) \subset \TZ^n_{\fs}(V, n; m)$ and $\gamma \in \TZ^n(V, n+1; m)$. This solves the problem in this case. 

Suppose that $X$ is not an affine space. If $Z$ is a component of $\alpha$ whose projection to $B_n$ is constant, then $Z$ is already 
an \sfs-cycle. But, we supposed no component of $\alpha$ is an \sfs-cycle. Hence, $Z \to B_n$ is non-constant for each irreducible component $Z$. It follows that Lemma \ref{lem:Res-sch-cycle} and Theorem \ref{thm:Main-2*} apply to every component of $\alpha$. Let $\phi: X \to \A^r_k$ be the finite and flat map as in Theorem \ref{thm:Main-2*} and let $\Sigma' = \phi(\Sigma)$. By shrinking $\mathcal{U} \subset \Gr(N-r-1, H)$ if necessary, we can assume that conditions (1) $\sim$ (7) of Theorem \ref{thm:Main-2*} hold for each $L \in \mathcal{U}(k)$ and for each component of $\alpha$. 

Let $V'= \Spec (\mathcal{O}_{\mathbb{A}^r_k, \Sigma'})$ and let $W = X \times_{\mathbb{A}^r_k} V'$. We have inclusions $\Sigma \subset V \subset W \subset X$ and a finite and flat morphism $\phi_\Sigma: W \to V'$ of smooth semi-local $k$-schemes. Let $j: V \to W$ be the localization map.

We can write $\overline{\alpha}_W=  (\overline{\alpha}_W - \phi^*_n \phi_{n *}(\overline{\alpha}_W)) + \phi^*_n \phi_{n *}(\overline{\alpha}_W)$. We have $\partial(\phi_{n *}(\overline{\alpha}_W)) =\phi_{n *}(\partial(\overline{\alpha}_W)) = 0$. By the previous case of affine space atlas, we can write $\phi_{n *}(\overline{\alpha}_W) = \eta_1 + \partial(\eta_2)$, where $\eta_1 \in \TZ^n_{\sfs}(V', n; m)$ and $\eta_2 \in  \TZ^n(V', n+1; m)$. This yields $\phi^*_n \phi_{n *}(\overline{\alpha}_W) = \phi^*_n(\eta_1) + \partial(\phi^*_n(\eta_2))$. Since $\phi: W \to V'$ is finite and {\'e}tale, it follows by Lemmas \ref{lem:fs bc} and \ref{lem:pull-back-sfs} that $\phi^*_n(\eta_1) \in \TZ^n_{\sfs}(W, n;m)$.

It follows from Lemma \ref{lem:Res-sch-cycle} and Theorem \ref{thm:Main-2*} that $j^*(\overline{\alpha}_W - \phi^*_n \phi_{n *}(\overline{\alpha}_W)) \in \TZ^n_{\sfs}(V, n;m)$. Let $\beta = j^*(\overline{\alpha}_W - \phi^*_n \phi_{n *}(\overline{\alpha}_W)) + j^*(\phi^*_n(\eta_1)) \in 
\TZ^n_{\sfs}(V, n;m)$ and $\gamma = j^*(\phi^*_n(\eta_2))  \in \TZ^n(V, n+1; m)$. Then, we get 
\[
\begin{array}{lll}
\alpha & = & j^*(\overline{\alpha}_W) = j^*(\overline{\alpha}_W - \phi^*_n \phi_{n *}(\overline{\alpha}_W)) + j^*\phi^*_n(\eta_1) + j^*(\partial(\phi^*_n(\eta_2))) \\
& = &  j^*(\overline{\alpha}_W - \phi^*_n \phi_{n *}(\overline{\alpha}_W)) + j^*\phi^*_n(\eta_1) + \partial(j^*\phi^*_n(\eta_2)) = \beta + \partial(\gamma).
\end{array}
\]

Since $\partial(\alpha) = 0$, we must have $\partial(\beta) = 0$ as well. This proves the theorem. $\hfill\square$

\bigskip

{\bf Proof of Theorem \ref{thm:Main-3}.}
We take $n \ge 2, \ A_0 = \widehat{A}_0 = \Spec(k)$, $A_i = \square_k$ and $\widehat{A}_i := \mathbb{P}_k ^1$ for $1 \le i \le n-1$ in Theorem \ref{thm:Main-2*}. We now repeat the proof of Theorem \ref{thm:Main-1*} verbatim using Remark \ref{remk:sfs-affine-Chow}. $\hfill\square$

\bigskip

\noindent\emph{Acknowledgments.} 
We thank Spencer Bloch, H\'el\`ene Esnault, Najmuddin Fakhruddin, Bruno Kahn, Marc Levine, and Kay R\"ulling for some conversations related to the question studied here throughout the time we were working on it. We thank the handling 
editors and, the anonymous referee of Algebra \& Number Theory, who read the article with thoroughness and patience, and supplied numerous suggestions, 
which helped the authors in improving the article.

We acknowledge that part of this work was done during JP's visits to TIFR and AK's visits to KAIST, and we thank both the institutions. JP would like to thank Juya and Damy for his peace of mind at home through the time this paper was written. JP was partially supported by the National Research Foundation of Korea (NRF) grant (2015R1A2A2A01004120 and 2018R1A2B6002287) funded by the Korean government (MSIP), and TJ Park Junior Faculty Fellowship funded by POSCO TJ Park Foundation.

\end{document}